\newcommand{\RR}{\mathbb{R}}
\newcommand{\symMat}{\mathbb{S}}
\mathchardef\mhyphen="2D 
\DeclareMathOperator*{\argmin}{\arg\!\min}
\DeclareMathOperator*{\argmax}{\arg\!\max}
\newtheorem{theorem}{Theorem}[section]
\newtheorem{definition}{Definition}[section]
\newtheorem{remark}{Remark}[section]
\newtheorem{lemma}{Lemma}[section]
\newtheorem{proposition}{Proposition}[section]
\newtheorem{example}{Example}[section]
\newtheorem{assumption}{Assumption}[section]
\crefname{equation}{}{}
\crefname{theorem}{Theorem}{Theorems}
\crefname{corollary}{Corollary}{Corollaries}
\crefname{example}{Example}{Examples}
\crefname{assumption}{Assumption}{Assumptions}
\crefname{lemma}{Lemma}{Lemmas}
\crefname{proposition}{Proposition}{Propositions}
\crefname{figure}{Figure}{Figures}
\crefname{table}{Table}{Tables}
\crefname{section}{Section}{Sections}
\crefname{appendix}{Appendix}{Appendices}
\Crefname{equation}{}{}
\Crefname{theorem}{Theorem}{Theorems}
\Crefname{corollary}{Corollary}{Corollaries}
\Crefname{example}{Example}{Examples}
\Crefname{lemma}{Lemma}{Lemma}
\Crefname{proposition}{Proposition}{Propositions}
\Crefname{figure}{Figure}{Figures}
\Crefname{table}{Table}{Tables}
\Crefname{section}{Section}{Sections}
\Crefname{appendix}{Appendix}{Appendices}
\titleformat{\subsubsection}[runin]{\normalfont\bfseries}{\thesubsubsection.}{10pt}{}
\newcommand{\SBMP}{$(r_\mathrm{p},r_\mathrm{c})$\textsf{-SBMP}}
\newcommand{\SBMD}{$(r_\mathrm{p},r_\mathrm{c})$\textsf{-SBMD}}
\newcommand{\vectorize}[1]{\mathrm{vec}\left(#1\right)}
\newcommand{\Pstar}{\mathcal{P}^\star}
\newcommand{\Dstar}{\mathcal{D}^\star}
\newcommand{\pstar}{p^\star}
\newcommand{\dstar}{d^\star}
\newcommand{\Xstar}{X^\star}
\newcommand{\ystar}{y^\star}
\newcommand{\Zstar}{Z^\star}
\newcommand{\Amap}{\mathcal{A}}
\newcommand{\Ajmap}{\Amap^*}
\newcommand{\Nmap}{\mathcal{N}}
\newcommand{\Njmap}{\Nmap^*}
\newcommand{\DZstar}{\mathcal{D}_{Z_\star}}
\newcommand{\DXstar}{\mathcal{D}_{X_\star}}
\newcommand{\Dxo}{\mathcal{D}_{\Omega_0}}
\newcommand{\lambdamax}{\lambda_{\max}}
\newcommand{\Dist}{\mathrm{dist}}
\newcommand{\Wtstar}{W_t^\star}
\newcommand{\ytstar}{y_t^\star}
\newcommand{\Ft}{\bar{F}_t}
\newcommand{\BoundOnTp}{\bigO \left( \frac{16 \times 9\max\{\|C\|^2_{\mathrm{op}}, 1\}}{\mu \beta (1-\beta)^2 \min \{\alpha,\mu\} \delta^2 }\right)}
\newcommand{\BoundOnetaP}{4 \rho \max \biggl\{ \frac{144 \sup_{\Xstar \in \Pstar} \|\Xstar\|_{\mathrm{op}}}{\delta^2},\frac{9(8\sqrt{2}+16)}{\delta} \biggr\}}
\newcommand{\ep}{\epsilon_{\mathrm{p}}}
\newcommand{\ed}{\epsilon_{\mathrm{d}}}
\newcommand{\eg}{\epsilon_{\mathrm{g}}}
\newcommand{\ee}{\mathrm{e}}
\newcommand{\etac}{\eta_{\mathrm{c}}}
\newcommand{\ml}{m_{\mathrm{l}}}
\newcommand{\mr}{m_{\mathrm{r}}}
\newcommand{\dm}{n}
\newcommand{\inprod}[2]{\langle #1, #2 \rangle}
\DeclareMathOperator{\dist}{dist}
\newcommand{\Trace}{\mathop{\bf tr}}
\newcommand{\tr}{\mathsf{ T}}
\newcommand{\ynextposi}{\omega_{t+1}}
\newcommand{\ycurrposi}{\omega_{t}}
\newcommand{\yinitposi}{\omega_{0}}
\newcommand{\ycandidate}{y_{t+1}^\star}
\newcommand{\candidate}{X_{t+1}^\star}
\newcommand{\initposi}{\Omega_{0}}
\newcommand{\currposi}{\Omega_{t}}
\newcommand{\nextposi}{\Omega_{t+1}}
\newcommand{\xcandidate}{x_{t+1}^\star}
\newcommand{\rpast}{r_{\mathrm{p}}} 
\newcommand{\rcurrent}{r_{\mathrm{c}}}
\newcommand{\rnull}{\sigma_{\mathrm{p}}}
\newcommand{\rnulld}{\sigma_{\mathrm{d}}}
\newcommand{\bigO}{\mathcal{O}}
\newcommand{\faceplus}[1]{\mathcal{C}_{#1}^{+}}%
\newcommand{\myparagraph}[1]{\textbf{#1}.}
\let\originalleft\left
\let\originalright\right
\renewcommand{\left}{\mathopen{}\mathclose\bgroup\originalleft}
\renewcommand{\right}{\aftergroup\egroup\originalright}
\begin{document}
\title{An Overview and Comparison of Spectral Bundle Methods for Primal and Dual Semidefinite Programs\thanks{This work is supported by NSF ECCS-2154650. Corresponding author: Yang Zheng (zhengy@eng.ucsd.edu).}}
\author[1]{Feng-Yi Liao}
\author[2]{Lijun Ding}
\author[1]{Yang Zheng}
\affil[1]{Department of Electrical and Computer Engineering, University of California, San Diego (fliao@ucsd.edu, zhengy@eng.ucsd.edu)}
\affil[2]{Wisconsin Institute for Discovery, University of Wisconsin–Madison, Madison (lding47@wisc.edu)}

\date{}
\maketitle
	
\begin{abstract}

  The spectral bundle method developed by Helmberg and Rendl is well-established for solving large-scale semidefinite programs (SDPs) in the dual form, especially when the SDPs admit \textit{low-rank primal solutions}. Under mild regularity conditions, a recent result by Ding and Grimmer has established fast linear convergence rates when the bundle method captures \textit{the rank of primal solutions}.   
  In this paper, we present an overview and comparison of spectral bundle methods for solving both \textit{primal} and \textit{dual} SDPs. In particular, we introduce a new family of spectral bundle methods for solving SDPs in the \textit{primal} form. The algorithm developments are parallel to those by Helmberg and Rendl, mirroring the elegant duality between primal and dual SDPs. The new family of spectral bundle methods also achieves linear convergence rates for primal feasibility, dual feasibility, and duality gap when the algorithm captures \textit{the rank of the {dual} solutions}. 
  Therefore, the original spectral bundle method by Helmberg and Rendl is well-suited for SDPs with \textit{low-rank primal solutions}, while on the other hand, our new spectral bundle method works well for SDPs with \textit{low-rank dual solutions}. These theoretical findings are supported by a range of large-scale numerical experiments. Finally, we demonstrate that our new spectral bundle method achieves state-of-the-art efficiency and scalability for solving polynomial optimization compared to a set of baseline solvers \textsf{SDPT3}, \textsf{MOSEK}, \textsf{CDCS}, and \textsf{SDPNAL+}. 
  
 \end{abstract}

    \section{Introduction}

Semidefinite programs (SDPs) are an important class of convex optimization problems that minimize a linear function in the space of positive semidefinite (PSD) matrices subject to linear equality constraints \cite{vandenberghe1996semidefinite}. Mathematically, the standard primal and dual SDPs are in the form of 
\begin{equation}
    \begin{aligned}
        \min_{X} \quad & \langle C, X\rangle \\
        \mathrm{subject~to} \quad & \langle A_i, X\rangle = b_i, \quad i = 1, \ldots, m,   \label{eq:SDP-primal}\\
        & X \in \mathbb{S}^n_+, 
\end{aligned}
\tag{P}
\end{equation}
and 
\begin{equation}
    \begin{aligned}
        \max_{y, Z} \quad & b^\tr y  \\
        \mathrm{subject~to} \quad & Z + \sum_{i=1}^m A_i y_i = C, \label{eq:SDP-dual} \\
        & Z \in \mathbb{S}^n_+, 
    \end{aligned}
    \tag{D}
\end{equation}
where $b \in \mathbb{R}^m, C, A_1, \ldots, A_m \in \mathbb{S}^n$ are the problem data, $\mathbb{S}^n_+$ denotes the set of $n \times n$ PSD matrices (we also write $X \succeq 0$ to denote $X \in \mathbb{S}^n_+$ when the dimension is clear from the context or not important), and $\langle \cdot,\cdot \rangle$ denotes the standard trace inner product on the space of symmetric matrices. 

SDPs offer a powerful mathematical framework that has gained significant attention for decades \cite{vandenberghe1996semidefinite,blekherman2012semidefinite} and still receives strong research interests today \cite{zheng2021chordal,vandenberghe2015chordal,majumdar2020recent}. Indeed, SDP provides a versatile and robust modeling and optimization approach for solving a wide range of problems in different fields. 
Undoubtedly, SDPs have become powerful tools in control theory \cite{boyd1994linear}, combinatorial optimization \cite{sotirov2012sdp}, polynomial optimization \cite{blekherman2012semidefinite}, machine learning \cite{lanckriet2004learning}, and beyond \cite{vandenberghe1999applications}. 
In theory, one can solve any SDP instance  up to arbitrary precision in polynomial time using second-order interior point methods (IPMs)\cite{vandenberghe1996semidefinite,wolkowicz2012handbook}.
At each iteration of second-order IPMs, one usually needs to solve a linear system with a coefficient matrix (i.e., the Schur complement matrix) being generally dense and ill-conditioned. Consequently, 
IPMs often suffer from both computational and memory issues when solving SDPs from large-scale practical applications. 

Improving the scalability of SDPs has gained significant attention in recent years; see \cite{majumdar2020recent,zheng2021chordal,ahmadi2017improving} for surveys. 
In particular, first-order methods (FOMs) are at the forefront of developing scalable algorithms for solving large-scale SDPs thanks to their low complexity per iteration. For instance, the alternating direction method of multipliers (ADMM) is used to solve large-scale SDPs in the dual form \cref{eq:SDP-dual} \cite{wen2010alternating}. The ADMM framework has been extended to solve the homogenous self-dual embedding of SDPs \cref{eq:SDP-primal,eq:SDP-dual} in \cite{ocpb:16}.
In \cite{garstka2021cosmo}, ADMM has been applied to solving SDPs with a quadratic cost function. 
It is known that augmented Lagrangian methods (ALMs) are also suitable for solving large-scale optimization problems. Some efficient ALM-based algorithms have recently been developed to solve large-scale SDPs. For example, a Newton-CG augmented Lagrangian method is proposed to solve SDPs with a large number of affine constraints. An enhanced version is developed in  \cite{zhao2010newton} to further tackle degenerate SDPs by employing a semi-smooth Newton-CG scheme coupled with a warm start strategy \cite{yang2015sdpnal}. The algorithms \cite{zhao2010newton, yang2015sdpnal} have been implemented in a MATLAB package, \textsf{SDPNAL+}, which has shown promising numerical performance.   
To tackle the storage issue, 
the sketching idea, approximating a large matrix $X$ without explicitly forming it, is exploited in the ALM framework together with a conditional gradient method for SDPs   
\cite{yurtsever2021scalable}. In \cite{ding2021optimal}, an optimal storage scheme  is developed to solve SDPs  by using a first-order method to solve the dual SDP \cref{eq:SDP-dual} and recovering the primal solution in \cref{eq:SDP-primal}. Finally, a class of efficient first-order spectral bundle methods has been developed to solve an equivalent eigenvalue problem when primal SDPs enjoy a constant trace property \cite{helmberg2000spectral,helmberg2002spectral}. 

Another important idea in designing efficient algorithms is to exploit the underlying sparsity and structures in  SDPs~\cite{zheng2021chordal,vandenberghe2015chordal,zheng2023sum}. 
When SDPs have an aggregate sparsity pattern, chordal decomposition \cite{vandenberghe2015chordal,zheng2021chordal} has been exploited to reduce the dimension of PSD constraints in the design of both IPMs~\cite{fukuda2001exploiting} and ADMM~\cite{zheng2020chordal}. In \cite{sun2014decomposition}, partially separable properties in conic programs (including chordal decomposition) have been investigated to design efficient first-order algorithms. On the other hand, when the SDPs have low-rank solutions, low-rank factorization decomposing a big PSD matrix $X \in \mathbb{S}^{n}_+$ into $VV^\tr$, where $V \in \mathbb{R}^{n \times r} $ with $r \ll n $, has been utilized reduce the searching space in  \cite{burer2003nonlinear}. This low-rank factorization leads to a nonconvex optimization problem, and there are significant efforts in advancing theoretical understanding of the factorization approach \cite{boumal2016non,ge2017no,waldspurger2020rank}. Similar to the low-rank factorization, one can approximate the PSD constraint $X \in \mathbb{S}^n_+$ by $X = V S V^\tr$ where $S \in \mathbb{S}^r_+$ and the factor $V \in \mathbb{R}^{n \times r}$ is fixed. This is one of the main ideas in the design of the spectral bundle method \cite{helmberg2000spectral,ding2020revisit} and the spectral Frank Wolf algorithm \cite{ding2020spectral}. The core strategy in \cite{helmberg2000spectral,ding2020revisit,ding2020spectral} is to iteratively search for the factor $V$ such that it spans the range space of the optimal solution in \Cref{eq:SDP-primal}. Another similar approximation strategy is the basis pursuit techniques in \cite{ahmadi2017sum,liao2022iterative}.

In this paper, we focus on the development of the low-rank approximation $X = V S V^\tr$ in spectral bundle methods \cite{helmberg2000spectral,helmberg2002spectral,helmberg2014spectral,ding2020revisit}. The spectral bundle method, originally developed by Helmberg and Rendl in \cite{helmberg2000spectral}, is well-established to solve large-scale SDPs, thanks to its low per-iteration complexity and fast practical convergence. Further developments of spectral bundle methods appear in \cite{helmberg2002spectral,helmberg2014spectral}. Very recently, Ding and Grimmer established sublinear convergence rates of the spectral bundle method in terms of primal feasibility, dual feasibility, and duality gap, and further proved a \textit{linear convergence} rate when the algorithm captures a rank condition \cite{ding2020revisit}. To the best of our knowledge, all existing spectral bundle methods \cite{ding2020revisit,helmberg2000spectral,apkarian2008trust,helmberg2002spectral,helmberg2014spectral} focus on solving dual SDPs in the form of \cref{eq:SDP-dual}. As shown in \cite{ding2020revisit}, these spectral bundle methods are more desirable when the primal SDP \cref{eq:SDP-primal} admits low-rank solutions in which it is easier to enforce the rank condition to guarantee linear convergence. On the other hand, when the dual SDP \cref{eq:SDP-dual} admits low-rank solutions, 
the existing spectral bundle methods may offer less benefit in terms of convergence and efficiency. Indeed, SDPs arising from moment/sum-of-squares (SOS) optimization problems and their applications \cite{parrilo2003semidefinite,lasserre2009moments} are likely to admit low-rank solutions in the \textit{dual} SDP \eqref{eq:SDP-dual} (this low-rank property is consistent with the flat extension theory on the moment side \cite[Theorem 3.7]{lasserre2009moments} when it is formulated as a dual SDP).  

In this work, we present an overview and comparison of spectral bundle methods for solving both \textit{primal} and \textit{dual} SDPs. In particular, we introduce a new family of spectral bundle methods for solving  SDPs in the \textit{primal} form \eqref{eq:SDP-primal}. Our algorithm developments are parallel to those by Helmberg and Rendl \cite{helmberg2000spectral} which focuses on solving dual SDPs \eqref{eq:SDP-dual}, mirroring the elegant duality between primal and dual SDPs. In particular, our contributions are as follows. 

\begin{itemize}
    \item We propose a new family of spectral bundle methods, called \SBMP, for solving primal SDPs \cref{eq:SDP-primal}, while all existing methods \cite{ding2020revisit,helmberg2000spectral,apkarian2008trust,helmberg2002spectral,helmberg2014spectral} focus on dual SDPs \eqref{eq:SDP-dual}. We first translate the primal SDP \eqref{eq:SDP-primal} into an eigenvalue optimization problem using the exact penalty method (\Cref{proposition-primal-exact-penalty}). Then, each iteration of \SBMP~solves a small subproblem formulated from $\rpast$ past eigenvectors and $\rcurrent$ current eigenvectors of the primal variable $X$ evaluated at the past and current iterates respectively (\Cref{proposition-primal-penalty}).   
    
    \item We show that any configuration of \SBMP~admits $\bigO(1/\epsilon^3)$ convergence rate in primal feasibility, dual feasibility, and duality gap. Similar to \cite{ding2020revisit}, \SBMP~has a faster convergence rate, $\bigO(1/\epsilon)$, when the SDPs \cref{eq:SDP-primal,eq:SDP-dual} satisfy strict complementarity (\Cref{thm: sublinearates-P-K}). In \Cref{thm: linear-convergence}, we further show \textit{linear convergence} of \SBMP~if $(\romannumeral 1) $ strict complementarity holds, $(\romannumeral 2) $ the number of eigenvectors $\rcurrent$ is larger than the rank of dual optimal solutions. 
    Our proofs largely follow the strategies in \cite[Section 3]{ding2020revisit}, \cite[Section 4]{du2017rate} and \cite[Section 7]{ruszczynski2011nonlinear}, and we complete some detailed calculations and handle the constrained case for the primal SDPs. As a byproduct, we revisit the results for generic bundle methods in \cite[Theorem 2.1 and 2.3]{diaz2023optimal} for constrained convex optimization (see \Cref{lemma-iterations-bound}).  

    \item We present a detailed comparison between the primal and dual formulations of the spectral bundle methods by showing the symmetry of the parameters and convergence behaviors from both sides. It becomes clear that the existing dual formulation in \cite{ding2020revisit,helmberg2000spectral,apkarian2008trust,helmberg2002spectral,helmberg2014spectral} is advantageous when \textit{the primal SDP \cref{eq:SDP-primal} admits low-rank solutions}. On the other hand, our primal formulation \SBMP~is more suitable when \textit{the dual SDP \cref{eq:SDP-dual} admits low-rank solutions}. These theoretical findings are supported by a range of large-scale numerical experiments. 
    
    \item Finally, we present an open-source implementation of the spectral bundle algorithms for both \cref{eq:SDP-primal} and \cref{eq:SDP-dual}, while the existing implementations of spectral bundle algorithms for \cref{eq:SDP-dual} are not open-source or not easily accessible.  We demonstrate that our new spectral bundle method \SBMP~achieves state-of-the-art efficiency and scalability for solving polynomial optimization compared to a set of baseline solvers \textsf{SDPT3} \cite{toh1999sdpt3}, \textsf{MOSEK} \cite{mosek}, \textsf{CDCS} \cite{CDCS}, and \textsf{SDPNAL+}~\cite{yang2015sdpnal}.
\end{itemize}

The rest of the paper is structured as follows. \cref{sec:Preliminaries} covers some preliminaries on SDPs and nonsmooth optimization. \Cref{sec:Penalized-nonsmooth-formulations-for-SDPs} presents the exact penalty formulations for primal and dual SDPs. This is followed by our new family of spectral bundle methods \SBMP~and~the~convergence results in \Cref{sec:SBM-Primal}. \Cref{section:SBM-dual} reviews the classical spectral bundle methods, called \SBMD, and the connections and differences between \SBMP~and~\SBMD~are clarified. Our open-source implementation and numerical experiments are presented in \Cref{sec:numerical-results}. \Cref{sec:Conclusion} concludes the paper. Some detailed calculations and technical proofs are postponed in the appendix.  

\vspace{2mm}

\noindent \myparagraph{Notation} We use $\langle \cdot,\cdot \rangle $ to denote the dot product and the trace inner product on the space of $\RR^n$ and $\mathbb{S}^n$ respectively.  For a symmetric matrix $A \in \mathbb{S}^n$, we denote its eigenvalues in the decreasing order as $\lambda_{\max}(A) = \lambda_{1}(A) \geq \cdots \geq \lambda_{n}(A)$. Given a vector on $\mathbb{R}^n$, we use $\|\cdot\|$ to denote its two norm. For a matrix $M \in \mathbb{R}^{m \times n}$, its Frobenius norm, operator two norm, and nuclear norm are denoted by $\|\cdot\|,\|\cdot\|_{\mathrm{op}},$ and $\|\cdot\|_{*}$ respectively. In the primal and dual SDPs \cref{eq:SDP-primal,eq:SDP-dual}, for notational simplicity, we will also denote a linear map $\Amap: \mathbb{S}^{n} \to \mathbb{R}^m$ as 
$
\Amap(X) := \begin{bmatrix} \langle A_1, X \rangle, \ldots, \langle A_m, X \rangle \end{bmatrix}^\tr, 
$ and its adjoint map $\Ajmap$ that is a linear mapping from $\mathbb{R}^m$ to
$\mathbb{S}^n$ as $\Ajmap(y) := \sum_{i=1}^m A_i y_i$. 
The optimal cost value of SDPs \cref{eq:SDP-primal,eq:SDP-dual} is denoted as $\pstar$ and $\dstar$ respectively.
Finally, given a closed set $\mathcal{C} \subset \mathbb{R}^n$ and a point $Y \in \mathbb{R}^n$, the distance of $Y$ to $\mathcal{C}$ is defined as $\Dist(Y,\mathcal{C}) = \inf_{X \in \mathcal{C}}  \|X-Y\|$. 
    \section{Preliminaries}
\label{sec:Preliminaries}
In this section, we first introduce standard assumptions and an important notion of strict complementarity for \cref{eq:SDP-primal,eq:SDP-dual}. We then briefly overview the exact penalization for constrained nonsmooth convex optimization and the generic bundle method.

\subsection{Strict complementarity of SDPs}
Throughout this paper, we make the following standard assumptions for well-behaved SDPs \cref{eq:SDP-primal} and \cref{eq:SDP-dual}.

\begin{assumption} \label{assumption:linearly-independence}
The matrices $A_i, i = 1, \ldots, m$ in \cref{eq:SDP-primal} and \cref{eq:SDP-dual} are linearly independent. 
\end{assumption}

\begin{assumption} \label{assumption-slater-condition}
The SDPs \cref{eq:SDP-primal} and \cref{eq:SDP-dual} satisfy Slater's constraint qualification, i.e., they are both strictly feasible.
\end{assumption}

\Cref{assumption:linearly-independence} allows us to uniquely determine $y$ from a given dual feasible $Z$, i.e., the feasible point $y$ is unique in $Z+\mathcal{A}^* (y) = C$ when giving a feasible $Z$. Under \Cref{assumption-slater-condition}, the strong duality holds for \cref{eq:SDP-primal} and \cref{eq:SDP-dual} (i.e., $p^\star = d^\star$), and both \cref{eq:SDP-primal} and \cref{eq:SDP-dual} are solvable (i.e., there exist at least a primal minimizer $\Xstar$ and a dual maximizer $(y^\star, Z^\star)$ that achieve the optimal cost) \cite[Section 5.2.3]{boyd2004convex}. 
We denote the set of primal optimal solutions to \cref{eq:SDP-primal} as $\Pstar$ and the set of dual optimal solutions to \cref{eq:SDP-dual} as $\Dstar$, i.e.,
\begin{subequations} \label{eq:optimal-solution-sets}
    \begin{align}
    \Pstar &= \left\{X \in \mathbb{S}^{n} \mid p^\star = \langle C, X\rangle, \mathcal{A}(X) = b, X \in \mathbb{S}^n_+\right\}, \label{eq:optimal-solution-sets-P} \\
    \Dstar &= \left\{(y,Z) \in \mathbb{R}^m \times \mathbb{S}^{n} \mid d^\star = b^\tr y, Z+\mathcal{A}^* (y) = C, Z \in \mathbb{S}^n_+\right\}. \label{eq:optimal-solution-sets-D} 
\end{align}
\end{subequations}
\Cref{assumption-slater-condition} ensures that $\Pstar\neq \emptyset$ and $\Dstar \neq \emptyset$. In addition, if \Cref{assumption:linearly-independence} holds, then the solution sets $\Pstar$ and $\Dstar$ are nonempty and compact \cite[Section 2]{ding2020revisit}. 

\begin{proposition}[{\cite[Section 2]{ding2020revisit}}] \label{assumption-compactness}
    Under \Cref{assumption:linearly-independence,assumption-slater-condition}, the mapping $\Amap$ is surjective and the optimal solution sets $\Pstar$ and $\Dstar$ are nonempty and compact.
\end{proposition}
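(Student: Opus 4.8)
The plan is to establish the three assertions in turn: surjectivity of $\Amap$, nonemptiness of $\Pstar$ and $\Dstar$, and their compactness. Surjectivity is immediate. A linear map between finite-dimensional inner-product spaces is onto exactly when its adjoint is injective, and $\Ajmap(y) = \sum_{i=1}^m A_i y_i = 0$ forces $y = 0$ precisely because the $A_i$ are linearly independent (\Cref{assumption:linearly-independence}); hence $\Amap$ maps onto $\mathbb{R}^m$. Nonemptiness is also quick: \Cref{assumption-slater-condition} supplies Slater points for both programs, so strong duality holds and both optima are attained \cite[Section 5.2.3]{boyd2004convex}, giving $\Pstar \neq \emptyset$ and $\Dstar \neq \emptyset$. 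Each solution set is closed, being the intersection of a closed feasible set with the closed level set $\{\langle C, X\rangle = \pstar\}$ (resp. $\{b^\tr y = \dstar\}$), so the substance of the proposition is boundedness.

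For boundedness of $\Pstar$ I would exploit a \emph{strictly feasible dual} point, whose existence is guaranteed by \Cref{assumption-slater-condition}. Let $(\bar y, \bar Z)$ satisfy $\bar Z \succ 0$ and $\bar Z + \Ajmap(\bar y) = C$. For any primal-feasible $X$ one has the identity
\begin{equation*}
\langle \bar Z, X\rangle = \langle C - \Ajmap(\bar y), X\rangle = \langle C, X\rangle - \bar y^\tr \Amap(X) = \langle C, X\rangle - b^\tr \bar y,
\end{equation*}
using $\Amap(X) = b$. Specializing to $\Xstar \in \Pstar$ makes the left-hand side equal to the constant $\pstar - b^\tr \bar y$. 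Since $\Xstar \succeq 0$ and $\bar Z \succ 0$, the inequality $\langle \bar Z, \Xstar\rangle \ge \lambda_{\min}(\bar Z)\, \Trace(\Xstar)$ yields a uniform bound on $\Trace(\Xstar)$, and because $\Xstar$ is PSD its trace dominates every matrix norm. Hence $\Pstar$ is bounded, and therefore compact.

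The dual side is symmetric but needs one extra step. Using a \emph{strictly feasible primal} point $\bar X \succ 0$ with $\Amap(\bar X) = b$, the analogous computation $\langle \bar X, \Zstar\rangle = \langle \bar X, C\rangle - \langle \bar X, \Ajmap(\ystar)\rangle = \langle \bar X, C\rangle - \dstar$ bounds $\Trace(\Zstar)$ and hence $\|\Zstar\|$. This alone does not bound $\ystar$; to close the argument I would return to the dual equation $\Ajmap(\ystar) = C - \Zstar$, whose right-hand side is now bounded, and invoke the injectivity of $\Ajmap$ (again \Cref{assumption:linearly-independence}) — equivalently a positive smallest singular value of $\Ajmap$ — to conclude that $\ystar$ is bounded as well. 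Thus $\Dstar$ is bounded and compact.

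I expect the main obstacle to be the boundedness arguments rather than surjectivity or nonemptiness: the crux is recognizing that strict feasibility of the \emph{opposite} program furnishes a positive-definite certificate ($\bar Z \succ 0$ or $\bar X \succ 0$) that converts the constant value of the complementarity pairing into a trace bound. A secondary subtlety, easy to overlook, is that controlling $\Zstar$ does not by itself control $\ystar$; both \Cref{assumption:linearly-independence} and \Cref{assumption-slater-condition} are genuinely used here, the former twice — for surjectivity and for the final inversion of $\Ajmap$.
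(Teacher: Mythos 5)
Your proposal is correct and follows essentially the same route as the paper: strict feasibility of the \emph{opposite} program turns the duality-gap pairing $\langle \hat{X}, \Zstar\rangle$ (resp.\ $\langle \bar{Z}, \Xstar\rangle$) into a finite constant that, via positive definiteness of the Slater point, bounds the trace of the PSD optimal variable, and injectivity of $\Ajmap$ from \Cref{assumption:linearly-independence} then bounds $\ystar$. The only difference is cosmetic — you make the bound quantitative with $\lambda_{\min}$ where the paper argues by contradiction with a limit — so there is nothing to add.
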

It is clear that $\Pstar$ and $\Dstar$ are closed. Given any $(\ystar,\Zstar) \in \Dstar$ and a strict primal feasible point $\hat{X}$, we have a finite duality~gap $\langle C ,\hat{X}\rangle - b^\tr \ystar = \langle \hat{X} , \Zstar \rangle \geq 0$. If $\|\Zstar\| \to \infty$, then $\langle \hat{X} , \Zstar \rangle \to \infty$ since $\Zstar \in \mathbb{S}^n_+$ and $\hat{X}$ is positive definite. 
This is impossible due to the finite duality gap, and thus, any optimal $\Zstar$ is bounded.  \Cref{assumption:linearly-independence} ensures that~$\Amap$ is a surjective mapping, which means $\Ajmap$ is injective. Thus, any optimal $\ystar$ is bounded, and $\Dstar$ is bounded. Similarly, the existence of a strictly feasible point $(\hat{y}, \hat{Z})$ ensures the compactness of $\Pstar$. 

The following result is a version of the KKT optimality condition for the SDPs \cref{eq:SDP-primal} and \cref{eq:SDP-dual}.

\begin{lemma}[{\cite[Lemma 3]{alizadeh1997complementarity}}] \label{theorem: optimality condition}
    Given a pair of primal and dual feasible solutions $\Xstar$ and $(\ystar, \Zstar)$, they are optimal if and only if  
    there exists an orthonormal matrix $Q \in \mathbb{R}^{n \times n}$ with $Q^{\tr} Q = I$, such that 
    \begin{equation} \label{eq:complementarity}
        \begin{aligned}
            X^\star = Q \cdot \mathrm{diag}(\lambda_1, \ldots, \lambda_n) \cdot Q^\tr, \quad          Z^\star = Q \cdot \mathrm{diag} (w_1, \ldots, w_n) \cdot Q^\tr
        \end{aligned}
    \end{equation}
    and $\lambda_i w_i = 0, i = 1,\ldots,n$.   
\end{lemma}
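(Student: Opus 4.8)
The plan is to reduce the optimality of the feasible pair to the single scalar complementary-slackness condition $\inprod{\Xstar}{\Zstar}=0$, and then to convert that condition into the claimed simultaneous spectral decomposition. First I would invoke strong duality: under \Cref{assumption-slater-condition} Slater's condition holds, so $\pstar=\dstar$ and the duality gap vanishes exactly at optimal pairs. For any primal–dual \emph{feasible} pair, using $\Amap(\Xstar)=b$, the dual equation $\Zstar = C-\Ajmap(\ystar)$, and the adjoint identity $\inprod{\Xstar}{\Ajmap(\ystar)}=\inprod{\Amap(\Xstar)}{\ystar}=b^\tr\ystar$, the gap rewrites as
\[
\inprod{C}{\Xstar} - b^\tr\ystar = \inprod{\Xstar}{C-\Ajmap(\ystar)} = \inprod{\Xstar}{\Zstar}.
\]
Hence a feasible pair is optimal if and only if $\inprod{\Xstar}{\Zstar}=0$.

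Second, I would show that for $\Xstar,\Zstar\in\symMat^n_+$ the scalar identity $\inprod{\Xstar}{\Zstar}=0$ is equivalent to the matrix identity $\Xstar\Zstar=0$. Writing the spectral decomposition $\Xstar=\sum_i \lambda_i q_i q_i^\tr$ with $\lambda_i\ge 0$, one has $\inprod{\Xstar}{\Zstar}=\sum_i \lambda_i\, q_i^\tr \Zstar q_i$, a sum of nonnegative terms since $\Zstar\succeq 0$. It vanishes only if $\Zstar q_i=0$ for every $i$ with $\lambda_i>0$, because $q_i^\tr \Zstar q_i = \| (\Zstar)^{1/2} q_i \|^2$. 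This forces $\Xstar\Zstar=0$, and taking transposes (both matrices are symmetric) also gives $\Zstar\Xstar=0$.

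Third, since $\Xstar$ and $\Zstar$ are symmetric and commute ($\Xstar\Zstar=\Zstar\Xstar=0$), they are simultaneously diagonalizable by a common orthonormal $Q$ with $Q^\tr Q=I$, yielding $\Xstar = Q\,\mathrm{diag}(\lambda_1,\ldots,\lambda_n)\,Q^\tr$ and $\Zstar = Q\,\mathrm{diag}(w_1,\ldots,w_n)\,Q^\tr$. Substituting into $\Xstar\Zstar=0$ gives $Q\,\mathrm{diag}(\lambda_1 w_1,\ldots,\lambda_n w_n)\,Q^\tr=0$, hence $\lambda_i w_i=0$ for every $i$, which is the asserted complementarity. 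The converse is immediate: given such a $Q$ with $\lambda_i w_i=0$, orthogonality yields $\inprod{\Xstar}{\Zstar}=\sum_i \lambda_i w_i=0$, so the duality gap is zero and the feasible pair is optimal.

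I expect the only genuinely delicate step to be the passage from the scalar condition $\inprod{\Xstar}{\Zstar}=0$ to the matrix equation $\Xstar\Zstar=0$, together with the resulting commutativity that licenses simultaneous diagonalization; the remainder is bookkeeping with the adjoint map and the spectral theorem. A minor care point is that the reduction to complementary slackness in the first paragraph uses \emph{feasibility} of the pair throughout, and that optimality in that step genuinely requires strong duality (\Cref{assumption-slater-condition}) rather than merely weak duality.
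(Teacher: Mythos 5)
Your proof is correct: the reduction of optimality to $\inprod{\Xstar}{\Zstar}=0$ via strong duality, the passage from the scalar condition to $\Xstar\Zstar=0$ using positive semidefiniteness, and the simultaneous diagonalization of the commuting pair are all sound, and the converse via weak duality is handled properly. The paper does not prove this lemma itself (it cites \cite{alizadeh1997complementarity}), but your argument is the standard one and matches exactly the chain of facts the paper sketches in the discussion immediately following the lemma ($\Zstar\Xstar=0$, commutativity, shared eigenvectors), so there is nothing to flag.
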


\noindent Given a pair of optimal solutions $\Xstar$ and $(\ystar, \Zstar)$, the complementary slackness condition 
\eqref{eq:complementarity} 
is equivalent to $\Zstar \Xstar = 0$ ($\Xstar$ and $\Zstar$ commutes so they share a common set of eigenvectors as the columns of $Q$). This implies that 
\begin{align*} 
    \text{rank}(\Xstar) + \text{rank}(\Zstar) \leq n, \quad \text{range}(\Zstar) \subset \text{null}(\Xstar), \quad \text{and} \quad  \text{range}(\Xstar) \subset \text{null}(\Zstar).
\end{align*}
We now introduce the notion of strict complementarity for a pair of 
optimal solutions. 
\begin{definition} [{\cite[Definition 4]{alizadeh1997complementarity}}] 
    A pair of primal and dual optimal solutions $\Xstar \in \Pstar$ and $(\ystar,\Zstar) \in \Dstar$ satisfies strict complementarity if $\mathrm{rank}(\Xstar) + \mathrm{rank}(\Zstar) = n$ holds, i.e., exactly one of the two conditions $\lambda_i = 0$ and $w_i = 0$ is true in \eqref{eq:complementarity}.  
    \label{eq:strictly-complementarity}
\end{definition}
If such a pair $\Xstar$ and $(\ystar,\Zstar)$ exists, we also say that the SDPs \cref{eq:SDP-primal} and \cref{eq:SDP-dual} satisfy strict complementarity. We note that strict complementarity is not restrictive. It is a generic property of SDPs \cite[Theorem 15]{alizadeh1997complementarity}, and many structured SDPs from practical applications also satisfy strict complementarity 
 \cite{ding2021simplicity}.

\subsection{Exact penalization for constrained convex optimization}

Consider a constrained convex optimization problem of the form
\begin{equation} \label{eq:constrained_non-smooth-problem}
    \begin{aligned}
    \min &\quad f(x) \\
    \text{subject to}& \quad g_i(x) \leq 0, \quad i = 1, \ldots, m,\\
    &\quad x \in \mathcal{X}_0,
    \end{aligned}
\end{equation}
where $f: \mathbb{R}^n \to \mathbb{R}$ and $g_i: \mathbb{R}^n \to \mathbb{R}, i = 1, \ldots, m$ are (possibly nondifferentiable) convex functions, and $\mathcal{X}_0 \subseteq \mathbb{R}^n$ is a closed convex set (which are defined by some simple constraints). The idea of exact penalty methods is to reformulate the constrained optimization problem \cref{eq:constrained_non-smooth-problem} by a problem with simple constraints. In particular, upon defining an exact penalty function 
$$
 P(x) = \sum_{i=1}^m \; \max\{0, g_i(x)\},
$$
we consider a penalized problem 
\begin{equation} \label{eq:constrained-penalized}
    \begin{aligned}
    \min &\quad \Phi_\rho (x) := f(x) + \rho P(x) \\
    \text{subject to}& \quad x \in \mathcal{X}_0,
    \end{aligned}
\end{equation}
where $\rho > 0$ is a penalty parameter. When choosing $\rho$ large enough, problems \eqref{eq:constrained_non-smooth-problem} and \eqref{eq:constrained-penalized} are equivalent to each other in the sense that they have the same optimal value and solution set. 

\begin{theorem}[{\cite[Theorem 7.21]{ruszczynski2011nonlinear}}] \label{theorem:exact-penalization}
Suppose that problem \eqref{eq:constrained_non-smooth-problem} satisfies Slater's constraint qualification. There exists a constant $\rho_0 \geq 0$ such that for each $\rho > \rho_0$, a point $\hat{x}$ is an optimal solution of \eqref{eq:constrained_non-smooth-problem} if and only if it is an optimal solution of \eqref{eq:constrained-penalized}. In particular, we can choose $\rho_0 = \sup_{\lambda \in \Lambda} \|\lambda\|_\infty$, where $\Lambda \subset \mathbb{R}^m$ is the set of optimal Lagrange multipliers associated with $g_i(x) \leq 0, i = 1, \ldots, m$.
\end{theorem}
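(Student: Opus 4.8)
The plan is to route the proof through convex Lagrangian duality, exploiting that the penalty term $P$ dominates the linear term $\sum_i \lambda_i g_i$ whenever the nonnegative multiplier $\lambda$ has entries bounded by $\rho$. First I would invoke convex duality under Slater's constraint qualification: since $f$ and the $g_i$ are convex and \eqref{eq:constrained_non-smooth-problem} is strictly feasible, strong duality holds and the set $\Lambda \subset \mathbb{R}^m_+$ of optimal Lagrange multipliers is nonempty and compact. Compactness is exactly what makes $\rho_0 = \sup_{\lambda \in \Lambda}\|\lambda\|_\infty$ finite. Fix any $\lambda^\star \in \Lambda$ and let $f^\star$ denote the common optimal value; the saddle-point property then gives $\inf_{x \in \mathcal{X}_0} L(x,\lambda^\star) = f^\star$, where $L(x,\lambda) = f(x) + \sum_{i=1}^m \lambda_i g_i(x)$.

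The core of the argument is a single chain of inequalities valid for every $x \in \mathcal{X}_0$ and every $\rho > \rho_0$. Writing $\rho = \rho_0 + \epsilon$ with $\epsilon > 0$ and using $0 \le \lambda_i^\star \le \rho_0$ together with $\max\{0,g_i(x)\} \ge g_i(x)$, I would decompose
\begin{align*}
\Phi_\rho(x) &= f(x) + \sum_{i=1}^m \lambda_i^\star \max\{0,g_i(x)\} + \sum_{i=1}^m (\rho - \lambda_i^\star)\max\{0,g_i(x)\} \\
&\ge L(x,\lambda^\star) + \epsilon\, P(x) \ge f^\star + \epsilon\, P(x),
\end{align*}
where the first inequality drops the nonnegative terms $\lambda_i^\star(\max\{0,g_i(x)\}-g_i(x))$ and bounds each surviving coefficient below by $\rho-\lambda_i^\star \ge \epsilon$. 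Taking the infimum over $x \in \mathcal{X}_0$ and noting $P \ge 0$ gives $\inf_{x \in \mathcal{X}_0}\Phi_\rho(x) \ge f^\star$. For the reverse bound, any optimal $\hat x$ of \eqref{eq:constrained_non-smooth-problem} is feasible, so $P(\hat x) = 0$ and $\Phi_\rho(\hat x) = f(\hat x) = f^\star$; hence the two problems share the optimal value $f^\star$.

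The equivalence of solution sets then falls out of the slack term $\epsilon P(x)$. For the forward implication, an optimal $\hat x$ of \eqref{eq:constrained_non-smooth-problem} attains $\Phi_\rho(\hat x) = f^\star$ and is therefore optimal for \eqref{eq:constrained-penalized}. For the converse, if $\hat x$ minimizes $\Phi_\rho$ over $\mathcal{X}_0$ then $f^\star = \Phi_\rho(\hat x) \ge f^\star + \epsilon P(\hat x)$, which forces $\epsilon P(\hat x) \le 0$; since $\epsilon > 0$ and $P \ge 0$ this yields $P(\hat x) = 0$, so $\hat x$ is feasible for \eqref{eq:constrained_non-smooth-problem}, and then $f(\hat x) = \Phi_\rho(\hat x) = f^\star$ shows it is optimal there. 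This is precisely the point at which the \emph{strict} inequality $\rho > \rho_0$ is used rather than $\rho \ge \rho_0$.

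The main obstacle is the duality input rather than the algebra: one must establish that under Slater's condition the optimal multiplier set $\Lambda$ is nonempty and bounded, so that $\rho_0 < \infty$ and a concrete $\lambda^\star$ with $\|\lambda^\star\|_\infty \le \rho_0$ is available; everything afterward is the elementary comparison of $P$ against $\sum_i \lambda_i^\star g_i$. A secondary care point is keeping the $\epsilon$-slack explicit throughout, since it is exactly what upgrades equality of optimal values to equality of the full solution sets.
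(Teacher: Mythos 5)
The paper does not prove this statement itself — it is quoted directly from Ruszczy\'nski's textbook (Theorem 7.21) — so there is no in-paper proof to compare against. Your argument is correct and is essentially the standard proof of that result: Slater's condition yields a nonempty, bounded multiplier set so that $\rho_0$ is finite, and the pointwise bound $\Phi_\rho(x) \geq f^\star + \epsilon P(x)$ for $\rho = \rho_0 + \epsilon$ delivers both the equality of optimal values and the equivalence of solution sets.
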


Therefore, we can transform some nonsmooth constraints that are hard to handle in~\eqref{eq:constrained_non-smooth-problem} into the nonsmooth cost function of~\eqref{eq:constrained-penalized}. Then, we can apply cutting plane or bundle methods to solve the nonsmooth optimization~\eqref{eq:constrained-penalized}. However, it should be noted that the resulting problem~\eqref{eq:constrained-penalized} may be difficult to solve if the penalty parameter $\rho$ is too large. As we will discuss in \Cref{sec:Penalized-nonsmooth-formulations-for-SDPs}, in some SDPs that arise from practical applications, the penalty parameter $\rho$ is known \textit{a priori} \cite{ding2020revisit,helmberg2000spectral,mai2023hierarchy}.    

\subsection{The cutting plane and bundle methods} \label{subsection:bundle-methods}
In this subsection, we briefly overview the generic bundle method; see \cite[Chapter 7]{ruszczynski2011nonlinear} for details. Consider a generic nonsmooth constrained convex optimization
\begin{equation} \label{eq:non-smooth-problem}
   f^\star =  \min_{x \in \mathcal{X}_0} \;\; f(x),
\end{equation}
where $f: \mathbb{R}^n \rightarrow \mathbb{R} $ is convex but not necessarily differentiable and $\mathcal{X}_0 \subseteq  \mathbb{R}^n$ is a closed convex set. It is clear that problem \cref{eq:non-smooth-problem} includes \cref{eq:constrained-penalized} as a special case. 

The simplest method for solving \cref{eq:non-smooth-problem} is arguably the \textit{subgradient method}, which constructs a sequence of points $x_t$ iteratively by updating
\begin{equation*} 
    \begin{aligned}
        x_{t+1} = \Pi_{\mathcal{X}_0} ( x_t - \tau_t g_t), \quad t = 1, 2, \ldots
    \end{aligned}
\end{equation*}
where $g_t \in \partial f(x_t)$ is a subgradient of $f(\cdot)$ at the current point $x_t$, and $ \tau_t \in \mathbb{R}$ is a step size, and $\Pi_{\mathcal{X}_0} (x)$ denotes the orthogonal projection of the point $x\in\mathbb{R}^n$ onto $\mathcal{X}_0$. Recall that for a convex function $f: \mathbb{R}^n \rightarrow \mathbb{R}$, a vector $g \in \mathbb{R}^n$ is called a subgradient of $f$ at $x$ if 
\begin{equation*}
f(y) \geq f(x) + \langle g, y - x \rangle, \;\;  \forall y \in \mathbb{R}^n.
\end{equation*}
The set of all subgradients of $f$ at $x$ is called the subdifferential, denoted by $\partial f(x)$. 

With mild assumptions and an appropriate choice of decreasing step sizes, the subgradient method is guaranteed to generate a converging sequence $\{x_t\}$ to an optimal solution of problem \eqref{eq:non-smooth-problem} (see \cite[Theorem 7.4]{ruszczynski2011nonlinear}). Despite the simplicity of subgradient methods, it is generally challenging to develop reliable and efficient step size rules for practical optimization instances. 

\subsubsection{The cutting plane method.} 
Another useful way to utilize subgradients is the idea of the \textit{cutting plane method}, which solves a lower approximation of the function $f(x)$ at every iteration. Here, we assume that $\mathcal{X}_0$ is compact in \cref{eq:non-smooth-problem}; otherwise, we consider minimizing $f(x)$ over $\mathcal{X}_0 \cap \mathcal{C}$ where $\mathcal{C}$ is a compact set containing an optimal solution.

The basic idea of the cutting plane method is to use the subgradient inequality to construct lower approximations of $f$. In particular, at iteration $t$, having points $x_1, x_2, \ldots, x_t$, function values $f(x_1), f(x_2), \ldots,$ $ f(x_t)$, and the corresponding subgradients $g_1, g_2, \ldots, g_t$, we construct a lower approximation using a piece-wise affine function
\begin{equation} \label{eq:piecewise-linear-LB} 
    \begin{aligned}
        \hat{f}_t (x) = \max_{i=1,\ldots,t} \;\; f(x_i) + \langle g_i,x-x_i \rangle.
    \end{aligned}
\end{equation} 
By definition of subgradients, it is clear that $ f(x) \geq \hat{f}_t (x), \forall x \in \mathbb{R}^n$. Starting from a triple $\{x_1 \in \mathcal{X}_0, f(x_1),g_1 \in \partial f(x_1)\}$, the cutting plane method solves the following master problem to generate the next point, 
\begin{equation*}
    \begin{aligned}
        x_{t+1} & \in  \argmin_{x \in \mathcal{X}_0} \;\; \hat{f}_t (x), \quad t = 1, 2, \ldots. 
    \end{aligned}
\end{equation*} 
When $\mathcal{X}_0$ is a convex set defined by simple constraints (e.g., a polyhedron), the above problem becomes a linear program (LP) for which very efficient algorithms exist. 
The sequence generated by the cutting plane method is guaranteed to satisfy that $\lim_{t \to \infty} f(x_t) = f^\star$ \cite[Theorem 7.7]{ruszczynski2011nonlinear}. However, the theoretical convergence rate is rather slow; it generally takes $\bigO(1/\epsilon^n)$ iterations to reach $f(x_t) - f^\star \leq \epsilon$ \cite[Section 5.2] {mutapcic2009cutting}. 
The practical convergence of the cutting plane method may be faster. 

\subsubsection{The bundle method.} \label{subsec:bundle method}
The \textit{bundle method} improves the convergence rate and numerical behavior of the cutting plane method by incorporating a regularization strategy. Unlike the cutting plane method that only considers the lower approximation function, the bundle method updates its iterates by solving a regularized master problem (i.e., a proximal step to the lower approximation model $\hat{f}_t$): 
\begin{equation} \label{eq:bundle-method-t}
    \begin{aligned}
        \xcandidate & \in \argmin_{x \in \mathcal{X}_0 } \;\;\hat{f}_t (x) + \frac{\alpha}{2 } \|x - \omega_t\|_2^2,
    \end{aligned}
\end{equation} 
where $\omega_t  \in  \mathbb{R}^n$ is the current reference point and $\alpha \! > \!  0$ penalizes the deviation from $\omega_t$. The bundle method only updates the iterate $\omega_t$ when the decrease of the objective value of $f$ is at least a fraction of the decrease that the approximated model $\hat{f}_t$ predicts. In particular, letting $0 \!< \!\beta \! < \!1$, if 
\begin{equation} \label{eq:sufficient-descent}
    \begin{aligned}
       \beta \left(f(\omega_t) - \hat{f}_t(\xcandidate)\right) \leq f(\omega_t) -  f(\xcandidate)
    \end{aligned}
\end{equation}
then we set $\omega_{t+1} = \xcandidate$ (\textit{descent step}); otherwise,  we set $\omega_{t+1} = \omega_{t}$ (\textit{null step}). In any case, the subgradient $g_{t+1} \in \partial f(\xcandidate)$ at the new point $\xcandidate$ is used to update the lower approximation $\hat{f}_{t+1}$, e.g., using \cref{eq:piecewise-linear-LB}. 

The seemingly subtle modifications above have a rather surprising consequence: the cost value $f(\omega_t)$ generated by the bundle method converges to $f^\star$ for any constant $\alpha >0$ with a rate of $\bigO(1/\epsilon^3)$ when the objective function is Lipschitz continuous \cite{kiwiel2000efficiency}. Faster convergence rates appear under different assumptions of $f$; see \cite[Table 1] {diaz2023optimal} for a detailed comparison. Note that subgradient methods rely on very carefully controlled decreasing stepsizes which might be inefficient and unreliable in practice, and the cutting plane method has a slow convergence rate theoretically. On the contrary, the bundle method appears more suitable to solve the nonsmooth problem \cref{eq:non-smooth-problem}.

\subsubsection{The bundle method with cut-aggregation.} \label{subsection:lower-approximation-bound}
The lower approximation model $\hat{f}_t (x)$ can be constructed using all past subgradients as in \cref{eq:piecewise-linear-LB}, but this leads to a growing number of cuts or constraints when solving the regularized master~problem \cref{eq:bundle-method-t}. Another useful cut-aggregation idea~\cite[Chapter 7.4.4]{ruszczynski2011nonlinear} allows us to simplify the collection of $t$ lower bounds used by \cref{eq:piecewise-linear-LB} into just two linear lower bounds. In particular, the convergence of the bundle method is guaranteed as long as the lower approximation model $\hat{f}_{t+1}$ satisfies the following three properties~\cite{diaz2023optimal}\footnote{The analysis in \cite{diaz2023optimal} focuses on unconstrained optimization where $\mathcal{X}_0 = \mathbb{R}^n$ in \eqref{eq:non-smooth-problem}. We have extended the analysis \cite{diaz2023optimal} for constrained optimization where $\mathcal{X}_0$ is a closed convex set; see \Cref{Appendix:assumptions-CBM}.}:
\begin{itemize}
    \begin{subequations}
    \item \textbf{Minorant}:  the function $\hat{f}_{t+1}$ is a lower bound on $f$, i.e.
    \begin{equation} \label{eq:bundle-method-property-1}
        \hat{f}_{t+1}(x) \leq f(x), \quad \forall x \in { \mathcal{X}_0 }.
    \end{equation}
    \item \textbf{Subgradient lowerbound}: $\hat{f}_{t+1}$ is lower bounded by the linearization given by some subgradient $g_{t+1} \in \partial f(\xcandidate) $ computed after \cref{eq:bundle-method-t}, i.e.
    \begin{equation} \label{eq:bundle-method-property-2}
         \hat{f}_{t+1} (x)  \geq f(\xcandidate) + \langle g_{t+1}, x-\xcandidate \rangle, \quad \forall x \in {\mathcal{X}_0 }.
    \end{equation}
    \item \textbf{Model subgradient lowerbound}: If step t is a null step, $\hat{f}_{t+1}$ is lower bounded by the linearization of the model $\hat{f}_{t}$ given by the subgradient $s_{t+1}:= \alpha(\omega_t- \xcandidate) \in \partial \hat{f}_t(\xcandidate) + {\mathcal{N}_{\mathcal{X}_0}(\xcandidate)}$, 
    i.e.
    \begin{equation} \label{eq:bundle-method-property-3}
         \hat{f}_{t+1} (x)  \geq \hat{f}_t(\xcandidate) + \langle s_{t+1}, x-\xcandidate \rangle, \quad \forall x \in { \mathcal{X}_0 },
    \end{equation}
    where $\mathcal{N}_{\mathcal{X}_0}(\xcandidate)$ denotes the normal cone of $\mathcal{X}_0$ at point $\xcandidate$, i.e. $\mathcal{N}_{\mathcal{X}_0}(\xcandidate) = \{ v \in \mathbb{R}^n \mid \langle v, x - \xcandidate \rangle \leq 0, \forall x \in \mathcal{X}_0\}$. Note that $s_{t+1}$ certifies the optimality of $\xcandidate$ for problem \cref{eq:bundle-method-t}. 
    \end{subequations}
\end{itemize}
The lower bound \cref{eq:bundle-method-property-3} serves as an aggregation of all previous subgradient lower bounds. Instead of~\eqref{eq:piecewise-linear-LB}, we can construct the lower approximation $\hat{f}_{t+1}$ as the maximum of two lower bounds as
\begin{equation}
    \begin{aligned}
         \hat{f}_{t+1} (x) = \max \left \{f(\xcandidate) + \langle g_{t+1}, x-\xcandidate \rangle, \hat{f}_t(\xcandidate) + \langle s_{t+1}, x-\xcandidate \rangle \right \}. 
    \end{aligned}
\end{equation}

\begin{algorithm}[t] 
    \caption{General Bundle Method} \label{alg:bundle-method} \label{algorithm:bundle}
    \begin{algorithmic}[1]
        \Require{Problem $f(x)$, $\mathcal{X}_0$; Parameters $\alpha > 0$, $\beta \in (0,1)$, 
        $t_{\max} \geq 1$. }
        \For{$t=1,2,\ldots,t_{\max}$}
            \State Solve \cref{eq:bundle-method-t} to obtain a candidate iterate $x_{t+1}^\star $.  \Comment{\textit{master problem}}
            \If {\cref{eq:sufficient-descent} is satisfied}
                \State Set $\omega_{t+1} = x_{t+1}^\star$. \Comment{\textit{descent step}}
            \Else
                \State Set $\omega_{t+1} = \omega_{t}$. \Comment{\textit{null step}}
            \EndIf
            \State Update approximation model $\hat{f}_{t+1} $ without violating \cref{eq:bundle-method-property-1,eq:bundle-method-property-2,eq:bundle-method-property-3}.
            \If {stopping criterion}
                \State Quit. 
                \Comment{\textit{termination}}
                \EndIf
        \EndFor
    \end{algorithmic}
\end{algorithm}

The overall process of the general bundle method is listed in \Cref{alg:bundle-method}. The convergence rates for \Cref{alg:bundle-method} have been recently revisited in {\cite[Theorem 2.1 and 2.3]{diaz2023optimal}}. The big-O notation below suppresses some universal constants. 

\begin{lemma}[{\cite[Theorem 2.1 and 2.3]{diaz2023optimal}}\footnote{The results in \cite[Theorem 2.1 and 2.3]{diaz2023optimal} focus on unconstrained problems \cref{eq:non-smooth-problem} with $\mathcal{X}_0 = \mathbb{R}^n$. Upon some adaptations, we can extend their results for constrained convex optimization with a closed convex set $\mathcal{X}_0 \subseteq \mathbb{R}^n$. Details are presented in \Cref{Appendix:assumptions-CBM}.}] 
\label{lemma-iterations-bound}
    Consider a convex and M-Lipschitz function $f$ in \cref{eq:non-smooth-problem}. Let $f^\star = \inf_{{x} \in \mathcal{X}_0} f(x)$
    and $\mathcal{C} =\{x \in \mathcal{X}_0  \mid f(x) = f^\star\} $. If $\mathcal{C}$ is nonempty, the number of steps for \Cref{alg:bundle-method}  before reaching an $\epsilon > 0$ optimality, i.e. $f(x)- f^\star\leq \epsilon$, is bounded by
    \begin{align*}
        t \leq \bigO \biggl( \frac{ 12 \alpha M^2 D^4}{\beta (1-\beta)^2 \epsilon^3} \biggr),
    \end{align*}   
    where $D = \sup_{k} \dist(x_k,\mathcal{C})  < \infty$. 
    If $f$ further satisfies the quadratic growth condition 
    \begin{equation} \label{eq:quadratic-growth-lemma}
        \begin{aligned}
            f(x) - f^\star \geq \mu \cdot \dist^2(x,\mathcal{C}), \quad \forall x \in \mathcal{X}_0,
        \end{aligned}
    \end{equation}
    where $\mu>0$ is a positive constant, then the number of steps for \Cref{alg:bundle-method} before reaching an $\epsilon$ optimality is bounded by 
    \begin{align*}
        t \leq \bigO \biggl( \frac{16M^2}{\beta (1-\beta)^2 \min \{\alpha,\mu\} \epsilon}\biggr).
    \end{align*}
\end{lemma}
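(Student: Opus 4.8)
The plan is to adapt the proximal bundle analysis of \cite{diaz2023optimal} to the constrained setting, tracking two monotone quantities along the run of \Cref{alg:bundle-method}: the objective value $f(\omega_t)$ at the reference points, which decreases only at descent steps, and the proximal master value $m_t := \hat{f}_t(\xcandidate) + \frac{\alpha}{2}\|\xcandidate - \omega_t\|^2$, which I will show can only increase during a run of consecutive null steps. I would partition the iterations into cycles delimited by descent steps and bound separately the number of cycles and the number of null steps inside each cycle.

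First I would record the elementary inequalities. From the minorant property \cref{eq:bundle-method-property-1} and optimality of $\xcandidate$ for \cref{eq:bundle-method-t} (take $x = \omega_t$ in the master), the predicted decrease $\delta_t := f(\omega_t) - \hat{f}_t(\xcandidate)$ satisfies $\delta_t \geq \frac{\alpha}{2}\|\xcandidate - \omega_t\|^2 \geq 0$, and a descent step reduces the objective by $f(\omega_{t+1}) \leq f(\omega_t) - \beta\delta_t$ via \cref{eq:sufficient-descent}. Projecting $\omega_t$ onto $\mathcal{C}$ and using strong convexity of the master objective together with \cref{eq:bundle-method-property-1} yields $\hat{f}_t(\xcandidate) \leq f^\star + \frac{\alpha}{2}\dist^2(\omega_t, \mathcal{C})$, hence $\delta_t \geq \bigl(f(\omega_t) - f^\star\bigr) - \frac{\alpha}{2}\dist^2(\omega_t, \mathcal{C})$; the supremal distance $D$ controls the correction term and also caps the master value, $m_t \leq f^\star + \frac{\alpha}{2}D^2$.

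The heart of the argument is the null-step progress estimate. Fixing the reference point $\omega$ throughout a null-step run, property \cref{eq:bundle-method-property-3} with $s_{t+1} = \alpha(\omega - \xcandidate)$ gives, for every $x \in \mathcal{X}_0$, the identity $\hat{f}_t(\xcandidate) + \langle s_{t+1}, x - \xcandidate\rangle + \frac{\alpha}{2}\|x - \omega\|^2 = m_t + \frac{\alpha}{2}\|x - \xcandidate\|^2$, which is precisely the strong-convexity certificate of the previous master solution and holds verbatim over $\mathcal{X}_0$. Combining this with the fresh subgradient minorant \cref{eq:bundle-method-property-2}, the failed descent test $f(\xcandidate) - \hat{f}_t(\xcandidate) > (1-\beta)\delta_t$, and $M$-Lipschitzness ($\|g_{t+1}\| \leq M$), I would pass to a convex combination of the two minorants and optimize the combination weight to obtain a quantitative increase $m_{t+1} - m_t \gtrsim \delta_t^2 / M^2$. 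Since $m_t$ is nondecreasing and bounded above by $f^\star + \frac{\alpha}{2}D^2$, this caps the number of null steps in any cycle whose gap stays above a threshold. For the Lipschitz case, multiplying the per-cycle null-step count by the number of descent cycles needed to drain the initial suboptimality (each costing at least $\beta\delta$) and optimizing over the gap thresholds produces the stated $\bigO\bigl(\alpha M^2 D^4/(\beta(1-\beta)^2\epsilon^3)\bigr)$ bound. Under the quadratic growth condition \cref{eq:quadratic-growth-lemma} the distance term is replaced by $(f(\omega_t) - f^\star)/\mu$, so $\delta_t$ becomes proportional to the suboptimality, the objective gap contracts geometrically across cycles, and the null-step count per cycle is uniformly bounded, collapsing the rate to $\bigO\bigl(M^2/(\beta(1-\beta)^2\min\{\alpha,\mu\}\epsilon)\bigr)$.

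The main obstacle is exactly the constrained adaptation that distinguishes this from \cite{diaz2023optimal}: in the unconstrained analysis the null-step progress bound is obtained by minimizing the convex-combination minorant-plus-regularizer in closed form over all of $\mathbb{R}^n$, but here that unconstrained minimizer may be infeasible, so the closed-form step is invalid as a lower bound on $m_{t+1} = \min_{x \in \mathcal{X}_0}[\,\cdot\,]$. The resolution I would pursue is to route the entire argument through the strong-convexity identity above (which is derived from master optimality and therefore already respects $\mathcal{X}_0$) rather than through an explicit unconstrained minimization, and to verify that the normal-cone component of $s_{t+1}$ enters with a favorable sign, i.e.\ $\langle \nu, x - \xcandidate\rangle \leq 0$ for $\nu \in \mathcal{N}_{\mathcal{X}_0}(\xcandidate)$ and feasible $x$, so that dropping it only weakens the lower bound. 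Checking that the optimized convex-combination estimate survives this restriction to $\mathcal{X}_0$, and tracking the resulting universal constants, is where the detailed calculations of the extension concentrate.
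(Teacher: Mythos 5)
Your plan reproduces the paper's own route (Appendix A, following Diaz--Grimmer): a descent-step decrease proportional to the proximal gap, a null-step count controlled by a monotone master value whose per-step increase is quadratic in the residual gap (obtained by minimizing over a convex combination of the aggregate cut and the fresh subgradient cut), and a lower bound on the gap in terms of suboptimality and distance to $\mathcal{C}$. Your identification of the constrained subtlety is also exactly the paper's: the two-piece proximal step must be solved with a normal-cone correction $\hat h_{t+1}\in\mathcal{N}_{\mathcal{X}_0}(\cdot)$, whose contribution $\langle \nu, x-\xcandidate\rangle\le 0$ for feasible $x$ only strengthens the lower bound, so the unconstrained closed-form analysis survives restriction to $\mathcal{X}_0$.

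The one step that, as written, would fail is your lower bound on the predicted decrease. You derive only $\delta_t \ge \bigl(f(\omega_t)-f^\star\bigr)-\tfrac{\alpha}{2}\dist^2(\omega_t,\mathcal{C})$, i.e.\ the evaluation of the master objective at the full projection onto $\mathcal{C}$. This is vacuous precisely in the regime that dominates the iteration count, namely $f(\omega_t)-f^\star\le \tfrac{\alpha}{2}D^2$. To get the stated $D^4/\epsilon^3$ rate you need the complementary quadratic branch
\begin{equation*}
\Delta_t \;\ge\; \frac{1}{2\alpha}\left(\frac{f(\omega_t)-f^\star}{\dist(\omega_t,\mathcal{C})}\right)^{2}
\qquad\text{when } f(\omega_t)-f^\star\le \alpha\,\dist^2(\omega_t,\mathcal{C}),
\end{equation*}
obtained by evaluating the master objective at $\omega_t+\lambda(x^\star-\omega_t)$ and optimizing $\lambda\in[0,1]$ rather than fixing $\lambda=1$ (this is \Cref{lem:prox-gap-bound}, i.e.\ Ruszczy\'nski's Lemma 7.12, which carries over verbatim to convex $\mathcal{X}_0$ since the segment stays feasible). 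With that branch inserted, the cycle counting you describe gives both stated rates; without it, the per-cycle accounting near $\epsilon$-optimality collapses and neither the $\epsilon^{-3}$ exponent nor the $D^4$ dependence can be recovered.
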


When solving SDPs \cref{eq:SDP-primal}-\cref{eq:SDP-dual}, a specialized version called \textit{spectral bundle method} constructs a special lower approximation model satisfying \cref{eq:bundle-method-property-1}-\cref{eq:bundle-method-property-3}.
This idea was first proposed in \cite{helmberg2000spectral} for the dual SDP \eqref{eq:SDP-dual} with a constant trace property. In this paper, we will show spectral bundle methods can be developed for both general primal and dual SDPs \eqref{eq:SDP-primal} and \eqref{eq:SDP-dual}.  
We will present the details in \Cref{sec:SBM-Primal} and \Cref{section:SBM-dual}. 

    \section{Penalized nonsmooth formulations for SDPs}\label{sec:Penalized-nonsmooth-formulations-for-SDPs}
We here present an exact nonsmooth penalization of primal and dual SDPs \cref{eq:SDP-primal}-\cref{eq:SDP-dual} in the form of \cref{eq:non-smooth-problem}, which allows us to apply the bundle method in \Cref{sec:SBM-Primal} and \Cref{section:SBM-dual}. 

\subsection{Exact penalization of primal and dual SDPs} \label{subsec:exact-penalty-SDP}
The semidefinite constraints in \cref{eq:SDP-primal} and \cref{eq:SDP-dual} are nonsmooth and typically non-trivial to deal with for numerical algorithms. A useful method proposed in \cite{helmberg2000spectral} is to move nonsmooth semidefinite constraints into the cost function. In particular, for the primal SDP \cref{eq:SDP-primal}, we consider a penalized nonsmooth formulation
\begin{equation} 
    \begin{aligned}
        \min_{X} \quad & \langle C, X\rangle + \rho \max \{\lambda_{\max} (-X) ,0 \}\\
        \mathrm{subject~to} \quad & \langle A_i, X\rangle = b_i, \quad i = 1, \ldots, m,   
        \label{eq:SDP-primal-penalized}
    \end{aligned}
    \end{equation}
    and for the dual SDP \cref{eq:SDP-dual}, we consider the following penalized nonsmooth formulation
    \begin{equation}
     \begin{aligned}
        \min_{y} \;\; -b^\tr y + \rho  \max \left\{\lambda_{\max} \left(\sum_{i=1}^m A_i y_i  - C\right) ,0 \right\}. \label{eq:SDP-dual-penalized}
    \end{aligned}
\end{equation}

From \Cref{theorem:exact-penalization}, we expect that if the penalty parameter $\rho$ is large enough, \cref{eq:SDP-primal-penalized} and \cref{eq:SDP-dual-penalized} are equivalent to the primal and dual SDPs \cref{eq:SDP-primal} and \cref{eq:SDP-dual}, respectively. We have the following results (recall that $\mathcal{P}^\star$ and $\mathcal{D}^\star$ are the sets of primal and dual optimal solutions, respectively; see \cref{eq:optimal-solution-sets}).

\begin{proposition} \label{proposition-primal-exact-penalty}
Let
$\rho > \DZstar := \max_{(\ystar, \Zstar) \in \Dstar} \; {\Trace(\Zstar)}.$
A point $\hat{X}$ is an optimal solution of the primal SDP \cref{eq:SDP-primal} if and only if it is an optimal solution of \eqref{eq:SDP-primal-penalized}.
\end{proposition}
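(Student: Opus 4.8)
The plan is to recast the primal SDP \eqref{eq:SDP-primal} in the abstract constrained form \eqref{eq:constrained_non-smooth-problem}, apply the exact penalization result \Cref{theorem:exact-penalization}, and then identify the abstract penalty threshold $\rho_0$ explicitly with $\DZstar$. First I would rewrite the semidefinite constraint $X \in \mathbb{S}^n_+$ as the single scalar inequality $g(X) := \lambda_{\max}(-X) \leq 0$, which is convex because $\lambda_{\max}(\cdot)$ is convex and $X \mapsto -X$ is affine. This places \eqref{eq:SDP-primal} exactly in the form \eqref{eq:constrained_non-smooth-problem} with objective $f(X) = \langle C, X\rangle$, the single constraint function $g$, and $\mathcal{X}_0 = \{X \in \mathbb{S}^n : \Amap(X) = b\}$ a closed (affine) convex set; the associated penalized problem \eqref{eq:constrained-penalized} is precisely \eqref{eq:SDP-primal-penalized} since $P(X) = \max\{0, \lambda_{\max}(-X)\}$ here.

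Next I would verify the hypothesis of \Cref{theorem:exact-penalization}. By \Cref{assumption-slater-condition} there is a strictly feasible $\hat{X} \succ 0$ with $\Amap(\hat{X}) = b$, and then $g(\hat{X}) = -\lambda_{\min}(\hat{X}) < 0$, so Slater's condition holds for the reformulation. Hence \Cref{theorem:exact-penalization} yields a threshold $\rho_0 = \sup_{\lambda \in \Lambda} |\lambda|$, where $\Lambda \subset \mathbb{R}$ is the set of optimal Lagrange multipliers for the constraint $g(X) \leq 0$, such that \eqref{eq:SDP-primal} and \eqref{eq:SDP-primal-penalized} share the same optimal value and solution set for every $\rho > \rho_0$.

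The crux is then to prove $\Lambda = \{\Trace(\Zstar) : (\ystar,\Zstar) \in \Dstar\}$, so that $\rho_0 = \DZstar$. For this I would invoke the subdifferential formula $\partial\lambda_{\max}(M) = \{W \succeq 0 : \Trace(W) = 1,\ \langle W, M\rangle = \lambda_{\max}(M)\}$. Writing stationarity of the Lagrangian at an optimal $\Xstar$ with multipliers $\lambda^\star \geq 0$ (for $g$) and $\ystar$ (for the affine constraints), and tracking $\partial_X \lambda_{\max}(-X) = -\partial\lambda_{\max}(-X)$, gives $C - \mathcal{A}^*(\ystar) = \lambda^\star W^\star$ for some $W^\star \in \partial\lambda_{\max}(-\Xstar)$. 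Setting $\Zstar := C - \mathcal{A}^*(\ystar)$, this makes $\Zstar = \lambda^\star W^\star \succeq 0$ dual feasible with $\Trace(\Zstar) = \lambda^\star \Trace(W^\star) = \lambda^\star$; complementary slackness $\lambda^\star \lambda_{\max}(-\Xstar) = 0$ together with $W^\star$ being supported on the top eigenspace of $-\Xstar$ (i.e.\ $\ker \Xstar$) forces $\Zstar \Xstar = 0$, so by \Cref{theorem: optimality condition} the pair $(\ystar,\Zstar)$ is dual optimal. Conversely, any $(\ystar,\Zstar) \in \Dstar$ furnishes a valid multiplier $\lambda^\star = \Trace(\Zstar)$ by taking $W^\star = \Zstar/\Trace(\Zstar)$ when $\Zstar \neq 0$ (and $\lambda^\star = 0$ otherwise), again using $\Zstar \Xstar = 0$ to place $W^\star$ in $\partial\lambda_{\max}(-\Xstar)$. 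Since $\Dstar$ is compact by \Cref{assumption-compactness}, the supremum is attained and $\rho_0 = \max_{(\ystar,\Zstar)\in\Dstar}\Trace(\Zstar) = \DZstar$.

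The main obstacle is this two-way correspondence: handling the subdifferential of $\lambda_{\max}$ so that the scalar multiplier $\lambda^\star$ and the trace-one matrix $W^\star$ reassemble exactly into the dual slack $\Zstar$, matching the stationarity identity against the dual feasibility equation $Z + \mathcal{A}^*(y) = C$, and treating the degenerate case $\Zstar = 0$ (inactive constraint, zero multiplier) separately. By contrast, verifying Slater's condition and invoking \Cref{theorem:exact-penalization} is routine.
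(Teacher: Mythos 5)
Your proposal is correct and follows essentially the same route as the paper's proof in \Cref{subsection:proof-penalty}: recast $X \succeq 0$ as $\lambda_{\max}(-X) \leq 0$ over the affine set $\mathcal{X}_0$, invoke \Cref{theorem:exact-penalization}, and then identify the multiplier set $\Lambda$ with $\{\Trace(\Zstar) : (\ystar,\Zstar)\in\Dstar\}$ via the KKT stationarity condition, the subdifferential formula for $\lambda_{\max}$, and complementary slackness, treating the $\Zstar = 0$ (inactive-constraint) case separately. The only cosmetic difference is that you make the Slater verification and the attainment of the supremum (via compactness of $\Dstar$) explicit, which the paper leaves implicit.
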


\begin{proposition}
\label{proposition-dual-exact-penalty}
Let
$
\rho > \DXstar := \max_{\Xstar \in \Pstar} \; {\Trace(\Xstar)}. 
$
A point $\hat{y}$ (with $\hat{Z} = C - \sum_{i=1}^m A_i \hat{y}_i$) is an optimal solution of the dual SDP \cref{eq:SDP-dual} if and only if it is an optimal solution of \eqref{eq:SDP-dual-penalized}.
\end{proposition}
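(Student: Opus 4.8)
The plan is to mirror the argument for \Cref{proposition-primal-exact-penalty}, exchanging the roles of the primal and dual variables, and then to invoke the exact penalty theorem \Cref{theorem:exact-penalization}. First I would eliminate $Z$ from the dual SDP \eqref{eq:SDP-dual} by setting $Z = C - \Ajmap(y)$, so that the constraint $Z \succeq 0$ becomes $\lambda_{\max}(\Ajmap(y) - C) \le 0$. The dual SDP is then equivalent to the single-constraint program
\begin{equation*}
\min_{y \in \mathbb{R}^m} \;\; -b^\tr y \quad \text{subject to}\quad g(y) := \lambda_{\max}\!\left(\Ajmap(y) - C\right) \le 0,
\end{equation*}
which has the form of \eqref{eq:constrained_non-smooth-problem} with $f(y) = -b^\tr y$, a single convex constraint $g$, and $\mathcal{X}_0 = \mathbb{R}^m$. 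The associated penalty function is $P(y) = \max\{0, g(y)\}$, so that the penalized problem \eqref{eq:constrained-penalized} coincides exactly with \eqref{eq:SDP-dual-penalized}.

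Next I would verify the hypotheses of \Cref{theorem:exact-penalization}. Slater's condition for the reformulated problem follows from \Cref{assumption-slater-condition}: a strictly feasible dual point $(\hat y, \hat Z)$ with $\hat Z \succ 0$ gives $g(\hat y) = \lambda_{\max}(-\hat Z) < 0$. \Cref{theorem:exact-penalization} then guarantees the desired equivalence for every $\rho > \rho_0 = \sup_{\lambda \in \Lambda}\|\lambda\|_\infty = \sup_{\lambda\in\Lambda}\lambda$, where $\Lambda \subset \mathbb{R}_{\ge 0}$ is the set of optimal Lagrange multipliers for $g(y)\le 0$. It therefore suffices to show $\rho_0 \le \DXstar$.

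The main step is to identify these multipliers with traces of primal optimal solutions. I would write the stationarity (KKT) condition at a dual optimal $\hat y$ using that the subdifferential of $\lambda_{\max}$ at a matrix $M$ is $\{W \succeq 0 : \Trace(W)=1,\ \langle W, M\rangle = \lambda_{\max}(M)\}$, together with the chain rule $\partial_y\, g(\hat y) = \{\Amap(W) : W \in \partial\lambda_{\max}(\Ajmap(\hat y)-C)\}$. Stationarity then reads $b = \lambda\,\Amap(W)$ for some such $W$ and some $\lambda \ge 0$, alongside complementary slackness $\lambda\, g(\hat y) = 0$. Setting $X := \lambda W$ produces a matrix with $\Amap(X) = b$, $X \succeq 0$, and $\Trace(X) = \lambda$; moreover $\langle X, \hat Z\rangle = -\lambda\, g(\hat y) = 0$, since $\langle W, \Ajmap(\hat y)-C\rangle = g(\hat y)$ and $\Ajmap(\hat y)-C = -\hat Z$. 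Hence $X$ is primal feasible and satisfies complementary slackness with $(\hat y, \hat Z)$, so $X \in \Pstar$ and $\lambda = \Trace(X) \le \max_{\Xstar \in \Pstar}\Trace(\Xstar) = \DXstar$. Taking the supremum over $\Lambda$ gives $\rho_0 \le \DXstar$, and the claim follows.

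The part I expect to require the most care is this third step: correctly handling the nonsmooth subdifferential of $\lambda_{\max}$ (in particular justifying the chain rule through $\Ajmap$, and that every optimal multiplier does arise from some admissible subgradient $W$), and then confirming that the constructed $X = \lambda W$ is not merely feasible but genuinely primal-optimal through the complementary-slackness characterization. Because the construction is completely symmetric to the primal case, I would present it concisely and refer to the proof of \Cref{proposition-primal-exact-penalty} for the analogous computation.
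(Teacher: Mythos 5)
Your argument is correct and follows essentially the same route as the paper: the paper treats \Cref{proposition-dual-exact-penalty} as a direct consequence of \Cref{theorem:exact-penalization} (deferring details to the cited reference), and its detailed proof of the primal counterpart in \Cref{subsection:proof-penalty} is exactly the mirror image of what you wrote — eliminate the conic variable, invoke the exact penalty theorem under Slater's condition, and identify the optimal Lagrange multipliers of the eigenvalue constraint with $\Trace(\Xstar)$ via the subdifferential of $\lambda_{\max}$ and complementary slackness. Your construction $X=\lambda W$ and the verification that $\langle X,\hat Z\rangle=0$ correctly establish the one inclusion $\Lambda\subseteq\{\Trace(\Xstar):\Xstar\in\Pstar\}$ that is needed to bound $\rho_0\leq\DXstar$.
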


Both \Cref{proposition-primal-exact-penalty,proposition-dual-exact-penalty}
 are direct consequences of \Cref{theorem:exact-penalization}. In particular, a proof of \Cref{proposition-dual-exact-penalty} appeared in \cite{ding2020revisit}. 
For completeness, we provide a proof of \Cref{proposition-primal-exact-penalty}~in~\Cref{subsection:proof-penalty}. In some applications, we may have prior information on the bounds of $\DZstar$ and $\DXstar$ (for example, one may have explicit trace constraints; see \Cref{subsection:SDP-with-trace-constraints}). In these cases, we can choose the penalty parameter $\rho$ \textit{a priori}.  

\begin{remark}[Exact penalization for primal SDPs] 
The exact penalization for dual SDPs \cref{eq:SDP-dual-penalized} is in the form of unconstrained eigenvalue minimization; see~\cite{overton1992large} for excellent discussions on eigenvalue optimization. To our best knowledge, all the existing results on the application of bundle methods for solving SDPs focus on the dual formulation \cref{eq:SDP-dual-penalized}. One of the early results in \cite{helmberg2000spectral} assumes a constant trace constraint $\Trace(X) = k>0$. This has been generalized to standard SDPs (i.e. \cref{proposition-dual-exact-penalty}) in \cite{ding2020revisit,ding2021optimal}. However, the exact penalization for primal SDPs \cref{eq:SDP-primal-penalized} has been less studied. We cannot find a formal statement of \Cref{proposition-primal-exact-penalty} in the literature. For completeness, we provide a proof of \Cref{proposition-primal-exact-penalty} in \Cref{subsection:proof-penalty}. 
\end{remark}

\subsection{SDPs with trace constraints} \label{subsection:SDP-with-trace-constraints}
Here, we show that the exact penalty formulations can be viewed as standard SDPs with an explicit trace constraint $\Trace(X) \leq \rho$ or $\Trace(Z) \leq \rho$. In particular, let us consider 
\begin{equation}
 \begin{aligned}
        \max_{y, Z} \quad & b^\tr y  \\
        \mathrm{subject~to} \quad & Z + \sum_{i=1}^m A_i y_i = C, \label{eq:SDP-dual-trace} \\
        & \Trace(Z) \leq \rho, Z \in \mathbb{S}^n_+, 
    \end{aligned}
\end{equation}
and 
\begin{equation}
    \begin{aligned}
        -\min_{X} \quad & \langle C, X\rangle \\
        \mathrm{subject~to} \quad & \langle A_i, X\rangle = b_i, \quad i = 1, \ldots, m,   \label{eq:SDP-primal-trace}\\
        & \Trace(X) \leq \rho, X \in \mathbb{S}^n_+. 
\end{aligned}
\end{equation}

\begin{proposition}  \label{proposition:trace-constraint}
The following statements hold:
\begin{itemize}
    \item For any $\rho >0$ such that \cref{eq:SDP-dual-trace} is strictly feasible, the exact penalization~\cref{eq:SDP-primal-penalized} and the modified SDP \cref{eq:SDP-dual-trace} have the same optimal cost value. 
    \item For any $\rho >0$ such that \cref{eq:SDP-primal-trace} is strictly feasible, the exact penalization~\cref{eq:SDP-dual-penalized} and the modified SDP \cref{eq:SDP-primal-trace} have the same optimal cost value. 
\end{itemize}
\end{proposition}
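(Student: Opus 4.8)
The plan is to show that each exact penalization is, up to sign, the Lagrangian dual of the corresponding trace-constrained SDP, and then to invoke strong duality—available precisely because the hypothesis assumes the trace-constrained problem is strictly feasible (Slater's condition).

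For the first statement, I would form the Lagrangian dual of the trace-constrained dual \cref{eq:SDP-dual-trace}, attaching a symmetric multiplier $X$ to the equality constraint $Z + \Ajmap(y) = C$ and a scalar multiplier $\mu \ge 0$ to the trace inequality $\Trace(Z) \le \rho$, while keeping $Z \succeq 0$ conic. Eliminating the inner variables—the free variable $y$ forcing the feasibility condition $\Amap(X) = b$, and the conic variable $Z \succeq 0$ forcing $X + \mu I \succeq 0$ (otherwise the inner optimum is $-\infty$)—yields the minimization $\min\{\langle C, X\rangle + \rho\mu : \Amap(X) = b,\ \mu \ge 0,\ X + \mu I \succeq 0\}$. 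The key step is the observation that, for fixed $X$, the constraints $\mu \ge 0$ and $X + \mu I \succeq 0$ are jointly equivalent to $\mu \ge \max\{\lambdamax(-X), 0\}$; since $\rho > 0$ the objective increases in $\mu$, so the optimal choice is $\mu = \max\{\lambdamax(-X), 0\}$, and substituting it recovers exactly the penalized objective $\langle C, X\rangle + \rho\max\{\lambdamax(-X), 0\}$ of \cref{eq:SDP-primal-penalized}.

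The second statement is entirely symmetric. I would dualize the minimization defining \cref{eq:SDP-primal-trace}, attaching $y$ to $\Amap(X) = b$ and $\mu \ge 0$ to $\Trace(X) \le \rho$, and eliminating the conic variable $X \succeq 0$ to obtain the linear matrix inequality $\Ajmap(y) - C \preceq \mu I$. The same elimination of $\mu$—now using that $\mu \ge 0$ together with $\Ajmap(y) - C \preceq \mu I$ is equivalent to $\mu \ge \max\{\lambdamax(\Ajmap(y) - C), 0\}$—produces the penalized objective $-b^\tr y + \rho\max\{\lambdamax(\Ajmap(y) - C), 0\}$ of \cref{eq:SDP-dual-penalized}; the leading minus sign in \cref{eq:SDP-primal-trace} accounts for passing between the inner minimization and its dual.

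The main obstacle is justifying a zero duality gap: the manipulations above merely rewrite the Lagrangian dual, so equality of the optimal values rests on strong duality for the trace-constrained SDPs. This is exactly where the hypothesis enters—strict feasibility of \cref{eq:SDP-dual-trace} (respectively \cref{eq:SDP-primal-trace}) furnishes Slater's condition and closes the gap. Beyond this, the only care needed is sign bookkeeping when converting between maximization and minimization, and confirming that the monotonicity argument eliminating $\mu$ is valid, which holds because $\rho > 0$.
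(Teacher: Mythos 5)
Your proposal is correct and follows essentially the same route as the paper: both form the Lagrangian dual of the trace-constrained SDP, eliminate the conic slack variable to obtain a semidefinite condition on the scalar multiplier, use monotonicity in $\rho>0$ to set that multiplier equal to $\max\{\lambdamax(\cdot),0\}$, and invoke strong duality via strict feasibility. The only difference is notational (your $X,\mu$ versus the paper's $Q,t$).
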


\begin{proof}
    The equivalence comes from the strong duality. We present simple arguments below. It is straightforward to verify that the Lagrange dual problem for \cref{eq:SDP-dual-trace} is 
    \begin{equation} \label{eq:SDP-dual-trace-s1}
     \begin{aligned}
            \min_{X,t,Q} \quad & \langle C,Q \rangle + \rho t  \\
            \mathrm{subject~to} \quad & \langle A_i,Q \rangle =  b_i,\quad i=1,\ldots,m,\\
            & Q + t I  = X,\; t \geq 0,\; X \in \mathbb{S}^n_+. 
        \end{aligned}
    \end{equation}
        Eliminating the variable $X$ leads to $Q + t I\in \mathbb{S}^n_+$, which is equivalent to $t \geq \lambda_{\max}(-Q)$. Since $ t \geq 0$, upon partially minimizing over $t$, the problem \cref{eq:SDP-dual-trace-s1} is equivalent to 
        \begin{equation*}
     \begin{aligned}
            \min_{Q} \quad & \langle C,Q \rangle + \rho \max\{\lambda_{\max}(-Q),0\}  \\
            \mathrm{subject~to} \quad & \langle A_i,Q \rangle =  b_i,\quad i=1,\ldots,m, 
        \end{aligned}
    \end{equation*}
    which is clearly equivalent to~\cref{eq:SDP-primal-penalized}. Since \cref{eq:SDP-dual-trace} is strictly feasible, strong duality holds for \cref{eq:SDP-dual-trace} and \cref{eq:SDP-dual-trace-s1}, which confirms that~\cref{eq:SDP-primal-penalized} and \cref{eq:SDP-dual-trace} have the same optimal cost value.  

    Similarly, the Lagrange dual problem for \cref{eq:SDP-primal-trace} is 
        \begin{equation}\label{eq:SDP-primal-trace-s1}
        \begin{aligned}
            -\max_{y,t} \quad & b^\tr y - t \rho \\
        \mathrm{subject~to} \quad & Z + \sum_{i=1}^m A_i y_i - t I = C ,\\
        & t \geq 0, Z \in \mathbb{S}^n_+.
        \end{aligned}
        \end{equation}
    Eliminating the variable $Z$ leads to~$C-\sum_{i=1}^m A_i y_i+ t I \in \mathbb{S}^n_+$, which is equivalent to $t \geq \lambda_{\max}( \sum_{i=1}^m A_i y_i -C )$. Combining this bound with the constraint $t \geq 0$ leads to $t \geq \max \{\lambda_{\max}(\sum_{i=1}^m A_i y_i -C ),0 \}$. Thus, the problem \cref{eq:SDP-primal-trace-s1} is equivalent to 
    \begin{align*}
        -\max_{y}  \;\; & b^\tr y - \rho  \max \Biggl\{{\lambda_{\max} \left(\sum_{i=1}^m A_i y_i- C\right ),0}\Biggr\},
    \end{align*}
    which is equivalent to \cref{eq:SDP-dual-penalized}. Since the strongly duality holds for \cref{eq:SDP-primal-trace} and \cref{eq:SDP-primal-trace-s1}, we know that \cref{eq:SDP-dual-penalized} and \cref{eq:SDP-primal-trace} have the same optimal cost value.
    \end{proof}
    
  We note that \Cref{proposition:trace-constraint} implies that when $\rho$ is large enough, i.e., satisfying the bounds in \Cref{proposition-primal-exact-penalty,proposition-dual-exact-penalty}, \Cref{eq:SDP-primal-trace} is equivalent to the primal SDP \Cref{eq:SDP-primal}, and \Cref{eq:SDP-dual-trace} is equivalent to the dual SDP  \Cref{eq:SDP-dual}. This result becomes obvious since the extra constraint trace constraint $\Trace(X) \leq \rho$ or $\Trace(Z) \leq \rho$ does not affect the optimal solutions.   

If the constraints $\langle A_i, X \rangle = b_i, i = 1, \ldots, m$ imply that $\Trace(X) = k$ for some $k>0$, i.e., the primal SDP \Cref{eq:SDP-primal} has an implicit constant trace constraint, the exact dual penalization \cref{eq:SDP-dual-penalized}
can be simplified as
\begin{equation}
     \begin{aligned}
        \min_{y} \;\; -b^\tr y + k \lambda_{\max} \left(\sum_{i=1}^m A_i y_i  - C\right). \label{eq:SDP-dual-penalized-trace}
    \end{aligned}
\end{equation}
This was first used to derive the original spectral bundle method in \cite{helmberg2000spectral}. Similarly, if $ Z + \sum_{i=1}^m A_i y_i = C$ imply that $\Trace(Z) = k$, i.e., $\Trace(A_i) = 0$, the exact primal penalization \cref{eq:SDP-primal-penalized} can be simplified as 
\begin{equation} 
    \begin{aligned}
        \min_{X} \quad & \langle C, X\rangle + k \lambda_{\max} (-X)\\
        \mathrm{subject~to} \quad & \langle A_i, X\rangle = b_i, \quad i = 1, \ldots, m. 
        \label{eq:SDP-primal-penalized-trace}
    \end{aligned}
    \end{equation}
For self-completeness, we provide short derivations for \Cref{eq:SDP-dual-penalized-trace} and \Cref{eq:SDP-primal-penalized-trace} in \Cref{sec:derivation-constant-trace-SDP}.

\begin{remark}[Applications with constant trace]
    We note that primal SDPs with a constant trace constraint are very common for semidefinite relaxations of binary combinatorial optimization problems, such as MaxCut \cite{goemans1995improved} and Lov$\acute{a}$sz theta number \cite{lovasz1979shannon}. Also, dual SDPs with a constant trace constraint appears in certain matrix completion problem \cite{candes2012exact} (see \Cref{subsec:constant-trace}) and the moment/sum-of-squares relaxation of polynomial optimizations \cite{mai2023hierarchy} (see \Cref{subsec-quartic-poly,subsec:Exaxt-penalty-SOS}). For these problems, the penalty parameter $\rho$ is thus known \textit{a priori}. 
\end{remark}
    \section{Spectral bundle methods for primal SDPs} \label{sec:SBM-Primal}

We can apply the standard bundle method in \Cref{subsection:bundle-methods} to solve the penalized primal formulation \cref{eq:SDP-primal-penalized}  or dual formulation \cref{eq:SDP-dual-penalized}. This idea was first proposed in \cite{helmberg2000spectral}, and further revised and developed in \cite{helmberg2002spectral,helmberg2014spectral,ding2020revisit,apkarian2008trust}. To our best knowledge, however, all previous studies \cite{ding2020revisit,helmberg2000spectral,apkarian2008trust,helmberg2002spectral,helmberg2014spectral} only consider the penalized dual formulation \cref{eq:SDP-dual-penalized}. The dual formulation is in the form of unconstrained eigenvalue optimization~\cite{overton1992large}, for which it seems more convenient to apply the bundle method. 

In this section, we apply the bundle method to solve the penalized primal formulation \cref{eq:SDP-primal-penalized}, which leads to a new family of spectral bundle algorithms. Differences and connections between our new algorithms and the existing spectral bundle algorithms will be clarified in \Cref{section:SBM-dual}.

\subsection{A new family of spectral bundle algorithms for primal SDPs}

For notational convenience, we denote the cost function in \cref{eq:SDP-primal-penalized} as 
\begin{equation} \label{eq:cost-function-primal-sdp}
    \begin{aligned}
        F(X) := \langle C,X \rangle  + \rho \max \{\lambda_{\max} (-X) ,0 \}.
    \end{aligned}
\end{equation}
Directly applying the bundle method in \cref{subsection:bundle-methods} to solve \cref{eq:SDP-primal-penalized} requires computing a subgradient of $F$ at every iteration $t$. It is known that for every $X_t \in \mathbb{S}^n$, a subgradient $g_t\in \partial F(X_t)$ is given by \cite[Example 2.89]{ruszczynski2011nonlinear}
\begin{equation*}
    \begin{aligned}
        g_t = \begin{cases}
            C - \rho v_t v_t^\tr ,& \text{ if } \lambda_{\max}(-X_t)>0, \\
            C,& \text{ otherwise},
        \end{cases}
    \end{aligned}
\end{equation*}
where $v_t \in \mathbb{R}^n$ is a normalized eigenvector associated with $\lambda_{\max}(-X_t)$. 

\subsubsection{Lower approximation models.} As discussed in \Cref{subsection:lower-approximation-bound}, one key step in the bundle method is to construct a valid lower approximation model of $F$ at each iteration $t$. Similar to \eqref{eq:piecewise-linear-LB}, one natural choice is to use a piece-wise affine function, 
\begin{equation*}
    \begin{aligned}
        \hat{F}_{t}(X) &  = \max_{i=1,\ldots,t}  \;\; \langle C,X_i \rangle  +\rho  \max \{\lambda_{\max} (-X_i) ,0 \}  +  \langle g_i , X-X_i \rangle,
    \end{aligned}
\end{equation*}
where $X_i, i = 1, \ldots, t$ are the past iterates, and $g_i \in \partial F(X_i), i = 1, \ldots, t$ are subgradients. 
Via simple derivations, each affine function corresponding to $X_i$ becomes  
\begin{equation} \label{eq:eigenvalue-lowerbound}
            \langle C,X_i \rangle  +\rho  \max \{\lambda_{\max} (-X_i) ,0 \}  +  \langle g_i , X-X_i \rangle \\
           \! = \!\begin{cases}
                \langle C,X\rangle + \rho \left \langle v_i v_i^\tr, -X \right \rangle , &  \text{if } \lambda_{\max} (-X_i) > 0, \\
                \langle C,X \rangle, & \text{otherwise},
            \end{cases} \\
\end{equation}
where $v_i \in \mathbb{R}^n$ is a normalized eigenvector corresponding to $\lambda_{\max}(-X_i)$.

For this special cost function, 
one key idea of the original spectral bundle method in \cite{helmberg2000spectral} is to improve the lower bound \cref{eq:eigenvalue-lowerbound} using infinitely many affine minorants. In particular, at iteration $t$, we compute a matrix $P_t \in \mathbb{R}^{n \times r}$ with some small value $1 \leq r<n$ and orthonormal columns (i.e., $P_t^\tr P_t = I \in \mathbb{S}^r$), and define a lower approximation function 
\begin{equation}\label{eq:F_P_t}
    \begin{aligned}
        \hat{F}_{P_t}(X) &= \langle C,X \rangle +  \rho \max_{S \in \mathbb{S}^r_+ , \Trace(S) \leq 1}  \left  \langle P_t S P_t^\tr,   -X \right \rangle.
    \end{aligned}
\end{equation}
It is clear that $F(X) \geq \hat{F}_{P_t}(X), \forall X \in \mathbb{S}^n$ thanks to the fact that 
\begin{equation*}
    \begin{aligned}
        \max \{\lambda_{\max}(-X) ,0 \} = \max_{S \in \mathbb{S}^{n}_+ , \Trace(S) \leq 1} \langle S,-X\rangle, \qquad \forall X \in \mathbb{S}^n,
    \end{aligned}
\end{equation*}
and 
\begin{equation*}
    \begin{aligned}
    \{P_t S P_t^\tr \in \mathbb{S}^n_+ \mid S \in \mathbb{S}^r_+, \Trace(S) \leq 1\} \subset \{S \in \mathbb{S}^n_+\mid \Trace(S) \leq 1\}.
    \end{aligned}
\end{equation*}
Meanwhile, it is not difficult to check that if $r = 1$, and $P_t = v_t$ with $v_t$ being the top eigenvector of $-X_t$, then $\hat{F}_{P_t}(X)$ defined in \cref{eq:F_P_t} is reduced to the approximation function in \cref{eq:eigenvalue-lowerbound} with $i = t$. Thus, when choosing $r > 1$ and selecting $P_t$ spanning $v_t$, we have a strictly better lower approximation using \cref{eq:F_P_t} than the simple linear function \cref{eq:eigenvalue-lowerbound} based on one subgradient. In principle, the columns of $P_t \in \mathbb{R}^{n \times r}$ should consist of both top eigenvectors of the current iterate $X_t$ and the accumulation of spectral information from past iterates $X_1, \ldots, X_{t-1}$ \cite{helmberg2000spectral,ding2020revisit}.

Therefore, by construction, $\hat{F}_{P_t}$ in \cref{eq:F_P_t}  is naturally a minorant satisfying \cref{eq:bundle-method-property-1}, and it also satisfies subgradient lowerbound \cref{eq:bundle-method-property-2}. However, a further refinement is needed for \cref{eq:F_P_t} to fulfill the model subgradient lower bound \cref{eq:bundle-method-property-3}. The spectral bundle method  \cite{helmberg2000spectral,ding2020revisit} maintains a carefully selected weight to capture past information. In particular, we introduce a matrix $\bar{W}_t \in \mathbb{S}^n_+$ and $\Trace (\bar{W}_t) = 1$, and then build the lower approximated model using $\bar{W}_t$ and $P_t \in \mathbb{R}^{n \times r}$,
\begin{equation}\label{eq:F_W_t_P_t}
    \begin{aligned}
        \hat{F}_{(\bar{W}_t,P_t)}(X) = \langle C,X \rangle +  \rho \max_{S \in \mathbb{S}^r_+ , \gamma \geq 0, \gamma + \Trace(S) \leq 1 }    \left \langle \gamma \bar{W}_t + P_tSP_t^\tr, -X \right\rangle.
    \end{aligned}
\end{equation}
It is clear that this lower approximation model \cref{eq:F_W_t_P_t} is an improved approximation than \cref{eq:F_P_t} (e.g., letting $\gamma = 0$ in \cref{eq:F_W_t_P_t} recovers the approximation model \cref{eq:F_P_t}). Thus, it satisfies the inequalities \cref{eq:bundle-method-property-1} and \cref{eq:bundle-method-property-2}. Upon careful construction of $\bar{W}_t$ at each iteration, we will show that \cref{eq:F_W_t_P_t} will also satisfy \cref{eq:bundle-method-property-3}. The construction details will be presented below. 

\subsubsection{Spectral bundle algorithms.}  
We are ready to introduce a new family of spectral bundle algorithms for primal SDPs, which we call \SBMP \footnote{This name is consistent with~\cite{ding2020revisit} which focuses on solving the dual penalization formulation \cref{eq:SDP-dual-penalized}.}. As we will detail below, the family of spectral bundle algorithms considers $r = \rpast + \rcurrent$ (where $\rpast\geq 0, \rcurrent \geq 1$) normalized eigenvectors to form the orthonormal matrix $P_t  \in  \mathbb{R}^{n \times r}$ that is used in \cref{eq:F_W_t_P_t}. The algorithms have the following steps. 

 \textit{Initialization:} \SBMP~starts with an initial guess $\initposi \in \mathbb{S}^n$, and $P_0 \in \mathbb{R}^{n \times {r}}$ formed by the $r = \rpast + \rcurrent$ top normalized eigenvectors of $-\initposi$, any weight matrix $\bar{W}_0 \in \mathbb{S}^n_+$ with $\Trace(\bar{W}_0) = 1$. Matrices $P_0$ and $\bar{W}_0$ are used to construct an initial lower approximation model $\hat{F}_{(\bar{W}_0,P_0)}$ in~\cref{eq:F_W_t_P_t}. 

 \textit{Solve the master problem:}  Similar to \cref{eq:bundle-method-t}, at iteration $t \geq 0$,  \SBMP~solves the following regularized master problem 
\begin{equation} \label{eq:SBM-subproblem}
    \begin{aligned}
     (\candidate, S_t^\star,\gamma^\star_t) =   \argmin_{X \in \mathcal{X}_0 } \;\; \hat{F}_{(\bar{W}_t,P_t)}(X)  + \frac{\alpha}{2 } \|X - \currposi\|^2,
    \end{aligned}
\end{equation} 
where $\currposi \in \mathbb{S}^n$ is the current reference point (proximal center), $\alpha > 0$ serves as a penalty for the deviation from $\currposi$, and 
$
\mathcal{X}_0 := \{ X \in \mathbb{S}^n \;|\; \mathcal{A}(X) = b \}.
$
Solving \cref{eq:SBM-subproblem} is the main computation in each iteration of \SBMP, and we provide its computational details in \Cref{subsection:regularized-master-problem}. 

 \textit{Update reference point:} Similar to \cref{eq:sufficient-descent}, \SBMP~updates the next reference point $\Omega_{t+1}$ as follows: given $\beta \in (0,1)$, if  
\begin{equation} \label{eq:null-or-serious-step-SBM}
    \begin{aligned}
       \beta \left(F(\currposi) - \hat{F}_{(\bar{W}_t,P_t)}(\candidate)\right) \leq F(\currposi) -  F(\candidate)
    \end{aligned}
\end{equation} 
holds true (i.e. at the candidate point $\candidate$, the decrease of the objective value of $F$ is at least $\beta$ fraction of the decrease in objective value that the model $\hat{F}_{(\bar{W}_t,P_t)}$ predicts), we set $\nextposi = \candidate$, which is called a \textit{descent step}. Otherwise, we let $\nextposi  = \currposi$, which is called a \textit{null step}. 

 \textit{Update the lower approximation model:} \SBMP~updates the spectral matrices $\bar{W}_{t+1}, P_{t+1}$ for the lower approximation model $\hat{F}_{(\bar{W}_{t+1},P_{t+1})}$ using a strategy similar to that in \cite{ding2020revisit,helmberg2000spectral}. We first compute the eigenvalue decomposition of the small $r \times r$ matrix $ S^\star_t $ as
 $$
 S^\star_t =  \begin{bmatrix} Q_1 & Q_2 \end{bmatrix} \begin{bmatrix} \Sigma_1 & 0 \\ 0 & \Sigma_2 \end{bmatrix}\begin{bmatrix} Q_1^\tr \\ Q_2^\tr \end{bmatrix},
 $$
 where $Q_1 \in \mathbb{R}^{r \times r_{\mathrm{p}}}$ consists of the top $r_{\mathrm{p}} \geq 0$ orthonormal eigenvectors, $\Sigma_1 \in \mathbb{S}^{r_{\mathrm{p}}}$ is a diagonal matrix formed by the top $r_{\mathrm{p}}$ eigenvalues of $S^\star_t$, and $Q_2 \in \mathbb{R}^{r \times  (r- r_{\mathrm{p}})}$ and $\Sigma_2 \in \mathbb{S}^{(r-r_{\mathrm{p}})}$ captures the remaining orthonormal eigenvectors and eigenvalues, respectively. 
 
\begin{itemize}
    \item The orthonormal matrix $P_{t+1}$: we compute $V_t \in \mathbb{R}^{n \times r_\mathrm{c}}$ with its columns being the top $r_{\mathrm{c} }\geq 1$ eigenvectors of $-\candidate$, which naturally contains the subgradient information of the true objective function $F$ at $\candidate$. Let the range space of  $P_{t+1}$ span $V_t$, which guarantees to improve the lower approximation model. We also let $P_{t+1}$ contain the past \textit{important} information $P_tQ_1$ according to the top $r_{\mathrm{p}}$ eigenvalues of $S^\star_t$. Therefore, we update $P_{t+1}$ as
    \begin{equation} \label{eq:update-P-t}
    P_{t+1}  = \mathrm{orth} \left(\begin{bmatrix} 
             V_t & P_tQ_1
        \end{bmatrix}\right), 
    \end{equation}
    where $\mathrm{orth} (\cdot)$ denotes an orthonormal process such that $P_{t+1}^\tr P_{t+1} = I_r$ with $r = r_{\mathrm{p}} + r_{\mathrm{c}}$.  
    \item The weight matrix $\bar{W}_{t+1}$: we keep the rest of the past information by updating $\bar{W}_{t+1}$ as
    \begin{equation} \label{eq:update-W-t}
        \bar{W}_{t+1} =  \frac{1}{\gamma^\star_t  + \Trace (\Sigma_2) } \left( \gamma^\star_t \bar{W}_t + P_t Q_2 \Sigma_2 Q_2^\tr P_t^\tr \right).
    \end{equation}
    Note that $\bar{W}_{t+1}$ has been normalized such that $\Trace (\bar{W}_{t+1}) = 1$. 
\end{itemize}
 
\noindent If $r_{\mathrm{p}} = 0$, the updates in \cref{eq:update-P-t} and \cref{eq:update-W-t} become 
$$ P_{t+1} = V_t , \qquad \text{and} \qquad \bar{W}_{t+1} =  \frac{1}{\Trace (\Wtstar) } \Wtstar,$$ where we denote the optimal solution of $\gamma \bar{W}_t + P_tSP_t^\tr$ in \cref{eq:SBM-subproblem} as 
\begin{equation} \label{eq:optimal-W-t}
    \Wtstar =  \gamma^\star_t \bar{W}_t + P_t S^\star_t P_t^\tr. 
\end{equation}

Overall, \SBMP~generates a sequence of points $\{\currposi, \Wtstar,  \ytstar\}$, where $\ytstar$ is the dual variable corresponding to the affine constraint in \cref{eq:SBM-subproblem}, and a sequence of monotonically decreasing cost values $\{F(\currposi)\}$. The detailed steps  of \SBMP~are listed in \Cref{alg:(r_p-r_c)-SBM-P}.
 
\begin{algorithm}[t] 
    \caption{\SBMP-- \textbf{S}pectral \textbf{B}undle \textbf{M}ethod for \textbf{P}rimal SDPs} \label{alg:(r_p-r_c)-SBM-P}
    \begin{algorithmic}[1]
        \Require{Problem data $A_1, \ldots,A_m, C \in \mathbb{S}^n$, $b\in \mathbb{R}^n$. Parameters $r_\mathrm{p} \geq 0, r_\mathrm{c} \geq 1$, $\alpha > 0$, $\beta \in (0,1)$, $\epsilon \geq 0 $, $t_{\max} \geq 1$. An initial point $ \initposi \in \mathbb{S}^n$.} \\
        \textbf{Initialization:} Let ${r} =r_\mathrm{p}+r_\mathrm{c}$. Initialize $  \bar{W}_0 \in \mathbb{S}^n_+$ with $\Trace(\bar{W}_0) = 1$. Compute $P_0  \in \mathbb{R}^{n \times {r}}$ with columns being the top ${r}$ orthonormal eigenvectors of $-\initposi$.
        \For{$t=0,\ldots,t_{\max}$}
            \State Solve \cref{eq:SBM-subproblem} to obtain $\candidate, \gamma^\star_t, S^\star_t$. 
            \Comment{\textit{master problem}}
            \State Form the iterate $\Wtstar$ in \cref{eq:optimal-W-t} and dual iterate $\ytstar$ in \cref{eq:SBP-equivalence}.
            \If {$t=0$ \textbf{and} $\Amap(\Omega_t) \neq b$ }
                \State Set primal iterate $  \nextposi = \candidate$.  \Comment{\textit{affine constraint}}
            \Else
                 \If {\cref{eq:null-or-serious-step-SBM} holds} 
                    \State Set primal iterate $  \nextposi = \candidate$. 
                    \Comment{\textit{descent step}}
                \Else 
                    \State Set primal iterate $  \nextposi = \currposi$. 
                    \Comment{\textit{null step}}
                \EndIf     
            \EndIf
            \State Compute $P_{t+1}$ as \cref{eq:update-P-t}, and $\bar{W}_{t+1}$ as \cref{eq:update-W-t}.  
            \Comment{\textit{update model}}
            \If {stopping criterion}
                \State Quit. 
                \Comment{\textit{termination}}
                \EndIf
        \EndFor
    \end{algorithmic}
\end{algorithm}

\subsection{Computational details} \label{subsection:regularized-master-problem}

At every iteration $t$, we need to solve the subproblem \cref{eq:SBM-subproblem}, which is the main computation in  \SBMP.
Therefore, it is crucial to solve the master problem  \cref{eq:SBM-subproblem} efficiently.
We summarize the computational details of solving \cref{eq:SBM-subproblem} in the following proposition.

\begin{proposition} \label{proposition-primal-penalty}
    The master problem \cref{eq:SBM-subproblem} is equivalent to the following problem
    \begin{align}
       \min_{W \in \hat{\mathcal{W}}_t,~y \in \mathbb{R}^m} \;\; \langle W-C,\currposi \rangle  - \langle b- \mathcal{A}(\currposi)  ,y \rangle+ \frac{1}{2 \alpha } \left\|W-C+\sum_{i=1}^{m}A_i y_i\right\|^2, \label{eq:SBP-equivalence}
    \end{align} 
where 
the constraint set is defined as
\begin{equation} \label{eq:constraint-W_t}
    \hat{\mathcal{W}}_t := \{  \gamma \bar{W}_t + P_t S P_t^\tr \in \mathbb{S}^n \;|\; S \in \mathbb{S}^r_+ , \gamma \geq 0, \gamma + \Trace(S) \leq \rho \}.
\end{equation}
The optimal $X$ in \cref{eq:SBM-subproblem} is recovered by
\begin{align} \label{eq:optimality-condition-Xt}
     \candidate   = \currposi + \frac{1}{\alpha} \left(\Wtstar-C+ \Ajmap(y_{t}^\star)\right),
\end{align}
where $(\Wtstar,y_{t}^\star)$ is a minimizer of \cref{eq:SBP-equivalence}.
\end{proposition}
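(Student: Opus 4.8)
The plan is to recognize the master problem \cref{eq:SBM-subproblem} as a convex--concave saddle problem and to pass to its dual by first exchanging the inner maximization with the outer minimization and then dualizing the affine constraint $\Amap(X) = b$. As a preliminary step I would absorb the penalty parameter into the maximization variable of \cref{eq:F_W_t_P_t}: setting $W = \rho(\gamma \bar{W}_t + P_t S P_t^\tr)$ and reparametrizing $(\gamma, S) \mapsto (\rho\gamma, \rho S)$, the feasible set $\{\gamma \geq 0,\ S \in \mathbb{S}^r_+,\ \gamma + \Trace(S) \leq 1\}$ maps bijectively onto the set $\hat{\mathcal{W}}_t$ in \cref{eq:constraint-W_t}, so that $\hat{F}_{(\bar{W}_t,P_t)}(X) = \langle C, X\rangle + \max_{W \in \hat{\mathcal{W}}_t}\langle W, -X\rangle$. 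Substituting this into \cref{eq:SBM-subproblem} recasts the master problem as
\[
\min_{X \in \mathcal{X}_0} \ \max_{W \in \hat{\mathcal{W}}_t} \ \left[ \langle C - W, X\rangle + \frac{\alpha}{2}\|X - \currposi\|^2 \right],
\]
which is strongly convex in $X$ and linear, hence concave, in $W$.

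Next I would justify swapping the minimum and the maximum. Since $\hat{\mathcal{W}}_t$ is convex and compact, $\mathcal{X}_0$ is a closed affine subspace, and the objective is strongly convex and coercive in $X$ for each fixed $W$, Sion's minimax theorem applies and yields the equal-value problem $\max_{W \in \hat{\mathcal{W}}_t} \min_{X \in \mathcal{X}_0} [ \langle C - W, X\rangle + \tfrac{\alpha}{2}\|X - \currposi\|^2 ]$. For each fixed $W$ the inner problem is a strictly convex quadratic program over the affine set $\mathcal{X}_0$; because the constraints are affine and $\Amap$ is surjective (\Cref{assumption-compactness}), strong duality holds and I would introduce a multiplier $y \in \mathbb{R}^m$ for $\Amap(X) = b$, writing the inner value as $\max_{y} \min_{X \in \mathbb{S}^n} [ \langle C - W, X\rangle + \tfrac{\alpha}{2}\|X - \currposi\|^2 + \langle y, b - \Amap(X)\rangle ]$.

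Finally I would eliminate $X$. Using $\langle y, \Amap(X)\rangle = \langle \Ajmap(y), X\rangle$, the now-unconstrained inner minimization over $X$ is strongly convex, and its first-order stationarity condition gives exactly the recovery formula \cref{eq:optimality-condition-Xt}, namely $X = \currposi + \tfrac{1}{\alpha}(W - C + \Ajmap(y))$. Substituting this minimizer back, and using $\langle \Ajmap(y), \currposi\rangle = \langle y, \Amap(\currposi)\rangle$ together with $\|C - W - \Ajmap(y)\|^2 = \|W - C + \Ajmap(y)\|^2$, collapses the value to
\[
\langle C - W - \Ajmap(y), \currposi\rangle + \langle y, b\rangle - \frac{1}{2\alpha}\|W - C + \Ajmap(y)\|^2 .
\]
Negating this concave objective turns the joint maximization over $(W, y)$ into the minimization \cref{eq:SBP-equivalence}, and the correspondence $\Wtstar = \gamma_t^\star \bar{W}_t + P_t S_t^\star P_t^\tr$ in \cref{eq:optimal-W-t} is precisely the change of variables from the first step.

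The main obstacle is making the two exchanges rigorous, rather than the algebra: verifying the hypotheses of the minimax theorem, where compactness of $\hat{\mathcal{W}}_t$ combined with strong convexity and coercivity in $X$ guarantees attainment of the inner minimum, and confirming strong duality for the affine-constrained quadratic subproblem, which holds because the equality constraints are affine and $\Amap$ is surjective. Once these are in place, the remaining steps reduce to routine sign and inner-product bookkeeping.
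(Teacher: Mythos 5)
Your proposal is correct and follows essentially the same route as the paper's proof: reparametrize the inner maximum over $\hat{\mathcal{W}}_t$, swap min and max using compactness of $\hat{\mathcal{W}}_t$ (the paper invokes \cite[Corollary 37.3.2]{rockafellar1970convex} where you invoke Sion, but this is the same step), dualize the affine constraint, and eliminate $X$ via the stationarity condition to obtain both the recovery formula and the joint maximization that becomes \cref{eq:SBP-equivalence} after negation. The algebra you sketch for the collapsed dual value matches the paper's dual function $g(y)$ exactly.
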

\begin{proof}
     Our proof relies on the strong duality of convex optimization. Upon applying the definition of $\hat{F}_{(\bar{W}_t,P_t)}(\cdot)$ in \eqref{eq:F_W_t_P_t}, it is clear that \cref{eq:SBM-subproblem} becomes
    \begin{align*}
       & \min_{X \in \mathcal{X}_0}   \langle C,X \rangle +  \rho \max_{S \in \mathbb{S}^r_+ , \gamma \geq 0, \gamma + \Trace(S) \leq 1 }   \;\;   \langle \gamma \bar{W}_t + P_tSP_t^\tr, -X\rangle + \frac{\alpha}{2 } \|X - \currposi\|^2 \nonumber \\
       = & \min_{X \in \mathcal{X}_0} \max_{S \in \mathbb{S}^r_+ , \gamma \geq 0, \gamma + \Trace(S) \leq \rho }  \;\;  \langle C,X \rangle +\langle \gamma \bar{W}_t + P_tSP_t^\tr, -X\rangle + \frac{\alpha}{2 } \|X - \currposi\|^2 \nonumber \\
       = & \min_{X \in \mathcal{X}_0 }  \max_{W \in  \hat{\mathcal{W}}_t}  \;\; \langle C-W,X \rangle  + \frac{\alpha}{2 } \|X - \currposi\|^2,
    \end{align*}
    where the first equality brings the constant $\rho$ into the constraint, and the second equality applies a change of variables $W = \gamma \bar{W}_t + P_t S P_t^\tr$ and uses the set $ \hat{\mathcal{W}}_t$ defined as~\cref{eq:constraint-W_t}. Since $\hat{\mathcal{W}}_t$ is bounded, by strong duality \cite[Corollary 37.3.2]{rockafellar1970convex}, we can switch the min-max order
    and have the following equivalency
    \begin{equation*}
    \begin{aligned} 
          \min_{X \in \mathcal{X}_0 }  \max_{W \in  \hat{\mathcal{W}}_t}  \;\; \langle C-W,X \rangle  + \frac{\alpha}{2 } \|X - \currposi\|^2
       =   \max_{W \in  \hat{\mathcal{W}}_t} \min_{X \in \mathcal{X}_0 }   \;\;\langle C-W,X \rangle + \frac{\alpha}{2 } \|X - \currposi\|^2.
    \end{aligned}
    \end{equation*}
    
Note that the inner minimization is an equality-constrained quadratic program,
\begin{equation} \label{eq:inner-optimization}
    \min_{X \in \mathcal{X}_0 }   \;\;\langle C-W,X \rangle + \frac{\alpha}{2 } \|X - \currposi\|^2,
\end{equation}
which can be simplified by considering its dual formulation. Specifically, we introduce a dual variable $ y \in \mathbb{R}^{m}$ and construct the Lagrangian for \cref{eq:inner-optimization} as follows
    \begin{align*}
        L(X,y) & = \langle C-W,X \rangle  + \frac{\alpha}{2} \|X-\currposi\|^2 + y^\tr(b-\Amap(X)),
    \end{align*}
    which is strongly convex in $X$. The dual function for \cref{eq:inner-optimization} is given by $g(y) := \min_X L(X,y)$, where the unique minimizer $X$ is    
    \begin{align} \label{eq:dual-form-X}
              X  = \currposi + \frac{1}{\alpha} \left(W-C+\sum_{i=1}^{m}A_i y_i\right).
    \end{align}
Therefore, the dual function becomes 
$$
g(y) =  \langle C-W,\currposi \rangle + \langle b -\mathcal{A}(\currposi)  ,y \rangle- \frac{1}{2 \alpha} \left\|W-C+\sum_{i=1}^{m}A_i y_i\right\|^2.
$$
By strong duality of \cref{eq:inner-optimization}, we have  
$$
\begin{aligned}
&\max_{W \in  \hat{\mathcal{W}}_t} \min_{X \in \mathcal{X}_0 }   \;\; \langle C-W,X \rangle + \frac{\alpha}{2 } \|X - \currposi\|^2  \\
= & \max_{W \in  \hat{\mathcal{W}}_t} \max_{y \in \mathbb{R}^m}   \;\; \langle C-W,\currposi \rangle + \langle b -\mathcal{A}(\currposi)  ,y \rangle- \frac{1}{2 \alpha} \left\|W-C+\sum_{i=1}^{m}A_i y_i\right\|^2  \\
=& \max_{W \in  \hat{\mathcal{W}}_t, y \in \mathbb{R}^m }  \;\; \langle C-W,\currposi \rangle + \langle b -\mathcal{A}(\currposi)  ,y \rangle- \frac{1}{2 \alpha} \left\|W-C+\sum_{i=1}^{m}A_i y_i\right\|^2,
\end{aligned} 
$$
which is clearly equivalent to \cref{eq:SBP-equivalence}. Finally, the optimal $X$ in \cref{eq:optimality-condition-Xt}  is recovered in \cref{eq:dual-form-X} once we obtain the optimal dual variables $\ytstar$ and $\Wtstar$ from solving \cref{eq:SBP-equivalence}. This completes the proof. 
\end{proof}

After the first iteration, if $\Amap(\Omega_0) \neq b$, we update the proximal center 
$\Omega_{1} = X^\star_{1}$.
Then, the rest of the iterations are naturally feasible to the affine constraint, i.e., $A(\currposi) = b,~ \forall t > 0$.
Therefore, the master problem  \cref{eq:SBP-equivalence} can be further simplified as 
\begin{align*}  
         \min_{W \in \hat{\mathcal{W}}_t,~y \in \mathbb{R}^m} \;\; \langle W-C,\currposi \rangle + \frac{1}{2 \alpha } \left\|W-C+\sum_{i=1}^{m}A_i y_i\right\|^2.
\end{align*} 

\begin{remark}[Complexity of solving~\cref{eq:SBP-equivalence} in \SBMP] \label{remark:master-problem-primal}
The subproblem \cref{eq:SBP-equivalence} is a semidefinite program with a convex quadratic cost function. The dimension of the PSD constraint is $r = \rpast+\rcurrent$, which can be chosen to be very small (i.e., $r \ll n$). Thus,  \cref{eq:SBP-equivalence} can be efficiently solved using either standard conic solvers (such as SeDuMi \cite{sedumi} and Mosek \cite{mosek}) or customized interior-point algorithms \cite{helmberg2000spectral}. In addition, we note that the dual variable $y \in \mathbb{R}^m$ in \cref{eq:SBP-equivalence} admits an analytical closed-loop solution in terms of $W$, and thus \cref{eq:SBP-equivalence} can be further converted into the following form 
\begin{equation}
    \begin{aligned} \label{eq:SBM-ms-pb-QP-abstract}
    \min_{v \in \mathbb{R}^{1+r^2} }  \quad&
    v^\tr Q v + q^\tr v + c  \\
    \mathrm{subject~to} \quad & v =  \begin{bmatrix} \gamma & \vectorize{S}^\tr  \end{bmatrix}^\tr,\\
    & \gamma \geq 0,S \in \mathbb{S}^r_+, \gamma + \Trace(S) \leq \rho,       
    \end{aligned}
\end{equation}
where $c \in \mathbb{R}$ is a constant, and $Q \in \mathbb{S}^{1+r^2}$ and $q \in \mathbb{R}^{1+r^2}$ depend on the problem data and the matrices $P_t$ and $\bar{W}_t$ at step $t$. 
We present the details of transforming \cref{eq:SBP-equivalence} into \cref{eq:SBM-ms-pb-QP-abstract} 
in \Cref{sec:reform-ms-pb}. Therefore, at each iteration of \SBMP, we only need to solve \eqref{eq:SBM-ms-pb-QP-abstract}, which admits very efficient solutions since $r$ can be much smaller than the original dimension $n$. 
As we shall see in \Cref{appendix:reformulation}, the computational complexity of solving the regularized sub-problem \cref{eq:SBM-subproblem} in spectral bundle methods for primal SDPs is very similar to that in spectral bundle methods for dual SDPs. Furthermore, when $r = 1$ (i.e., $\rpast = 0, \rcurrent=1$), the problem \cref{eq:SBM-ms-pb-QP-abstract} admits an analytical closed-form solution, and no other solver is required; see \Cref{appendix: r=1}. 
\end{remark}

\subsection{Convergence results}
We here present two convergence guarantees \Cref{thm: sublinearates-P-K} and \Cref{thm: linear-convergence} for~\SBMP. 
Consistent with \Cref{lemma-iterations-bound}, when strong duality holds for \cref{eq:SDP-primal} and \cref{eq:SDP-dual}, 
\Cref{thm: sublinearates-P-K} below provides a convergence rate of $\bigO(1/\epsilon^3)$ in terms of cost value gap, approximate primal feasibility, approximate dual feasibility, and approximate primal-dual optimality gap. The convergence rate improves to $\bigO(1/\epsilon)$ under the condition of strict complementarity  (see \Cref{eq:strictly-complementarity}). 
\begin{theorem}\label{thm: sublinearates-P-K}
{Suppose \cref{assumption:linearly-independence,assumption-slater-condition} are satisfied.} 
 Given any $\beta\in(0,1)$, $\rcurrent \geq 1$, $\rpast \geq 0 $, $ \alpha>0$, $r= \rcurrent+\rpast$,
	$\rho > 2\DZstar$,
 and target accuracy $\epsilon > 0 $, the \SBMP~in \Cref{alg:(r_p-r_c)-SBM-P} 
	produces iterates $\currposi$, $\Wtstar$, and $\ytstar$  with
	$F(\currposi)-F(\Xstar)\leq \epsilon$ and
	\begin{subequations} \label{eq:feasibility}
	\begin{align}
 \text{approximate primal feasibility: }& \quad  \Amap(\currposi) -b = 0, \quad \lambda_{\min}(\currposi)\geq -\epsilon,  \label{eq:feasibility-b}\\
	\text{approximate dual feasibility: }& \quad\left\|\Wtstar+\Ajmap(\ytstar) - C\right\|^2 \leq \epsilon, \quad  \Wtstar\succeq 0, \label{eq:feasibility-a}\\
	\text{approximate primal-dual optimality: }&\quad  | \langle C, \currposi \rangle - \langle  b, \ytstar \rangle | \leq \sqrt{\epsilon} \label{eq:feasibility-c}
	\end{align}
	\end{subequations}
	by some iteration $t\leq \bigO(1/\epsilon^3)$.
	If additionally, strict complementarity holds for \cref{eq:SDP-primal} and \cref{eq:SDP-dual}, then these conditions \cref{eq:feasibility} are reached by some iteration $t\leq \bigO(1/\epsilon)$.
\end{theorem}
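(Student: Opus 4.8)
The plan is to recognize \SBMP~(\Cref{alg:(r_p-r_c)-SBM-P}) as a concrete instantiation of the generic bundle method (\Cref{alg:bundle-method}) applied to the penalized primal problem \cref{eq:SDP-primal-penalized}, with objective $F$ from \cref{eq:cost-function-primal-sdp} and feasible set $\mathcal{X}_0 = \{X \in \mathbb{S}^n : \Amap(X) = b\}$, and then to invoke \Cref{lemma-iterations-bound}. The preparatory step is to verify that the spectral model $\hat{F}_{(\bar{W}_t,P_t)}$ in \cref{eq:F_W_t_P_t} satisfies the three model requirements \cref{eq:bundle-method-property-1,eq:bundle-method-property-2,eq:bundle-method-property-3}: the minorant and subgradient lower bounds are immediate from the containment argument already given in the text, while the model subgradient lower bound follows from the weight update \cref{eq:update-W-t} together with the certificate $s_{t+1} = \alpha(\currposi - \candidate)$, exactly as in the generic construction. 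With the model validated, the hypotheses of \Cref{lemma-iterations-bound} reduce to exhibiting a Lipschitz constant and a finite distance bound.

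Next I would supply these two ingredients. Since every subgradient of $F$ has the form $C$ or $C - \rho v v^\tr$ with $\|v\| = 1$, the function $F$ is $M$-Lipschitz with $M \le \|C\| + \rho$. The solution set $\Pstar$ is compact by \Cref{assumption-compactness}, and because $F(\currposi)$ is monotonically nonincreasing, the proximal centers remain in the sublevel set $\{X \in \mathcal{X}_0 : F(X) \le F(\initposi)\}$, which is bounded: any unbounded feasible direction either drives the eigenvalue penalty to $+\infty$ or would contradict the compactness of $\Pstar$. Hence $D := \sup_t \Dist(\currposi,\Pstar) < \infty$. Because $\rho > 2\DZstar + 1 > \DZstar$, \Cref{proposition-primal-exact-penalty} gives $F(\Xstar) = \pstar$ and identifies the minimizers of $F$ over $\mathcal{X}_0$ with $\Pstar$. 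Applying the first bound of \Cref{lemma-iterations-bound} then yields $F(\currposi) - \pstar \le \epsilon$ after $t \le \bigO(1/\epsilon^3)$ iterations.

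The bulk of the remaining work is to convert this cost-gap guarantee into the four certificates in \cref{eq:feasibility}. Affine feasibility $\Amap(\currposi) = b$ holds for all $t>0$ by construction of the master problem, and $\Wtstar \succeq 0$ holds automatically since $\Wtstar \in \hat{\mathcal{W}}_t$. The central observation is the optimality relation \cref{eq:optimality-condition-Xt}, which rearranges to $\Wtstar + \Ajmap(\ytstar) - C = \alpha(\candidate - \currposi)$; thus the dual-infeasibility residual equals $\alpha^2\|\candidate - \currposi\|^2$, and the strong convexity of \cref{eq:SBM-subproblem} bounds this squared step by the model gap $F(\currposi) - \hat{F}_{(\bar{W}_t,P_t)}(\candidate)$, which is precisely the termination quantity that the analysis behind \Cref{lemma-iterations-bound} drives to zero alongside the cost gap. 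The penalty structure of $F$ converts the cost gap into $\lambda_{\min}(\currposi) \ge -\epsilon$, and combining the dual residual with weak duality yields \cref{eq:feasibility-c}; the $\sqrt{\epsilon}$ there appears because the gap scales with the residual \emph{norm} $\|\Wtstar + \Ajmap(\ytstar) - C\| \le \sqrt{\epsilon}$ rather than its square.

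Finally, to obtain the improved $\bigO(1/\epsilon)$ rate under strict complementarity (\Cref{eq:strictly-complementarity}), I would establish that $F$ satisfies the quadratic growth condition \cref{eq:quadratic-growth-lemma}, namely $F(X) - \pstar \ge \mu\,\Dist^2(X,\Pstar)$ for all $X \in \mathcal{X}_0$, and then invoke the second bound of \Cref{lemma-iterations-bound}. I expect this quadratic growth estimate to be the main obstacle: it is the primal analogue of the sharpness result used in the dual analysis of \cite{ding2020revisit}, and proving it requires a spectral perturbation argument showing that, near the optimal face, the penalty $\rho\max\{\lambda_{\max}(-X),0\}$ grows at least quadratically in the distance to $\Pstar$. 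Strict complementarity is essential here, since it supplies a nondegenerate eigenvalue gap that prevents this growth from flattening, and the margin in $\rho > 2\DZstar + 1$ ensures the optimal dual variable lies strictly inside the trace ball $\Trace(\cdot) \le \rho$, which keeps the penalty active enough for the quadratic lower bound to hold. The secondary difficulty is purely bookkeeping: tracking universal constants through the conversions of the previous paragraph so that each certificate in \cref{eq:feasibility} attains its stated accuracy at the claimed iteration count.
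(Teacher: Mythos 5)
Your proposal is correct and follows essentially the same route as the paper: verify that the spectral model satisfies the three bundle conditions \cref{eq:bundle-method-property-1,eq:bundle-method-property-2,eq:bundle-method-property-3}, invoke the generic bundle rates of \Cref{lemma-iterations-bound} (with the quadratic-growth case activated by strict complementarity, which the paper establishes via an error-bound/singularity-degree argument in \Cref{lemma:quadratic-growth}), and then convert the cost gap into the certificates \cref{eq:feasibility} using the optimality relation \cref{eq:optimality-condition-Xt}, the descent-step inequality, and complementary slackness with $\rho > 2\DZstar+1$ — which is exactly the content of the paper's \Cref{lemma:Primal-Dual-Gap-Feasibility}. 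The only minor divergence is your heuristic framing of the quadratic-growth proof as a spectral perturbation argument, whereas the paper derives it from Sturm-type error bounds and the singularity degree being at most one under strict complementarity; the conclusion and its role in the proof are the same.
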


In addition to strict complementarity, an improved convergence rate can be established if the number of current eigenvectors at every iteration satisfies  
\begin{equation}\label{eq: rank-condition}
\rcurrent \geq \rnull :=\max_{\Xstar \in \Pstar }\dim(\text{null}(\Xstar))
\end{equation} 
with proper choice of $\alpha$ and $\beta$. Under these conditions, \Cref{thm: linear-convergence} ensures that \SBMP~converges linearly once the iterate $\currposi$ is close enough to the set of the primal optimal solutions $\Pstar$.

\begin{theorem}\label{thm: linear-convergence}
{Suppose \cref{assumption:linearly-independence,assumption-slater-condition} are satisfied and strict complementarity holds for \cref{eq:SDP-primal} and \cref{eq:SDP-dual}.}
        There exist constants $T_0>0$ and $\eta>0$. Under a proper selection of~$\alpha \geq \eta$, $\beta\in(0,\frac{1}{2}]$, $\rcurrent \geq \rnull $, $\rpast \geq 0 $, $r= \rcurrent + \rpast$,
	$\rho > 2\DZstar$,  
 and target accuracy $\epsilon > 0$, after at most ${T}_{0}$ iterations, the \SBMP~in \Cref{alg:(r_p-r_c)-SBM-P} only takes descent steps and converges linearly to an optimal solution. Consequently, \SBMP~produces iterates $\currposi$, $\Wtstar$, and $\ytstar$  satisfying $F(\currposi)-F(\Xstar)\leq \epsilon$ and \cref{eq:feasibility}
    by at most ${T}_{0} + \bigO(\log(1/\epsilon))$ iterations. 
\end{theorem}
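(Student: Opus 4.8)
The plan is to follow the two-phase template of \cite{ding2020revisit}: a finite \emph{identification} phase in which the reference points enter a neighborhood of $\Pstar$, followed by a \emph{fast} phase in which every iteration is a descent step and the iterates contract geometrically. The engine for the second phase is a quadratic-growth bound for the penalized cost, so the first thing I would establish is that strict complementarity delivers it, namely $F(X)-F(\Xstar)\geq \mu\,\Dist^2(X,\Pstar)$ for all $X\in\mathcal{X}_0$ with some $\mu>0$; this is precisely the regularity that upgrades the rate in \Cref{thm: sublinearates-P-K}. Feeding the $\bigO(1/\epsilon)$ guarantee of \Cref{thm: sublinearates-P-K} into this bound shows $\Dist(\currposi,\Pstar)\to0$, so after finitely many iterations---call the count $T_0$---the reference point lies in any prescribed neighborhood of the optimal face.

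The crux is the second phase, and this is where the rank condition \cref{eq: rank-condition} enters. Under strict complementarity, at an optimal $\Xstar$ the matrix $-\Xstar$ has top eigenvalue $0$ with multiplicity $\dim(\mathrm{null}(\Xstar))\leq\rnull$, separated by a fixed spectral gap from its remaining strictly negative eigenvalues. The key lemma I would prove is an \emph{eigenspace identification} result: once $\currposi$ is close enough to $\Pstar$ and $\alpha\geq\eta$ forces the proximal step $\candidate-\currposi$, whose size scales like $1/\alpha$, to be short, the spectral gap guarantees that the top $\rcurrent\geq\rnull$ eigenvectors of $-\candidate$ stably capture the active eigenspace. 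Since the columns of $P_t$ already span the active eigenspace of the previous reference point (carried in through the block $V_{t-1}$ of \cref{eq:update-P-t}), it follows that $\mathrm{range}(P_t)$ approximately contains the active eigenspace of $-\candidate$, which makes the lower-approximation model exact at the candidate up to a higher-order error controlled by the step length and $\Dist(\currposi,\Pstar)$, i.e. $\hat{F}_{(\bar{W}_t,P_t)}(\candidate)=F(\candidate)$ up to this error. I expect this eigenvalue/eigenvector perturbation argument---isolating a uniform spectral gap on the neighborhood and choosing $\eta$ large enough to keep every step inside the gap-controlled regime---to be the main obstacle.

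Granting near-exactness of the model, the descent test \cref{eq:null-or-serious-step-SBM} is satisfied for every $\beta\in(0,\tfrac12]$, because the predicted decrease $F(\currposi)-\hat{F}_{(\bar{W}_t,P_t)}(\candidate)$ then matches the realized decrease $F(\currposi)-F(\candidate)$ up to the higher-order slack that the margin $\beta\leq\tfrac12$ absorbs; hence after iteration $T_0$ the algorithm takes only descent steps. Model exactness also forces the update \cref{eq:SBM-subproblem} to coincide with the genuine proximal-point step $\candidate=\argmin_{X\in\mathcal{X}_0}F(X)+\tfrac{\alpha}{2}\|X-\currposi\|^2$ on the true objective: the minorant inequality together with $\hat{F}_{(\bar{W}_t,P_t)}(\candidate)=F(\candidate)$ pins the candidate to the unique minimizer of the strongly convex proximal objective. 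I would then invoke the standard contraction estimate for the proximal-point map of a function obeying quadratic growth to obtain $\Dist(\nextposi,\Pstar)\leq(1-c)\,\Dist(\currposi,\Pstar)$ for an explicit $c\in(0,1)$ depending on $\mu$ and $\alpha$, and an induction verifying that $P_{t+1}$ from \cref{eq:update-P-t} continues to span the active eigenspace closes the loop, yielding $F(\currposi)-F(\Xstar)\leq\epsilon$ within $T_0+\bigO(\log(1/\epsilon))$ iterations.

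Finally, I would convert the geometric decay of $\Dist(\currposi,\Pstar)$ and of the cost gap into the three certificates of \cref{eq:feasibility}, reusing the estimates from \Cref{thm: sublinearates-P-K}: primal feasibility is immediate since $\Amap(\currposi)=b$ is maintained for $t>0$ and $\lambda_{\min}(\currposi)\geq-\bigO(\Dist(\currposi,\Pstar))$ by eigenvalue Lipschitzness, while the dual residual $\|\Wtstar+\Ajmap(\ytstar)-C\|$ and the gap $|\langle C,\currposi\rangle-\langle b,\ytstar\rangle|$ inherit the same linear decay through the master-problem optimality relations \cref{eq:optimality-condition-Xt} and \cref{eq:SBP-equivalence}.
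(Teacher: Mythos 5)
Your proposal is correct in outline and follows essentially the same two-phase architecture as the paper: quadratic growth under strict complementarity (the paper's \Cref{lemma:quadratic-growth}), a finite burn-in $T_0$ obtained by feeding the $\bigO(1/\epsilon)$ rate of \Cref{thm: sublinearates-P-K} into that growth bound, a spectral-gap argument showing the model becomes accurate once $\rcurrent\geq\rnull$ and $\currposi$ is near $\Pstar$, an all-descent-steps-plus-contraction phase for $\beta\in(0,\tfrac12]$ and $\alpha\geq\eta$, and finally \Cref{lemma:Primal-Dual-Gap-Feasibility} to convert the geometric decay of the cost gap into \cref{eq:feasibility}.

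Two points of divergence are worth recording. First, your key lemma is phrased as eigenspace identification: a Davis--Kahan-style perturbation argument showing the top $\rcurrent$ eigenvectors of $-\candidate$ are stably captured by $\mathrm{range}(P_t)$, from which near-exactness of the model at the candidate follows. The paper instead proves a \emph{global} quadratic-closeness sandwich, $\Ft(X)\leq F(X)\leq \Ft(X)+\tfrac{\eta}{2}\|X-\currposi\|^2$ for all $X\in\mathcal{X}_0$ (\Cref{lemma:quadratic-closeness}), by invoking the eigenvalue-approximation bound of \Cref{lem: importantLemmaQuadraticAccurateModel}, which controls the function-value error $\max\{\lambda_1(Y),0\}-\max_{W\in\faceplus{r}(X)}\langle W,Y\rangle$ directly in terms of $\|Y-X\|^2$ and the spectral gap, without ever tracking eigenvector rotations. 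Both routes are viable; the paper's packaging avoids the subspace-angle bookkeeping and delivers the bound uniformly in $X$, which is what the contraction proof actually consumes. Second, your contraction step asserts that model exactness "pins the candidate to the unique minimizer of the strongly convex proximal objective" and then invokes the standard proximal-point contraction. Since by your own account the model is only exact at $\candidate$ up to a quadratic error, $\candidate$ is an \emph{inexact} proximal point, and the off-the-shelf contraction estimate does not apply verbatim; the error term $\tfrac{\eta}{2}\|\candidate-\currposi\|^2$ must be carried through and absorbed against the $\tfrac{\alpha}{2}\|\candidate-\currposi\|^2$ produced by strong convexity of the regularized model, which is exactly why the hypothesis $\alpha\geq\eta$ appears. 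The paper's \Cref{lemma:linear-contraction} does this absorption explicitly (and additionally derives the cost-gap contraction \cref{eq:contraction-cost}, not just the distance contraction, since the former is what \Cref{lemma:Primal-Dual-Gap-Feasibility} needs). This is a repairable looseness rather than a fatal gap, but as written that step would not go through.
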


The proof sketches are provided in \Cref{subsection:proof-sketches}. The constants $T_0$ and $\eta$ only depend on problem data and are independent of the sub-optimality $\epsilon$, and we provide some discussions in \Cref{subsection:proof-linear-convergence} (see \Cref{lemma:quadratic-close} and \Cref{subsec:estimation-T} in the appendix for further details).

\begin{remark}
    The convergence results in \Cref{thm: sublinearates-P-K,thm: linear-convergence} can be viewed as the counterparts of \cite[Theorems 3.1 and 3.2]{ding2020revisit} when applying the spectral bundle method for solving primal SDPs. Note that \cite[Theorems 3.1 and 3.2]{ding2020revisit} focus on solving dual SDPs only (we will review some details in \Cref{subsection:SBM-dual}). As~highlighted earlier, all existing studies \cite{helmberg2000spectral,helmberg2002spectral,helmberg2014spectral,ding2020revisit} only consider the penalized dual formulation \cref{eq:SDP-dual-penalized}. Here, we establish a class of spectral bundle methods, i.e., \SBMP~in \Cref{alg:(r_p-r_c)-SBM-P}, to directly solve primal SDPs with similar computational complexity and convergence behavior. However, we remark that the value of $r_{\mathrm{c}}$ in \eqref{eq: rank-condition} can be drastically different from that in \cite{ding2020revisit} for linear convergence. A detailed comparison will be presented in \Cref{sec:comparison-connection}.
\end{remark}

\subsection{Proof sketches} \label{subsection:proof-sketches}
Here, we provide some proof sketches for \Cref{thm: sublinearates-P-K,thm: linear-convergence}. The proofs largely follow the strategies in \cite[Section 3]{ding2020revisit}, \cite[Section 4]{du2017rate} and \cite[Section 7]{ruszczynski2011nonlinear}. We complete some detailed calculations, handle the constrained case for the penalized primal SDPs \eqref{eq:SDP-primal-penalized} (note that the penalized dual case is unconstrained), and fix minor typos in \cite{ding2020revisit,du2017rate,ruszczynski2011nonlinear}. We have also provided an extension in \Cref{lemma:linear-contraction} compared to \cite[Section 3]{ding2020revisit}. We do not claim main contributions for establishing the proofs, and we provide them for self-completeness and for the convenience of interested readers. 

\subsubsection{Proof of \cref{thm: sublinearates-P-K}} One main step in the proof of \cref{thm: sublinearates-P-K} is to characterize the improvements in terms of primal feasibility, dual feasibility, and primal-dual optimality at every descent step, which is summarized in the following lemma. Its proof is provided in \Cref{Appendix:technical-proofs}.

\begin{lemma} \label{lemma:Primal-Dual-Gap-Feasibility}
        In \SBMP, let $\beta\in(0,1)$, $\rcurrent \geq 1$, $\rpast \geq 0 $, $ \alpha>0$, $r= \rcurrent+\rpast$,
	$\rho > 2\DZstar$. Then, at every descent step $t>0$, the following results hold.
	\begin{enumerate}
	\item The approximate primal feasibility for $\Omega_{t+1}$ satisfies
	\begin{equation*}
        \begin{aligned}
	    \lambda_{\min} (\Omega_{t+1}) \geq \frac{-(F(\currposi)-F(\Xstar))}{ \DZstar}, \quad and \quad \mathcal{A}(\Omega_{t+1}) = b.
     \end{aligned}
	\end{equation*}
 \item The approximate dual feasibility for $(\Wtstar, \ytstar)$ satisfies 
	\begin{equation*}
	\begin{aligned}
     \Wtstar \succeq 0,\quad and \quad \|\Wtstar-C+ \Ajmap(\ytstar) \|^2 \leq \frac{2 \alpha}{\beta} (F(\currposi)-F(\Xstar)).
     \end{aligned} 
	\end{equation*}
	\item The approximate primal-dual optimality for $(\nextposi, \Wtstar, \ytstar)$ satisfies
    \begin{equation*}
	    \begin{aligned}
             \langle C, \nextposi \rangle - \langle b,\ytstar \rangle & \geq -\frac{\rho (F(\currposi)-F(\Xstar))}{ \DZstar  } -  \Dxo \sqrt{\frac{2 \alpha}{\beta} (F(\currposi)-F(\Xstar))}, \\  
            \langle C, \nextposi \rangle - \langle b,\ytstar \rangle & \leq \frac{1-\beta}{\beta} (F(\currposi)-F(\Xstar))  +  \Dxo \sqrt{\frac{2 \alpha}{\beta} (F(\currposi)-F(\Xstar))}, 
	    \end{aligned}
	\end{equation*}
    where $\Dxo := \max_{F(\Omega_t) \leq F(\Omega_0) }\|\Omega_t\|$
    is bounded due to the compactness of $\Pstar$(see \cref{lemma:compact-sublevel-set}).
    \end{enumerate}
\end{lemma}

\Cref{thm: sublinearates-P-K} is a direct consequence by combining \Cref{lemma:Primal-Dual-Gap-Feasibility} with the convergence results of the generic bundle method in \Cref{lemma-iterations-bound}. 

\vspace{1mm}

\noindent \textbf{Proof of \Cref{thm: sublinearates-P-K}:} Consider the generic convex optimization \cref{eq:non-smooth-problem}. For any lower approximation function $\hat{f}_{t+1}$ satisfying \cref{eq:bundle-method-property-1}-\cref{eq:bundle-method-property-3}, \Cref{lemma-iterations-bound} guarantees that the generic bundle method in \Cref{algorithm:bundle} generates iterates $\omega_{t}$ with the gap $f(\omega_{t}) - f(x^\star)$ converging to zero at a rate of $\bigO (1/\epsilon^3)$. 
This rate improves to $\bigO (1/\epsilon)$ whenever the quadratic growth condition \cref{eq:quadratic-growth-lemma} occurs. 

In our case of SDPs, the quadratic growth of the primal penalized cost function $F$  in \cref{eq:cost-function-primal-sdp} 
holds whenever strict complementarity holds (see \Cref{lemma:quadratic-growth}): fix $\epsilon > 0$ and define a $\epsilon$-sublevel set
\begin{align}\label{eq:sublevelset-main-text}
\mathcal{P}_\epsilon = \{X\in \mathbb{S}^n\;|\; \Amap(X)=b, F(X) - F(\Xstar) \leq  \epsilon\}, 
\end{align}
 then there exist a constant $\mu > 0$  such that 
\begin{equation} \label{eq:quadratic-growth-F(x)}
 F(X)-F(\Xstar) \geq \mu \cdot \Dist^2(X,\Pstar), \,\; \forall X \in \mathcal{P}_\epsilon.
\end{equation}
In other words, the square of the distance between a point $X$ and the set of primal optimal solutions  $\Pstar$ is bounded by the optimality of the objective value when strict complementarity holds.
Therefore, if the lower approximation model $\hat{F}_{(\bar{W}_t,P_t)}$ in \cref{eq:F_W_t_P_t} satisfies \cref{eq:bundle-method-property-1,eq:bundle-method-property-2,eq:bundle-method-property-3}, the gap $F(\currposi) - F(X^{\star})$ from \SBMP~converges to zero at the rate of $\bigO \left(1/\epsilon^3\right)$. This rate improves to $\bigO (1/\epsilon)$ whenever strict complementarity holds. 

By \cref{lemma:Primal-Dual-Gap-Feasibility}, the convergence results for approximate primal feasibility \cref{eq:feasibility-b}, approximate dual feasibility \cref{eq:feasibility-a}, and primal and dual gap \cref{eq:feasibility-c} are naturally established. It remains to verify \cref{eq:bundle-method-property-1,eq:bundle-method-property-2,eq:bundle-method-property-3} for the lower approximation $\hat{F}_{(\bar{W}_t,P_t)}$, which is provided in \Cref{appendix:verification}. This completes the proof. \hfill $\blacksquare$

\subsubsection{Proof of \Cref{thm: linear-convergence}}  \label{subsection:proof-linear-convergence}

Thanks to \Cref{thm: sublinearates-P-K}, the iterate  $\currposi$ will be sufficiently close to the set of optimal solutions $\Pstar$ after some finite number of iterations (which is independent of $\epsilon$). In particular, let ${T}_0$ be the number of iterations that ensures 
$$ \min_{\Xstar \in \Pstar} \|\currposi - \Xstar\|_{\mathrm{op}} \leq \frac{\delta}{3},$$ 
where $\delta$ is a constant eigenvalue gap parameter (see~\cref{lemma:quadratic-close} in \Cref{appendix:theorem-linearly-convergence}). Then, the lower approximation model $\hat{F}_{(\bar{W}_t,P_t)}$ in \cref{eq:F_W_t_P_t} becomes quadratically close to the true penalized cost function $F$, as summarized in \Cref{lemma:quadratic-closeness}. Thanks to the quadratic closeness of $\hat{F}_{(\bar{W}_t,P_t)}$,  \Cref{alg:(r_p-r_c)-SBM-P} will take only descent steps after $T_0$ iterations, and we have contractions in terms of the cost value gap and the distance to the set of primal optimal solutions (\Cref{lemma:linear-contraction}). 

\begin{lemma} \label{lemma:quadratic-closeness}
    Suppose \cref{assumption:linearly-independence,assumption-slater-condition} are satisfied and strict complementarity holds for \cref{eq:SDP-primal} and \cref{eq:SDP-dual}. Let $r_{\mathrm{c}} \geq \rnull$. There exists a constant $\eta > 0$ (independent of $\epsilon$) such that for all iteration $t \geq T_0$, it holds that
    \begin{equation*} 
        \hat{F}_{(\bar{W}_{t},P_{t})} (X) \leq F(X) \leq \hat{F}_{(\bar{W}_{t},P_{t})} (X) + \frac{\eta}{2}\|X-\currposi\|^2, \quad \forall X \in \mathcal{X}_0. 
    \end{equation*}
\end{lemma}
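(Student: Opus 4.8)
The plan is to treat the two inequalities separately. The left inequality $\hat{F}_{(\bar{W}_t,P_t)}(X)\le F(X)$ is exactly the minorant property \cref{eq:bundle-method-property-1}, which the model \cref{eq:F_W_t_P_t} satisfies by construction (the feasible set of its inner maximization is contained in $\{S\succeq 0,\ \Trace(S)\le\rho\}$). So the whole content is the right inequality, and after subtracting $\langle C,X\rangle$ from both sides (recall $F$ is \cref{eq:cost-function-primal-sdp}) it reduces to the eigenvalue estimate
\[
\rho\,\max\{\lambda_{\max}(-X),0\}\le \rho\max_{\substack{S\succeq 0,\ \gamma\ge 0\\ \gamma+\Trace(S)\le 1}}\langle \gamma\bar{W}_t+P_tSP_t^\tr,-X\rangle+\tfrac{\eta}{2}\|X-\currposi\|^2.
\]
Dropping the $\gamma\bar{W}_t$ term (set $\gamma=0$) only decreases the right-hand maximum, so it suffices to bound $\max\{\lambda_{\max}(-X),0\}-\max\{\lambda_{\max}(P_t^\tr(-X)P_t),0\}$ by $\tfrac{\eta}{2\rho}\|X-\currposi\|^2$, i.e.\ to compare the top eigenvalue of $-X$ with that of its compression onto $\mathrm{range}(P_t)$.

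Next I would set up the spectral gap. After $T_0$ iterations \cref{lemma:quadratic-close} gives $\min_{\Xstar\in\Pstar}\|\currposi-\Xstar\|_{\mathrm{op}}\le \delta/3$, and strict complementarity forces the nearest $\Xstar$ to have $\dim(\mathrm{null}(\Xstar))\le\rnull$, so the eigenvalues of $-\Xstar$ split into a top cluster (equal to $0$) and a remainder at most $-\delta$, where $\delta$ is the uniform gap furnished by compactness of $\Pstar$. Weyl's inequality transfers this to $-\currposi$: its top $\rnull$ eigenvalues lie above $-\delta/3$ while the rest lie below $-2\delta/3$, a gap of at least $\delta/3$. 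Because $\rcurrent\ge\rnull$ by \cref{eq: rank-condition} and, in the descent-only regime after $T_0$, the columns of $P_t$ span the top $\rcurrent$ eigenvectors of $-\currposi$ (the update \cref{eq:update-P-t} with $\Omega_{t+1}=\candidate$), $\mathrm{range}(P_t)$ contains an invariant subspace of $-\currposi$ separated from its orthogonal complement by this gap; in particular it already carries the entire top eigenspace, so the two eigenvalues agree exactly at $X=\currposi$ and the quadratic bound is tight there.

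The core estimate is then a second-order eigenvalue perturbation. Writing $\Pi=P_tP_t^\tr$ and $\Delta=X-\currposi$, I would block-decompose $-X$ relative to $(\mathrm{range}(P_t),\mathrm{range}(P_t)^\perp)$; since $\Pi(-\currposi)(I-\Pi)=0$, the off-diagonal block equals $-\Pi\Delta(I-\Pi)$, of norm at most $\|\Delta\|_{\mathrm{op}}$, while the diagonal blocks stay separated by a gap of order $\delta$ for $\|\Delta\|_{\mathrm{op}}\lesssim\delta$. A Schur-complement (Haynsworth) bound then gives $\lambda_{\max}(-X)\le\lambda_{\max}(P_t^\tr(-X)P_t)+C\|\Delta\|_{\mathrm{op}}^2/\delta$; passing through $\max\{\cdot,0\}$, which is nondecreasing and $1$-Lipschitz and where the interlacing inequality $\lambda_{\max}(-X)\ge\lambda_{\max}(P_t^\tr(-X)P_t)$ keeps the argument in order, and using $\|\Delta\|_{\mathrm{op}}\le\|X-\currposi\|$ yields the local inequality with $\eta\asymp\rho/\delta$. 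To promote this to all $X\in\mathcal{X}_0$ I would patch with the crude global bound $F(X)-\hat{F}_{(\bar{W}_t,P_t)}(X)\le\rho\|X\|_{\mathrm{op}}\le\rho(\Dxo+\|X-\currposi\|)$: outside a fixed radius $R_0\asymp\delta$ this linear growth is itself dominated by $\tfrac{\eta}{2}\|X-\currposi\|^2$ after enlarging $\eta$, and all constants $\rho,\delta,\Dxo,R_0$ are fixed and independent of $\epsilon$ and $t$.

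The main obstacle is the combination of the invariant-subspace step with the perturbation bound: one must argue carefully that after $T_0$ iterations $\mathrm{range}(P_t)$ genuinely captures the top eigenspace of $-\currposi$ (relying on being in the descent-only regime and on a \emph{single} gap $\delta$ valid uniformly over the compact set $\Pstar$), and that this gap is robust enough for the Schur-complement estimate to deliver a second-order rather than first-order dependence on $\|X-\currposi\|$. The kink of $\max\{\cdot,0\}$ and the near-feasible case $\lambda_{\max}(-\currposi)\le 0$ need a little care, but are handled by the monotonicity of $\max\{\cdot,0\}$ and by the fact that compression can only decrease the top eigenvalue.
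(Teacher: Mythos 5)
Your overall architecture matches the paper's: the left inequality is exactly the minorant property \cref{eq:bundle-method-property-1}; the right inequality is reduced to comparing $\max\{\lambda_{\max}(-X),0\}$ with its value after compression onto a subspace carried by the model; and the spectral gap is set up the same way (use $T_0$ to get $\inf_{\Xstar\in\Pstar}\|\currposi-\Xstar\|_{\mathrm{op}}\le\delta/3$, with $\delta$ uniform over the compact set $\Pstar$, then transfer by Weyl's inequality). Where you genuinely differ is the core quantitative step. The paper restricts the inner maximization to $PSP^{\tr}$ with $P$ the top eigenvectors of $-\currposi$ and then invokes the ready-made eigenvalue approximation bound of Ding and Grimmer \cite[Lemma 3.9]{ding2020revisit}, which is quadratic in $\|X-\currposi\|$ and valid for \emph{all} $X$. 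You instead re-derive such a bound from scratch: a Schur-complement estimate on the $2\times 2$ block decomposition, valid when $\|X-\currposi\|_{\mathrm{op}}\lesssim\delta$, patched globally by the crude linear bound $F(X)-\hat{F}_{(\bar{W}_t,P_t)}(X)\le\rho(\Dxo+\|X-\currposi\|)$. That self-contained route is legitimate and yields the same conclusion with a different (but still $\epsilon$-independent) constant $\eta$; what the citation buys the paper is that the near/far case analysis is already absorbed into the quoted lemma.

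One step as written would fail. You assert $\Pi(-\currposi)(I-\Pi)=0$ for $\Pi=P_tP_t^{\tr}$, i.e.\ that $\mathrm{range}(P_t)$ is an invariant subspace of $-\currposi$. This holds only when $\rpast=0$: by \cref{eq:update-P-t}, $P_t$ spans the top $\rcurrent$ eigenvectors of $-\currposi$ (in the descent-only regime) \emph{together with} the aggregated past directions $P_{t-1}Q_1$, and the latter are in general not eigenvectors of $-\currposi$. With a nonvanishing off-diagonal block of $-\currposi$ itself, your Schur bound degrades from $O(\|X-\currposi\|^2/\delta)$ to $O(1)$ and quadratic closeness is lost. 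The repair is exactly the restriction the paper performs before applying its lemma: drop not only the $\gamma\bar{W}_t$ term but also the past columns, i.e.\ lower-bound the model's inner maximum by the maximum over $VSV^{\tr}$ with $V$ the top $\rcurrent\ge\rnull$ eigenvectors of $-\currposi$, whose span is contained in $\mathrm{range}(P_t)$ by construction and \emph{is} invariant under $-\currposi$, and run the block decomposition relative to $\mathrm{range}(V)$. With that change the rest of your argument — the monotone $1$-Lipschitz handling of $\max\{\cdot,0\}$ and the global patching — goes through. You rightly flag that the claim that $P_t$ carries the top eigenvectors of $-\currposi$ rests on the preceding step being a descent step; the paper leaves this interplay with \cref{lemma:linear-contraction} equally implicit, so you are no worse off there.
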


\begin{lemma} \label{lemma:linear-contraction}
Under the conditions in \cref{lemma:quadratic-closeness}, for any $\alpha \geq \eta$ and $t \geq {T}_0$,  \Cref{alg:(r_p-r_c)-SBM-P} with $\beta \in (0, \frac{1}{2}]$ takes only descent steps and guarantees two contractions\footnote{Note that \cite[Lemma 3.8]{ding2020revisit} only shows the linear convergence in terms of 
the distance to the set of optimal solutions. 
We further establish the linear convergence in terms of the cost value gap $F(\currposi) - F(\Xstar)$, which can be directly used in \Cref{lemma:Primal-Dual-Gap-Feasibility}.} 
    \begin{subequations}
	\begin{align}
        \dist(\nextposi,\Pstar) &\leq \sqrt{ \frac{{\alpha}/{2}}{\mu +{\alpha}/{2}} }\dist(\currposi,\Pstar), \label{eq:contraction-iterate}  \\
        F(\nextposi) - F(\Xstar) &\leq \left(1- \min\left\{\frac{\mu}{2 \alpha},\frac{1}{2} \right\} \beta\right) \left(F(\currposi) - F(\Xstar)\right),
        \label{eq:contraction-cost}
 \end{align}
 \end{subequations}
 where $\mu$ is the quadratic growth constant in \cref{eq:quadratic-growth-F(x)} for the initial sublevel set $\mathcal{P}_{F(\Omega_0)}$ defined in \cref{eq:sublevelset-main-text}.
\end{lemma}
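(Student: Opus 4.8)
The plan is to treat the master problem \eqref{eq:SBM-subproblem} as an inexact proximal step on $F$ and exploit the two-sided sandwich from \Cref{lemma:quadratic-closeness} together with the quadratic growth \eqref{eq:quadratic-growth-F(x)}. Throughout, write $m_t(\cdot) := \hat{F}_{(\bar{W}_t,P_t)}(\cdot)$ and $\phi_t(X) := m_t(X) + \frac{\alpha}{2}\|X-\currposi\|^2$, and let $\Delta_t := F(\currposi) - m_t(\candidate) \ge 0$ denote the predicted decrease appearing in \eqref{eq:null-or-serious-step-SBM}. Since $\candidate$ minimizes the $\alpha$-strongly convex function $\phi_t$ over the convex set $\mathcal{X}_0$, the optimality inequality yields $\phi_t(X) \ge \phi_t(\candidate) + \frac{\alpha}{2}\|X-\candidate\|^2$ for every $X \in \mathcal{X}_0$; this strong-convexity estimate is the workhorse of the whole argument.

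First I would establish that only descent steps occur for $t \ge T_0$. Taking $X = \currposi$ in the strong-convexity estimate and using the minorant bound $m_t(\currposi) \le F(\currposi)$ produces the sharp inequality $\alpha\|\candidate-\currposi\|^2 \le m_t(\currposi)-m_t(\candidate) \le \Delta_t$ (the prefactor $\alpha$, rather than the naive $\alpha/2$, is crucial). Combining this with the upper closeness bound $F(\candidate) \le m_t(\candidate) + \frac{\eta}{2}\|\candidate-\currposi\|^2$ from \Cref{lemma:quadratic-closeness} gives $F(\currposi) - F(\candidate) \ge (1 - \frac{\eta}{2\alpha})\Delta_t$. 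For $\alpha \ge \eta$ this factor is at least $\frac12 \ge \beta$, so the descent test \eqref{eq:null-or-serious-step-SBM} passes and $\nextposi = \candidate$.

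Next is the cost contraction \eqref{eq:contraction-cost}. Using the minorant $m_t \le F$, I upper bound $\phi_t(\candidate) = \min_{X \in \mathcal{X}_0}\phi_t(X)$ by $F(X) + \frac{\alpha}{2}\|X-\currposi\|^2$ evaluated along the segment $X_\tau = \currposi + \tau(\Xstar - \currposi)$, $\tau \in [0,1]$, where $\Xstar$ is the projection of $\currposi$ onto $\Pstar$ (which lies in $\mathcal{X}_0$ since $\Pstar \subseteq \mathcal{X}_0$). Convexity of $F$ and the quadratic growth bound $\dist^2(\currposi,\Pstar) \le (F(\currposi)-F(\Xstar))/\mu$ turn this into $\Delta_t \ge G\,\tau(1 - \frac{\alpha\tau}{2\mu})$ with $G := F(\currposi)-F(\Xstar)$; optimizing over $\tau$ (interior optimum $\tau = \mu/\alpha$ when $\alpha \ge \mu$, boundary $\tau=1$ otherwise) gives $\Delta_t \ge \min\{\frac{\mu}{2\alpha},\frac12\}G$. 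Plugging this into the descent guarantee $F(\nextposi) \le F(\currposi) - \beta\Delta_t$ yields \eqref{eq:contraction-cost}.

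Finally, for the iterate contraction \eqref{eq:contraction-iterate} I would run a clean proximal-point argument: apply the strong-convexity estimate at $X = \Xstar$, bound $\phi_t(\Xstar) \le F(\Xstar) + \frac{\alpha}{2}\dist^2(\currposi,\Pstar)$ via the minorant, and bound $\phi_t(\candidate) \ge F(\candidate)$ via the lower closeness bound combined with $\alpha \ge \eta$. This delivers $F(\candidate) - F(\Xstar) + \frac{\alpha}{2}\|\Xstar-\candidate\|^2 \le \frac{\alpha}{2}\dist^2(\currposi,\Pstar)$; invoking quadratic growth $F(\candidate)-F(\Xstar) \ge \mu\,\dist^2(\candidate,\Pstar)$ together with $\|\Xstar-\candidate\| \ge \dist(\candidate,\Pstar)$ and $\nextposi = \candidate$ gives $(\mu + \frac{\alpha}{2})\dist^2(\nextposi,\Pstar) \le \frac{\alpha}{2}\dist^2(\currposi,\Pstar)$, which is \eqref{eq:contraction-iterate} after taking square roots. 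The main obstacle is the constant bookkeeping that makes the single threshold $\alpha \ge \eta$ with $\beta \le \frac12$ suffice for all three conclusions — precisely why the sharp estimate $\alpha\|\candidate-\currposi\|^2 \le \Delta_t$ is needed rather than its factor-of-two weaker counterpart — together with checking that the monotone decrease of $F$ keeps every iterate inside the sublevel set on which \Cref{lemma:quadratic-closeness} and the quadratic growth \eqref{eq:quadratic-growth-F(x)} hold with the fixed constants $\eta$ and $\mu$.
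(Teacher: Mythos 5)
Your proposal is correct. Two of its three parts coincide with the paper's proof essentially step for step: the descent-step argument rests on the same sharpened strong-convexity bound $\alpha\|\candidate-\currposi\|^2 \le F(\currposi)-\Ft(\candidate)$ combined with the quadratic closeness of the model (the paper likewise notes that the factor $\alpha$, not $\alpha/2$, is what makes $\beta\le\tfrac12$ suffice), and the distance contraction \cref{eq:contraction-iterate} is obtained in both cases by testing the strong-convexity inequality at the projection of $\currposi$ onto $\Pstar$, cancelling the $\frac{\alpha}{2}\|\nextposi-\currposi\|^2$ terms via quadratic closeness, and invoking quadratic growth at $\nextposi$. Where you genuinely diverge is the cost contraction \cref{eq:contraction-cost}. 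The paper routes it through the exact proximal mapping $F_{\alpha}(W)=\min_{X\in\mathcal{X}_0}F(X)+\frac{\alpha}{2}\|X-W\|^2$: it lower-bounds $F_{\alpha}(\currposi)$ by the model value $\Ft(\candidate)+\frac{\alpha}{2}\|\candidate-\currposi\|^2$ and then applies the constrained versions of \cref{lemma:explicit-progress-proximal} (with its piecewise function $\Phi$) and \cref{lemma:cost-each-step} to extract the factor $\min\{\mu/(2\alpha),\tfrac12\}$. You instead evaluate the regularized model objective directly along the segment $\currposi+\tau(\Xstar-\currposi)$ and optimize over $\tau\in[0,1]$; the interior-versus-boundary optimum of $\tau\bigl(1-\alpha\tau/(2\mu)\bigr)$ reproduces exactly the same two cases and the same constant. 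Your route is more self-contained — no detour through $F_{\alpha}$ and no appeal to the two auxiliary lemmas adapted from prior work — while the paper's buys modularity, since those lemmas are stated for any minorant model and are reused from the dual-SDP analysis. The only bookkeeping to make explicit (which you already flag) is that $\mu$ and $\eta$ are the constants for the initial sublevel set and that the monotone decrease of $F(\currposi)$, together with $\mathcal{A}(\candidate)=b$ by construction of the master problem, keeps $\currposi$ and $\nextposi$ inside the set where these constants apply.
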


\noindent \textbf{Proof of \Cref{thm: linear-convergence}:} \Cref{thm: linear-convergence} is a direct consequence by combining \Cref{lemma:linear-contraction} with \Cref{lemma:Primal-Dual-Gap-Feasibility}. After $T_0$ iterations, \eqref{eq:contraction-cost} guarantees the linear convergence of $F(\currposi) - F(\Xstar)$. Thus, together with \Cref{lemma:Primal-Dual-Gap-Feasibility}, this ensures the linear convergence of approximate primal feasibility, approximate dual feasibility, and approximate primal-dual optimality after $T_0$ iterations. \hfill $\blacksquare$

We finally note that $T_0$ is a constant depending on problem data only. The proofs of \Cref{lemma:quadratic-closeness,lemma:linear-contraction} are provided in \Cref{appendix:theorem-linearly-convergence}.  
    
\section{Spectral bundle methods for dual SDPs} \label{section:SBM-dual}

In this section, we first review the existing spectral bundle method for dual SDPs that was originally proposed in \cite{helmberg2000spectral} and further developed in \cite{helmberg2002spectral,helmberg2014spectral,ding2020revisit}. The (sub)linear convergence results have been recently established in \cite{ding2020revisit}. We then compare the spectral bundle method for dual SDPs with that for primal SDPs developed in \cref{sec:SBM-Primal}.

\subsection{Spectral bundle algorithms for dual SDPs} \label{subsection:SBM-dual}
The idea in \cite{helmberg2000spectral} applies the standard bundle method in \Cref{subsection:bundle-methods} to the penalized dual formulation \cref{eq:SDP-dual-penalized}. One key step is to construct an appropriate lower approximation model.  
For notational simplicity, let us denote the objective function in \cref{eq:SDP-dual-penalized} as
\begin{equation*}
    \begin{aligned}
        F_{\mathrm{d}} (y) :=  -b^\tr y + \rho  \max \left\{\lambda_{\max} \left(\Ajmap(y)  - C\right) ,0 \right\}.
    \end{aligned}
\end{equation*}
Similar to \cref{eq:F_W_t_P_t}, the family of spectral bundle methods for dual SDPs uses a positive semidefinite definite matrix $ \bar{W}_t \in \mathbb{S}^n_{+}$ with $\Trace (\bar{W}_t) = 1$ and an orthonormal $P_t \in \mathbb{R}^{n \times r}$ matrix at iteration $t$ to lower approximate $F_d$. Specifically, the lower approximation model is constructed as 
\begin{equation} \label{eq:F_d_X_t_V_t}
    \begin{aligned}
        \hat{F}_{\mathrm{d},(\bar{W}_t,V_t)}(y) =-b^\tr y  + \rho  \max_{S \in \mathbb{S}^r_+ , \gamma \geq 0, \gamma + \Trace(S) \leq 1 }    \left   \langle \gamma \bar{W}_t + P_tSP_t^\tr, \mathcal{A}^*(y) -C  \right \rangle.
    \end{aligned}
\end{equation}
It is clear to see that $\hat{F}_{\mathrm{d},(\bar{W}_t,V_t)}(y) \leq F_{\mathrm{d}} (y), \forall y \in \mathbb{R}^m$. 

Following \cite{ding2020revisit}, we present a family of spectral bundle algorithms for dual SDPs, called \SBMD,  
which has the following steps: 
\begin{itemize}
    \item \textit{Initialization:} \SBMD~starts with an initial guess $\yinitposi \in \mathbb{R}^m$, $P_0 \in \mathbb{R}^{n \times r}$ formed by top $r$ normalized eigenvectors of $\Ajmap(\yinitposi) - C$, and $\bar{W}_0 \in \mathbb{S}^n$ with $\Trace(\bar{W}_0) = 1$. The matrices $P_0$ and $\bar{W}_0$ are used to construct an initial lower approximation model $\hat{F}_{\mathrm{d},(\bar{W}_t,V_t)}$ in~\cref{eq:F_d_X_t_V_t}. 
    \item \textit{Solve the master problem:}  Similar to \cref{eq:bundle-method-t}, at iteration $t \geq 0$,  \SBMD~solves the following regularized master problem 
\begin{equation}
    \begin{aligned}\label{eq:SBMD-subproblem}
       \left(\ycandidate,S_t^\star,\gamma^\star_t\right) = & \argmin_{y \in \mathbb{R}^n  } \;\; \hat{F}_{\mathrm{d},(\bar{W}_t,P_t)}(y)  + \frac{\alpha}{2} \|y -  \ycurrposi\|^2,
    \end{aligned}
\end{equation}
where $\ycurrposi \in \mathbb{R}^m$ is the current reference position, and $\alpha > 0$ is as a penalty parameter.

\item  \textit{Update reference point:} Similar to \cref{eq:sufficient-descent}, \SBMD~updates the next reference point $\omega_{t+1}$ as follows: given $\beta \in (0,1)$, if  
\begin{equation} \label{eq:null-or-serious-step-SBM-dual}
    \begin{aligned}
        \beta \left(F_\mathrm{d}(\ycurrposi) - \hat{F}_{\mathrm{d},(\bar{W}_t,P_t)}(\ycandidate)\right) \leq F_\mathrm{d}(\ycurrposi)- F_\mathrm{d}(\ycandidate)
    \end{aligned}
\end{equation} 
holds true, we let $\ynextposi = \ycandidate$ (\textit{descent step}), otherwise $\ynextposi  = \ycurrposi$ (\textit{null step}).

\item  \textit{Update the lower approximation model:}  \SBMD~updates the matrices $\bar{W}_{t+1}, P_{t+1}$ for the lower approximation model $\hat{F}_{\mathrm{d},(\bar{W}_t,V_t)}$ using formulas in \cref{eq:update-P-t,eq:update-W-t} with one difference that the matrix $V_t \in \mathbb{R}^{n \times \rcurrent}$ is formed by the top $\rcurrent \geq 1$ eigenvectors of $\Ajmap(\ycandidate) - C$.
\end{itemize}

The overall process for~\SBMD~is listed in \Cref{alg:(r_p-r_c)-SBM-D}. \SBMD~generates a sequence of iterates $\{\ycurrposi, \Wtstar\}$ with monotonically decreasing cost values $\{F_d(\ycurrposi)\}$. 
Solving the subproblem \cref{eq:SBMD-subproblem} is the main computation in each iteration of \SBMD. Similar to \cref{proposition-primal-penalty}, we have the following result.
\begin{proposition}[{\cite[Section 2.3]{ding2020revisit}}]
The master problem \cref{eq:SBMD-subproblem} is equivalent to
    \begin{align} 
       \min_{W \in \hat{\mathcal{W}}_t} \;\; \langle b,\ycurrposi \rangle   + \langle W, C - \Ajmap(\ycurrposi) \rangle + \frac{1}{2 \alpha } \left\|b - \Amap W\right\|^2, \label{eq:SBD-equivalence}
    \end{align} 
where the constraint set is defined as
\begin{equation} \label{eq:SBMD-constraint-W_t}
    \hat{\mathcal{W}}_t := \{  \gamma \bar{W}_t + P_t S P_t^\tr \in \mathbb{S}^n \;|\; S \in \mathbb{S}^r_+ , \gamma \geq 0, \gamma + \Trace(S) \leq \rho \}.
\end{equation}
The optimal $y$ in \cref{eq:SBMD-subproblem} is recovered as $\ycandidate   = \ycurrposi + \frac{1}{\alpha} \bigl( b- \Amap (\Wtstar) \bigr),$ where $\Wtstar$ is a minimizer of \cref{eq:SBD-equivalence}.
\end{proposition}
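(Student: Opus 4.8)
The plan is to follow the same convex min-max duality argument used for the primal master problem in \Cref{proposition-primal-penalty}, with the essential simplification that the inner minimization is now unconstrained. First I would substitute the definition of $\hat{F}_{\mathrm{d},(\bar{W}_t,P_t)}(y)$ from \cref{eq:F_d_X_t_V_t} into \cref{eq:SBMD-subproblem} and absorb the constant $\rho$ into the trace constraint, turning $\gamma + \Trace(S) \leq 1$ into $\gamma + \Trace(S) \leq \rho$. Applying the change of variables $W = \gamma \bar{W}_t + P_t S P_t^\tr$ rewrites the subproblem as
\begin{equation*}
    \min_{y \in \mathbb{R}^m} \; \max_{W \in \hat{\mathcal{W}}_t} \; -b^\tr y + \langle W, \Ajmap(y) - C \rangle + \frac{\alpha}{2}\|y - \ycurrposi\|^2,
\end{equation*}
where $\hat{\mathcal{W}}_t$ is the set in \cref{eq:SBMD-constraint-W_t}. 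Since $\hat{\mathcal{W}}_t$ is compact (the trace bound controls both $\gamma$ and $S$) and the objective is convex in $y$ and affine in $W$, strong duality via \cite[Corollary 37.3.2]{rockafellar1970convex} lets me exchange the order of $\min$ and $\max$.

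Next I would solve the inner minimization over $y$. Using the adjoint pairing $\langle W, \Ajmap(y)\rangle = \langle \Amap(W), y\rangle$, the inner objective becomes $(\Amap(W) - b)^\tr y - \langle W, C\rangle + \frac{\alpha}{2}\|y - \ycurrposi\|^2$, which is strongly convex and, crucially, \emph{unconstrained}—unlike the primal case, there is no affine set $\mathcal{X}_0$ requiring an additional layer of Lagrangian duality. Setting the gradient to zero gives the unique minimizer
\begin{equation*}
    y = \ycurrposi + \frac{1}{\alpha}\bigl(b - \Amap(W)\bigr),
\end{equation*}
which, evaluated at the optimal $\Wtstar$, is exactly the stated recovery formula for $\ycandidate$.

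Finally I would substitute this minimizer back and simplify. Writing $d = b - \Amap(W)$, the optimal inner value reduces to $-d^\tr \ycurrposi - \langle W, C\rangle - \frac{1}{2\alpha}\|d\|^2$; expanding $-d^\tr \ycurrposi = -b^\tr \ycurrposi + \langle W, \Ajmap(\ycurrposi)\rangle$ and negating to convert the remaining $\max_{W}$ into an equivalent $\min_W$ (with the same minimizer $\Wtstar$) yields precisely \cref{eq:SBD-equivalence}. I do not anticipate a serious obstacle here: the argument is a routine convex-duality computation, and the only point demanding care is the bookkeeping of the adjoint identity $\langle W, \Ajmap(y)\rangle = \langle \Amap(W), y\rangle$ together with the sign conventions, so that the quadratic term $\|b - \Amap W\|^2$ and the linear term $\langle W, C - \Ajmap(\ycurrposi)\rangle$ emerge with the correct signs.
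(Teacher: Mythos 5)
Your argument is correct and is exactly the route the paper intends: the paper does not spell out a proof for this dual proposition (it cites \cite[Section 2.3]{ding2020revisit} and notes it is ``similar to \Cref{proposition-primal-penalty}''), and your derivation is precisely the min--max duality proof of \Cref{proposition-primal-penalty} adapted to the unconstrained dual setting, with the correct observation that the inner minimization over $y$ needs no extra Lagrangian layer. The bookkeeping (adjoint identity, the stationarity condition $y = \ycurrposi + \frac{1}{\alpha}(b - \Amap(W))$, and the sign flip turning the outer $\max_W$ into the stated $\min_W$ with the same minimizer $\Wtstar$) all checks out.
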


Similar to \Cref{remark:master-problem-primal}, problem \cref{eq:SBD-equivalence} is a quadratic problem with a small semidefinite constraint, which can be reformulated into a problem of the form \cref{eq:SBM-ms-pb-QP-abstract}; see \cref{sec:reform-ms-pb-dual}. 
The convergence results of \SBMD~are summarized in \cref{thm: sublinearates,thm: linear convergence of Block SBM under the extra condition strict complementarity}.

\begin{algorithm}[t] 
    \caption{\SBMD-- \textbf{S}pectral \textbf{B}undle \textbf{M}ethod for \textbf{D}ual SDPs } \label{alg:(r_p-r_c)-SBM-D}
    \begin{algorithmic}[1]
        \Require{Problem data $A_1, \ldots,A_m, C \in \mathbb{S}^n$, $b\in \mathbb{R}^n$. }
        \Require{Parameters $r_\mathrm{p} \geq 0, r_\mathrm{c} \geq 1$, $\alpha > 0$, $\beta \in (0,1)$, $\epsilon \geq 0 $. An initial point $ \omega_0 \in \mathbb{R}^m$.}\\
        \textbf{Initialization: } Let ${r} =\rpast+\rcurrent $. Initialize $  \bar{W}_0 \in \mathbb{S}^n_+$ with $\Trace(\bar{W}_0) = 1$. Compute $P_0  \in \mathbb{R}^{n \times {r}}$ with columns being the top ${r}$ orthonormal eigenvectors of $\Ajmap(\omega_0)-C$.
        \For{$t=0,\ldots,t_{\max}$}
            \State Solve \cref{eq:SBMD-subproblem} to obtain $\ycandidate, \gamma^\star_t, S^\star_t$. 
            \Comment{\textit{master problem}}
            \State Form the iterate $\Wtstar$ as in \cref{eq:SBMD-constraint-W_t}.
             \If {\cref{eq:null-or-serious-step-SBM-dual} holds} 
                \State Set primal iterate $  \ynextposi = \ycandidate$. 
                \Comment{\textit{descent step}}
            \Else 
                \State Set primal iterate $  \ynextposi = \ycurrposi$. 
                \Comment{\textit{null step}}
            \EndIf               
                     \State Compute $P_{t+1}$ as \cref{eq:update-P-t}, and $\bar{W}_{t+1}$ as \cref{eq:update-W-t}.  
            \Comment{\textit{update model}}
             \If {stopping criterion are met}
            \State Quit. 
            \Comment{\textit{termination}}
            \EndIf
        \EndFor
    \end{algorithmic}
\end{algorithm}

\begin{theorem} [{\cite[Theorem 3.1]{ding2020revisit}}]  \label{thm: sublinearates}
    Suppose \cref{assumption:linearly-independence,assumption-slater-condition} are satisfied.
        Given any $\beta\in(0,1)$, $\rcurrent \geq 1$, $\rpast \geq 0 $, $ \alpha>0$, $r= \rcurrent+\rpast $,
	$\rho > 2\DXstar$, $P_0\in \RR^{ n \times r}$, $\omega_0 \in \mathbb{R}^m$, and accuracy $\epsilon > 0$, the \SBMD~in \Cref{alg:(r_p-r_c)-SBM-D}
	produces iterates $\ycurrposi$ and $\Wtstar$ with
	$F_\mathrm{d}(\ycurrposi)-F_\mathrm{d}(\ystar )\leq \epsilon$ and
\begin{subequations} \label{eq:feasibility-dual}
 \begin{align}
	\text{approximate primal feasibility: }& \quad\|b - \Amap (\Wtstar) \|^2 \leq \epsilon, \quad  \Wtstar \succeq 0,\\
	\text{approximate dual feasibility: }& \quad \lambda_{\min}(C - \Ajmap (\ycurrposi) )\geq -\epsilon,\\
	\text{approximate primal-dual optimality: }&\quad  |\langle C, \Wtstar\rangle- \langle b, \ycurrposi \rangle | \leq \sqrt{\epsilon}
	\end{align}
 \end{subequations}
	by some iteration $t\leq \bigO(1/\epsilon^3)$.
	If additionally, strict complementarity holds, then these conditions are reached by some iteration $t\leq \bigO(1/\epsilon)$.
\end{theorem}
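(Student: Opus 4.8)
The plan is to mirror the proof of the primal counterpart \Cref{thm: sublinearates-P-K}: reduce the statement to the generic bundle rate in \Cref{lemma-iterations-bound} applied to the penalized objective $F_{\mathrm{d}}$, and then translate the cost-value gap $F_{\mathrm{d}}(\ycurrposi)-F_{\mathrm{d}}(\ystar)$ into the three feasibility/optimality quantities of \cref{eq:feasibility-dual} through a descent-step lemma. First I would check that the hypotheses of \Cref{lemma-iterations-bound} hold for $F_{\mathrm{d}}$. The objective is convex, being the sum of a linear term and $\rho\max\{\lambda_{\max}(\cdot),0\}$ composed with the affine map $y\mapsto\Ajmap(y)-C$; it is $M$-Lipschitz with $M=\|b\|+\rho\|\Ajmap\|_{\mathrm{op}}$, since $\lambda_{\max}$ is $1$-Lipschitz in the operator norm and every subgradient has the form $-b+\rho\,\Amap(vv^\tr)$ with $\|v\|=1$. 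Under \Cref{assumption-slater-condition} the dual SDP is solvable, so by \Cref{proposition-dual-exact-penalty} (using $\rho>2\DXstar\geq\DXstar$) the minimizer set $\mathcal{C}=\{y\mid F_{\mathrm{d}}(y)=F_{\mathrm{d}}(\ystar)\}$ coincides with the dual-optimal $y$-set, which is nonempty. Exact penalization makes $F_{\mathrm{d}}$ coercive, and \Cref{assumption:linearly-independence} renders $\Ajmap$ injective, so the sublevel set $\{y\mid F_{\mathrm{d}}(y)\leq F_{\mathrm{d}}(\yinitposi)\}$ is compact; since the iterates stay in it, $D=\sup_k\dist(\ycurrposi,\mathcal{C})<\infty$.

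Second, I would verify that the model $\hat{F}_{\mathrm{d},(\bar{W}_t,P_t)}$ in \cref{eq:F_d_X_t_V_t} satisfies the three bundle-method properties \cref{eq:bundle-method-property-1,eq:bundle-method-property-2,eq:bundle-method-property-3}, exactly as in the primal verification. The minorant and subgradient-lowerbound inequalities follow from the construction: choosing the columns of $P_t$ to span the top eigenvector of $\Ajmap(\ycandidate)-C$ makes the maximal affine minorant dominate the exact linearization at $\ycandidate$. The model-subgradient-lowerbound is the property that forces the specific updates \cref{eq:update-P-t,eq:update-W-t}: after solving the master problem \cref{eq:SBD-equivalence}, the aggregated certificate $s_{t+1}=\alpha(\ycurrposi-\ycandidate)$ must be a subgradient of the retained model, and carrying the tail block $P_tQ_2\Sigma_2 Q_2^\tr P_t^\tr$ into $\bar{W}_{t+1}$ is precisely what preserves this inequality. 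With the three properties in hand, \Cref{lemma-iterations-bound} yields $F_{\mathrm{d}}(\ycurrposi)-F_{\mathrm{d}}(\ystar)\leq\epsilon$ by some $t\leq\bigO(1/\epsilon^3)$.

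Third, I would prove a dual analogue of \Cref{lemma:Primal-Dual-Gap-Feasibility}, bounding each quantity in \cref{eq:feasibility-dual} by the cost-value gap at every descent step. Using the recovery formula $\ycandidate=\ycurrposi+\tfrac1\alpha(b-\Amap(\Wtstar))$ gives $\|b-\Amap(\Wtstar)\|=\alpha\|\ycandidate-\ycurrposi\|$, whose square I would tie to the predicted descent $F_{\mathrm{d}}(\ycurrposi)-\hat{F}_{\mathrm{d},(\bar{W}_t,P_t)}(\ycandidate)$ via the optimality of the master problem, and then to the true gap through the descent test \cref{eq:null-or-serious-step-SBM-dual}; meanwhile $\Wtstar\succeq0$ and $\Trace(\Wtstar)\leq\rho$ hold by $\Wtstar\in\hat{\mathcal{W}}_t$. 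The approximate dual feasibility $\lambda_{\min}(C-\Ajmap(\ycurrposi))\geq-\epsilon$ follows because a sufficiently negative eigenvalue would force $F_{\mathrm{d}}(\ycurrposi)$ above $F_{\mathrm{d}}(\ystar)$ by more than the gap, where the margin $\rho>2\DXstar$ supplies the needed penalty slack. The primal-dual optimality bound combines the two by expressing $\langle C,\Wtstar\rangle-\langle b,\ycurrposi\rangle$ through the master-problem residual and the gap, just as in the third item of \Cref{lemma:Primal-Dual-Gap-Feasibility}. Combining this translation lemma with the $\bigO(1/\epsilon^3)$ cost-gap rate establishes the general claim.

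Finally, for the accelerated $\bigO(1/\epsilon)$ rate I would establish quadratic growth $F_{\mathrm{d}}(y)-F_{\mathrm{d}}(\ystar)\geq\mu\,\dist^2(y,\mathcal{C})$ on the sublevel set under strict complementarity, the dual counterpart of the growth property invoked in \Cref{thm: sublinearates-P-K}, and then apply the second bound of \Cref{lemma-iterations-bound}. I expect this quadratic-growth step to be the main obstacle: it requires a spectral error bound certifying that the penalized eigenvalue objective grows quadratically away from its minimizer set, and strict complementarity is exactly what guarantees a uniform eigenvalue gap between the active and inactive blocks of $C-\Ajmap(\ystar)$ so that $\mu>0$. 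Once a strictly positive $\mu$ is secured, the remaining arguments reduce to the same routine substitutions used above.
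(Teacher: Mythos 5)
Your proposal is correct and follows essentially the intended route: the paper does not prove this theorem itself but imports it from Ding--Grimmer, and its own proof of the primal analogue (\Cref{thm: sublinearates-P-K}, via \Cref{lemma-iterations-bound}, the verification of \cref{eq:bundle-method-property-1,eq:bundle-method-property-2,eq:bundle-method-property-3}, a descent-step translation lemma like \Cref{lemma:Primal-Dual-Gap-Feasibility}, and quadratic growth under strict complementarity) is exactly the argument you dualize. The only cosmetic slip is the justification of boundedness of the iterates: compactness of the sublevel sets comes from compactness of $\Dstar$ (\Cref{assumption-compactness}) together with \Cref{lemma:compact-sublevel-set}, rather than from coercivity per se, but the ingredients you cite are the right ones and the conclusion stands.
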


Along with strict complementarity, a further improvement on the convergence rate can be shown if the number of selected current eigenvectors at every iteration satisfies 
\begin{equation}
\rcurrent \geq \rnulld :=\max_{ (y^\star, \Zstar) \in \Dstar }\dim(\text{null}( \Zstar)).
\label{eq:rank-condition-dual}
\end{equation}

\begin{theorem}[{\cite[Theorem 3.2]{ding2020revisit}}]\label{thm: linear convergence of Block SBM under the extra condition strict complementarity}
	Suppose \cref{assumption:linearly-independence,assumption-slater-condition} are satisfied and strict complementarity holds for \cref{eq:SDP-primal} and \cref{eq:SDP-dual}. There exist constants $T_0>0$ and $\eta>0$. Under a proper selection of~$\alpha$ and any $\beta\in(0,\frac{1}{2}]$, $\rcurrent \geq  \rnulld$, $\rpast \geq 0 $, $r= \rcurrent + \rpast$,
	$\rho >2 \DXstar $, $P_0\in \mathbb{R}^{ n \times r}$, $\omega_0 \in \mathbb{R}^m$, and accuracy $\epsilon > 0$, after at most ${T}_{0}$ iterations, the \SBMD~in \Cref{alg:(r_p-r_c)-SBM-D} only takes descent steps and converges linearly to an optimal solution. Consequently, \SBMD~produces iterates $\ycurrposi$ and $\Wtstar$ with
	$F_\mathrm{d}(\ycurrposi)-F_\mathrm{d}(\ystar )\leq \epsilon$ and \cref{eq:feasibility-dual}
	by at most $T_0 + \bigO(\log(1/\epsilon))$ iterations.
\end{theorem}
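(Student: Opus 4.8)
The plan is to mirror the argument for the primal linear-convergence result in \Cref{thm: linear-convergence}, exploiting the symmetry between \SBMD~and the primal method. First I would invoke the sublinear guarantee of \Cref{thm: sublinearates}: under strict complementarity the gap $F_\mathrm{d}(\ycurrposi)-F_\mathrm{d}(\ystar)$ decays at rate $\bigO(1/\epsilon)$, so after a finite number $T_0$ of iterations (depending only on the problem data, not on $\epsilon$) the reference point $\ycurrposi$ enters a neighborhood of the dual optimal set $\Dstar$ in which the top eigenspace of $\Ajmap(\ycurrposi)-C$ is separated from the rest of the spectrum by a fixed eigenvalue gap $\delta>0$. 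The existence of such a gap is exactly where strict complementarity is used: at any optimal $\ystar$ one has $\Ajmap(\ystar)-C=-\Zstar$, whose largest eigenvalue $0$ has multiplicity $\dim(\mathrm{null}(\Zstar))\leq\rnulld$, and strict complementarity forces a strictly positive separation between this zero eigenvalue and the next one.

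Second, I would establish the quadratic growth of $F_\mathrm{d}$, i.e. $F_\mathrm{d}(y)-F_\mathrm{d}(\ystar)\geq\mu\cdot\dist^2(y,\ysolset)$ on the relevant sublevel set, the dual counterpart of \cref{eq:quadratic-growth-F(x)}. This again follows from strict complementarity through the sharpness of the eigenvalue penalty $\lambda_{\max}(\Ajmap(y)-C)$ combined with the injectivity of $\Ajmap$ guaranteed by \Cref{assumption:linearly-independence}, which transfers growth in the matrix argument back to growth in $y$.

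The third and central step is the dual analogue of \Cref{lemma:quadratic-closeness}: after $T_0$ iterations the lower model $\hat{F}_{\mathrm{d},(\bar{W}_t,P_t)}$ becomes quadratically close to $F_\mathrm{d}$, namely $\hat{F}_{\mathrm{d},(\bar{W}_t,P_t)}(y)\leq F_\mathrm{d}(y)\leq\hat{F}_{\mathrm{d},(\bar{W}_t,P_t)}(y)+\tfrac{\eta}{2}\|y-\ycurrposi\|^2$ for some $\eta>0$ independent of $\epsilon$. The rank condition $\rcurrent\geq\rnulld$ enters here: once the eigenvalue gap $\delta$ is in force, the $\rcurrent$ top eigenvectors $V_t$ of $\Ajmap(\ycandidate)-C$ span the entire eigenspace associated with the clustered top eigenvalues, so the range of $P_{t+1}$ contains the subspace on which $F_\mathrm{d}$ is genuinely curved by the $\lambda_{\max}$ term, while on the orthogonal complement $F_\mathrm{d}$ is locally affine and exactly reproduced by the model. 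A second-order perturbation estimate of $\lambda_{\max}$ then yields the quadratic upper bound.

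Finally, with quadratic closeness in hand I would reproduce the contraction argument of \Cref{lemma:linear-contraction}: choosing $\alpha\geq\eta$ and $\beta\in(0,\tfrac12]$ forces every step after $T_0$ to be a descent step and produces geometric contraction of both $\dist(\ynextposi,\ysolset)$ and the cost gap $F_\mathrm{d}(\ycurrposi)-F_\mathrm{d}(\ystar)$, giving the $\bigO(\log(1/\epsilon))$ bound. Translating this cost-gap contraction into the approximate primal feasibility, dual feasibility, and optimality estimates \cref{eq:feasibility-dual} uses the dual version of \Cref{lemma:Primal-Dual-Gap-Feasibility}. I expect the main obstacle to be the quadratic-closeness step: one must control, via an eigenvalue-perturbation argument, that the top-$\rcurrent$ eigenvector selection genuinely captures the active eigenspace of $\Ajmap(\ycandidate)-C$ uniformly over the descent phase, and that the aggregated weight $\bar{W}_t$ together with $P_t$ reconstructs the penalty term to second order; this is precisely where the rank condition $\rcurrent\geq\rnulld$ and the fixed gap $\delta$ must be combined carefully.
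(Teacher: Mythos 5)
Your outline is correct and follows essentially the same route as the source: the paper itself does not reprove this result but cites \cite[Section 3.3.1]{ding2020revisit}, and the argument there (mirrored by the paper's own proof of the primal counterpart, \Cref{thm: linear-convergence}, via \Cref{lemma:quadratic-closeness,lemma:linear-contraction}) is exactly your four-step scheme of finite-time localization, quadratic growth from strict complementarity, quadratic closeness of the model using the eigenvalue gap and $\rcurrent \geq \rnulld$, and the descent-step contraction. You have also correctly identified the delicate point, namely the eigenvalue-perturbation estimate (the analogue of \Cref{lem: importantLemmaQuadraticAccurateModel}) needed to make the quadratic-closeness step uniform over the descent phase.
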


The constants $T_0>0$ and $\eta>0$ only depend on problem data and are independent of $\epsilon$. We refer the interested reader to \cite[Section 3.3.1]{ding2020revisit} for details. 

\begin{remark}
 Historically, the spectral bundle method was first introduced in \cite{helmberg2000spectral} to tackle SDP relaxations for large-scale combinatorial problems. The method in \cite{helmberg2000spectral} works for dual SDPs with an explicit constant trace constraint; see \cref{eq:SDP-dual-penalized-trace}. The algorithm \cite{helmberg2000spectral} requires one current eigenvalue, i.e. $\rcurrent = 1$, and allows different values of $\rpast$. This selection of parameters is different from the selection of parameters reviewed in \Cref{alg:(r_p-r_c)-SBM-D} (we follow the choice in \cite{ding2020revisit}). The convergence results in \Cref{thm: linear convergence of Block SBM under the extra condition strict complementarity} show that the parameter $\rpast$ can be chosen as zero which still guarantees the linear convergence whenever $\rcurrent \geq \rnulld $, $\alpha$ is chosen correctly, and the SDP satisfies strict complementarity. The result is based on one critical observation that $\gamma \bar{W}_t + P_tSP_t^\tr$ in \cref{eq:F_d_X_t_V_t} is an approximation of the optimal primal variable $\Xstar$ and $P_t$ is an approximation of the null space of $Z^\star$. A key result in the analysis is based on a novel eigenvalue approximation in \cite[Lemma 3.9]{ding2020revisit}. 
 One immediate benefit of selecting $\rcurrent$ instead of $\rpast$ is that \SBMD~becomes particularly efficient when the primal SDP admits low-rank solutions as $\rcurrent$ can be chosen small to fulfill the rank condition \cref{eq:rank-condition-dual}. In this case, the master problem \cref{eq:SBMD-subproblem} can be solved efficiently. The low-rank property indeed holds for many SDPs from combinatorial problems and phase retrieval \cite{candes2015phase}. 
\end{remark}

\subsection{Comparison and connections} \label{sec:comparison-connection}
In this subsection, we compare the differences and draw the connections between primal and dual formulations of the spectral bundle method. 

\begin{table*}[t]
\renewcommand{\arraystretch}{1.8}
    \centering
    \caption{Comparison between \SBMP~in \Cref{alg:(r_p-r_c)-SBM-P} and \SBMD~in \Cref{alg:(r_p-r_c)-SBM-D}. $\Wtstar$ denotes the optimizer of the master problem in constraint set $\hat{\mathcal{W}}_t$ in \cref{eq:constraint-W_t} for \SBMP~(or \cref{eq:SBMD-constraint-W_t} for \SBMD).
    The first block of rows (\textit{Comp.}) denotes computational details, and the second block of rows (\textit{Conv.}) shows convergence details.  
    } 
    \label{tb:comparison-SBMP-SBMD}
    \scalebox{0.89}{
    \begin{tabular}{cccc}
    \toprule
   & Descriptions & \SBMP & \SBMD \\
    \hline
   \multirow{7}{*}{\textit{Comp.}} & Domain & $\mathcal{X}_0 = \{X \in \mathbb{S}^n\;|\; \Amap(X) = b\}$ & $\RR^m$\\
   & Reference, candidate & $\currposi \in \mathbb{S}^n,\candidate\in \mathbb{S}^n $ & $\ycurrposi\in \mathbb{R}^m,\ycandidate\in \mathbb{R}^m$ \\
   &  Penalized parameter & $\rho > \sup_{(\ystar, \Zstar) \in \Dstar} \; {\Trace(\Zstar)}$  & $\rho > \sup_{\Xstar \in \Pstar} \; {\Trace(\Xstar)}$ \\
   & Nonsmooth objective & $ \langle C,X \rangle  + \rho \max \{\lambda_{\max} (-X) ,0 \}$ & $-b^\tr y + \rho  \max \left\{\lambda_{\max} \left( \Ajmap(y)   - C\right) ,0 \right\}$ \\
  & Approximation $(\hat{f}_t)$ &  $\langle C,X \rangle  + \max_{W \in \hat{\mathcal{W}}_t } \langle W, -X\rangle$ & $-b^\tr y +  \max_{ W \in \hat{\mathcal{W}}_t}   \left \langle W, \Ajmap(y)-C \right \rangle$ \\
  &  Master problem & $\min_{X \in \mathcal{X}_0 }\;\hat{f}_t(X)  + \frac{\alpha}{2 } \|X - \currposi\|^2$ & $\min_{y \in \RR^m }\;\hat{f}_t(y)  + \frac{\alpha}{2 } \|y - \ycurrposi\|^2$ \\
  &  Current information  & Top $\rcurrent$ eigenvectors of $-\candidate$  & Top $\rcurrent$ eigenvectors of $\Ajmap\left(\ycandidate\right) - C$ \\
    \hline
  \multirow{4}{*}{\textit{Conv.}} & Primal feasibility & $\Amap(\currposi) -b =0, \lambda_{\min}(\currposi)\geq -\epsilon$ & $\| \Amap (\Wtstar) -b\|^2 \leq \epsilon, \Wtstar \succeq 0$ \\ 
  & Dual feasibility & $\left\|\Wtstar+\Ajmap(\ytstar) - C\right\|^2 \leq \epsilon,  \Wtstar\succeq 0$ & $\lambda_{\min}\left(\Ajmap \left( \ycurrposi \right)  - C  \right)\geq -\epsilon$ \\
   & Duality gap & $\lvert \langle C, \currposi \rangle - \langle  b, \ytstar \rangle \rvert \leq \sqrt{\epsilon}$ & 
    $\lvert \langle C, \Wtstar\rangle- \langle b, \ycurrposi \rangle \rvert \leq \sqrt{\epsilon}$ \\
  &  Bound on $\rcurrent$  & $\max_{\Xstar \in \Pstar }\dim(\text{null}(\Xstar))$  & $\max_{(\ystar, \Zstar) \in \Dstar }\dim(\text{null}( \Zstar))$ \\
    \bottomrule 
    \end{tabular}
    }
\end{table*}

It is clear that both \SBMP~in \Cref{alg:(r_p-r_c)-SBM-P} (primal SDPs) and \SBMD~in \Cref{alg:(r_p-r_c)-SBM-D} (dual SDPs) follow the same framework of the generic bundle method (see \Cref{algorithm:bundle}). One main difference is that \SBMP~solves a constrained nonsmooth problem \cref{eq:SDP-primal-penalized} while \SBMD~deals with an unconstrained nonsmooth problem \cref{eq:SDP-dual-penalized}. The unconstrained nonsmooth problem \cref{eq:SDP-dual-penalized} is in the form of eigenvalue minimization which has extensive literature \cite{overton1992large}. \Cref{tb:comparison-SBMP-SBMD} presents a comparison of the computational details and convergence results between \SBMP~and \SBMD. 
In particular, we have the following observations:
\begin{itemize}
    \item 
\SBMP~generates iterates $(\nextposi,\Wtstar,\ytstar)$ that 1) \textit{exactly} satisfy the primal affine constraint $\Amap(\nextposi) = b$ and dual PSD constraint $\lambda_{\min}(\Wtstar) \geq 0$, but 2) \textit{do not exactly} satisfy the primal PSD constraint $\lambda_{\min}(\nextposi) \geq -\epsilon$ and the dual affine constraint $\left\|\Wtstar+\Ajmap(\ytstar) - C\right\|^2 \leq \epsilon$. 
\item \SBMD~outputs iterates $(\Wtstar,\ytstar)$ that 1) \textit{exactly} satisfy the dual affine constraint (by construction) and the primal PSD constraint $(\lambda_{\min}(\Wtstar) \geq 0)$, but 2) \textit{do not exactly }satisfy the primal affine constraint $\| \Amap (\Wtstar) -b\|^2 \leq \epsilon$ and the dual PSD constraint $\lambda_{\min}(C-\Ajmap(\ynextposi))\geq -\epsilon$.
\item Each iteration of \SBMP~and \SBMD~requires solving a small quadratic SDP in the form of \cref{eq:SBM-ms-pb-QP-abstract}. When choosing the same values for $\rpast$ and $\rcurrent$, the subproblems from \SBMP~and \SBMD~have the same dimension, and thus the computational complexity is very similar (although their constructions of \cref{eq:SBM-ms-pb-QP-abstract} are slightly different; see \Cref{appendix:reformulation}).

\end{itemize}

 Both \SBMP~and \SBMD~have very similar sublinear and linear convergence rates, as shown in \Cref{thm: sublinearates-P-K,thm: linear-convergence} and \Cref{thm: sublinearates,thm: linear convergence of Block SBM under the extra condition strict complementarity}, respectively. However, we here highlight a key difference in the condition for linear convergence.   
 When constructing the lower approximation model at each iteration, \SBMP~selects the top $\rcurrent$ eigenvectors of $-X^\star_{t+1}$ to approximate the null space of the primal optimal solutions $\Xstar$, while \SBMD~uses $\rcurrent$ eigenvectors of $\Ajmap(\ynextposi)-C$ to approximate the null space of the dual optimal solutions $\Zstar$. If the null space of the primal optimal solutions in $\Pstar$ has a smaller dimension than that of the dual optimal solutions in $ \Dstar$, i.e.,
\begin{subequations}
    \begin{align} \label{eq:low-rank-solution-dual}
\max_{\Xstar \in \Pstar }\dim(\text{null}(\Xstar)) \ll \max_{ (\ystar, \Zstar) \in \Dstar }\dim(\text{null}( \Zstar)),
    \end{align}
then the master problem in \SBMP~can select a smaller number of eigenvectors, leading to a smaller PSD constraint in \cref{eq:SBM-ms-pb-QP-abstract}. This is reflected in the linear convergence condition on $r_c$; see \cref{eq: rank-condition,eq:rank-condition-dual}. In this case, it will be more numerically beneficial to apply \SBMP~that solves the primal SDPs directly as it is easier to solve the subproblem \cref{eq:SBM-ms-pb-QP-abstract} at each iteration. If, on the other hand, we have the following relationship
 \begin{align} \label{eq:low-rank-solution-primal}
\max_{\Xstar \in \Pstar }\dim(\text{null}(\Xstar)) \gg \max_{ (\ystar,\Zstar) \in \Dstar }\dim(\text{null}( \Zstar)),
    \end{align}
then it will be more numerically beneficial to apply \SBMD~that solves the dual SDPs directly. 
\end{subequations}

\begin{remark}[Low-rank primal solutions versus low-rank dual solutions]
\label{remark:low-rank-discussion}
    
Note that \Cref{eq:low-rank-solution-dual} implies that the dual SDP \cref{eq:SDP-dual} admits low-rank optimal solutions (i.e., $\mathrm{rank}(Z^\star)$ is small), while \Cref{eq:low-rank-solution-primal} indicates that the primal SDP \cref{eq:SDP-primal} has low-rank optimal solutions (i.e., $\mathrm{rank}(X^\star)$ is small). Therefore, it is crucial to choose the appropriate algorithm to solve the problem if we have prior knowledge of the rank property of SDPs. For instance, SDP-based optimization problems from sum-of-square relaxation and their applications \cite{parrilo2003semidefinite,lasserre2009moments} are likely to admit low-rank dual optimal solutions in \cref{eq:SDP-dual}. On the other hand, SDP relaxation of combinatorial problems such as Max-Cut \cite{yurtsever2021scalable}, matrix completion \cite{candes2012exact}, and phase retrieval \cite{candes2015phase} are likely to admit low-rank primal solutions in \cref{eq:SDP-primal}. We provide numerical experiments of SOS optimization and Max-Cut in \Cref{sec:numerical-results}, which indeed validate the effect of the rank property on the convergence behavior of \Cref{alg:(r_p-r_c)-SBM-P,alg:(r_p-r_c)-SBM-D}. We finally note that the low-rank properties also depend on how SDPs are formulated in different applications; see the conversion between primal and dual SDPs in \Cref{appendix:conversion}.  
\end{remark}
    \section{Implementation and numerical experiments} \label{sec:numerical-results}
We have implemented \SBMP~and \SBMD~in \Cref{alg:(r_p-r_c)-SBM-P,alg:(r_p-r_c)-SBM-D} in an open-source MATLAB package, which is available at 
\begin{center}
\url{https://github.com/soc-ucsd/specBM}. 
\end{center}
In this section, we discuss some implementation details and present three sets of numerical experiments to validate the performance of \SBMP~and \SBMD, especially their linear convergence behaviors. The numerical results confirm the discussions in \Cref{sec:comparison-connection}: \SBMP~works better when the dual SDP \cref{eq:SDP-dual} admits low-rank solutions (i.e., \cref{eq:low-rank-solution-dual} holds). Similarly, \SBMD~is more beneficial when the primal SDP \cref{eq:SDP-primal} admits low-rank solutions (i.e., \cref{eq:low-rank-solution-primal} holds). 

In \Cref{sbusec:random-SDPs}, similar to \cite[Section 5.1]{ding2020revisit}, we consider SDPs with randomly generated problem data, which demonstrates that \SBMP~and \SBMD~admit sublinear convergence under different configurations of $(\rpast,\rcurrent)$ and that the algorithms have a linear convergence when the rank conditions in \Cref{eq: rank-condition,eq:rank-condition-dual} hold. 
 In \Cref{subsec:max-cut}, we consider a benchmark combinatorial problem -- Max-Cut and its standard SDP relaxation, which is likely to admit low-rank primal solutions. In this case, it is more desirable to apply
 \SBMD~than \SBMP. In \cref{subsec-quartic-poly}, we show that 
 \SBMP~is well-suited for SDP relaxation rising from sum-of-squares (SOS) optimization, which appears to admit low-rank dual solutions. In this case, a small number of current eigenvectors $\rcurrent$ is sufficient to fulfill the rank condition \Cref{eq: rank-condition} that ensures fast linear convergence. To highlight the efficiency of \SBMP, we further compare it with the state-of-the-art interior-point and first-order SDP solvers: we choose \textsf{SDPT3} \cite{toh1999sdpt3} and \textsf{MOSEK} \cite{mosek} as the interior-point solvers, and we select \textsf{CDCS} \cite{CDCS} and \textsf{SDPNAL+} \cite{yang2015sdpnal} as the first-order solvers.
 
 All numerical experiments are conducted on a PC with a 12-core Intel i7-12700K CPU$@$3.61GHz and 32GB RAM.

\subsection{Implementation details}

One major computation in each iteration of \SBMP~and \SBMD~is to solve the master problems \cref{eq:SBP-equivalence} and \cref{eq:SBD-equivalence}. As discussed in \Cref{remark:master-problem-primal}, we have implemented an automatic transformation from \cref{eq:SBP-equivalence} and \cref{eq:SBD-equivalence}  into standard conic form \cref{eq:SBM-ms-pb-QP-abstract}. Then, a subroutine is required to solve \cref{eq:SBM-ms-pb-QP-abstract}. In our current implementation, we used \textsf{MOSEK} \cite{mosek} to get an exact solution of \cref{eq:SBM-ms-pb-QP-abstract} when $r = \rpast +\rcurrent > 1$ and implemented the analytical solution in \Cref{appendix: r=1} when $\rpast = 0, \rcurrent = 1$. We note that customized algorithms can be developed to \cref{eq:SBM-ms-pb-QP-abstract} with varying accuracy which will further improve numerical efficiency at each iteration.  

To form the lower approximation models \cref{eq:F_W_t_P_t} and \cref{eq:F_d_X_t_V_t}, we computed eigenvalues and eigenvectors using the routine \texttt{eig} in MATLAB. Faster eigenvalue/eigenvector computations can be implemented using \textsf{dsyevx} in \textsf{LAPACK}, similar to \cite[Section 5]{yang2015sdpnal}. 
The orthogonalization process to update the matrix $P_{t+1}$ in \cref{eq:update-P-t} was implemented using the routine \texttt{orth} in MATLAB.

\subsubsection{Adaptive strategy on the regularization parameter.}

\Cref{alg:(r_p-r_c)-SBM-P,alg:(r_p-r_c)-SBM-D} are guaranteed to converge with any regularization parameter $\alpha > 0$ in the subproblems \cref{eq:SBM-subproblem,eq:SBMD-subproblem}. Yet, the value of $\alpha$ largely influences the practical convergence performance, as highlighted in \cite{kiwiel1990proximity} and \cite[Page 380]{ruszczynski2011nonlinear}. In our implementation of \Cref{alg:(r_p-r_c)-SBM-P}, the  parameter $\alpha$ uses the adaptive updating rule below\footnote{The implementation of \Cref{alg:(r_p-r_c)-SBM-D} uses the same updating rule and replaces  $\currposi$, $\candidate$, $F$, and $\hat{F}_{(\Bar{W}_t,P_t)}$ with $\ycurrposi$, $\ycandidate$, $F_\mathrm{d}$ and $\hat{F}_{\mathrm{d},(\Bar{W}_t,P_t)}$  respectively.}:
\begin{align*}
    \alpha = 
    \begin{cases}
    \min\{2\alpha,\alpha_{\max} \},~\text{if }\ml \left(F(\currposi) - \hat{F}_{(\bar{W}_t,P_t)}(\candidate)\right) \geq F(\currposi) -  F(\candidate) ~\text{and}~ N_{\mathrm{c}} \geq N_{\min}\\
         \max\{\alpha/2,\alpha_{\min} \},~\text{if }  \mr \left(F(\currposi) - \hat{F}_{(\bar{W}_t,P_t)}(\candidate)\right) \leq F(\currposi) -  F(\candidate), 
    \end{cases}
\end{align*}
where $\mr > \beta $ and $ 0 < \ml < \beta$ are two nonnegative parameters that indicate the effectiveness of the current approximation model $ \hat{F}_{(\bar{W}_t,P_t)}$ and the candidate point $\candidate$, $\alpha_{\min} $ and $ \alpha_{\max}$ are two nonnegative parameters that keep $\alpha$ staying in the interval $[\alpha_{\min},\alpha_{\max}]$, $N_{\mathrm{c}}$ counts the number of consecutive null steps, and $N_{\min}$ is the threshold that controls the frequency of increasing $\alpha$.

In our  implementation for
 \Cref{alg:(r_p-r_c)-SBM-P,alg:(r_p-r_c)-SBM-D},
the default parameters are chosen as $\ml = 0.001,  
  \alpha_{\min} = 10^{-5},\alpha_{\max} = 100, ~\text{and}~ N_{\min} = 10.$ 
  The parameter $\beta$ and $\mr$ were tuned slightly for different classes of instances in our experiments.
The initial points are chosen as $\Omega_0 = I$ and $\omega_0 = 0$ for
\Cref{alg:(r_p-r_c)-SBM-P} and \Cref{alg:(r_p-r_c)-SBM-D} respectively.

\subsubsection{Suboptimality measures.} 
For the pair of primal and dual SDPs \cref{eq:SDP-primal,eq:SDP-dual}, we measure the feasibility and optimality of a candidate solution $(X,y,Z) \in \mathbb{S}^n \times \mathbb{R}^m \times \mathbb{S}^n$ using 
\begin{subequations}\label{eq:measures-optimality}
    \begin{align}
    \eta_1 &= \frac{\|\mathcal{A}(X) - b\|}{1 + \|b\|}, \qquad \quad\eta_2 = \max\{0,\lambda_{\max}(-X) \}, \label{eq:suboptimality-primal}\\
    \eta_3 &= \frac{\|\mathcal{A}^*(y) + Z - C\|}{1 + \|C\|}, \quad \eta_4 = \max\{0,\lambda_{\max}(-Z) \}, \label{eq:suboptimality-dual}\\
    \eta_5 &= \frac{|\langle C, X \rangle - b^\tr y|}{1 + |\langle C, X \rangle| + |b^\tr y|}.  \label{eq:suboptimality-gap}
    \end{align}
\end{subequations}
In \cref{eq:suboptimality-primal}, $\eta_1$ and $\eta_2$ measure the violation of the affine constraint and the conic constraint in the primal SDP, respectively. The measures $\eta_3$/$\eta_4$ in \cref{eq:suboptimality-dual} quantify the violation of the affine/conic constraints in the dual SDP. The last index $\eta_5$ in \cref{eq:suboptimality-gap} measures the duality gap. 

As stated in \Cref{thm: sublinearates-P-K}, all iterates $\currposi$ (primal variable), $\ytstar$ and $\Wtstar$ (dual variables) from \SBMP~ensure $\eta_1 = 0$ and $\eta_4 = 0$ (up to machine accuracy). On the other hand, the iterates $\Wtstar$ (primal variable) and $\omega_t$ (dual variable) from \SBMD~guarantee $\eta_2 = 0$ and $\eta_3 = 0$ (see \Cref{thm: sublinearates}).
In \Cref{sbusec:random-SDPs,subsec:max-cut}, we ran \SBMP~and \SBMD~for a fixed number of iterations and report the measures $\eta_1, \ldots, \eta_5$ for the final iterate.  
In \Cref{subsec-quartic-poly}, for a given tolerance $\textsf{tol} \geq 0$, we terminate the algorithms when 
$$\max\{\eta_1, \ldots, \eta_5\} \leq \textsf{tol}.$$ The performance of \SBMP~was compared with the baseline solvers \textsf{SDPT3} \cite{toh1999sdpt3}, \textsf{MOSEK} \cite{mosek}, \textsf{CDCS} \cite{CDCS} and \textsf{SDPNAL+} \cite{yang2015sdpnal} in our last experiment. In \Cref{sbusec:random-SDPs,subsec:max-cut}, $f(\omega_t)$ denotes the value of the generic cost function \cref{eq:non-smooth-problem}, and it refers to the cost function in \cref{eq:SDP-primal-penalized} for primal SDPs and in \cref{eq:SDP-dual-penalized} for dual SDPs.   

\subsection{SDPs with randomly generated problem data}
\label{sbusec:random-SDPs}

Our first experiment demonstrates the (sub)linear convergence of \SBMP~and \SBMD\; under different configurations of $(\rpast,\rcurrent)$. Similar to \cite[Section 5.1]{ding2020revisit}, we randomly generated two SDPs, satisfying strict complementarity, in the form of \cref{eq:SDP-primal} and \cref{eq:SDP-dual}. Both SDP instances have a PSD constraint of dimension $n = 1000$ and affine constraints of size $m = 200$.  
 The first SDP admits a low-rank dual solution ($\text{rank}(\Zstar) = 3$) and the second SDP admits a low-rank primal solution ($\text{rank}(\Xstar) = 3$). Details of generating these SDP instances are discussed in \Cref{subsec:data-generation}. 

We consider two different configurations of the parameters $\rpast$ and $\rcurrent$: 
\begin{itemize}
    \item $\rpast = 0$, while changing $\rcurrent = r^\star -1,~\rcurrent = r^\star,~\text{and } \rcurrent = r^\star+1,$ 
    \item $\rcurrent = 1$, while changing $\rpast = r^\star-2,~\rpast = r^\star-1,~\text{and } \rpast = r^\star,$
\end{itemize}
where we set $r^\star = 3$ for both SDP instances (thus we have  $r^\star = \text{rank}(\Zstar)$ or $r^\star = \text{rank}(\Xstar)$). 
In the first setting, we do not consider any past information but only rely on the current information $\rcurrent$, while in the second setting, we keep the minimum amount of current information $\rcurrent$ and rely on the accumulated past information $\rpast$. 
As the discussion in \Cref{sec:comparison-connection}, \SBMP~and \SBMD~iteratively approximate two different spaces: the null space of the primal solutions $\Xstar$ and the null space of the dual solutions $\Zstar$, respectively. When an SDP admits low-rank dual solutions (i.e., the null space of the primal solutions $X^\star$ has a low dimension), it is computationally more beneficial to choose \SBMP~to solve the SDP. On the other hand, when an SDP admits low-rank primal solutions (i.e., the null space of the dual solutions $Z^\star$ has a low dimension), it is computationally more beneficial to choose \SBMD. 

We use the first SDP instance with a low-rank dual solution to demonstrate the benefits of \SBMP~and its fast convergence guarantees in \Cref{thm: linear-convergence}. For this SDP instance, we ran \SBMP~for both settings and \SBMD~for only the first setting. Then, we use the second SDP instance with a low-rank primal solution to highlight the benefits of \SBMD~and validate its fast convergence guarantees in \Cref{thm: linear convergence of Block SBM under the extra condition strict complementarity}. 
For the second SDP instance with a low-rank primal solution, we ran \SBMD~for both settings and \SBMP~for only the first setting. The penalty parameter $\rho$ is set $2\Trace(\Zstar)+2$ and $2\Trace(\Xstar)+2$ for \SBMP~and \SBMD~respectively. The step-size parameters $\beta$ and $\mr$ are chosen as $0.4$ and $0.7$ respectively.

\begin{figure}[t]
     \centering
     \begin{subfigure}[b]{0.49\textwidth}
         \centering
         \includegraphics[width=0.85\textwidth]{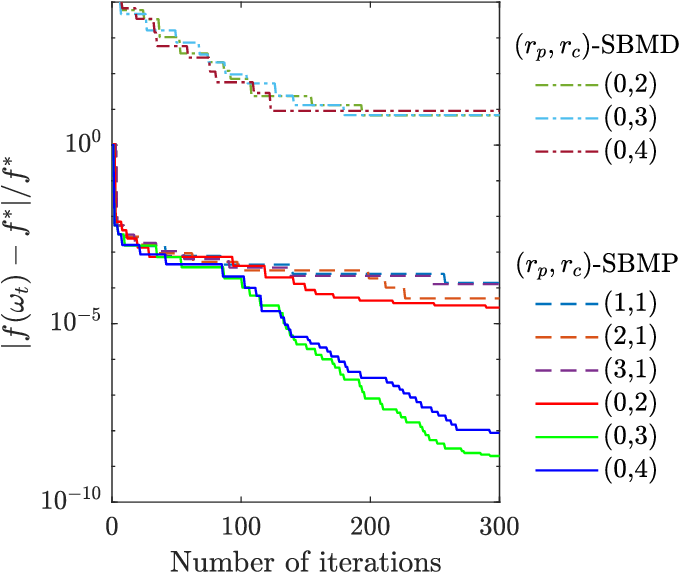}
         \caption{An SDP instance with $\mathrm{rank}(\Zstar)=3$}
     \end{subfigure}
     \hfill
     \begin{subfigure}[b]{0.49\textwidth}
         \centering
         \includegraphics[width=0.85\textwidth]{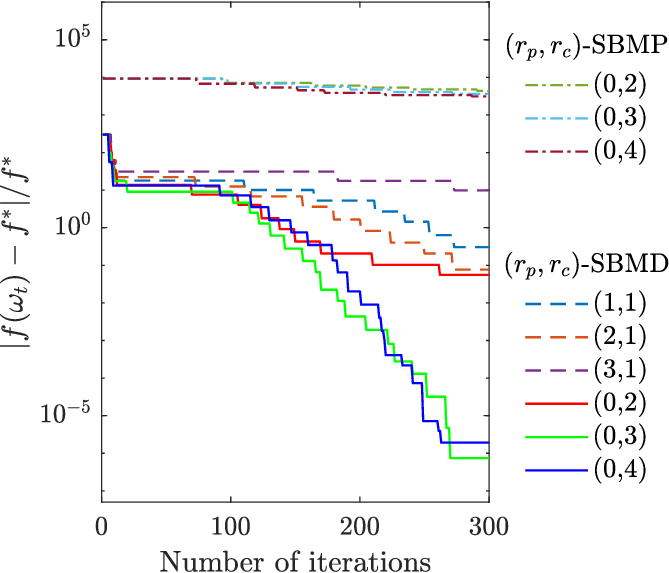}
         \caption{An SDP instance with $\mathrm{rank}(\Xstar)=3$}
     \end{subfigure}
    \caption{The relative optimality gap of different choices of $r_{\mathrm{p}}$ and $r_{\mathrm{c}}$ in \SBMP~and \SBMD~for solving two SDPs with $X \in \mathbb{S}^{1000}_+$: the left SDP instance admits a low-rank dual solution $\mathrm{rank}(Z^\star) = 3$ which suits well for~\SBMP, while the right SDP instance admits a low-rank primal solution $\mathrm{rank}(X^\star) = 3$ that suits well for~\SBMD. }
    \label{figure:experiment-randomSDPs}
\end{figure}

\begin{table}[t]
  \begin{center}
    \caption{Computational results of \SBMP~and\SBMD~for solving two random SDPs ($n=1000$ and $m = 200$) under different values of $\rpast$ and $\rcurrent$. ``Semi Feasi.'' denotes the violation of PSD constraints $\max\{\eta_2,\eta_4\}$,  ``Affine Feasi.'' denotes the violation of affine constraints $\max\{\eta_1,\eta_3\}$, ``Dual Gap'' denotes the duality gap $\eta_5$ in \Cref{eq:measures-optimality}, and ``Cost Opt.'' denotes the cost-value gap $\etac = {(f(\omega_t)-f^\star)}/{f^\star}$.}
    \label{tb:experiment-randomSDPs}
   { \small 
    \begin{tabular}{l*{7}{l}}
    \toprule
    SDP instance &  Algorithm & $(r_{\mathrm{p}}, r_{\mathrm{c}})$& Semi Feasi. & Affine Feasi. & Dual Gap & Cost Opt. \\
    \midrule
    \multirow{9}{*}{
            $
            \begin{aligned}
                & \text{Instance 1} \\
                & \text{rank}(\Zstar) = 3 
            \end{aligned}
           $
        } 
       &  \multirow{3}{*}{
            \SBMD
        } & \textsf{$(0,2)$} & $-7.53\ee{-1}$ & $1.55\ee{-4}$ & $4.28\ee{-5}$ & $13.1$
    \\ & &\textsf{$(0,3)$} & $-7.59\ee{-1}$ & $1.54\ee{-4}$ & $3.62\ee{-4}$ & $13.1$
    \\ & &\textsf{$(0,4)$} & $-9.96\ee{-1}$ & $3.31\ee{-4}$ & $1.10\ee{-4}$ & $28.6$
    \\\cline{2-7}  &  \multirow{6}{*}{
            \SBMP
        } & \textsf{$(1,1)$} & $-3.67\ee{-3}$ & $3.27\ee{-4}$ & $1.24\ee{-5}$ & $1.38\ee{-4}$
    \\ & &\textsf{$(2,1)$} & $-2.92\ee{-3}$ & $3.18\ee{-5}$ & $1.03\ee{-5}$ & $1.02\ee{-4}$
    \\ & &\textsf{$(3,1)$} & $-7.22\ee{-3}$ & $9.00\ee{-6}$ & $9.74\ee{-7}$ & $2.17\ee{-4}$
    \\ & &\textsf{$(0,2)$} & $-3.58\ee{-4}$ & $2.46\ee{-4}$ & $5.06\ee{-5}$ & $3.22\ee{-5}$
    \\ & &\textsf{$(0,3)$} & $-1.17\ee{-8}$ & $1.57\ee{-6}$ & $3.38\ee{-7}$ & $2.14\ee{-9}$
    \\ & &\textsf{$(0,4)$} & $-1.40\ee{-7}$ & $3.22\ee{-6}$ & $6.91\ee{-7}$ & $1.05\ee{-8}$
    \\ 
    \hline
    \multirow{9}{*}{
            $
            \begin{aligned}
                & \text{Instance 2} \\
                & \text{rank}(\Xstar) = 3
            \end{aligned}
           $
        } 
         &  \multirow{3}{*}{
            \SBMP
        } & \textsf{$(0,2)$} & $-144$  & $6.82\ee{-1}$ & $9.99\ee{-1}$  &  $4801$ 
    \\ & &\textsf{$(0,3)$} & $-121$  & $5.61\ee{-1}$ & $9.99\ee{-1}$ &  $4041$
    \\ & &\textsf{$(0,4)$} & $-106$  & $4.58\ee{-1}$ & $9.99\ee{-1}$  & $3369$
    \\\cline{2-7}  &  \multirow{6}{*}{
            \SBMD
        } & \textsf{$(1,1)$} & $-1.80$ &  $3.11\ee{-2}$ &   $8.21\ee{-3}$  &  $6.34\ee{-1}$
    \\ & &\textsf{$(2,1)$} & $-5.11\ee{-1}$ & $2.00\ee{-2}$ &  $4.78\ee{-3}$  &  $2.06\ee{-1}$
    \\ & &\textsf{$(3,1)$} & $-4.95\ee{-1}$ & $4.25\ee{-3}$ &  $4.62\ee{-3}$ &  $17.7$
    \\ & &\textsf{$(0,2)$} & $-3.04\ee{-1}$ & $5.47\ee{-3}$ &  $6.74\ee{-4}$ & $1.03\ee{-1}$
    \\ & &\textsf{$(0,3)$} & $-4.66\ee{-6}$ & $6.65\ee{-5}$ &  $4.91\ee{-5}$ & $4.74\ee{-6}$
    \\ & &\textsf{$(0,4)$} & $-9.30\ee{-6}$ & $6.65\ee{-5}$ &  $1.32\ee{-5}$ & $3.94\ee{-6}$
    \\ 
    \toprule
    \end{tabular}
    }
  \end{center}
\end{table}

In all cases, we ran \SBMP~and \SBMD~for 300 iterations. In this experiment, we also computed the cost value gap as 
$
\etac = {(f(\omega_t)-f^\star)}/{f^\star}. 
$
The convergence behaviors of the cost value gap are illustrated in \Cref{figure:experiment-randomSDPs}. In \Cref{tb:experiment-randomSDPs}, we list the suboptimality measures for the final iterates, where ``Semi Feasi.'' denotes the violation of PSD constraints $\max\{\eta_2,\eta_4\}$,  ``Affine Feasi.'' denotes the violation of affine constraints $\max\{\eta_1,\eta_3\}$, ``Dual Gap'' denotes the duality gap $\eta_5$ in \Cref{eq:measures-optimality}, and ``Cost Opt.'' denotes the cost-value gap $\etac$.  As expected, the value of $\rcurrent$ greatly affects the convergence performance for both \SBMP~and~\SBMD. 
In the first SDP with a low-rank dual solution, we observe that \SBMP~has a fast convergence behavior when choosing $\rcurrent \geq 3 = \text{dim}(\text{null}(\Xstar))$, while \SBMD~has a slow performance in all settings. On the other hand, in the second SDP with a low-rank primal solution, \SBMD~enjoys the fast convergence when $\rcurrent \geq 3 = \text{dim}(\text{null}(\Zstar))$, while \SBMP~converges poorly in all different configurations. The numerical results confirm the theoretical convergence results in \Cref{thm: linear-convergence,thm: linear convergence of Block SBM under the extra condition strict complementarity} and our discussions in \Cref{sec:comparison-connection}. 
\\
\subsection{Max-Cut}\label{subsec:max-cut}
\begin{figure}[t]
     \centering
     \begin{subfigure}[b]{0.49\textwidth}
         \centering
         \includegraphics[width=0.85\textwidth]{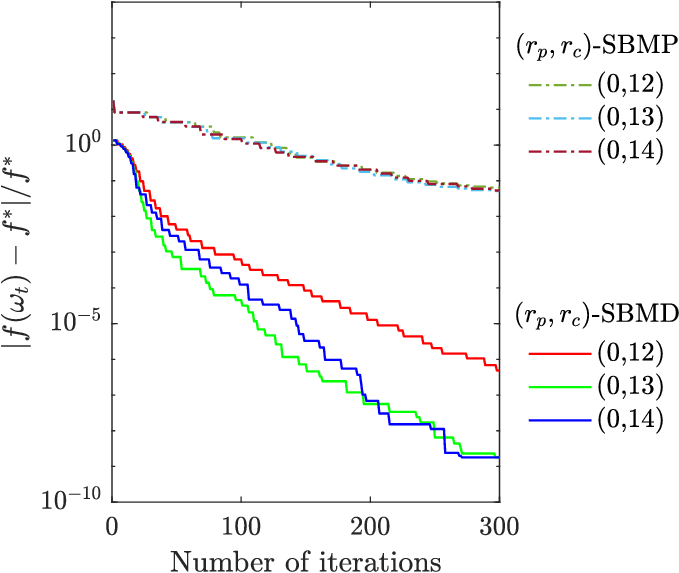}
         \caption{Max-Cut problem with Graph \textsf{G1}}
     \end{subfigure}
     \hfill
     \begin{subfigure}[b]{0.49\textwidth}
         \centering
         \includegraphics[width=0.85\textwidth]{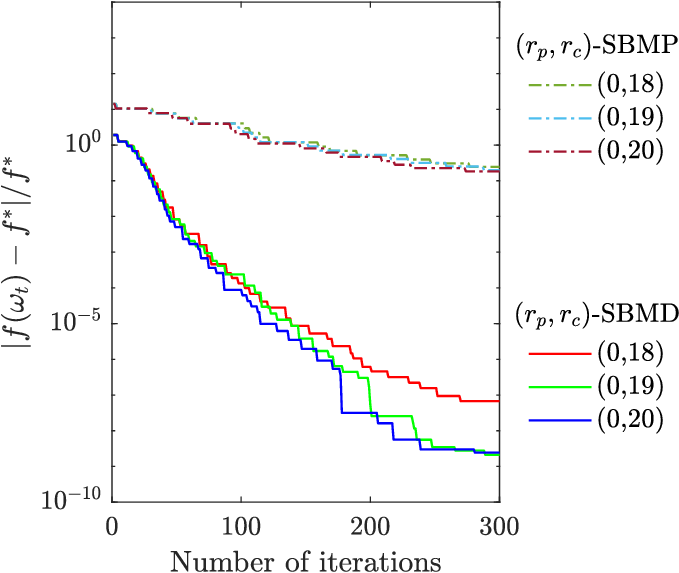}
         \caption{Max-Cut problem with Graph \textsf{G25}}
     \end{subfigure}
    \caption{The relative optimality gap of different choices of $r_{\mathrm{p}}$ and $r_{\mathrm{c}}$ in \SBMP~and \SBMD~for solving SDP relaxations of two Max-Cut problems.}
    \label{figure:experiment-maxcut}
\end{figure}

In this experiment, we consider the maximum cut problem, which is a benchmark combinatorial optimization problem. The SDP relaxation is likely to have low-rank primal solutions, for which \SBMD~is better suited. Consider an undirected graph $\mathcal{G}(\mathcal{V},\mathcal{E})$ defined by a set of vertices $\mathcal{V} = \{1,2,\ldots,n\}$ and a set of edges $\mathcal{E} \subseteq \mathcal{V} \times \mathcal{V}$, and each edge  $\{i,j\} \in \mathcal{E}$ has a weight $w_{ij} = w_{ji}$. The max-cut problem aims to find a maximum cut that separates the vertices into two different groups. This can be formulated as a binary quadratic program \cite{goemans1995improved} 
\begin{equation} \label{eq:max-cut}
         \min_{x_i^2 = 1, i = 1, \ldots, n} \;\; \frac{1}{4} x^\tr L x,  \\
\end{equation}
where $L \in \mathbb{S}^n$ is the Laplacian matrix of $\mathcal{G}(\mathcal{V},\mathcal{E})$ defined as $ L_{ij} = \sum_{j \neq i } w_{ij}$ if $i = j$, and $L_{ij} = -w_{ij}$ otherwise. 
A well-known semidefinite relaxation \cite{goemans1995improved} for the Max-Cut problem \cref{eq:max-cut} is   
\begin{equation}
    \begin{aligned}
       \min_{X} \quad & \frac{1}{4}  \langle L,X \rangle  \\
        \mathrm{subject~to} \quad & X_{ii} = 1, \ldots, n,   \\
        & X \in \mathbb{S}^n_+.
        \label{eq:Maxcut-sdp}
    \end{aligned}
\end{equation}
If the optimal solution of SDP relaxation \Cref{eq:Maxcut-sdp} satisfies $\mathrm{rank}(X^\star) = 1$, then the SDP relaxation \Cref{eq:Maxcut-sdp} is exact and one can recover a globally optimal solution to \Cref{eq:max-cut}. However, the rank-one solution may not exist. Instead, many max-cut instances admit low-rank optimal solutions: $1 < \mathrm{rank}(X^\star) \ll n$, as observed in \cite[Section 1.3.2]{yurtsever2021scalable}, \cite[Section 5.1]{ding2020revisit}. For these SDP instances, we expect that \SBMD~exhibits faster linear convergence when choosing a small value of  $\rcurrent$, while \SBMP~only has slower sublinear convergence for the same choice of $\rcurrent$.  

We run both \SBMP~and \SBMD~for a fixed number of 300 iterations for two Max-Cut instances \textsf{G1} and \textsf{G25} from \cite{davis2011university}. There are 800 nodes in graph \textsf{G1}, and 2000 nodes in graph \textsf{G25}, and so the PSD constraints in \cref{eq:Maxcut-sdp} have a dimension of 800 and 2000, respectively. Despite the large value of $n$, we observed a low rank primal solution $\mathrm{rank}(X^\star) = 12$ for \textsf{G1} and $\mathrm{rank}(X^\star) = 19$ for \textsf{G25}.
Note that the SDP \Cref{eq:Maxcut-sdp} has a constant trace property $\Trace(X) = n$. Thus, we chose the penalty parameter $\rho = 2n+2$ for \SBMD.~We can also estimate the penalty parameter for \SBMP~as\footnote{This estimate is due to the structure of Max-Cut problems \cref{eq:Maxcut-sdp} that $\Trace\left(\sum_{i=1}^m A_i y_i \right)= b^\tr y, \forall y \in \mathbb{R}^m$, and the fact that $\hat{y} = \left(\frac{n}{4}\min\{0,\lambda_{\min}(L)\}\right)\mathbf{1}$ is a dual feasible solution, where $\mathbf{1} \in \mathbb{R}^m$ is an all one vector. Hence, we have $\Trace(\Zstar) = \frac{1}{4}\Trace(L) - \sum_{i=1}^m \Trace(A_i) \ystar_i = \frac{1}{4}\Trace(L) - b^\tr \ystar \leq \frac{1}{4}\Trace(L) - b^\tr \hat{y}, \forall (\ystar, \Zstar) \in \Dstar.$} $\rho = 2\left(\frac{1}{4}\Trace(L)-\frac{n}{4}\min\{0,\lambda_{\min}(L)\}\right)+2$. We set $r_{\mathrm{p}} = 0$ since this parameter has little impact on convergence in this case. The other parameters were chosen as those in \Cref{sbusec:random-SDPs}.  

The numerical results are illustrated in \Cref{figure:experiment-maxcut} and \Cref{tb:experiment-MaxCut}. In all cases, compared with \SBMP, \SBMD~returns solutions with much higher accuracy within the same number of iterations. In particular, since the rank condition \Cref{eq:rank-condition-dual} is expected to hold, \SBMD~shows a faster linear convergence rate, while \SBMP~converges to an optimal solution slowly. Again, this is consistent with the theoretical expectation in \Cref{sec:comparison-connection}. 

\begin{table}[h]
  \begin{center}
    \caption{Computational results of \SBMP~and \SBMD~for solving max-cut SDPs \cref{eq:Maxcut-sdp} under different values of $\rpast$ and $\rcurrent$. ``Semi Feasi.'' denotes the violation of PSD constraints $\max\{\eta_2,\eta_4\}$,  ``Affine Feasi.'' denotes the violation of affine constraints $\max\{\eta_1,\eta_3\}$, ``Dual Gap'' denotes the duality gap $\eta_5$ in \Cref{eq:measures-optimality}, and ``Cost Opt.''  denotes the cost-value gap $\etac = {(f(\omega_t)-f^\star)}/{f^\star}$.}
    \label{tb:experiment-MaxCut}
    {\small
    \begin{tabular}{l*{6}{l}}
    \toprule
    Max-cut instance &   Algorithm & $(r_{\mathrm{p}}, r_{\mathrm{c}})$& Semi Feasi. & Affine Feasi. & Dual Gap & Cost Opt. \\
    \midrule
    \multirow{6}{*}{
            $
            \begin{aligned}
                & \text{G1} \\
                & \text{rank}(\Xstar) = 12 \\
                & n = 800 \\
                & m = 800
            \end{aligned}
           $
        } 
        &  \multirow{3}{*}{
            \SBMP
        }
       & \textsf{$(0,12)$} & $-1.39\ee{-2}$ & $7.28\ee{-2}$ & $1.68\ee{-2}$ & $6.85\ee{-2}$
    \\ & & \textsf{$(0,13)$} & $-1.39\ee{-2}$ & $6.94\ee{-2}$ & $1.77\ee{-2}$ & $5.49\ee{-2}$
    \\ & & \textsf{$(0,14)$} & $-2.36\ee{-2}$ & $6.63\ee{-2}$ & $1.47\ee{-2}$ & $5.97\ee{-2}$\\
           \cline{2-7} &  \multirow{3}{*}{
            \SBMD
        }
     & \textsf{$(0,12)$} & $-3.80\ee{-6}$ & $7.16\ee{-4}$ & $1.23\ee{-5}$ & $6.86\ee{-7}$
    \\ & & \textsf{$(0,13)$} & $-2.52\ee{-9}$ & $2.69\ee{-5}$ & $4.58\ee{-8}$ & $2.31\ee{-9}$
    \\ & & \textsf{$(0,14)$} & $-1.69\ee{-9}$ & $4.42\ee{-5}$ & $1.83\ee{-7}$ & $1.97\ee{-9}$
    \\ 
    \hline
    \multirow{6}{*}{
            $
            \begin{aligned}
                & \text{G25} \\
                & \text{rank}(\Xstar) = 19\\
                & n =2000 \\
                & m = 2000
            \end{aligned}
           $
        } 
    &  \multirow{3}{*}{
            \SBMP
        }
       & \textsf{$(0,18)$} & $-1.27\ee{-1}$ & $4.73\ee{-1}$ & $3.88\ee{-2}$ &  $3.07\ee{-1}$ 
    \\ & & \textsf{$(0,19)$} & $-9.24\ee{-2}$ & $1.18$        & $2.33\ee{-1}$ &  $2.51\ee{-1}$
    \\ & & \textsf{$(0,20)$} & $-8.27\ee{-2}$ & $6.62\ee{-1}$ & $1.30\ee{-1}$  & $2.24\ee{-1}$

    \\ \cline{2-7} & \multirow{3}{*}{
            \SBMD
        }
     & \textsf{$(0,18)$} & $-2.17\ee{-7}$ & $6.34\ee{-4}$ & $3.63\ee{-6}$ & $9.37\ee{-8}$
    \\ & & \textsf{$(0,19)$} & $-1.72\ee{-9}$ & $3.04\ee{-6}$ & $1.77\ee{-8}$ & $2.76\ee{-9}$
    \\ & & \textsf{$(0,20)$} & $-2.83\ee{-9}$ & $2.76\ee{-5}$ & $2.99\ee{-8}$ & $3.01\ee{-9}$
    \\
    \toprule
    \end{tabular}
    }
  \end{center}
\end{table}

\subsection{Quartic polynomial optimization on a sphere}
\label{subsec-quartic-poly}

In our last numerical experiments, we consider SOS relaxations for polynomial optimization, which are likely to admit low-rank dual solutions. We expect that \SBMP~is better suited than \SBMD. Our numerical results further show that \SBMP~outperforms 
a set of baseline solvers, including interior-point solvers \textsf{SDPT3} \cite{toh1999sdpt3}, \textsf{MOSEK} \cite{mosek}, and first-order solvers \textsf{CDCS} \cite{CDCS}, \textsf{SDPNAL+} \cite{yang2015sdpnal}. 

Consider a constrained polynomial optimization problem over a sphere
 $\min_{x \in \mathcal{S}^{n-1}} \; p_0(x),$  
where $p_0: \RR^n \rightarrow \RR$ is a polynomial and $\mathcal{S}^{n-1} = \{ x \in \RR^n \mid \|x\|^2 = 1\} $ is the $n$-dimensional unit sphere. This problem is in general NP-hard, but it can be approximated well using the moment/SOS relaxation \cite{parrilo2003semidefinite,lasserre2009moments}. 
In particular, the $k$th-order SOS relaxation  is  
\begin{equation}
    \begin{aligned}\label{eq:kth-SOS-hierachy}
    \max_{\gamma, \sigma_0, \psi_1} & \quad \gamma \\
     \mathrm{subject~to} & \quad  p_0(x) - \gamma -\psi_1 h(x) = \sigma_0 ,\\
                         & \quad \sigma_0 \in \Sigma[x]_{n,2k}, \;\psi_1 \in \RR[x]_{n,2(k-\lceil h \rceil)},
    \end{aligned}
\end{equation}
where $h(x) = \|x\|^2 -1$, $\lceil h \rceil = \lceil \mathrm{deg}(h)/2 \rceil$, $\RR[x]_{n,2(k-\lceil h \rceil)}$ denotes the real polynomial in $n$ variables and degree at most $2(k-\lceil h \rceil)$, and $\Sigma[x]_{n,2k}$ denote the cone of SOS polynomials in $ \RR[x]_{n,2k}$.  
It is well-known that \cref{eq:kth-SOS-hierachy} can be equivalently reformulated into the standard primal SDP in the form \cref{eq:SDP-primal} with some extra free variables; see e.g., \cite{parrilo2003semidefinite} for details. We observe that these SDPs are likely to admit low-rank dual solutions {(this observation is consistent with the flat extension theory on the moment side \cite[Theorem 3.7]{lasserre2009moments} when it is formulated as a dual SDP)}. 

Motivated by the benchmark problems \cite{zheng2022block}, we consider three instances of \cref{eq:kth-SOS-hierachy} in our numerical experiments:  
\begin{enumerate}
    \item Modified Broyden tridiagonal polynomial 
        \begin{align*}
            q_1(x) = & \left(\left(3-2 x_1\right) x_1-2 x_2+1\right)^2 + \\
        &\sum_{i=2}^{d-1}\left(\left(3-2 x_i\right) x_i-x_{i-1}-2 x_{i+1}+1\right)^2 +\left(\left(3-2 x_d\right) x_d-x_{d-1}+1\right)^2+\left(\sum_{i=1}^d x_i\right)^2.
        \end{align*}
    \item Modified Rosenbrock polynomial
       $  q_2(x) =  1 + \sum_{i=2}^{d} 100 \left( x_i - x_{i-1}^2 \right)^2 + \left( 1 - x_i \right)^2 + \left(\sum_{i=1}^d x_i\right)^2.$
    \item Random quartic polynomial
     $    q_3(x) =  \langle c_{d,4},[x]_{d,4}  \rangle ,$ 
    where $[x]_{d,4} $ is the standard monomial bases with $d$ variables and degree at most $4$, and $c_{d,4}$ is a randomly generated coefficient vector.
\end{enumerate}
We used the package \textsf{SOSTOOLS} \cite{sostools} to recast the SOS relaxation \Cref{eq:kth-SOS-hierachy} with $k = 2$ into a standard SDP in the form of \cref{eq:SDP-primal}.  
We tested \SBMP~for the above three polynomials with different dimensions (the performance of \SBMD~was very poor in our experiments, and we omitted it here). The dimension of the SDP relaxations ranges from $n = 496$ to $861$ and $m= 45,897$ to $134,889$.    
Recall in \cref{subsec:exact-penalty-SDP} that we need to choose the penalty parameter $\rho > \DZstar$ for \SBMP. For SDPs from \cref{eq:kth-SOS-hierachy}, we can show that any $\rho \geq (1+1)^2 = 4$ is a valid exact penalty parameter (see \Cref{lemma:const-trace-POP}) thanks to the unit sphere constraint. We thus chose $\rho =10$ for all cases. The parameters $(\rpast,\rcurrent)$ are chosen as $(0,3),(0,5),$ and $(0,7)$ for three different problems. In our experiments, we ran \SBMP~until it reached tolerance $1\ee{-4}$.  

\begin{table}
\centering
\caption{Computational results on the SOS relaxation \cref{eq:kth-SOS-hierachy} of quartic polynomial optimization on a sphere: $ \ep$ denotes the primal feasibility, $ \ed$ denotes the primal feasibility, and $\eg$ denotes the duality gap. Entries marked by {\sc oom} indicate ``out of memory'' runtime errors.  }
\label{tb:POPs} 
\adjustbox{max width=\textwidth}{
\centering
\small
\scalebox{1}{
    \begin{tabular}{|c|c|c|cccc||c|}
    \hline
     Dimension & Case & Metric & \textsf{SDPT3} \cite{toh1999sdpt3} & \textsf{MOSEK} \cite{mosek} & \textsf{CDCS} \cite{CDCS}  & \textsf{SDPNAL+} \cite{yang2015sdpnal} & \SBMP 
     \\
     \hline
     \hline
     \multirow{15}{*}{\Large$ \substack{d:\ 30 \\ \\  n:\ 496\  \\ \\ m:\ 45,879\ }$ } & \multirow{5}{*}{$q_1$} 
    							& $ \ep$  & \multirow{5}{*}{\textsc{oom}}  & $9.2\ee{-11}$ & $9.9\ee{-5}$  & $2.1\ee{-11}$ & $5.9\ee{-14}$\\
    							& & $\ed$ &  & $2.0\ee{-9}$ & $4.1\ee{-5}$  & $9.5\ee{-5}$ & $9.2\ee{-5}$\\
                                    & & $\eg$ &  & $5.9\ee{-12}$ & $9.0\ee{-6}$ & $1.0\ee{-2}$ & $6.8\ee{-6}$\\ 
                                    & & cost  &  & $-25.074$      & $-25.073$      & $-42.850$     & $-25.074$\\
    							& & time  &  & $887.1$       & $100.9$             & $1209$        & \textbf{17.5}\\
    \cline{2-8}
    							& \multirow{5}{*}{$q_2$} 
    							&   $\ep$ & \multirow{5}{*}{\textsc{oom}}  & $8.2\ee{-10}$ & $1.5\ee{-5}$   & $ 4.7\ee{-5} $ & $5.4 \ee{-14}$\\
    							& & $\ed$ &  & $1.9\ee{-8}$ & $9.3\ee{-5}$ & $8.1\ee{-5}$     & $8.9\ee{-5}$\\
    							& & $\eg$ &  & $2.9\ee{-10}$ & $3.9\ee{-5}$  & $6.7\ee{-3}$    & $5.3\ee{-5}$\\
                                    & & cost  &  & $-32.136$     & $-32.137$ & $-32.137$                          &$-32.135$    \\
    							& & time  &  & $583$      & $45.1$ & $17.3$ & \textbf{12.8} \\
    \cline{2-8}
    							& \multirow{5}{*}{$q_3$} 
    							&   $\ep$ & \multirow{5}{*}{\textsc{oom}}  & $9.1\ee-{9}$& $1.0\ee{-4}$  & $ 3.2\ee{-16}$ & $1.9 \ee{-14}$\\
    							& & $\ed$ & & $2.0\ee{-7}$ & $1.5\ee{-6}$ & $5.2\ee{-4}$  & $8.7\ee{-5}$\\
    							& & $\eg$ & & $4.9\ee{-9}$& $2.4\ee{-6}$  & $3.5\ee{-3}$  & $2.2\ee{-5}$\\
                                    & & cost  & & $2.023$     & $2.023$            & $2.021$      & $2.023$    \\
    							& & time  & & $645.2$      & $55.3$                  & $548.6$         & \textbf{16.9} \\
    \cline{2-8}
    \hline
    \hline
    \multirow{15}{*}{\Large$ \substack{d:\ 35 \\ \\  n:\ 666 \\ \\ m:\ 81,584}$ } & \multirow{5}{*}{$q_1$} 
    							& $\ep$   &\multirow{5}{*}{\textsc{oom}} & \multirow{5}{*}{\textsc{oom}}  & $9.6\ee{-5}$  & $1.1\ee{-15}$ & $9.9\ee{-14}$\\
    							& & $\ed$ &                    &                      & $1.9\ee{-5}$  & $6.8\ee{-5}$  & $9.9\ee{-5}  $\\
                                    & & $\eg$ &                    &                      & $2.1\ee{-5}$  & $2.7\ee{-4}$  & $9.4\ee{-6}  $\\ 
                                    & & cost  &                    &                      & $-30.053$        & $-30.070$     & $-30.050$\\
    							& & time  &                    &                      & $289.6$             & $3449.4$      & \textbf{41.4}\\
    \cline{2-8}
    							& \multirow{5}{*}{$q_2$} 
    							&   $\ep$ &\multirow{5}{*}{\textsc{oom}} & \multirow{5}{*}{\textsc{oom}}  & $2.2\ee{-5}$  & $ 4.2\ee{-5}$ & $1.3\ee{-13}$\\
    							& & $\ed$ &                    &  & $9.4\ee{-5}$  & $9.5\ee{-5}$    & $8.8\ee{-5}$\\
    							& & $\eg$ &                    &  & $7.7\ee{-5}$  & $2.4\ee{-3}$   & $2.0\ee{-5}$\\
                                    & & cost  &                    &  & $-37.100$  & $-37.221$      & $-37.091$  \\
    							& & time  &                    &  & $112.3$ & $100.5$ & \textbf{42.7}\\
    \cline{2-8}
    							& \multirow{5}{*}{$q_3$} 
    							&   $\ep$ &\multirow{5}{*}{\textsc{oom}} & \multirow{5}{*}{\textsc{oom}} & $1.0\ee{-4}$  & $ 9.3\ee{-13}$ & $1.9\ee{-14}$\\
    							& & $\ed$ &                    &  & $6.9\ee{-5}$  & $6.5\ee{-7}$  & $9.4\ee{-5}$\\
    							& & $\eg$ &                    &  & $3.5\ee{-5}$ & $6.2\ee{-3}$  & $2.5\ee{-6}$\\
                                    & & cost  &                    &  & $2.121$         & $2.039$      & $2.121$  \\
    							& & time  &                    &  & $170.1$           & $1208.4$         & \textbf{44.5}\\
    \cline{2-8}
    \hline
    \hline
    \multirow{15}{*}{\Large$ \substack{d:\ 40 \\ \\  n:\ 861 \\ \\ m:\ 134,889}$ } & \multirow{5}{*}{$q_1$} 
    							&   $\ep$ & \multirow{5}{*}{\textsc{oom}} & \multirow{5}{*}{\textsc{oom}} & $1.0\ee{-4}$ & $3.6\ee{-12}$ & $ 7.4\ee{-14}$\\
    							& & $\ed$ &                     &                     & $3.2\ee{-6}$ &  $1.0\ee{-4}$& $9.6\ee{-5} $\\
    							& & $\eg$ &                     &                     & $1.4\ee{-5}$ & $4.9\ee{-2}$ & $5.2\ee{-6}$\\
                                    & & cost  &                     &                     & $-35.037$    & $-47.792$ & $-35.034$ \\
    							& & time  &                     &                     & $738.9$      & $1881.1$ & \textbf{84.8}\\
    \cline{2-8}
    							& \multirow{5}{*}{$q_2$} 
    							&   $\ep$ & \multirow{5}{*}{\textsc{oom}} &\multirow{5}{*}{\textsc{oom}} & $1.7\ee{-5}$ & $3.3\ee{-14}$ & $1.3\ee{-13}$\\
    							& & $\ed$ &                     &                    & $9.3\ee{-5}$ & $9.9\ee{-5} $ & $5.5\ee{-5} $\\
    							& & $\eg$ &                     &                    & $5.8\ee{-5}$ & $2.8\ee{-5}$ & $1.6\ee{-5}$\\
                                    & & cost  &                     &                    & $-42.041$    & $-41.997$ & $-42.050$\\
    							& & time  &                     &                    & $254.9$      & $187.6$ & \textbf{82.1}\\
    \cline{2-8}
    							& \multirow{5}{*}{$q_3$} 
    							&   $\ep$ & \multirow{5}{*}{\textsc{oom}} &\multirow{5}{*}{\textsc{oom}} & $1.0\ee{-4}$ & $9.1\ee{-16}$ & $2.0\ee{-14}$\\
    							& & $\ed$ &                     &                    & $1.6\ee{-6}$ & $7.5\ee{-5} $ & $8.2\ee{-5} $\\
    							& & $\eg$ &                     &                    & $4.9\ee{-5}$ & $2.4\ee{-3}$ & $1.0\ee{-4}$\\
                                    & & cost  &                     &                    & $2.681$      & $2.695$ & $2.681$\\
    							& & time  &                     &                    & $144.5$      & $1339.6$ & \textbf{93.1}\\
    \cline{2-8}
    \hline
    \end{tabular}
    }
    }
    \end{table}

To further demonstrate the performance of \SBMP, we compare it with \textsf{SDPT3} \cite{toh1999sdpt3}, \textsf{MOSEK} \cite{mosek}, \textsf{CDCS} \cite{CDCS}, \textsf{SDPNAL+} \cite{yang2015sdpnal}. For \textsf{SDPT3} and \textsf{MOSEK}, we used their default parameters. For \textsf{CDCS}, we use the \textsf{sos} solver with a maximum 10,000 iterations with tolerance $1\ee{-4}$, and this \textsf{sos} option is customized for solving SDPs from SOS relaxations (by exploiting a property called \textit{partial orthogonality} \cite{zheng2018fast}). For \textsf{SDPNAL+}, we used $1\ee{-4}$ as the tolerance, turn off their stagnation detection, and run it using their default parameter with a maximum of 20,000 iterations and a maximum of 10,000 seconds runtime. 

The computational results are listed in \cref{tb:POPs}. To be consistent with other solvers, we report the cost value, time consumption, primal feasibility, dual feasibility, and duality gap (see \cref{eq:measures-optimality}) of the final outcome, i.e., 
$$ 
\ep = \eta_1, \; \ed = \eta_3, \; \eg = \eta_5. 
$$  
As we can see in \cref{tb:POPs}, our algorithm \SBMP~solves all SDP instances to the desired accuracy within a reasonable time, and it consistently outperforms the baseline solvers. For the interior-point solvers, $\sf{SDPT3}$ runs out of memory in all cases on our computer. $\sf{MOSEK}$ was able to return solutions of high accuracy for medium-size problems ($d=30$) but took more time consumption. $\sf{MOSEK}$ also encountered memory issues for larger instances ($d\geq 35$).  
For the first-order solvers, \textsf{CDCS} solved all tested problems with medium accuracy, but the runtime was worse than \SBMP~(indeed, our algorithm~\SBMP~was one order of magnitude faster than \textsf{CDCS} in some cases); 
the solver \textsf{SDPNAL+} solved all SDPs to the desired accuracy for the measures $\ep$ and $\ed$, while the duality gap $\eg$ remains unsatisfactory. We note that the design of \textsf{SDPNAL+} does not consider the duality gap $\eg$ as a stopping criterion. This partially explains the poor performance of the duality gap $\eg$ in the final iterate.

    \section{Conclusion}
\label{sec:Conclusion}

In this paper, we have presented an overview and comparison of spectral bundle methods for solving primal and dual SDPs. All the existing results focus on solving dual SDPs. We have established a family of spectral bundle methods for solving primal SDPs directly. The algorithm developments mirror the elegant duality between primal and dual SDPs. We have presented the sublinear convergence rates for this family of spectral bundle methods and shown that the algorithm enjoys linear convergence with proper parameter choice and low-rank dual solutions. The convergence behaviors and computational complexity of spectral bundle methods for both primal and dual SDPs are in general similar, but they have different features. It is clear that the existing spectral bundle methods are well-suited for SDPs with \textit{low-rank primal solutions}, and our new spectral bundle method works well for SDPs with \textit{low-rank dual solutions}. These theoretical findings are supported by a range of large-scale numerical experiments. We have further demonstrated that our new spectral bundle method achieves state-of-the-art efficiency and scalability when solving the SDP relaxations from polynomial optimization.  

Potential future directions include incorporating other types of constraints (such as nonnegative, second-order cone constraints, etc.), considering second-order information \cite{helmberg2014spectral} for the lower approximation, and analyzing the algorithm performance when the subproblem \cref{eq:SBM-subproblem} is solved inexactly \cite{rockafellar1976augmented}.  Finally, we remark that our current prototype implementation shows promising numerical performance, and it would also be very interesting to further develop reliable and efficient open-source implementations of these spectral bundle methods.

    \bibliographystyle{unsrt}
    \bibliography{reference}

    \newpage
    \appendix
    \numberwithin{equation}{section}
\noindent\textbf{\Large Appendix}
\vspace{5mm}

\small 

\sectionfont{\large}
\subsectionfont{\normalsize}
The appendix is divided into five parts:
\begin{enumerate}
    \item \Cref{Appendix:assumptions-CBM} extends the convergence results of generic bundle methods for unconstrained optimization in \cite{diaz2023optimal} to constrained convex optimization. We present the adaptions required to prove \Cref{lemma-iterations-bound};  
    \item \Cref{Appendix:proofs-in-penalty} presents some technical proofs in \Cref{sec:Penalized-nonsmooth-formulations-for-SDPs}, i.e., the exact penalization for primal and dual SDPs;
    \item \Cref{appendix:reformulation} presents some computation details in \SBMP~and \SBMD; 
    \item \Cref{Appendix:technical-proofs} completes the technical proofs for the convergence guarantees of \SBMP, i.e., \Cref{thm: sublinearates-P-K,thm: linear-convergence};
    \item \Cref{Appendix:data-generation-SDP-SOS} presents further details of our numerical experiments in \Cref{sec:numerical-results}, including the generation of random SDPs and the exact penalty parameter in SOS optimizations on a sphere.
\end{enumerate}

\section{Bundle methods for constrained convex optimization}
\label{Appendix:assumptions-CBM}

In this section, we show that the three conditions \Cref{eq:bundle-method-property-1,eq:bundle-method-property-2,eq:bundle-method-property-3} ensure the convergence of the bundle method in \Cref{alg:bundle-method} to solve the constrained convex optimization problem \Cref{eq:non-smooth-problem}. We restate \Cref{eq:non-smooth-problem} below for convenience,
\begin{equation} \label{eq:non-smooth-problem-appendix}
   f^\star =  \min_{x \in \mathcal{X}_0} \quad f(x),
\end{equation}
where $f: \mathbb{R}^n \rightarrow \mathbb{R} $ is convex but not necessarily differentiable and $\mathcal{X}_0 \subseteq  \mathbb{R}^n$ is a closed convex set. 

The analysis in \cite[Theorem 2.1 and 2.3]{diaz2023optimal} focuses on unconstrained convex optimization with $\mathcal{X}_0 = \mathbb{R}^n$ in \cref{eq:non-smooth-problem-appendix}. We here clarify the minor extension of their analysis to the constrained case with a closed convex set $\mathcal{X}_0 $. We present the adaptions to prove \Cref{lemma-iterations-bound} in the main text. Specifically, we only need to establish the constrained versions of \cite[Lemma 5.1, 5.2, and 5.3]{diaz2023optimal}, which are given in \Cref{lem:descent-prox-gap,lem:null-prox-gap,lem:prox-gap-bound} below. Then, the proof of \Cref{lemma-iterations-bound} follows the same arguments in \cite[Section 5]{diaz2023optimal}. Following the notations in \cite{diaz2023optimal}, at iteration $k$, we define the \textit{proximal gap} by 
$$ \Delta_k := f(\omega_k) - \left(f(\bar{x}_{k+1}) + \frac{\alpha}{2}\|\bar{x}_{k+1} - \omega_k\|^2\right),$$
where $\bar{x}_{k+1} = \argmin_{x\in \mathcal{X}_0 } \left\{f(x) + \frac{\alpha}{2}\|x-\omega_k\|^2\right\}$ and $\omega_k$ is the reference point. Note that $\bar{x}_{k+1}$ is obtained using the original $f$ rather than the lower approximation model $\hat{f}_{k}$.
\begin{lemma}[\bf Descent steps attain decrease proportionally to the proximal gap]\label{lem:descent-prox-gap}
A descent step at iteration $k$ satisfies
\begin{equation} \nonumber
  f(\omega_{k+1}) \leq f(\omega_k) - \beta\Delta_k.
\end{equation}
\end{lemma}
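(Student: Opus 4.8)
The plan is to exploit the two facts that define a descent step: the candidate $x_{k+1}^\star$ produced by the master problem \cref{eq:bundle-method-t} becomes the new reference point, so $\omega_{k+1} = x_{k+1}^\star$, and the sufficient-descent test \cref{eq:sufficient-descent} holds, giving $f(\omega_k) - f(x_{k+1}^\star) \geq \beta\bigl(f(\omega_k) - \hat{f}_k(x_{k+1}^\star)\bigr)$. Rearranging, $f(\omega_{k+1}) \leq f(\omega_k) - \beta\bigl(f(\omega_k) - \hat{f}_k(x_{k+1}^\star)\bigr)$. Hence the lemma follows at once if I can show that the \emph{model} decrease dominates the proximal gap, namely $f(\omega_k) - \hat{f}_k(x_{k+1}^\star) \geq \Delta_k$, which is equivalent to $\hat{f}_k(x_{k+1}^\star) \leq f(\bar{x}_{k+1}) + \frac{\alpha}{2}\|\bar{x}_{k+1} - \omega_k\|^2$.

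The crux is to compare the two proximal minimizers. First I would use that $x_{k+1}^\star$ minimizes $\hat{f}_k(x) + \frac{\alpha}{2}\|x - \omega_k\|^2$ over $\mathcal{X}_0$, so testing this optimality against the competitor $\bar{x}_{k+1} \in \mathcal{X}_0$ yields $\hat{f}_k(x_{k+1}^\star) + \frac{\alpha}{2}\|x_{k+1}^\star - \omega_k\|^2 \leq \hat{f}_k(\bar{x}_{k+1}) + \frac{\alpha}{2}\|\bar{x}_{k+1} - \omega_k\|^2$. Next I invoke the minorant property \cref{eq:bundle-method-property-1}, $\hat{f}_k(\bar{x}_{k+1}) \leq f(\bar{x}_{k+1})$, and drop the nonnegative term $\frac{\alpha}{2}\|x_{k+1}^\star - \omega_k\|^2$ on the left. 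This gives exactly $\hat{f}_k(x_{k+1}^\star) \leq f(\bar{x}_{k+1}) + \frac{\alpha}{2}\|\bar{x}_{k+1} - \omega_k\|^2 = f(\omega_k) - \Delta_k$, which is the required inequality. Substituting into the rearranged descent test from the first step closes the argument.

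The extension from the unconstrained setting of \cite{diaz2023optimal} to a general closed convex $\mathcal{X}_0$ is clean and occurs entirely in the comparison step above: the only thing to check is that the true proximal point $\bar{x}_{k+1}$ is an admissible competitor in the optimality inequality for $x_{k+1}^\star$. This holds because both proximal subproblems are posed over the same feasible set $\mathcal{X}_0$, so $\bar{x}_{k+1} \in \mathcal{X}_0$ and the minorant bound \cref{eq:bundle-method-property-1} is asserted for all $x \in \mathcal{X}_0$. I do not anticipate a genuine obstacle here; the main point requiring care is simply to keep the feasibility bookkeeping consistent and to record that $\Delta_k \geq 0$ (testing optimality of $\bar{x}_{k+1}$ against the feasible point $\omega_k \in \mathcal{X}_0$ gives $f(\bar{x}_{k+1}) + \frac{\alpha}{2}\|\bar{x}_{k+1} - \omega_k\|^2 \leq f(\omega_k)$), so that the claimed decrease is a genuine nonnegative reduction.
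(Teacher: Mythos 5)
Your argument is correct and is exactly the standard proof that the paper omits by citing \cite[Lemma 5.1]{diaz2023optimal}: combine the sufficient-descent test with the fact that the model's proximal value at $x_{k+1}^\star$ lower-bounds the true proximal value at $\bar{x}_{k+1}$, using the minorant property and feasibility of $\bar{x}_{k+1}\in\mathcal{X}_0$ as a competitor. Your observation that the constrained extension only requires both proximal subproblems to share the feasible set $\mathcal{X}_0$ is precisely the point the paper is implicitly relying on when it says the proof carries over unchanged.
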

\begin{proof}
    The proof is the same as in \cite[Lemma 5.1]{diaz2023optimal}. We omit the details.
\end{proof}
\begin{lemma}[\bf The number of consecutive null steps is bounded by the proximal gap]\label{lem:null-prox-gap}
    A descent step, at iteration $k$, followed by $T$ consecutive null steps has at most
    \begin{equation}\nonumber
      T\leq \frac{8G_{k+1}^2}{(1-\beta)^2\alpha\Delta_{k+T}}, 
    \end{equation}
    where $G_{k+1}=\sup \{\|g_{t+1}\| \mid k\leq t \leq k+T\}$. If $f$ is $M$-Lipschitz, the condition simplifies to
    \begin{align*}
         T \leq 
      \dfrac{8M^2}{(1-\beta)^2\alpha\Delta_{k+T}}.
    \end{align*} 
\end{lemma}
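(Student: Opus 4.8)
The plan is to track, over the block of consecutive null steps, the \emph{model proximal value} and show it increases by a definite amount at each null step; since it can never exceed the true proximal value, the number of null steps is controlled. Throughout the block the reference point is frozen, so write $\omega := \omega_{k+1} = \cdots = \omega_{k+T}$, and for each $t$ in the block let $x_{t+1}^\star$ be the master-problem solution, $g_{t+1}\in\partial f(x_{t+1}^\star)$ the subgradient from \cref{eq:bundle-method-property-2}, and $s_{t+1} = \alpha(\omega - x_{t+1}^\star)\in\partial\hat f_t(x_{t+1}^\star) + \mathcal N_{\mathcal X_0}(x_{t+1}^\star)$ the aggregate subgradient from \cref{eq:bundle-method-property-3}. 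Define the model proximal value $m_t := \hat f_t(x_{t+1}^\star) + \frac{\alpha}{2}\|x_{t+1}^\star - \omega\|^2$ and the gap $d_t := f(\omega) - m_t$. Since $\hat f_t\le f$ on $\mathcal X_0$ by \cref{eq:bundle-method-property-1}, the value $m_t$ never exceeds the true proximal value $f(\bar x_{k+T+1}) + \frac{\alpha}{2}\|\bar x_{k+T+1}-\omega\|^2 = f(\omega) - \Delta_{k+T}$, so $d_t\ge\Delta_{k+T}$ for every $t$ in the block.

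First I would record a per-step improvement bound. Using \cref{eq:bundle-method-property-2,eq:bundle-method-property-3} I have $\hat f_{t+1}\ge\max\{\ell_g,\ell_s\}$ on $\mathcal X_0$, where $\ell_g(x)=f(x_{t+1}^\star)+\langle g_{t+1},x-x_{t+1}^\star\rangle$ and $\ell_s(x)=\hat f_t(x_{t+1}^\star)+\langle s_{t+1},x-x_{t+1}^\star\rangle$. A completion of squares, using $\alpha(\omega - x_{t+1}^\star)=s_{t+1}$, shows $\ell_s(x)+\frac{\alpha}{2}\|x-\omega\|^2 = m_t + \frac{\alpha}{2}\|x-x_{t+1}^\star\|^2$, so the aggregate term alone reproduces the baseline $m_t$ at $x_{t+1}^\star$. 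Writing $a := f(x_{t+1}^\star)-\hat f_t(x_{t+1}^\star)\ge0$ for the gain the genuine cut $g_{t+1}$ adds at $x_{t+1}^\star$ and taking convex combinations $\lambda\ell_g+(1-\lambda)\ell_s$ of the two minorants, I will derive
\begin{equation*}
 m_{t+1} - m_t \;\ge\; \max_{\lambda\in[0,1]}\Bigl(\lambda a - \tfrac{\lambda^2}{2\alpha}\,\|g_{t+1}-\bar g_{t+1}\|^2\Bigr),
\end{equation*}
where $s_{t+1}=\bar g_{t+1}+\nu_{t+1}$ with $\bar g_{t+1}\in\partial\hat f_t(x_{t+1}^\star)$ and $\nu_{t+1}\in\mathcal N_{\mathcal X_0}(x_{t+1}^\star)$.

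The step where the constrained case genuinely departs from the unconstrained analysis of \cite{diaz2023optimal}, and the main obstacle, is obtaining the last display with $\|g_{t+1}-\bar g_{t+1}\|$ rather than the potentially unbounded $\|g_{t+1}-s_{t+1}\|$. Parametrising $x=x_{t+1}^\star+\delta$, the minorant inequality only holds on feasible displacements $\delta$ (those with $x_{t+1}^\star+\delta\in\mathcal X_0$), and for such $\delta$ the normal-cone inequality $\langle\nu_{t+1},\delta\rangle\le0$ lets me replace $\langle s_{t+1},\delta\rangle$ by $\langle\bar g_{t+1},\delta\rangle$ without increasing the bound; I then relax the feasible minimisation over $\delta$ to all of $\mathbb R^n$ (which only lowers the minimum and hence keeps a valid lower bound) and minimise the resulting quadratic. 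This removes the normal-cone part entirely, and since $\bar g_{t+1}$ is a convex combination of past genuine subgradients of $f$, we have $\|\bar g_{t+1}\|\le M$ (resp.\ $\le G_{k+1}$), giving $\|g_{t+1}-\bar g_{t+1}\|^2\le 4M^2$ (resp.\ $\le 4G_{k+1}^2$).

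Finally I would convert the improvement into an iteration count. The null-step test (the failure of \cref{eq:sufficient-descent} at $x_{t+1}^\star$) gives $a\ge(1-\beta)\,(f(\omega)-\hat f_t(x_{t+1}^\star))\ge(1-\beta)d_t$, and choosing the interior maximiser $\lambda=\alpha a/\|g_{t+1}-\bar g_{t+1}\|^2$ yields $d_t-d_{t+1}=m_{t+1}-m_t\ge\frac{\alpha(1-\beta)^2 d_t^2}{2\|g_{t+1}-\bar g_{t+1}\|^2}$. Dividing by $d_td_{t+1}$ and using $d_{t+1}\le d_t$ produces the telescoping inequality $\frac{1}{d_{t+1}}-\frac{1}{d_t}\ge\frac{\alpha(1-\beta)^2}{2\|g_{t+1}-\bar g_{t+1}\|^2}$; summing over the $T$ null steps and invoking $d_{k+T}\ge\Delta_{k+T}$ together with the norm bound above gives $T\le\frac{2\|g_{t+1}-\bar g_{t+1}\|^2}{(1-\beta)^2\alpha\Delta_{k+T}}\le\frac{8G_{k+1}^2}{(1-\beta)^2\alpha\Delta_{k+T}}$, and likewise $\le\frac{8M^2}{(1-\beta)^2\alpha\Delta_{k+T}}$ in the Lipschitz case. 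A minor bookkeeping point to check is the boundary regime $\alpha a\ge\|g_{t+1}-\bar g_{t+1}\|^2$, where the maximiser is $\lambda=1$; this occurs only while the gap is still large and does not affect the asymptotic count.
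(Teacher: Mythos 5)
Your overall skeleton matches the paper's: you track the model proximal value $m_t$ (the paper's $\widetilde\Delta_t$ is your $d_t$), derive a per-null-step decrease quadratic in the gap via a convex combination of the genuine cut and the aggregate cut, and convert the recursion into an iteration count. The null-step inequality $a\geq(1-\beta)d_t$, the completion of squares around $s_{t+1}=\alpha(\omega-x_{t+1}^\star)$, and the final constants all check out. The telescoping $1/d_{t+1}-1/d_t\geq c$ is a legitimate substitute for the recursion lemma the paper imports from \cite{diaz2023optimal}.

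However, there is a genuine gap exactly at the step you flag as the crux of the constrained case: the bound $\|\bar g_{t+1}\|\leq M$ justified by the claim that ``$\bar g_{t+1}$ is a convex combination of past genuine subgradients of $f$.'' This is not true in general. The decomposition $s_{t+1}=\bar g_{t+1}+\nu_{t+1}$ only places $\bar g_{t+1}$ in $\partial\hat f_t(x_{t+1}^\star)$, and for a model satisfying only \cref{eq:bundle-method-property-1,eq:bundle-method-property-2,eq:bundle-method-property-3} that subdifferential need not be contained in the convex hull of subgradients of $f$: a minorant of an $M$-Lipschitz function can be steeper than $M$ (the tangential part of its subgradients at $x_{t+1}^\star$ is only controlled by $M+a/\dist(x_{t+1}^\star,\partial\mathcal X_0)$, which blows up near the boundary). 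Even for the concrete aggregation scheme, the active cuts of $\hat f_t$ include the previous aggregate slopes $s_j$, each of which carries a normal-cone component taken at a \emph{different} point $x_j^\star$; these can only be re-folded into $\mathcal N_{\mathcal X_0}(x_{t+1}^\star)$ when the normal cones coincide across iterates (e.g.\ $\mathcal X_0$ affine), and even then a recursive argument is needed that you do not give. The paper avoids this entirely: it keeps $\|g_{t+1}-s_{t+1}\|^2\leq 2\|g_{t+1}\|^2+2\|s_{t+1}\|^2$ and bounds the \emph{whole} aggregate vector via the proximal geometry, $\|s_{t+1}\|^2=\alpha^2\|x_{t+1}^\star-\omega_{k+1}\|^2\leq 2\alpha\widetilde\Delta_t\leq 2\alpha\widetilde\Delta_{k+1}\leq\|g_{k+1}\|^2\leq G_{k+1}^2$, using strong convexity of the master objective, the normal-cone inequality at $x_{t+1}^\star$ only, and the minorant property at $\omega_{k+1}$. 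That chain works for any closed convex $\mathcal X_0$ and any admissible model; you should replace your norm bound with it. A secondary, patchable issue: your dismissal of the boundary regime $\lambda=1$ needs the estimate $d_{k+1}\leq\frac{1}{2\alpha}\|g_{k+1}\|^2$ (which follows from \cref{eq:bundle-method-property-2} at $\omega_{k+1}$) to fold those steps into the stated count; as written you assert rather than prove that they do not affect the bound.
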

\begin{proof}
 The proof is essentially the same that in \cite[Lemma 5.2]{diaz2023optimal}. 
 Consider some descent step $k$, followed by $T$ consecutive null step. Define the proximal subproblem gap at $k < t \leq k+T$ by 
    \begin{align*}
        \widetilde\Delta_t := f(\omega_{k+1}) - \left(\hat{f}_t( \xcandidate ) + \frac{\alpha}{2}\| \xcandidate - \omega_{k+1}\|^2\right).
    \end{align*}
    Note that every null step has the same reference point $\omega_{k+1}$. The core of this proof is to show the inequality
 \begin{align}
       \widetilde\Delta_{t+1} &\leq \widetilde\Delta_{t} -  \frac{(1-\beta)^2\alpha \widetilde\Delta_t^2}{8G_{k+1}^2}. \label{eq:condition}
    \end{align}
    Before proving this inequality, let us show how it completes the proof first. After $T$ consecutive null steps, the lower bound $\hat{f}_{k+T} (\cdot) \leq f(\cdot)$ in \Cref{eq:bundle-method-property-1}  ensures that $\widetilde\Delta_{k+T} \geq \Delta_{k+T}$. Thus, to bound $T$, it is sufficient to show that the reversed inequality $\widetilde\Delta_{k+T} < \Delta_{k+T}$ holds. Indeed, upon applying the result in \cite[Lemma A.1.]{diaz2023optimal} that bounds the number of steps for a recursive relation and setting the target accuracy $\epsilon = \Delta_{k+T}$ (note that $\Delta_{k+T} = \Delta_{k+1}$), we conclude the number of consecutive null steps $T$ is at most 
    \begin{align*}
        T\leq \frac{8G_{k+1}^2}{(1-\beta)^2\alpha\Delta_{k+T}}. 
    \end{align*}
    
    Now, let us focus on the derivation of \Cref{eq:condition}. Consider some null step $k < t \leq k+T$. We define the necessary lower bound given by \Cref{eq:bundle-method-property-2,eq:bundle-method-property-3} as 
    \begin{align*}
        \tilde f_{t+1}(x) := \max\left\{\hat{f}_{t}(\xcandidate) + \langle s_{t+1}, x-\xcandidate\rangle,\ f(\xcandidate) + \langle g_{t+1}, x-\xcandidate\rangle\right\} \leq \hat{f}_{t+1}(x), \quad \forall x \in \mathcal{X}_0,
    \end{align*}
and the solution of a proximal step made by $\tilde f_{t+1}(x)$ as $y_{t+2} = \argmin_{x \in \mathcal{X}_0} \left\{ \tilde f_{t+1}(x) + \frac{\alpha}{2} \|x - \omega_{k+1}\|^2 \right\}$. It admits the analytical solution 
\begin{align*}
\theta_{t+1} &= \min\left\{1,\frac{\alpha\left(f(\xcandidate)-\hat{f}_t(\xcandidate)\right)}{\|g_{t+1}-s_{t+1}\|^2}\right\},\\
  y_{t+2} &= \omega_{k+1} - \frac{1}{\alpha}\left(\theta_{t+1} g_{t+1} +(1-\theta_{t+1})s_{t+1} + \hat{h}_{t+1} \right),
\end{align*}
where $ \hat{h}_{t+1} \in \mathcal{N}_{\mathcal{X}_0}(y_{t+2})$.
Hence, the objective of the proximal subproblem can be lower bounded by 
\begin{equation*}
    \begin{aligned}
        &  f(x_{t+2}^\star) + \frac{\alpha}{2}\|x_{t+2}^\star - \omega_{k+1} \|^2 \\
         \geq  & \; \tilde f_{t+1}(x_{t+2}^\star) + \frac{\alpha}{2}\|x_{t+2}^\star - \omega_{k+1} \|^2 \\
         \geq & \; \tilde f_{t+1}(y_{t+2}) + \frac{\alpha}{2}\|y_{t+2} - \omega_{k+1} \|^2 \\
         \geq & \; \theta_{t+1} (\hat{f}_{t}(\xcandidate) + \langle s_{t+1}, y_{t+2}-\xcandidate\rangle) + (1-\theta_{t+1})(f(\xcandidate) + \langle g_{t+1}, y_{t+2}-\xcandidate\rangle) + \frac{\alpha}{2}\|y_{t+2} - \omega_{k+1} \|^2 \\
         = & \; f(\xcandidate) + \theta_{t+1} \left(f(\xcandidate)-\hat{f}_{t}(\xcandidate)\right) - \frac{\theta_{t+1}^2}{2\alpha}\|g_{t+1} -s_{t+1}\|^2 + \frac{\alpha}{2}\|\xcandidate-\omega_{k+1}\|^2 + \frac{1}{2\alpha}\left\|\hat{h}_{t+1}\right\|^2 \\
         \geq & \;  f(\xcandidate) + \theta_{t+1} \left (f(\xcandidate)-\hat{f}_{t}(\xcandidate) \right) - \frac{\theta_{t+1}^2}{2\alpha}\|g_{t+1} -s_{t+1}\|^2 + \frac{\alpha}{2}\|\xcandidate-\omega_{k+1}\|^2,
    \end{aligned}
\end{equation*}
where the equality uses the definition of $y_{t+2}$ and the fact that 
$$y_{t+2}  =\xcandidate -\frac{1}{\alpha}\left(\hat{h}_{t+1}+\theta_{t+1}(g_{t+1}-s_{t+1})\right) $$ since $\xcandidate = \omega_{k+1} - \frac{1}{\alpha}s_{t+1}$ by the optimality condition of the subproblem $\argmin_{x \in \mathcal{X}_0}  \left\{\hat{f}_t(x) + \frac{\alpha}{2}\|x - \omega_{k+1}\|^2 \right\}$ at iteration $t$. 
Thus, we have 
\begin{align*}
    \widetilde\Delta_{t+1} \leq  \widetilde\Delta_{t} - \left ( \theta_{t+1} \left(f(\xcandidate)-\hat{f}_{t}(\xcandidate)\right) - \frac{\theta_{t+1}^2}{2\alpha}\|g_{t+1} -s_{t+1}\|^2 \right).
\end{align*}
The amount of decrease above can be lower bounded as follows 
\begin{align*}
      &\theta_{t+1} \left(f(\xcandidate)-\hat{f}_{t}(\xcandidate)\right) - \frac{\theta_{t+1}^2}{2\alpha}\|g_{t+1} -s_{t+1}\|^2 \\
      & \; \geq \frac{1}{2} \min \left \{  f(\xcandidate)-\hat{f}_{t}(\xcandidate) , \frac{\alpha \left(f(\xcandidate)-\hat{f}_{t}(\xcandidate)\right)^2 }{\|g_{t+1} -s_{t+1}\|^2}  \right \} \\
      & \; \geq \frac{1}{2} \min \left \{  (1-\beta) \widetilde\Delta_{t} , \frac{\alpha (1-\beta)^2\widetilde\Delta_{t}^2 }{ \|g_{t+1}-s_{t+1}\|^2 }  \right \}\\ 
      & \; \geq \frac{1}{2} \min \left \{  (1-\beta) \widetilde\Delta_{t} , \frac{\alpha (1-\beta)^2\widetilde\Delta_{t}^2 }{ 2\|g_{t+1}\|^2 + 2\|s_{t+1}\|^2}  \right \},
\end{align*}
where the first inequality uses the definition of $\theta_{t+1}$, the second inequality uses the definition of the null step, and the third inequality uses Young's inequality.
It is clear that both components in the minimum above are non-negative and we have a weaker result that $\widetilde\Delta_{t+1}$ is non-increasing. Upon further utilizing the relation $\|s_{t+1}\|^2 \leq 2 \alpha \widetilde\Delta_{t} \leq G^2_{k+1}$ (which will be shown later), the decrease bound above is at least
\begin{align*}
    &= \frac{1}{2} \min \left \{  (1-\beta) \widetilde\Delta_{t} , \frac{\alpha (1-\beta)^2\widetilde\Delta_{t}^2 }{ 2\|g_{t+1}\|^2 + 2\|s_{t+1}\|^2}  \right \} \\
    &\geq \frac{1}{2} \min \left \{  2 \frac{\alpha(1-\beta) \widetilde\Delta_{t}^2}{G_{k+1}^2} , \frac{\alpha (1-\beta)^2\widetilde\Delta_{t}^2 }{ 4G_{k+1}^2}  \right \} \\
    &\geq \frac{\alpha (1-\beta)^2 \widetilde\Delta_{t}^2}{8G_{k+1}^2} \\
    & \geq \frac{\alpha (1-\beta)^2 \widetilde\Delta_{t}^2}{8M^2},
\end{align*}
where the last inequality applies the observation that the subgradient $\|G_{k+1}\|$ is uniformly upper bounded by the $M$ as $f$ is $M$-Lipschitz. Hence, we conclude that $\widetilde\Delta_{t+1} \leq  \widetilde\Delta_{t}  - \frac{\alpha (1-\beta)^2 \widetilde\Delta_{t}^2}{8M^2}.$

We then show the inequality $\|s_{t+1}\|^2 \leq 2 \alpha \widetilde\Delta_{t} \leq G^2_{k+1}$. By the facts that $\hat{f}_{t}(\cdot) + \frac{\alpha}{2} \|\cdot - \omega_{k+1}\|^2 $ is $\alpha$ strongly convex and that the minimizer $\xcandidate$ is unique, there exists a $h_{t} \in \mathcal{N}_{\mathcal{X}_0}(x_{t+1}^\star)$ and $v_{t} \in \partial \hat{f}_t(\xcandidate) + \alpha (\xcandidate - \omega_{k+1})$ such that $0 = v_{t} + h_{t}$. It follows from the first-order condition of a strongly convex function that
\begin{align*}
   \hat{f}_t(\omega_{k+1}) & \geq   \hat{f}_t(\xcandidate) + \frac{\alpha}{2} \|\xcandidate - \omega_{k+1}\|^2  - \langle h_{t} ,\omega_{k+1} - \xcandidate \rangle+\frac{\alpha}{2} \|\xcandidate- \omega_{k+1} \|^2 \\
   & \geq   \hat{f}_t(\xcandidate) + \frac{\alpha}{2} \|\xcandidate - \omega_{k+1}\|^2 + \frac{\alpha}{2} \|\xcandidate - \omega_{k+1}\|^2,
\end{align*}
where the second inequality uses the definition of normal cone $\langle  h_t, y -  x_{t+1}^\star\rangle \leq 0, \forall y \in \mathcal{X}_0.$ Using the lower bound property $\hat{f}_t(\omega_{k+1}) \leq f(\omega_{k+1})$, we have
\begin{align*}
    \frac{\alpha}{2} \|x_{t+1}^\star - \omega_{k+1}\|^2 \leq f(\omega_{k+1}) - \left(\hat{f}_t(\xcandidate)+ \frac{\alpha}{2} \|\xcandidate -  \omega_{k+1}\|^2\right) = \widetilde\Delta_{t} \leq \widetilde\Delta_{k+1} ,
\end{align*}
where the second inequality comes from the fact that $\widetilde\Delta_{t}$ is non-increasing. 

By the construction of the lower approximation model at step $k+1$ and Young's inequality, we know
\begin{align*}
\hat{f}_{k+1}(x^\star_{k+2}) & \geq f(\omega_{k+1}) + \langle g_{k+1},x^\star_{k+2}-\omega_{k+1}\rangle\\
                 & \geq f(\omega_{k+1}) -\frac{1}{2}\left( \frac{\|g_{k+1}\|^2}{\alpha} + \alpha\|x^\star_{k+2}-\omega_{k+1}\|^2\right),
\end{align*}
where $g_{k+1} \in \partial f(\omega_{k+1}).$ 
Combining the above inequality and the definition $\widetilde \Delta_{k+1}$, we get the relationship
\begin{align*}
     \frac{\alpha}{2} \|x_{t+1}^\star - \omega_{k+1}\|^2 \leq \widetilde\Delta_{t} \leq \widetilde \Delta_{k+1}   \leq \frac{1}{2\alpha}\|g_{k+1}\|^2,
\end{align*}
which implies $ \|s_{t+1}\|^2=\alpha^2 \|x_{t+1}^\star - \omega_{k+1}\|^2 \leq 2 \alpha \widetilde\Delta_{t}^2 \leq 2 \alpha \widetilde \Delta_{k+1} \leq \|g_{k+1}\|^2 \leq G_{k+1}^2$.

\end{proof}
\begin{lemma}[\bf Lower bound on the proximal gap] \label{lem:prox-gap-bound}
  Fix a minimizer $x^{\star}$ in \cref{eq:non-smooth-problem-appendix} and let $\omega_k\in\RR^n \backslash \{x^{\star}\}$, the proximal gap is lower bounded by
  \begin{equation*} \label{eq:prox-gap-bound}
    \Delta_k \geq \begin{cases} \dfrac{1}{2\alpha}\left(\dfrac{f(\omega_k)-f(x^*)}{\|\omega_k-x^*\|}\right)^2, & \text{ if } f(\omega_k)-f(x^*) \leq \alpha\|\omega_k-x^*\|^2,\\
      f(\omega_k)-f(x^*)-\dfrac{\alpha}{2}\|\omega_k-x^*\|^2, & \text{ otherwise.} \end{cases}
  \end{equation*}
\end{lemma}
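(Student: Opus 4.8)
The plan is to exploit the exact variational characterization of $\bar{x}_{k+1}$. Since $\bar{x}_{k+1}$ minimizes $f(x) + \frac{\alpha}{2}\|x - \omega_k\|^2$ over $x \in \mathcal{X}_0$, the proximal gap can be rewritten as
\[
\Delta_k = f(\omega_k) - \min_{x \in \mathcal{X}_0}\left\{f(x) + \frac{\alpha}{2}\|x - \omega_k\|^2\right\},
\]
so that \emph{every} feasible point furnishes a lower bound on $\Delta_k$. Because $\omega_k, x^\star \in \mathcal{X}_0$ and $\mathcal{X}_0$ is convex, I would probe the minimum along the segment $x_\theta = \omega_k + \theta(x^\star - \omega_k)$ for $\theta \in [0,1]$, all of whose points remain in $\mathcal{X}_0$.

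First, convexity of $f$ along this segment gives $f(x_\theta) \leq f(\omega_k) - \theta\,(f(\omega_k) - f(x^\star))$, while $\|x_\theta - \omega_k\|^2 = \theta^2 \|x^\star - \omega_k\|^2$. Substituting $x = x_\theta$ into the minimum above and simplifying yields, for every $\theta \in [0,1]$,
\[
\Delta_k \geq \theta\,(f(\omega_k) - f(x^\star)) - \frac{\alpha}{2}\theta^2 \|\omega_k - x^\star\|^2.
\]

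Next, I would maximize the right-hand side over $\theta \in [0,1]$. Writing $a = f(\omega_k) - f(x^\star) \geq 0$ and $b = \|\omega_k - x^\star\|^2 > 0$ (using $\omega_k \neq x^\star$), the concave quadratic $\theta \mapsto \theta a - \frac{\alpha}{2}\theta^2 b$ attains its unconstrained maximum at $\theta^\star = a/(\alpha b)$, with value $a^2/(2\alpha b)$. If $\theta^\star \leq 1$, equivalently $a \leq \alpha b$ (the first case $f(\omega_k) - f(x^\star) \leq \alpha\|\omega_k - x^\star\|^2$), this interior optimum is admissible and gives the first branch $\frac{1}{2\alpha}\bigl((f(\omega_k)-f(x^\star))/\|\omega_k - x^\star\|\bigr)^2$. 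Otherwise $a > \alpha b$, the quadratic is increasing on $[0,1]$, so the maximum occurs at $\theta = 1$ and equals $a - \frac{\alpha}{2}b = f(\omega_k) - f(x^\star) - \frac{\alpha}{2}\|\omega_k - x^\star\|^2$, the second branch.

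The argument is elementary once the reformulation is in place; the only point requiring care is the feasibility of the probe points $x_\theta$, which relies on $\omega_k$ being a feasible reference point together with the convexity of $\mathcal{X}_0$. In the constrained bundle method this holds because every reference point is produced by minimizing over $\mathcal{X}_0$, so $\omega_k \in \mathcal{X}_0$. This is the constrained analogue of the unconstrained computation in \cite[Lemma 5.3]{diaz2023optimal}, with the interior segment point $x_\theta$ toward $x^\star$ replacing the probe used there, so no genuinely new difficulty arises.
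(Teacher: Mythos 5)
Your proof is correct and is essentially the standard argument behind the cited \cite[Lemma 7.12]{ruszczynski2011nonlinear}, whose proof the paper omits: probe the proximal minimum along the segment from $\omega_k$ to $x^\star$, apply convexity of $f$, and optimize the resulting concave quadratic in $\theta$ over $[0,1]$, with the two branches arising from whether the unconstrained maximizer $\theta^\star = (f(\omega_k)-f(x^\star))/(\alpha\|\omega_k-x^\star\|^2)$ lies in $[0,1]$. Your explicit remark that the probe points $x_\theta$ remain feasible because $\omega_k, x^\star \in \mathcal{X}_0$ and $\mathcal{X}_0$ is convex is exactly the (minor) adaptation needed for the constrained setting, so the argument goes through as stated.
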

\begin{proof}
    This follows \cite[Lemma 7.12]{ruszczynski2011nonlinear} exactly and we omit the proof here. 
\end{proof}

The proof of \Cref{lemma-iterations-bound} follows the same arguments in \cite[Section 5]{diaz2023optimal} via combining \Cref{lem:descent-prox-gap,lem:null-prox-gap,lem:prox-gap-bound}. Indeed, 
\begin{itemize}
    \item \Cref{lem:descent-prox-gap} gives the progress made in the descent step with the relation to the proximal gap; 
\item \Cref{lem:null-prox-gap} shows that the maximum number of null steps between two descent steps depends on the proximal gap; and \item \Cref{lem:prox-gap-bound} shows the lower bound on the proximal gap. 
\end{itemize}
As a result, by the lower bound on the proximal gap given in \Cref{lem:prox-gap-bound}, the number of null steps between two descent steps can be bounded using \Cref{lem:null-prox-gap}. Combining \Cref{lem:descent-prox-gap,lem:prox-gap-bound}, we can achieve the maximum number of descent steps to achieve the desired accuracy. Finally, considering both the maximum number of null steps and descent steps renders the maximum number of total steps to achieve the desired accuracy, which gives the convergence results in \Cref{lemma-iterations-bound}. We refer interested readers to \cite[Section 5]{diaz2023optimal} for more details. 
    \section{Technical proofs in \Cref{sec:Penalized-nonsmooth-formulations-for-SDPs}} 
\label{Appendix:proofs-in-penalty}
\subsection{Proof of \Cref{proposition-primal-exact-penalty}} \label{subsection:proof-penalty}

We first recall two technical lemmas: the first one is the KKT optimality condition for the primal and dual SDPs \cref{eq:SDP-primal} and \cref{eq:SDP-dual}, which is shown in \Cref{theorem: optimality condition}, and the second one is the computation of the subdifferential of the maximal eigenvalue of symmetric matrices. 

\begin{lemma}[{\cite[Theorem 2]{overton1992large}}] \label{lemma:subdifferential-eigenvalue}
Given a symmetric matrix $A \in \mathbb{S}^n$. Suppose its maximal eigenvalue $\lambda_{\max}(A)$ has multiplicity $t$. Then, we have 
$$
\partial \lambda_{\max}(A) = \{Q U Q^\tr \mid U \in \mathbb{S}^t_+, \Trace(U) = 1\},
$$
where the columns of $Q \in \mathbb{R}^{n \times t}$ forms an orthonormal set of the eigenvectors for $\lambda_{\max}(A)$. 
\end{lemma}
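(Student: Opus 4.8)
The plan is to recognize $\lambda_{\max}$ as the support function of the spectrahedron of density matrices, and then to invoke (by proving directly) the standard correspondence between the subdifferential of a sublinear support function and its set of maximizers. First I would record the variational representation
\[
\lambda_{\max}(A) = \max_{S \in \mathcal{D}} \langle A, S\rangle, \qquad \mathcal{D} := \{S \in \mathbb{S}^n \mid S \succeq 0,\ \Trace(S) = 1\},
\]
which holds because the extreme points of the compact convex set $\mathcal{D}$ are exactly the rank-one projections $vv^\tr$ with $\|v\|=1$, and $\max_{\|v\|=1} v^\tr A v = \lambda_{\max}(A)$. This simultaneously exhibits $\lambda_{\max}$ as a finite convex, positively homogeneous (hence sublinear) function, a structure I will exploit repeatedly below.

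Second, I would pin down the maximizer set $\mathcal{M} := \arg\max_{S \in \mathcal{D}}\langle A, S\rangle$. Writing a full orthogonal eigendecomposition $A = R\Lambda R^\tr$ and substituting $\tilde{S} = R^\tr S R$ (again a density matrix), one gets $\langle A, S\rangle = \Trace(\Lambda \tilde{S}) = \sum_i \lambda_i \tilde{S}_{ii} \le \lambda_{\max}(A)$, since $\sum_i \tilde{S}_{ii} = 1$. Equality holds precisely when $\tilde{S}_{ii} = 0$ for every index $i$ with $\lambda_i < \lambda_{\max}(A)$; because $\tilde{S} \succeq 0$, a vanishing diagonal entry forces the whole corresponding row and column to vanish (via the $2\times 2$ principal minors), so $\tilde{S}$ is supported on the top-eigenvalue block. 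Rotating back by the corresponding columns $Q$ of $R$ yields exactly $\mathcal{M} = \{QUQ^\tr \mid U \in \mathbb{S}^t_+,\ \Trace(U) = 1\}$, which is the claimed right-hand side.

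Third, I would establish the two inclusions proving $\partial \lambda_{\max}(A) = \mathcal{M}$. For $\mathcal{M} \subseteq \partial \lambda_{\max}(A)$: if $S^\star \in \mathcal{M}$, then for every $B \in \mathbb{S}^n$ the representation gives $\lambda_{\max}(B) \ge \langle S^\star, B\rangle = \lambda_{\max}(A) + \langle S^\star, B - A\rangle$, which is precisely the subgradient inequality. For the reverse inclusion, let $G \in \partial \lambda_{\max}(A)$. Testing the subgradient inequality at $B = 0$ and $B = 2A$ (using $\lambda_{\max}(0)=0$ and positive homogeneity) forces $\langle G, A\rangle = \lambda_{\max}(A)$; substituting this back reduces the subgradient inequality to $\langle G, B\rangle \le \lambda_{\max}(B)$ for all $B$. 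Choosing $B = \pm I$ then yields $\Trace(G) = 1$, and choosing $B = -vv^\tr$ (whose largest eigenvalue is $0$) yields $v^\tr G v \ge 0$ for all $v$, so $G \succeq 0$. Hence $G \in \mathcal{D}$ attains the maximum, i.e. $G \in \mathcal{M}$.

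The main obstacle is this reverse inclusion $\partial\lambda_{\max}(A) \subseteq \mathcal{M}$: the forward inclusion and the maximizer characterization are essentially algebraic, whereas showing that an \emph{arbitrary} subgradient is both feasible for $\mathcal{D}$ and attains the maximum hinges on exploiting sublinearity through the carefully chosen test directions $0,\ 2A,\ \pm I,\ -vv^\tr$. Once these four substitutions are in place the conclusion is immediate. Throughout I would take for granted only the finiteness and convexity of $\lambda_{\max}$, both of which are consistent with the variational formula derived in the first step.
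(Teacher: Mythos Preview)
Your proof is correct. The paper does not actually supply its own proof of this lemma: it is stated with a bare citation to Overton's paper and then used as a black box in the proof of \Cref{proposition-primal-exact-penalty}. Your argument---writing $\lambda_{\max}$ as the support function of the density-matrix spectrahedron $\mathcal{D}$, identifying the maximizer set via the eigenbasis, and then trapping an arbitrary subgradient $G$ inside $\mathcal{D}$ by the test directions $0$, $2A$, $\pm I$, $-vv^\tr$---is a complete and self-contained substitute for that citation. All four test substitutions in the reverse inclusion do exactly what you claim, and the step ``$\tilde S_{ii}=0$ with $\tilde S\succeq 0$ forces the $i$th row and column to vanish'' is the only place where positive semidefiniteness of the maximizer is genuinely used, which you handle correctly.
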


We are now ready to prove \Cref{proposition-primal-exact-penalty}.

\begin{proof}    
    Upon denoting $\mathcal{X}_0 = \{X \in \mathbb{S}^{n}\,|\, \langle A_i, X\rangle = b_i, \, i = 1, \ldots, m\}$, it is easy to see that the primal SDP \cref{eq:SDP-primal} is equivalent to 
    \begin{equation}\label{eq:SDP-primal-eigen}
    \begin{aligned}
        \min_{X} \quad & \langle C, X\rangle \\
        \mathrm{subject~to} \quad & \lambda_{\max}(-X) \leq 0,  \\
        &X \in \mathcal{X}_0,
\end{aligned}
\end{equation}
where we have applied the fact that $X \succeq 0 \Leftrightarrow -X  \preceq 0 \Leftrightarrow \lambda_{\max}(-X) \leq  0$. \Cref{theorem:exact-penalization} ensures that the penalty form \cref{eq:SDP-primal-penalized} is equivalent to \cref{eq:SDP-primal-eigen} if we choose $\rho > \rho_0$ with $\rho_0 = \sup_{\lambda \in \Lambda} |\lambda|$, where $\Lambda \subset \mathbb{R}$ is the set of Lagrange multipliers associated with the inequality $\lambda_{\max}(-X) \leq  0$. 

Denote $\tilde{\Lambda} = \{\Trace (\Zstar) \in \mathbb{R}\mid (\ystar,\Zstar) \in \Dstar\}$, where $\Dstar$ is the set of dual optimal solutions to \cref{eq:SDP-dual}. We only need to show that $\tilde{\Lambda} = \Lambda$.  To see this, we start with the KKT optimality condition for~\cref{eq:SDP-primal-eigen}, i.e.,
\begin{subequations}
\begin{align}
   &0   \in C + \alpha \partial (\lambda_{\max}(-\Xstar))+ N_{\mathcal{X}_0}(\Xstar) \;\;
    \Longleftrightarrow \;\; 0  \in C + \alpha \partial (\lambda_{\max}(-\Xstar))- \sum_{i=1}^{m}A_i y_i, \label{eq:mutliplier-a} \\
      &      \alpha \lambda_{\max}(-\Xstar) = 0,\quad  \alpha \geq0, \quad  \lambda_{\max}(-\Xstar) \leq 0,  \quad \Xstar \in \mathcal{X}_0,  \label{eq:mutliplier-b} 
\end{align}
\end{subequations}
where $\mathcal{N}_{\mathcal{X}_0}(\Xstar) :=\{ X \in \mathbb{S}^{n} \,|\, \langle X, Y-\Xstar \rangle \leq 0, \forall Y \in \mathcal{X}_0 \} $ denotes the normal cone to $\mathcal{X}_0$ at $\Xstar$, and we have used the fact that $\mathcal{N}_{\mathcal{X}_0}(\Xstar) =\{\sum_{i=1}^{m}A_i y_i \in \mathbb{S}^{n} \,|\, y \in \mathbb{R}^{m}  \}$ since $\mathcal{X}_0 = \{X \in \mathbb{S}^{n}\,|\, \langle A_i, X\rangle = b_i, \, i = 1, \ldots, m\}$ is an affine space. 
The set of Lagrange multipliers $\Lambda$ associated with the inequality $\lambda_{\max}(-X) \leq  0$  is all $\alpha$ satisfying \cref{eq:mutliplier-a} and \cref{eq:mutliplier-b}. 

Our proof is divided into two steps. 

\begin{itemize}
    \item We first prove that $\tilde{\Lambda} \subseteq \Lambda $. For any dual optimal solution $(\ystar, \Zstar) \in \mathcal{D}^\star$, let the corresponding primal optimal solution be $\Xstar$. It suffices to show that $\alpha = \Trace(\Zstar)$, $\ystar$, and $\Xstar$ satisfy \cref{eq:mutliplier-a} and \cref{eq:mutliplier-b}. 
    
        If $\Xstar$ is strictly positive definite, then $\Zstar = 0$ according to \Cref{theorem: optimality condition}. Then $\Xstar$, $\alpha = \Trace(\Zstar) = 0$, and $C - \sum_{i=1}^m A_i y_i^\star = 0$ naturally satisfy \Cref{eq:mutliplier-a}-\Cref{eq:mutliplier-b}.
    
    If $\Xstar$ is positive semidefinite with $\lambda_{\max}(-\Xstar)  = -\lambda_{\min}(\Xstar)= 0$, then $\alpha = \Trace(\Zstar) \geq 0$ naturally satisfies \Cref{eq:mutliplier-b}.  Denote the multiplicity of $\lambda_{\min}(\Xstar) = 0$ as $r$, and the corresponding set of orthonormal eigenvectors as the columns of $ P \in \mathbb{R}^{n \times r }$. Via a classical chain rule (see e.g.,\cite[Theorem XI.3.2.1]{hiriart1993convex}) and \Cref{lemma:subdifferential-eigenvalue}, the subdifferential of $\lambda_{\max}(-\Xstar)$ is  
    \begin{equation*}
        \begin{aligned}
            \partial \lambda_{\max}(-\Xstar) = \{ -P U P^\tr \in \mathbb{S}^{n} \,|\, U \in \mathbb{S}^{r}_+, \Trace(U) = 1\}.
        \end{aligned}
    \end{equation*}
    By \Cref{theorem: optimality condition}, it is not difficult to see that $\Zstar \in \{\Trace(\Zstar) \times P U P^\tr \,|\, U \in \mathbb{S}^{r}_+, \Trace(U) = 1\}$, which indicates
    $$
    C - \sum_{i=1}^m A_i y_i^\star =\Zstar \in - \Trace(\Zstar)\partial \lambda_{\max}(-\Xstar). 
    $$
    Thus, $\alpha = \Trace(\Zstar)$, $\ystar$, and $\Xstar$ also satisfy \cref{eq:mutliplier-a}.
        
    \item We then prove $\Lambda \subseteq \tilde{\Lambda}$. It suffices to show that for any $\alpha, y\in \mathbb{R}^m, \Xstar$ satisfying  \cref{eq:mutliplier-a}-\cref{eq:mutliplier-b}, the points $\Xstar, (y, Z = C - \sum_{i=1}^m A_i y_i)$ are a pair of primal and dual optimal solutions to the SDPs \cref{eq:SDP-primal} and \cref{eq:SDP-dual} (note that \cref{eq:mutliplier-a} implies that $\Trace(Z) = \Trace(C - \sum_{i=1}^m A_i y_i) = \alpha$).
    
    By definition, it is clear that $\Xstar, y, Z = C - \sum_{i=1}^m A_i y_i$ are primal and dual feasible, i.e.,
    $$
    \langle A_i, \Xstar \rangle = b_i, i = 1, \ldots, m, \quad \sum_{i=1}^m A_i y_i + Z = C,  \quad \Xstar \succeq 0, \quad Z \succeq 0.
    $$
   By \Cref{theorem: optimality condition}, we only need to show that the complementarity slackness  $\Xstar Z = 0$ also holds.  
   If $\lambda_{\max}(-\Xstar) < 0$, then $\alpha = 0$ by \Cref{eq:mutliplier-b}. Thus, we have $Z = C - \sum_{i=1}^m A_i y_i  \in - \alpha \partial \lambda_{\max}(-\Xstar) =\{0\},$ leading to $\Xstar Z = 0$.
   If $\lambda_{\max}(-\Xstar) = 0$ with multiplicity $r$, then $PUP^\tr \times \Xstar = 0, \forall U \in \mathbb{S}^r$, where the columns of $P \in \mathbb{R}^{n \times r}$ forms an orthogonal set of the eigenvectors for $\lambda_{\max}(-\Xstar)$. Combining this fact with the definition $Z = C - \sum_{i=1}^m A_i y_i  \in - \alpha \partial \lambda_{\max}(-\Xstar)$, we also have $\Xstar Z = 0$.  
   Therefore,  the points $\Xstar, y, Z = C - \sum_{i=1}^m A_i y_i$ satisfy the KKT condition for \cref{eq:SDP-primal} and \cref{eq:SDP-dual}. 
\end{itemize}

Therefore, we have established that $\tilde{\Lambda} = \Lambda$. Together with \Cref{theorem:exact-penalization}, this completes the proof. 
\end{proof}

\subsection{Constant trace property in primal and dual SDPs}\label{sec:derivation-constant-trace-SDP}
In this subsection, we give a detailed derivation for the penalized primal/dual problem when the feasible set of \cref{eq:SDP-primal,eq:SDP-dual} implies a constant trace constraint on the decision variables $X$ and $Z$, i.e. $\Trace(X) = k$ and $\Trace(Z) = k$ for some $k>0$.  

\subsubsection{Constant trace in primal SDPs: Derivation of \cref{eq:SDP-dual-penalized-trace}}

\label{subsubsec:const-trace-primal-eigen}
Without loss of generality, we can add the explicit constant trace constraint $\Trace(X) = k$ to \cref{eq:SDP-primal}, leading to 
\begin{equation*}
    \begin{aligned}
        \min_{X} \quad & \langle C, X\rangle \\
        \mathrm{subject~to} \quad & \langle A_i, X\rangle = b_i, \quad i = 1, \ldots, m,  \\
         \quad & \langle I,X \rangle = k, \\
        & X \in \mathbb{S}^n_+. 
    \end{aligned}
\end{equation*}
Then, the corresponding Lagrangian dual problem reads as
\begin{equation} \label{eq:dual-formulation-for-primal-constant-trace}
    \begin{aligned}
        \max_{y,t} \quad & b^\tr y + t k \\
        \mathrm{subject~to} \quad & Z + \sum_{i=1}^m A_i y_i + t I = C, \\
        & Z \in \mathbb{S}^n_+.
    \end{aligned}
\end{equation}
For every pair of optimal solutions $X^\star$ and $Z^\star$, we have $\Trace(\Xstar) = k>0$ and $\text{rank}(\Zstar) \leq n-1$ due to the rank condition in \cref{eq:complementarity}. This implies $\lambda_{\min}(\Zstar) = 0$. By eliminating the variable $Z$ in \cref{eq:dual-formulation-for-primal-constant-trace}, we get 
$$
\lambda_{\min}\left(C - \sum_{i=1}^m A_i y_i - t I\right) = 0 \quad \Rightarrow \quad t = \lambda_{\min}(C - \Ajmap(y)) =  -\lambda_{\max}(\Ajmap(y) -C).
$$
Therefore, the problem \cref{eq:dual-formulation-for-primal-constant-trace} can be equivalently written as 
\begin{equation*}
    \begin{aligned}
        \max_{y} \quad & b^\tr y - k \lambda_{\max}(\Ajmap(y) -C),
    \end{aligned}
\end{equation*}
which is also equivalent to \cref{eq:SDP-dual-penalized-trace}.

\subsubsection{Constant trace in dual SDPs: Derivation of \cref{eq:SDP-primal-penalized-trace}}
Similarly, we can add the explicit constant trace constraint, $\Trace(Z) = k$, to the dual SDP \cref{eq:SDP-dual}. Its Lagrange dual problem becomes
\begin{equation*} 
    \begin{aligned}
        \min_{X,t,Q} \quad & \langle C,Q \rangle + k t  \\
        \mathrm{subject~to} \quad & \langle A_i,Q \rangle =  b_i, \quad i=1,\ldots,m,\\
        & Q + t I  = X,\\
        & X \in \mathbb{S}^n_+. 
    \end{aligned}
\end{equation*}
Following the same argument in \Cref{subsubsec:const-trace-primal-eigen}, we know 
$$\lambda_{\min}(Q + t I) = 0 \quad \Rightarrow \quad t = -\lambda_{\min}(Q) = \lambda_{\max}(-Q),$$ 
which leads to
\begin{equation*} 
    \begin{aligned}
        \min_{Q} \quad & \langle C,Q \rangle + k \lambda_{\max}(-Q)  \\
        \mathrm{subject~to} \quad & \langle A_i,Q \rangle =  b_i,\quad i=1,\ldots,m.
    \end{aligned}
\end{equation*}
This is clearly equivalent to \cref{eq:SDP-primal-penalized-trace}.

\begin{example}[Constant trace in dual SDPs] \label{subsec:constant-trace}
Here, we point out a simple observation that a specific matrix completion problem admits a constant trace property in the dual variable $Z$. Matrix completion aims to recover a low-rank matrix $M \in \mathbb{R}^{s \times t}$ from its partially observed entries $\{M_{ij}\}_{(i,j)\in \Omega}$. A typical formulation with a nuclear norm regularization reads as \cite{candes2012exact}
\begin{equation*}
    \begin{aligned}
         \min_{X} \quad & \|X\|_{*} \\
        \mathrm{subject~to} \quad & X_{ij} = M_{ij},\; \forall (i,j) \in \Omega,
    \end{aligned}
\end{equation*}
where $\|X\|_{*}$ denotes the nuclear norm of $X$. The  problem above can be cast as a standard primal SDP
\begin{equation} \label{eq:SDP-matrix-completion}
    \begin{aligned}
         \min_{X} \quad & \Trace(X) \\
        \mathrm{subject~to} \quad & \left \langle \begin{bmatrix} 0 & E_{ij}^\tr \\
                                                    E_{ij} & 0\end{bmatrix},  X \right \rangle =  2M_{ij},\forall (i,j) \in \Omega, \\
        & X  = \begin{bmatrix}
                W_1 & U^\tr \\
                U & W_2
        \end{bmatrix}  \in \mathbb{S}^{s+t}_+,
    \end{aligned}
\end{equation}
where $E_{ij} \in \mathbb{R}^{s \times t}$ is zero everywhere except the $(i,j)$ entry being $1$. Letting $A_{ij} = \begin{bmatrix} 0 & E_{ij}^\tr \\ E_{ij} & 0\end{bmatrix}$ and re-indexing the matrices $\{A_{ij}\}$ by integers $s= 1, \ldots , |\Omega|$, the dual SDP for \cref{eq:SDP-matrix-completion} becomes
\begin{equation*}
    \begin{aligned}
        \max_{y,Z} \quad & b^\tr y \\
        \mathrm{subject~to} \quad & \sum_{s = 1}^{|\Omega|} A_{s}y_{s} + Z = I, \\
        \quad & Z \in \mathbb{S}^{s+t}_+.
    \end{aligned}
\end{equation*}
Note that $\Trace\left(A_{s}\right) = 0$ and any feasible variable $Z$ has the constant trace property $\Trace(Z) = \Trace(I) = s+t$. 
\end{example}
    \section{Computation and conversion in \SBMP~and\SBMD} \label{appendix:reformulation}

\subsection{Reformulation of master problem \cref{eq:SBP-equivalence} in \SBMP} \label{sec:reform-ms-pb}
As indicated in \Cref{proposition-primal-penalty}, \SBMP~needs to solve the following SDP with a quadratic cost function at each iteration (recall that the constraint set $\hat{\mathcal{W}}_t$ is defined in \cref{eq:constraint-W_t})
\begin{align*}
       \min_{W \in \hat{\mathcal{W}}_t,~y \in \mathbb{R}^m} \;\; \langle W-C,\Omega_t \rangle  - \langle b- \mathcal{A}(\Omega_t)  ,y \rangle+ \frac{1}{2 \alpha } \left\|W-C+\sum_{i=1}^{m}A_i y_i\right\|^2.
\end{align*} 
After dropping some constant terms and using the operator $\mathrm{vec}: \mathbb{S}^n \rightarrow \RR^{n^2}$ that stacks the columns of the input matrix on top of each other, the problem \cref{eq:SBP-equivalence} can be reformulated as
\begin{equation}
   \begin{aligned} \label{eq:ms-pb-reform-QP-1}
   \min_{\gamma, S, y} \quad & \begin{bmatrix} \gamma & \vectorize{S}^\tr & y^\tr \end{bmatrix}
        \begin{bmatrix} 
            Q_{11} & Q_{12} & Q_{13} \\
            Q_{12}^\tr & Q_{22} & Q_{23} \\
            Q_{13}^\tr & Q_{23}^\tr & Q_{33}
        \end{bmatrix}
        \begin{bmatrix} \gamma \\ \mathrm{vec}(S) \\ y\end{bmatrix} + \begin{bmatrix} q_1^\tr & q_2^\tr  &  q_3^\tr \end{bmatrix} \begin{bmatrix} \gamma \\ \vectorize{S} \\y\end{bmatrix} \\
        \mathrm{subject~to}\quad& \gamma \geq 0, S \in \mathbb{S}^r_+, \gamma + \Trace(S) \leq \rho, y \in \mathbb{R}^m,
    \end{aligned}
\end{equation}
where  $Q_{11}  = \langle \bar{W}_t,\bar{W}_t \rangle, Q_{22}  = I_{r^2},  Q_{33} = \begin{bmatrix} \Amap (A_1), \ldots, \Amap (A_m) \end{bmatrix}, Q_{12} = \vectorize{P_t^\tr \bar{W}_t P_t }^\tr, Q_{13}  = \Amap(\bar{W}_t)^\tr$ and
\begin{align*}
    Q_{23} & = \begin{bmatrix}\vectorize{P_t^\tr A_1 P_t} & \vectorize{P_t^\tr A_2 P_t} & \cdots &  \vectorize{P_t^\tr A_m P_t} \end{bmatrix}, \\
    q_{1} & = -2 \langle \bar{W}_t, C  \rangle + 2 \alpha \langle \bar{W}_t, \Omega_t  \rangle, \\
    q_{2} & = 2 \alpha\vectorize{P_t^\tr \Omega_t P_t} - 2 \vectorize{P_t^\tr C P_t}, \\
    q_{3} & = -(2 \alpha (b-\Amap(\Omega_t))+ 2\mathcal{A}(C)) ).
\end{align*}
Although the problem 
 \cref{eq:ms-pb-reform-QP-1} 
is already ready to be solved by standard conic solvers, the computation can be further simplified by eliminating the variable $y$. The optimality condition for $y$ follows
\begin{align*}
    y = Q_{33}^{-1} \Bigl(\frac{-q_3}{2} - Q_{13}^\tr \alpha - Q_{23}^\tr \vectorize{S}\Bigr),
\end{align*}
where $Q_{33}$ is invertible because of \cref{assumption:linearly-independence}.
Therefore, the problem becomes
\begin{equation}
   \begin{aligned}\label{eq:ms-pb-reform-QP-2}
   \argmin_{\gamma, S} \quad & \begin{bmatrix} \gamma & \vectorize{S}^\tr \end{bmatrix}
        \begin{bmatrix} 
            M_{11} & M_{12} \\
            M_{12}^\tr & M_{22}\\    
        \end{bmatrix}
        \begin{bmatrix} \gamma \\ \mathrm{vec}(S) \end{bmatrix} + \begin{bmatrix} m_1^\tr & m_2^\tr \end{bmatrix} \begin{bmatrix} \gamma \\ \vectorize{S} \end{bmatrix} \\
        \mathrm{subject~to}\quad& \gamma \geq 0, S \in \mathbb{S}^r_+, \gamma + \Trace(S) \leq \rho,
    \end{aligned}
\end{equation}
where $M_{11} = Q_{11} - Q_{13} Q_{33}^{-1} Q_{13}^\tr, M_{22} = Q_{22} - Q_{23} Q_{33}^{-1}  Q_{23}^\tr, M_{12} = Q_{12} - Q_{13} Q_{33}^{-1}  Q_{23}^\tr, m_1 = q_1  - Q_{13} Q_{33}^{-1} q_3$ and $m_2  =q_2 - Q_{23}Q_{33}^{-1} q_3$.
We note that \cref{eq:ms-pb-reform-QP-2} has only one non-negative variable, one semidefinite variable, and one inequality constraint. The computational complexity for solving  \cref{eq:ms-pb-reform-QP-2} is low 
when the dimension of the semidefinite variable is small.

\begin{remark}[Eliminating the equality constraints in \cref{eq:inner-optimization}] \label{remark:eliminate-equality}
Aside from deriving the dual problem for \cref{eq:inner-optimization}, another approach to solving equality-constrained problems is to eliminate the affine constraints by finding an explicit representation of the feasible set
\cite[Section 10.1.2]{boyd2004convex}. Specifically, \cref{eq:inner-optimization} can be reformulated as 
    \begin{align*}
        \min_{x \in \mathbb{R}^{p}} \quad & \Biggl\langle C-W, X_0 + \sum_{i=1}^{p} N_i x_i  \Biggr\rangle + \frac{\alpha}{2 } \left\|X_0 + \sum_{i=1}^{p} N_i x_i - \currposi \right \|^2,
    \end{align*}
    where $X_0 \in \mathbb{S}^n $ is a particular solution for the affine constraint, i.e., $\Amap(X_0) = b$, and $\{N_1, \ldots, N_p\}$ is a set of the orthonormal basis of the null space of $\Amap$. We note that the above problem
    is an unconstrained quadratic program in $x$, and thus we have the optimality condition
    \begin{align*}
        x =  -K^{-1} \Biggl(\Nmap\biggl(\frac{1}{\alpha}(C-W) + X_0-\currposi \biggr)\Biggr),
    \end{align*}
    where $\mathcal{N}(\cdot)$ is a linear map $ \mathbb{S}^{n} \to \mathbb{R}^p$ as 
$
\mathcal{N}(X) := \begin{bmatrix} \langle N_1, X \rangle, \ldots , \langle N_p, X \rangle \end{bmatrix}^\tr$, and $K = \begin{bmatrix} \Nmap(N_1) \ldots \Nmap(N_p) \end{bmatrix} \in \mathbb{S}^p_{++}$.
   Therefore, another equivalent problem of \cref{eq:SBM-subproblem} becomes (after dropping some constant terms) 
    \begin{align*}
        \argmax_{W \in  \hat{\mathcal{W}}_t} \quad   - \langle X_0 - \Njmap (K\Nmap(X_0-\currposi)) ,W \rangle -\frac{1}{2\alpha}\left \|(K^{-1})^{\frac{1}{2}} \Nmap(C-W) \right\|^2,
    \end{align*}
    where $\Njmap(\cdot):\mathbb{R}^p \to \mathbb{S}^{n} $ is the adjoint operation of $\Nmap$. 
\end{remark}

\subsection{Reformulation of master problem \eqref{eq:SBD-equivalence} in \SBMD}
\label{sec:reform-ms-pb-dual}
Similar to \SBMP, \SBMD~solves the following quadratic SDP at every iteration
\begin{align*} 
   \min_{W \in \hat{\mathcal{W}}_t} \;\; \langle b,\ycurrposi \rangle   + \langle W, C - \Ajmap(\ycurrposi) \rangle + \frac{1}{2 \alpha } \left\|b - \Amap (W)\right\|^2, 
\end{align*} 
where the constraint set is defined as
\begin{equation*} 
    \hat{\mathcal{W}}_t := \{  \gamma \bar{W}_t + P_t S P_t^\tr \in \mathbb{S}^n \;|\; S \in \mathbb{S}^r_+ , \gamma \geq 0, \gamma + \Trace(S) \leq \rho \}.
\end{equation*}
By removing some constants and performing a scaling, the problem can be reformulated as 
\begin{align}\label{eq:SBMD-ms-pb-reform-QP}
    \min_{\gamma, S} \quad & \begin{bmatrix} \gamma & \vectorize{S}^\tr \end{bmatrix}
        \begin{bmatrix} 
            M_{11} & M_{12} \\
            M_{21} & M_{22}
        \end{bmatrix} 
        \begin{bmatrix} \gamma \\ \mathrm{vec}(S)\end{bmatrix}
        + \begin{bmatrix} m_1^\tr & m_2^\tr \end{bmatrix}\begin{bmatrix} \gamma \\ \vectorize{S}\end{bmatrix} \\
    \mathrm{subject~to}\quad& \gamma \geq 0, S \in \mathbb{S}^r_+, \gamma + \Trace(S) \leq \rho, y \in \mathbb{R}^m \nonumber,
\end{align}
where the problem data are
\begin{align*}
    M_{11} & =  \langle \Amap(\bar{W}_t),\Amap(\bar{W}_t) \rangle  , &
    M_{22} & =  \sum_{i=1}^m  \vectorize{P_t^\tr A_i P_t} \vectorize{P_t^\tr A_i P_t}^\tr,\\
    M_{12} & = P_t^\tr \Ajmap(\Amap(\bar{W}_t))P_t, &
    m_1 & = \langle -2\Ajmap(b) + 2 \alpha G, \bar{W}_t \rangle ,\\
    m_2 & = P_t^\tr (-2\Ajmap(b)+ 2 \alpha G) P_t, &
    G  &  = C - \Ajmap(\ycurrposi).
\end{align*}
It is clear that \cref{eq:ms-pb-reform-QP-2,eq:SBMD-ms-pb-reform-QP} are in the same form as \cref{eq:SBM-ms-pb-QP-abstract} and have decision variables of the same dimension. Thus, solving \cref{eq:ms-pb-reform-QP-2,eq:SBMD-ms-pb-reform-QP} has the same computational complexity. However, the problem data $M, m_1, m_2$ in \cref{eq:ms-pb-reform-QP-2,eq:SBMD-ms-pb-reform-QP} are different, and their constructions take different time consumption. 

\subsection{Analytical solution when $\rpast = 0$ and $\rcurrent = 1$} \label{appendix: r=1}
The subproblem \cref{eq:SBP-equivalence} or \Cref{eq:SBD-equivalence} can be reformulated into a conic problem of the form  \cref{eq:ms-pb-reform-QP-2}. In general, the subproblem \cref{eq:ms-pb-reform-QP-2} does not admit an analytical solution and needs to be solved by another algorithm (e.g., MOSEK \cite{mosek} or SeDuMi \cite{sedumi}).  
Here, we highlight that an analytical solution to \cref{eq:ms-pb-reform-QP-2} exists when parameters $\rpast = 0$ and $\rcurrent = 1$. In this case, the conic problem \cref{eq:ms-pb-reform-QP-2} is reduced to 
\begin{equation}
   \begin{aligned}\label{eq:ms-pb-reform-QP-simple}
   \min_{\gamma, s} \quad & \begin{bmatrix} \gamma & s \end{bmatrix}
        M \begin{bmatrix} \gamma \\ s \end{bmatrix} + m^\tr \begin{bmatrix} \gamma \\ s \end{bmatrix} \\
        \mathrm{subject~to}\quad& \gamma \geq 0, s \geq 0, \gamma + s \leq \rho,
    \end{aligned}
\end{equation}
where $M \in \mathbb{S}^{2}_{+}$ and $m \in \mathbb{R}^{2}$ are problem data. 

There are only two scalar decision variables in \Cref{eq:ms-pb-reform-QP-simple}, and the feasible region forms a triangle in the first quadrant.
Therefore, the solution to \cref{eq:ms-pb-reform-QP-simple} depends on the location of the minimum of the quadratic objective function. Precisely, the minimum of the objective function happens at 
$$
    \begin{bmatrix} \gamma_{\mathrm{obj}}  \\ s_{\mathrm{obj}}  \end{bmatrix} = -\frac{M^{\dag}m}{2},
$$
where $M^{\dag}$ is the Moore–Penrose Pseudoinverse of $M$. If $\gamma_{\mathrm{obj}}$ and $ s_{\mathrm{obj}}$ are feasible in \cref{eq:ms-pb-reform-QP-simple}, they are also the optimal solution to \cref{eq:ms-pb-reform-QP-simple}. If, however, $\gamma_{\mathrm{obj}}$ and $ s_{\mathrm{obj}}$ are not feasible, the solution to \cref{eq:ms-pb-reform-QP-simple} will be on one of the three line segments defined by 
\begin{align*}
     l_1 &=\left \{ (\gamma,0) \in \mathbb{R}^2 \mid 0 \leq \gamma \leq \rho\right \}, \\
     l_2 &= \left\{(0,s) \in \mathbb{R}^2 \mid 0 \leq s \leq \rho\right \}, \\
     l_3 &= \left\{ (\gamma , s) \in  \mathbb{R}^2 \mid \gamma  +  s = \rho, \gamma \geq 0, s \geq 0 \right \}.
\end{align*}
Each line segment is one-dimensional and thus the minimizer of a quadratic function on each line segment can be calculated analytically:

\begin{itemize}
    \item 
 For the line segment $l_1$, the minimizer is 
\begin{align*}
    v_1 = \begin{bmatrix} \hat{\gamma}  \\ 0 \end{bmatrix} \text{ and } \hat{\gamma} = \begin{cases}     0 &\text{if} \quad \displaystyle-{m_1}/({2M_{11}}) < 0, \\
     \displaystyle -\frac{m_1}{2M_{11}} &\text{if} \quad 0 \leq   \displaystyle -{m_1}/(2M_{11}) \leq \rho, \\
    \rho &\text{if}\quad  \displaystyle- {m_1}/({2M_{11}}) > \rho.
    \end{cases} 
\end{align*}
\item For the line segment  $l_2$, the minimizer  is
\begin{align*}
     v_2 = 
     \begin{bmatrix} 0 \\ \hat{s} \end{bmatrix} \text{ and } 
     \hat{s} = \begin{cases} 
    0 &\text{if} \quad \displaystyle  -{m_2}/({2M_{22}}) < 0, \\
     \displaystyle -\frac{m_2}{2M_{22}} & \text{if}\quad 0 \leq  \displaystyle  -{m_2}/({2M_{22}}) \leq \rho, \\
    \rho &\text{if} \quad  \displaystyle  -{m_2}/({2M_{22}}) > \rho.
    \end{cases}    
\end{align*}
\item For the line segment  $l_3$, the minimizer  is 
\begin{align*}
      v_3 = \begin{bmatrix} \hat{\gamma} \\ \rho - \hat{\gamma} \end{bmatrix} \text{ and } \hat{\gamma} = \begin{cases}  
    0 &\text{if} \quad  \phi < 0, \\
    \phi &\text{if} \quad 0 \leq \phi \leq \rho, \\
    \rho &\text{if} \quad  \phi> \rho,
    \end{cases} 
\end{align*}
where $\phi = \frac{2\rho M_{22} - 2 \rho M_{12}  - m_1 + m_2}{2M_{11} + 2M_{22} - 4M_{12}} $.

\end{itemize}
Let $f(v) = v^{\tr} M v + m^{\tr} v$, where $v \in \mathbb{R}^2$, be the same objective function in \cref{eq:ms-pb-reform-QP-simple}. When $\gamma_{\mathrm{obj}}$ and $s_{\mathrm{obj}}$ are not feasible,  the solution to \cref{eq:ms-pb-reform-QP-simple} can be computed as
$\argmin_{v \in \{v_1,v_2,v_3\}} f(v).$

\subsection{Conversion between primal and dual formulations.} \label{appendix:conversion} 

Here, we show that the primal and dual algorithms \SBMP~and \SBMD~can be converted from each other through a reformulation of the SDPs  \cref{eq:SDP-primal,eq:SDP-dual}. Accordingly, \SBMP~is able to solve the dual SDP \cref{eq:SDP-dual} in a different form, and \SBMD~can solve the primal SDP \cref{eq:SDP-primal}. The idea is to make a conversion between the equality-form SDP \cref{eq:SDP-primal} and inequality-form SDP \cref{eq:SDP-dual}.

Consider the primal SDP \cref{eq:SDP-primal}. Any feasible point satisfying $\Amap(X) = b$ can be represented as
\begin{equation*}
    \begin{aligned}
        X = X_{\mathrm{p}} + \Njmap y,
    \end{aligned}
\end{equation*}
where $X_{\mathrm{p}} \in \mathbb{S}^n$ is a particular solution of $\Amap(X) = b$, $y \in \mathbb{R}^p$ with $p$ denoting the dimension of the null space of $\Amap$, and $\mathcal{N}(\cdot)$ is a linear map $ \mathbb{S}^{n} \to \mathbb{R}^p$ as $\mathcal{N}(X) := \begin{bmatrix} \langle N_1, X \rangle, \ldots , \langle N_p, X \rangle \end{bmatrix}^\tr$ such that its adjoint $\Njmap y$ represents the null space of $\Amap$.  
This allows us to equivalently rewrite \cref{eq:SDP-primal-penalized} as 
\begin{equation*}  
    \begin{aligned}
        &\min_{y} \quad \langle C, X_{\mathrm{p}} + \Njmap y \rangle    + \rho \max \{\lambda_{\max} (- X_{\mathrm{p}} - \Njmap y) ,0 \} \\
      = & \min_{y} \quad \langle \Nmap (C), y \rangle   + \rho \max \{\lambda_{\max} (- X_{\mathrm{p}} - \Njmap y) ,0 \},
    \end{aligned}
\end{equation*} 
where we drop a constant term and use the property $\langle C,\Njmap y \rangle = \langle \Nmap(C), y \rangle$.
It is clear that the problem above is in the same form as the penalized dual SDPs \cref{eq:SDP-dual-penalized} with problem data  
 $b = -\Nmap(C), \, \Amap = -\Nmap, \, C = X_{\mathrm{p}}.$
Therefore, we can apply \SBMD~in \Cref{alg:(r_p-r_c)-SBM-D} to solve it and convergence guarantees follow \Cref{thm: sublinearates,thm: linear convergence of Block SBM under the extra condition strict complementarity}. 

On the other hand, the dual SDP \Cref{eq:SDP-dual} can also be solved by \SBMP~in \Cref{alg:(r_p-r_c)-SBM-P}. Indeed, we can equivalently reformulate \cref{eq:SDP-dual} or \cref{eq:SDP-dual-penalized} into the form of \cref{eq:SDP-primal} or \cref{eq:SDP-primal-penalized}: 
\begin{itemize}
    \item We first find $G_1, \ldots,G_l \in \mathbb{S}^{n}$ and $h \in \mathbb{R}^l$, such that $\{ Z \in \mathbb{S}^n ~|~ \mathcal{G} (Z) = h \} = \{C - \Ajmap y ~|~ y\in \mathbb{R}^m \}$, where $\mathcal{G}$ is a linear map $: \mathbb{S}^n \rightarrow \mathbb{R}^l$ as $\mathcal{G}(X) = \begin{bmatrix} \langle G_1,X \rangle, \ldots,\langle G_m ,X\rangle \end{bmatrix}^\tr$. In particular,   $\{G_1, \ldots, G_l\}$ is a set of the basis of the null space of $\mathcal{A}$ and $h = \mathcal{G}(C)$.
    \item We then find a feasible point $X_\mathrm{f}$ satisfying the affine constraint in \cref{eq:SDP-primal}, i.e., $\mathcal{A}(X_\mathrm{f})=b$. 
\end{itemize}
Consequently, we can equivalently rewrite the dual penalized nonsmooth formulation \cref{eq:SDP-dual-penalized} in the form of
\begin{equation*}
    \begin{aligned}
        \min_{ Z} \quad & \langle X_{\mathrm{f}}, Z \rangle + \rho \max \{\lambda_{\max}(-Z),0 \} \\
        \mathrm{subject~to} \quad & \mathcal{G}(Z) = h,
    \end{aligned}
\end{equation*}
where we have replaced $b$ by $\Amap(X_\mathrm{f})$, introduced a constant $\langle C, X_\mathrm{f} \rangle$, and defined a new variable $Z = C - \Ajmap y$.
It is clear that the problem above is in the form of \cref{eq:SDP-primal-penalized}, which is ready to be solved by \SBMP~in \Cref{alg:(r_p-r_c)-SBM-P}. The convergence results follow \cref{thm: sublinearates-P-K,thm: linear-convergence}.

    \section{Technical proofs in \Cref{sec:SBM-Primal}} \label{Appendix:technical-proofs}

In this section, we complete the technical proofs in \Cref{sec:SBM-Primal}. We first verify that the lower approximation model \cref{eq:F_W_t_P_t} satisfies \cref{eq:bundle-method-property-1,eq:bundle-method-property-2,eq:bundle-method-property-3} in \Cref{appendix:verification}, and then prove \Cref{lemma:Primal-Dual-Gap-Feasibility} in \Cref{section:proof-lemma-appendix}. These two results complete the proof of \Cref{thm: sublinearates-P-K}. Finally, in \Cref{appendix:theorem-linearly-convergence}, we complete the proofs for \Cref{thm: linear-convergence}.

\subsection{Verification of the lower approximation model \cref{eq:F_W_t_P_t}} \label{appendix:verification}

 First, the constructions of $P_t$ in \cref{eq:update-P-t} and $\bar{W}_t $ in \cref{eq:update-W-t} guarantee that $P_t^\tr P_t = I_r, \bar{W}_t \succeq 0$ and $\Trace(\bar{W}_t) =1$. Then, we have 
$$
    \hat{\mathcal{W}}_t:=\{\gamma \bar{W}_t + P_tSP_t^\tr \mid S \in \mathbb{S}^r_+ , \gamma \geq 0,\gamma + \Trace(S) \leq \rho\} \subset \{ W \in \mathbb{S}^n_{+} \mid \Trace(W) \leq \rho\},
$$
which implies that
$$
\max_{W \in \hat{\mathcal{W}}_t }    \langle W, -X\rangle \leq \max_{\Trace(W) \leq \rho,  W \in \mathbb{S}^n_{+} }    \langle W, -X\rangle = \rho \max\{\lambda_{\max}(-X),0\}, \quad \forall X \in \mathbb{S}^n.
$$
Thus, $\hat{F}_{(\bar{W}_t,P_t)}(\cdot) $ 
is a global lower approximation of $F(\cdot)$
, satisfying \cref{eq:bundle-method-property-1}. 

 Second, the satisfaction of \cref{eq:bundle-method-property-2} is due to the fact that a subgradient of $F(X) $ at $\candidate$ at iteration $t$ is contained in the feasible set $\hat{\mathcal{W}}_{t+1}$ for the next model, when constructing $P_{t+1}$ in \cref{eq:update-P-t}. In particular,  the column space of $P_{t+1}$  in \cref{eq:update-P-t} spans the top $r_{\mathrm{c}}$ eigenvectors of $-\candidate$. Then, there exists a unit vector $s \in \mathbb{R}^r$ 
    such that $P_{t+1}s = v$, where $v$ is a top normalized eigenvector of $-\candidate$.  
Letting $\gamma = 0$, it is easy to verify that $\rho vv^\tr = P_{t+1} (\rho s s^\tr) P_{t+1}^\tr \in \hat{\mathcal{W}}_{t+1}$ since $\rho s s^\tr \in \mathbb{S}^r_+$ and $\Trace(\rho s s^\tr) \leq \rho$. Therefore, if $\lambdamax(-\candidate) > 0$, we have
    \begin{align*}
         \hat{F}_{(\bar{W}_{t+1},P_{t+1})} (X)   =  
         \langle C,X \rangle + \max_{W \in \hat{\mathcal{W}}_{t+1}} \langle W, -X \rangle 
    & \geq \langle C,X \rangle + \rho \langle vv^\tr, -X \rangle  \\
    & = F(\candidate) + \langle g_{t} ,  X- \candidate   \rangle, \quad \forall X \in \mathbb{S}^n,
    \end{align*}
    where $g_{t} =C-\rho vv^\tr \in \partial F(\candidate) $. On the other hand, if $\lambdamax(-\candidate) \leq 0$, since $0 \in \hat{\mathcal{W}}_{t+1} $, it also follows that  
    \begin{align*}
        \hat{F}_{(\bar{W}_{t+1},P_{t+1})} (X) \geq  \langle C , X \rangle 
         = F(\candidate) + \langle g_{t} , X- \candidate \rangle, \quad \forall X \in \mathbb{S}^n, 
    \end{align*}
    where $g_{t} = C \in \partial F(\candidate)$. 
    Hence, we have verified the subgradient lower-bound in \cref{eq:bundle-method-property-2} for $\hat{F}_{(\bar{W}_t,P_t)}(\cdot)$.
   
    Finally, the satisfaction of \cref{eq:bundle-method-property-3} is due to the fact that the optimal solution $\Wtstar$ at iteration $t$ is contained the feasible set $\hat{\mathcal{W}}_{t+1}$ at the next iteration $t+1$, thanks to the construction \cref{eq:update-P-t} and \cref{eq:update-W-t}. In particular, since the column space of $P_{t+1}$ spans the past information information $P_tQ_1$ as in \cref{eq:update-P-t}, there exists an orthonormal matrix $\bar{Q} \in \mathbb{R}^{ r \times r_{\mathrm{p}} }$ such that $P_{t+1}\bar{Q} = P_{t}Q_1$. Let $\gamma = \gamma_t^\star + \Trace(\Sigma_2)$ and $S =Q_1 \Sigma_1 Q_1^\tr$, then $W_{t}^\star = \gamma \bar{W}_{t+1} + P_{t+1}SP_{t+1}^\tr \in \hat{\mathcal{W}}_{t+1}$. Therefore, $\forall X \in \mathbb{S}^n$ such that $\Amap(X) = b$, we have
    \begin{align*}
         \hat{F}_{(\bar{W}_{t+1},P_{t+1})} (X) & =  
         \langle C,X \rangle + \max_{W \in \hat{\mathcal{W}}_{t+1}} \langle W, -X \rangle  \\
    & \geq \langle C,X \rangle +\langle W_{t}^\star, -X \rangle  \\
    & = \langle C- W_{t}^\star,\candidate \rangle +\langle W_{t}^\star, \candidate -X \rangle - \langle C,\candidate-X \rangle +  \langle \Ajmap(\ytstar),\candidate -X \rangle\\
    & = \hat{F}_{(\bar{W}_{t},P_{t})} (\candidate) + \langle -W_{t}^\star + C - \Ajmap(\ytstar) , X - \candidate \rangle \\
    & = \hat{F}_{(\bar{W}_{t},P_{t})} (\candidate) + \langle \alpha(\currposi-\candidate) , X - \candidate \rangle \\
    & = \hat{F}_{(\bar{W}_{t},P_{t})} (\candidate) + \langle s_{t+1}, X- \candidate \rangle,
    \end{align*}
    where the second equality uses $\langle \Ajmap(\ytstar), \candidate - X \rangle = \langle \Amap (\candidate - X \rangle) ,\ytstar\rangle = 0$ since $\candidate$ and $X$ are both feasible, the fourth equality uses the optimal condition \cref{eq:optimality-condition-Xt}, and the fifth equality sets $s_{t+1} =\alpha(\currposi-\candidate)  \in \partial \hat{F}_{(\bar{W}_{t},P_{t})} (\candidate) + \mathcal{N}_{\mathcal{X}_0}(\candidate)$ since $\mathcal{N}_{\mathcal{X}_0}(\candidate) = \{ \Ajmap y \mid \forall y \in \mathbb{R}^m \}$.  Therefore, we have verified the model subgradient lowerbound in \cref{eq:bundle-method-property-3}.
    \vspace{-2mm}

\subsection{Proof of \Cref{lemma:Primal-Dual-Gap-Feasibility}} \label{section:proof-lemma-appendix}

Before presenting the proof, we first draw a technical lemma that ensures the compactness of the sub-level set $\Dxo$ in \cref{lemma:Primal-Dual-Gap-Feasibility} (recall that $\Dxo := \sup_{F(\Omega_t) \leq F(\Omega_0) }\|\Omega_t\|$). 
\begin{lemma}\label{lemma:compact-sublevel-set}
    Given a closed convex set $A \subseteq \mathbb{R}^n $ and a convex function $f: A \rightarrow \mathbb{R}$,
    if the optimal solution set of $\min_{x \in A}f(x)$ is compact, then the sublevel set $C_{\epsilon} = \{x \in A \;|\; f(x^\star) \leq f(x^\star) + \epsilon \}$, $x^\star$ is a minimizer, is also compact for all $\epsilon > 0$.
\end{lemma}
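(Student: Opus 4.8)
The plan is to prove compactness by establishing closedness and boundedness separately, with boundedness being the crux. Throughout, write $f^\star := f(x^\star) = \min_{x \in A} f(x)$ and let $\mathcal{X}^\star = \{x \in A \mid f(x) = f^\star\}$ denote the optimal set, which is compact by hypothesis. First I would observe that the sublevel set $C_\epsilon = \{x \in A \mid f(x) \leq f^\star + \epsilon\}$ is closed: since $A$ is closed and $f$ is a finite convex function (hence continuous on $\mathbb{R}^n$; in our application $f = F$ is continuous on all of $\mathbb{S}^n$ and $A = \mathcal{X}_0$ is an affine subspace), $C_\epsilon$ is the intersection of the closed set $A$ with the closed set $\{x \mid f(x) \leq f^\star + \epsilon\}$.

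The main work is to show $C_\epsilon$ is bounded, which I would argue by contradiction along recession directions. Suppose $C_\epsilon$ is unbounded; then there exists a sequence $\{x_k\} \subset C_\epsilon$ with $\|x_k - x^\star\| \to \infty$. Setting $d_k = (x_k - x^\star)/\|x_k - x^\star\|$, each $d_k$ lies on the unit sphere, so by compactness of the sphere I may pass to a subsequence with $d_k \to d$ and $\|d\| = 1$. The goal is to show that $x^\star + t d \in \mathcal{X}^\star$ for every $t \geq 0$, producing an unbounded ray inside $\mathcal{X}^\star$ and contradicting its compactness.

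To extract the ray, fix $t \geq 0$ and take $k$ large enough that $\|x_k - x^\star\| \geq t$, so that $\theta_k := t/\|x_k - x^\star\| \in [0,1]$ and $x^\star + t d_k = (1-\theta_k)x^\star + \theta_k x_k$ is a genuine convex combination of points of $A$. Convexity of $f$ then gives $f(x^\star + t d_k) \leq (1-\theta_k) f^\star + \theta_k (f^\star + \epsilon) = f^\star + \theta_k \epsilon$. As $k \to \infty$ with $t$ fixed, $\theta_k \to 0$ and $x^\star + t d_k \to x^\star + t d \in A$ by closedness of $A$. Passing to the limit using continuity (or lower semicontinuity) of $f$ yields $f(x^\star + t d) \leq f^\star$, hence equality, so $x^\star + t d \in \mathcal{X}^\star$. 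Since $t \geq 0$ was arbitrary, $\mathcal{X}^\star$ contains the entire ray $\{x^\star + td \mid t \geq 0\}$, contradicting boundedness of $\mathcal{X}^\star$.

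The main obstacle I anticipate is the limiting step: I must ensure that the limit point stays in $A$ and that the value of $f$ does not jump upward in the limit. Both are handled by the closedness of $A$ and the continuity of the finite convex objective (which holds for the penalized SDP cost $F$), though the argument would require extra care if $f$ were merely lower semicontinuous with a nontrivial effective domain. Finally, a closed and bounded subset of $\mathbb{R}^n$ is compact, which completes the proof; equivalently, one could invoke the standard fact that all nonempty sublevel sets of a closed proper convex function share a common recession cone, so that boundedness of $\mathcal{X}^\star$ immediately forces boundedness of every $C_\epsilon$.
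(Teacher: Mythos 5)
Your proof is correct, and it follows the same overall strategy as the paper's: argue by contradiction that an unbounded sublevel set forces an unbounded ray inside the optimal set, contradicting its compactness. The mechanism differs in two places, though. The paper first invokes a structural fact (an unbounded closed convex set contains a half-line $\{x^\star + td \mid t \geq 0\}$ through $x^\star$, citing Soltan's Theorem 6.12) to obtain a ray lying entirely in $C_\epsilon$, and then uses the monotonicity of convex difference quotients ([Eq.~2.30] of Ruszczy\'nski) to show that $f$ cannot exceed $f^\star$ anywhere on that ray --- otherwise $f$ would grow linearly to $+\infty$ along it, leaving $C_\epsilon$. You instead build the direction $d$ by hand as a limit of normalized differences $d_k = (x_k - x^\star)/\|x_k - x^\star\|$ and get the stronger conclusion $f(x^\star + td) \le f^\star$ in one step from the convex-combination bound $f(x^\star + td_k) \le f^\star + \theta_k\epsilon$ with $\theta_k \to 0$. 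Your route is more self-contained (it needs only compactness of the unit sphere and convexity, not the two cited results), at the cost of an explicit limiting step requiring lower semicontinuity of $f$ at $x^\star + td$ --- a hypothesis you correctly flag and which the paper also implicitly uses when it asserts that $f$ is continuous and hence closed (this is automatic in the intended application, where $f = F$ is finite and convex on all of $\mathbb{S}^n$ and $A = \mathcal{X}_0$ is an affine subspace). Your closing remark that all nonempty sublevel sets of a closed proper convex function share a common recession cone is the cleanest high-level summary of why the lemma is true, and either write-up would be acceptable.
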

\begin{proof}
    Let $\mathcal{X}$ be the optimal solution set of $\min_{x \in A}f(x)$. We first note that $f$ is closed as $f$ is a continuous function and the domain of $f$ is closed \cite[A.3.3]{boyd2004convex}. Thus, both $\mathcal{X}$  and $C_{\epsilon}$ are closed as the sublevel set of a closed function is closed \cite[A.3.3]{boyd2004convex}.
    We only need to prove $C_{\epsilon}$ is bounded. For this, it suffices to prove that the unboundedness of $C_{\epsilon}$ implies the unboundedness of $\mathcal{X}$. 
    
    We prove this by contradiction. For any $\epsilon > 0$, suppose $C_{\epsilon}$ is unbounded. Since $C_{\epsilon}$ is convex and closed, there exists a direction $d \in \mathbb{R}^n$ such that $\lim_{t \rightarrow \infty} \|x^\star + t d\|_2 = \infty$, where $x^\star$ is an optimal solution, and $x^\star + t d \in C_{\epsilon}, \forall t \geq 0$ \cite[Theorem 6.12]{soltanlectures}. If there exists a point $\hat{x}$ on this half-line such that $\hat{x} = x^\star + \hat{t} d, \hat{t} > 0$,  and  $f(\hat{x}) > f(x^\star)$, then according to \cite[Equation 2.30]{ruszczynski2011nonlinear}, we have 
    $$ 
    \frac{f(x^\star + td) - f(x^\star)}{t} \geq \frac{f\left(x^\star + \hat{t} d\right) - f(x^\star)}{\hat{t}} =:k > 0, \quad \forall t \geq \hat{t},
    $$
    which implies $f(x^\star + td) \geq f(x^\star) + t k, \forall t \geq \hat{t}$. Thus, we have
    $$
    \lim_{t \to \infty} f(x^\star + td) = \infty.
    $$
    However, this contradicts the assumption that $x^\star + t d \in C_{\epsilon}$ for all $ t \geq 0$. This indicates that $f(x^\star + td) = f(x^\star), \forall t \geq 0$, and consequently $\mathcal{X}$ is unbounded, which completes the proof. 
\end{proof}

To lighten the notation, for the rest of the section, we use $\Ft:= \hat{F}_{(\bar{W}_{t},P_{t})}$ to denote the approximate model at step $t$. For convenience, we restate \Cref{lemma:Primal-Dual-Gap-Feasibility} below. 
\begin{lemma}
        In \SBMP, let $\beta\in(0,1)$, $\rcurrent \geq 1$, $\rpast \geq 0 $, $ \alpha>0$, $r= \rcurrent+\rpast$,
	$\rho > 2\DZstar$. Then, at every descent step $t>0$, the following results hold.
	\begin{enumerate}
	\item The approximate primal feasibility for $\Omega_{t+1}$ satisfies
	\begin{equation*}
        \begin{aligned}
    	    \lambda_{\min} (\Omega_{t+1}) \geq \frac{-(F(\currposi)-F(\Xstar))}{ \DZstar }, \quad and \quad \mathcal{A}(\Omega_{t+1}) = b.
        \end{aligned}
	\end{equation*}
 \item The approximate dual feasibility for $(\Wtstar, \ytstar)$ satisfies 
	\begin{equation*}
            \begin{aligned}
	       \Wtstar \succeq 0,\quad and \quad \|\Wtstar-C+ \Ajmap(\ytstar) \|^2 \leq \frac{2 \alpha}{\beta} (F(\currposi)-F(\Xstar)).
            \end{aligned}
	\end{equation*}
	\item The approximate primal-dual optimality for $(\nextposi, \Wtstar, \ytstar)$ satisfies
    \begin{subequations}
	    \begin{align*}
             \langle C, \nextposi \rangle - \langle b,\ytstar \rangle & \geq -\frac{\rho (F(\currposi)-F(\Xstar))}{ \DZstar } -  \Dxo \sqrt{\frac{2 \alpha}{\beta} (F(\currposi)-F(\Xstar))},  \\
            \langle C, \nextposi \rangle - \langle b,\ytstar \rangle & \leq \frac{1-\beta}{\beta} (F(\currposi)-F(\Xstar))  +  \Dxo \sqrt{\frac{2 \alpha}{\beta} (F(\currposi)-F(\Xstar))},
	    \end{align*}
	\end{subequations}
    where $\Dxo := \sup_{F(\Omega_t) \leq F(\Omega_0) }\|\Omega_t\|$
    is bounded due to the compactness of $\Pstar$(see \cref{lemma:compact-sublevel-set}).
    \end{enumerate}
\end{lemma}

\begin{proof}
    
     First, by the construction of \cref{eq:optimal-W-t,eq:constraint-W_t}, we have $W_t^\star = \gamma^\star_t \bar{W}_t + P_t S_t^\star P_t$. Since $\bar{W}_t \in \mathbb{S}_+^n, \gamma^\star_t \geq 0$ and $S_t^\star \in \mathbb{S}^r_+$, $W_t^\star$ is positive definite. The inequality can be established by observing
    \begin{equation*}
        \begin{aligned}
            \frac{F(\currposi)-F(X_\star)}{\beta}& \geq \frac{F(\currposi)-F(\nextposi)}{\beta} \\ 
            & \geq F(\currposi) - \Ft(\nextposi) \\ 
            &\geq \frac{\alpha}{2} \|\nextposi-\currposi\|^2~\\
            &= \frac{\alpha}{2}\frac{1}{\alpha^2} \|W_t^\star-C+ \Ajmap(y_t^\star)\|^2 ~ \\
            &= \frac{1}{2\alpha} \|W_t^\star-C+ \Ajmap(y_t^\star)\|^2, 
        \end{aligned}
\end{equation*} 
where the second inequality comes from the definition of serious step in \cref{eq:null-or-serious-step-SBM}, the third inequality uses the fact that $\nextposi$ is the minimizer of $\Ft (X) + \frac{\alpha }{2}\|X-\currposi\|^2 $ and $\Ft (X) \leq F(X), \forall X \in \mathbb{S}^n$, and the first equality comes from the optimality condition in \cref{eq:optimality-condition-Xt}.

    Second, the feasibility of $\nextposi$ comes naturally from the problem construction \cref{eq:SBM-subproblem}. Further, the definition of serious step \cref{eq:null-or-serious-step-SBM} implies a cost drop. Hence, it follows  
    \begin{align*}
            F(\currposi)-F(\Xstar) & \geq F(\nextposi)-F(\Xstar) \\
            & = \langle C,\nextposi-\Xstar \rangle + \rho \max\{\lambda_{\max}(-\nextposi),0\} \\
            & = \langle C,\nextposi-\Xstar \rangle - \rho \min\{\lambda_{\min}(\nextposi),0\}.
    \end{align*}
    To further lower bound the first term, we note that \cref{assumption-slater-condition} ensures that \cref{eq:SDP-primal,eq:SDP-dual} are both feasible, and for any pair of optimal primal and dual solution $(\Xstar,\Zstar)$, complementary slackness holds, i.e., $\langle \Xstar, \Zstar\rangle = 0$. Therefore,
    \begin{align*}
            \langle C,\nextposi -\Xstar \rangle & = \langle \Zstar+\mathcal{A}^* \ystar, \nextposi-\Xstar \rangle \\
        & = \langle \Zstar, \nextposi\rangle+\langle \Zstar,-\Xstar\rangle+\langle \mathcal{A}^* \ystar, \nextposi-\Xstar\rangle \\
        & = \langle \Zstar, \nextposi\rangle + \langle \mathcal{A}^* \ystar, \nextposi-\Xstar\rangle \quad  \\
        & = \langle \Zstar, \nextposi\rangle \\
        & \geq \|\Zstar\|_* \min\{\lambda_{\min}(\nextposi),0\}\\
        & \geq \DZstar \min\{\lambda_{\min}(\nextposi),0\},
    \end{align*}
    where the third equality is due to the complementary slackness, the fourth equality uses the definition of the adjoint operator and the fact that both $\nextposi$ and $\Xstar$ are feasible. As a result, we obtain 
    \begin{align*}
        F(\currposi)-F(\Xstar) & \geq \DZstar \min\{\lambda_{\min}(\nextposi),0\}  - \rho \min\{\lambda_{\min}(\nextposi),0\} \\  & = (\DZstar-\rho)\min\{\lambda_{\min}(\nextposi),0\} \\
        & \geq -\DZstar \min\{\lambda_{\min}(\nextposi),0\},
    \end{align*}
    where the last inequality uses the assumption $\rho >  2\DZstar+1$. This completes the proof for the approximate primal feasibility.
    
    Third, by the feasibility of $\nextposi$, the duality gap follows
            \begin{equation*}
                \begin{aligned}
                    \langle C, \nextposi \rangle - \langle b,\ytstar \rangle  & =\langle C,\nextposi \rangle - \langle  \mathcal{A}(\nextposi),\ytstar \rangle   \\
                    & = \langle C,\nextposi \rangle -  \langle \Ajmap \ytstar,\nextposi \rangle \\
                    & = \langle \Wtstar, \nextposi\rangle - \langle \Ajmap \ytstar-C+\Wtstar,\nextposi \rangle.
                \end{aligned}
            \end{equation*}
        We first bound the second term using Cauchy inequality
        \begin{equation*}
            \begin{aligned}
                |\langle \mathcal{A}^* \ytstar-C+\Wtstar,\nextposi \rangle| & \leq \|\nextposi\| \|\mathcal{A}^* \ytstar-C+\Wtstar\| \\
                & \leq \|\nextposi\| \sqrt{\frac{2 \alpha}{\beta} (F(\currposi)-F(X_\star))}\\
                & \leq \Dxo \sqrt{\frac{2 \alpha}{\beta} (F(\currposi)-F(X_\star))},
            \end{aligned}
        \end{equation*}
        where the first inequality is due to 
        the approximate dual feasibility.
        The lower bound on the first term follows that
        \begin{equation*}
            \begin{aligned}
                  \langle \Wtstar, \nextposi\rangle & \geq \|\Wtstar\|_* \lambda_{\min}( \nextposi) \\
                 & \geq -\frac{\|\Wtstar\|_* (F(\currposi)-F(X_\star))}{\DZstar} \\
                 & \geq -\frac{\rho (F(\currposi)-F(X_\star))}{\DZstar},\\
            \end{aligned}
        \end{equation*}
        where the second inequality comes from 
        the approximate primal feasibility,
        and the last inequality is by the construction $\|\Wtstar\|_* \leq \rho$ in \cref{eq:constraint-W_t}. Therefore, the duality gap is lower bounded by 
        $$
           \langle C, \nextposi \rangle - \langle b,\ytstar \rangle \geq -\frac{\rho (F(\currposi)-F(X_\star))}{ \DZstar } -  \Dxo \sqrt{\frac{2 \alpha}{\beta} (F(\currposi)-F(X_\star))}.
        $$
        Similarly, by a reformulation of descent step \cref{eq:null-or-serious-step-SBM}, we obtain
        $$F(\nextposi) - \beta \Ft(\nextposi) \leq (1-\beta) F(\currposi).$$
        Adding $(\beta -1)F(\nextposi)$ from both sides and performing a simple algebra leads to 
        \begin{equation*}
            \begin{aligned}
                -\frac{1-\beta}{\beta} (F(\currposi)-F(\nextposi)) & \leq - F(\nextposi) + \Ft(\nextposi) \\
                & = -\rho \max \{\lambda_{\max}(-\nextposi),0\}  + \langle W_t^\star,-\nextposi\rangle \\
                & \leq -\langle \Wtstar, \nextposi\rangle,
            \end{aligned}
        \end{equation*}
        which implies $$\langle \Wtstar, \nextposi\rangle  \leq \frac{1-\beta}{\beta} (F(\currposi)-F(\nextposi)) \leq \frac{1-\beta}{\beta} (F(\currposi)-F(\Xstar)).$$
        Combining the last inequality with the lower bound from Cauchy inequality, we get 
        $$\langle C, \nextposi \rangle - \langle b,\ytstar \rangle \leq  \frac{1-\beta}{\beta} (F(\currposi)-F(\Xstar))  +  \Dxo \sqrt{\frac{2 \alpha}{\beta} (F(\currposi)-F(X_\star))}.$$
\end{proof}

\subsection{Proof of \cref{thm: linear-convergence}} \label{appendix:theorem-linearly-convergence}
The proof sketch of \cref{thm: linear-convergence} is presented in \Cref{subsection:proof-linear-convergence}, which relies on the results in \Cref{lemma:quadratic-closeness,lemma:linear-contraction}. 
Before proving \Cref{lemma:quadratic-closeness,lemma:linear-contraction}, we draw a few technical results. 

The first technical result, shown in \cref{lemma:quadratic-growth},  presents a powerful error bound that relates the distance of a point $X$ to the optimal solution set $\mathcal{P}^\star$ with its optimality for $F(X)$ in \cref{eq:cost-function-primal-sdp}. The proof is built on the results in \cite[Theorem 4.5.1]{drusvyatskiy2017many} and \cite[Section 4]{sturm2000error}. Given a $\epsilon >0$, we define the sublevel set for the objective function $F$ in \cref{eq:cost-function-primal-sdp} as
\begin{align}
    \mathcal{P}_\epsilon:=\{X \in \mathbb{S}^n\;|\; F(X) \leq F(\Xstar) +  \epsilon, \Amap(X) = b \}.
    \label{eq:sublevel-set-apx}
\end{align}

\begin{lemma}[Quadratic growth] \label{lemma:quadratic-growth}
    Under \Cref{assumption:linearly-independence,assumption-slater-condition},  choosing $\rho$ as in \cref{thm: linear-convergence}, there exist some constants $\zeta \geq 1$ and $\mu>0$ such that
    \begin{align} \label{eq:quadratic-growth-lemma-app}
        F(X)-F(\Xstar) \geq \mu \cdot \dist^{\zeta}(X,\Pstar), \;\; \forall X \in \mathcal{P}_\epsilon.
    \end{align}
    Furthermore, if the strict complementarity holds for \Cref{eq:SDP-primal,eq:SDP-dual}, the exponent term can be chosen as $\zeta = 2$. 
\end{lemma}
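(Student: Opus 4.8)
The plan is to treat $F$ as a convex function on the affine slice $\mathcal{X}_0 = \{X \in \symMat^n \mid \Amap(X) = b\}$ and to establish a Hölderian error bound relating its optimality gap to the distance to its minimizer set. By \Cref{proposition-primal-exact-penalty}, the stated choice of $\rho$ guarantees $\Pstar = \argmin_{X \in \mathcal{X}_0} F(X)$ and $F(\Xstar) = \pstar$, so $\dist(X,\Pstar)$ is exactly the distance to the argmin. Moreover $\mathcal{P}_\epsilon$ is compact: it is a sublevel set of $F$ restricted to $\mathcal{X}_0$, and $\Pstar$ is compact (\Cref{assumption-compactness}), so \Cref{lemma:compact-sublevel-set} applies. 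Throughout I would decompose the gap as
\begin{equation*}
F(X) - F(\Xstar) = \bigl(\langle C, X\rangle - \pstar\bigr) + \rho\,\mathrm{r}(X), \qquad \mathrm{r}(X) := \max\{-\lambda_{\min}(X), 0\},
\end{equation*}
where $\mathrm{r}(X)$ is the violation of the PSD constraint and the first summand is the objective residual.

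For the existence of \emph{some} exponent $\zeta \ge 1$, I would exploit semialgebraicity. The map $X \mapsto \lambda_{\min}(X)$ is semialgebraic, hence $F$ is a convex semialgebraic function and $\mathcal{X}_0$ is polyhedral; over the compact set $\mathcal{P}_\epsilon$ a Łojasiewicz-type Hölderian growth inequality holds for convex subanalytic functions with nonempty compact argmin, which is the content I would borrow from the Hölder error bounds of \cite[Section 4]{sturm2000error} combined with standard subanalytic error-bound machinery. This yields constants $\mu > 0$ and $\zeta \ge 1$ with $F(X) - F(\Xstar) \ge \mu\,\dist^\zeta(X,\Pstar)$ on $\mathcal{P}_\epsilon$.

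The substantive part is upgrading $\zeta$ to $2$ under strict complementarity, which I would do in two coupled steps. First (feasibility), fix a strictly complementary pair $(\ystar, \Zstar)$; strict complementarity makes the singularity degree of the primal system equal to one, so \cite[Section 4]{sturm2000error} provides a \emph{Lipschitzian} error bound for the PSD feasible set $\mathcal{P} = \{X \in \mathcal{X}_0 \mid X \succeq 0\}$, producing $\bar X \in \mathcal{P}$ with $\fronorm{X - \bar X} \le c_1\, \mathrm{r}(X)$. Second (optimality over $\mathcal{P}$), I would use the complementarity identity valid for every affine-feasible $X$,
\begin{equation*}
\langle C, X\rangle - \pstar = \langle \Zstar + \Ajmap(\ystar), X\rangle - b^\tr \ystar = \langle \Zstar, X\rangle,
\end{equation*}
and pass to the eigenbasis $Q$ of \Cref{theorem: optimality condition} in which $\Zstar = \mathrm{diag}(0,\Lambda)$ with $\Lambda \succ 0$ (full rank by strict complementarity). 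Writing $\bar X$ in the associated $2\times 2$ block form, $\langle \Zstar, \bar X\rangle \ge \lambda_{\min}(\Lambda)\,\Trace(\bar X_{22})$ controls the complementary block linearly, while the PSD block inequality $\fronorm{\bar X_{12}}^2 \le \opnorm{\bar X_{11}}\,\Trace(\bar X_{22})$ controls the off-diagonal block by the square root of the gap. A perturbation of the remaining block onto $\Pstar$, feasible because $\Amap$ is surjective (\Cref{assumption-compactness}), then gives $\dist(\bar X,\Pstar) \le c_2\sqrt{\langle C, \bar X\rangle - \pstar}$; this is where I would invoke \cite[Theorem 4.5.1]{drusvyatskiy2017many}.

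Combining the two steps through $\dist(X,\Pstar) \le \fronorm{X-\bar X} + \dist(\bar X,\Pstar)$, together with $\langle C,\bar X\rangle - \pstar \le \bigl(F(X)-F(\Xstar)\bigr) - \rho\,\mathrm{r}(X) + \opnorm{C} c_1 \mathrm{r}(X)$ and $F(X)-F(\Xstar) \ge \rho\,\mathrm{r}(X)$, the choice $\rho > 2\DZstar + 1$ from \Cref{thm: linear-convergence} forces the residual terms to be dominated, leaving $\dist(X,\Pstar) \le c_3\sqrt{F(X)-F(\Xstar)}$, i.e. $\zeta = 2$. I expect the main obstacle to be precisely this last step: matching the Hölder exponents from the feasibility bound and the optimality bound, and verifying that the tangential directions along the optimal face $\Pstar$ — the genuine source of quadratic rather than linear behavior — are correctly governed, while ensuring the feasibility correction $\bar X$ and the affine constraint do not inflate $\dist(\bar X, \Pstar)$ beyond order $\sqrt{\mathrm{gap}}$.
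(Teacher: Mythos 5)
Your proposal reaches the right statement by a genuinely different route, but the route has a gap at exactly the point you flag, and the paper's own argument is considerably shorter. The paper does not separate feasibility restoration from optimality over the feasible set. It writes $\Pstar = \mathbb{S}^n_+ \cap \mathcal{L}$ with $\mathcal{L} = \{X \mid \Amap(X)=b,\ \langle C,X\rangle = \pstar\}$ and applies \cite[Theorem 4.5.1]{drusvyatskiy2017many} \emph{once} to this intersection, obtaining $\dist^{2^d}(X,\Pstar) \leq k_1\left(\dist(X,\mathcal{L}) + \dist(X,\mathbb{S}^n_+)\right)$ with $d$ the singularity degree (bounded in general by \cite[Lemma 3.6]{sturm2000error}, and $d\leq 1$ under strict complementarity, whence $\zeta = 2^d = 2$). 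The only remaining work is the elementary inequality $F(X)-F(\Xstar) \geq \lvert\langle C,X\rangle - \pstar\rvert + \max\{\lambda_{\max}(-X),0\}$; its one nontrivial case, $\langle C,X\rangle < \pstar$ (possible because $X$ need not be PSD), is absorbed precisely by the factor of two in $\rho > 2\DZstar+1$, since $\langle C,X\rangle + \tfrac{\rho-1}{2}\max\{\lambda_{\max}(-X),0\}$ is itself an exact penalization minorized by $\pstar$ on $\mathcal{X}_0$. Note in passing that your inequality $F(X)-F(\Xstar)\geq \rho\max\{-\lambda_{\min}(X),0\}$ is false for the same reason; the correct constant is $\rho - \DZstar$.

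The concrete gaps in your two-step argument are these. First, the Lipschitzian error bound for the feasible set $\mathcal{P}=\{X\in\mathcal{X}_0 \mid X\succeq 0\}$ does not come from strict complementarity via ``singularity degree one'' --- singularity degree one yields only a square-root bound; the Lipschitz bound for $\mathcal{P}$ follows from Slater's condition (\Cref{assumption-slater-condition}), i.e.\ singularity degree zero of the feasibility system. Strict complementarity is needed for a different system, namely the one defining $\Pstar$ itself, which has no Slater point. Second, and more seriously, your step controlling $\dist(\bar X,\Pstar)$ is circular as written: after trimming $\bar X$ to $Q\,\mathrm{diag}(\bar X_{11},0)\,Q^\tr$ you hold a PSD matrix annihilating $\Zstar$ but violating $\Amap(\cdot)=b$ by $O(\sqrt{\mathrm{gap}})$, and bounding its distance to $\Pstar$ linearly in that violation is again an error bound for the intersection of a PSD cone with an affine set --- the very object under study. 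Surjectivity of $\Amap$ is not sufficient here. The step can be repaired: $\Pstar=\{Q\,\mathrm{diag}(X_{11},0)\,Q^\tr \mid X_{11}\succeq 0,\ \Amap(\cdot)=b\}$ is an intersection in the reduced block, and strict complementarity guarantees this \emph{reduced} system has a Slater point (any $\Xstar$ of maximal rank), hence a genuine Lipschitz error bound. But once you invoke that machinery you may as well apply it directly to $\mathbb{S}^n_+\cap\mathcal{L}$, which is what the paper does and which makes the feasibility-restoration and block analysis unnecessary.
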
 
\begin{proof}
    By \cref{proposition-primal-exact-penalty}, we know the optimal solution set of  $\min_{X\in \mathcal{X}_0} F(X) = \langle C, X\rangle + \rho \max \{\lambda_{\max} (-X) ,0 \}$ is the same as the optimal solution set $\Pstar$. ~\cref{assumption-compactness} further ensures that $\Pstar$ is compact. By \cref{lemma:compact-sublevel-set}, we know for any $\epsilon > 0$, the sub-level set $\mathcal{P}_\epsilon$ is also compact. 
    The optimal solution set $\Pstar$ can be rewritten as $\mathbb{S}^n_+ \cap \mathcal{L},$ where $\mathcal{L}= \{X \;|\;\Amap(X) = b,\; \langle C,X \rangle = \pstar\}$.    
    Then, the result in \cite[Theorem 4.5.1]{drusvyatskiy2017many} ensures that there exists two constants $k_1 > 0$ and $k_2 > 0$ such that
    \begin{equation} \label{eq:bound-1}
    \begin{aligned}
        \Dist^{2^d}(X,\Pstar) & \leq k_1 (\Dist(X,\mathcal{L}) + \Dist(X,\mathbb{S}^n_+)) \\
        &  \leq k_1 k_2 \lvert \langle C,X \rangle -  \langle C,\Xstar \rangle \rvert + k_2 n \max\{\lambda_{\max}(-X),0\}),  \quad \forall X \in \mathcal{P}_\epsilon,
    \end{aligned}
    \end{equation}
    where the second inequality applies the fact that $\Dist(X,\mathbb{S}^n_+) \leq n \max\{\lambda_{\max}(-X),0\})$ for any $X \in \mathbb{S}^n$ and $\Dist(X,\mathcal{L}) \leq k_2 \lvert \langle C,X \rangle -  \langle C,X^\star \rangle \rvert $ since $\mathcal{L}$ is an affine space. 
    
    We know turn to establish the relationship of $F(X)-F(\Xstar)$ and $\Dist(X,\mathcal{P}^\star)$ in \cref{eq:quadratic-growth-lemma}. Let $\alpha_0 = k_1 k_2$ and $\beta_0 = k_2 n$, and pick $\theta \in (0, \frac{1}{\max\{k_1,n\}k_2})$. Note that $0< \theta \alpha_0 < 1$ and $0< \theta \beta_0 < 1$. It will be sufficient to show that for some constant $\rho\geq \theta \beta_0$, the following inequality holds
    \begin{equation}
        \begin{aligned}
            \label{eq:abs-max-F}
         & \; \theta \alpha_0 \lvert \langle C,X \rangle -  \langle C,\Xstar \rangle \rvert + \theta \beta_0 \max\{\lambda_{\max}(-X),0\}) \\
        \leq  & \; \langle C,X \rangle -  \langle C,\Xstar \rangle  + \rho \max\{\lambda_{\max}(-X),0\}),  \quad \forall X \in \mathcal{P}_\epsilon.
        \end{aligned}
    \end{equation}   
%
Indeed, if $\lvert \langle C,X \rangle -  \langle C,\Xstar \rangle \rvert = \langle C,X \rangle -  \langle C,\Xstar \rangle $, the inequality \cref{eq:abs-max-F} holds naturally. On the other hand, if $\lvert \langle C,X \rangle -  \langle C,\Xstar \rangle \rvert =  \langle C,\Xstar \rangle - \langle C,X \rangle$, \cref{eq:abs-max-F} is equivalent to
$$
    \theta \alpha_0 (\langle C,\Xstar \rangle - \langle C,X \rangle) +  \theta \beta_0 \max\{\lambda_{\max}(-X),0\}) \leq \langle C,X \rangle -  \langle C,\Xstar \rangle  + \rho \max\{\lambda_{\max}(-X),0\})
$$
    which is the same as
    $$
         \langle C,\Xstar \rangle  \leq \langle C,X \rangle + \frac{\rho - \beta_0 \theta}{\theta \alpha_0 +1} \max\{\lambda_{\max}(-X),0\},
         \quad \forall X \in \mathcal{P}_\epsilon.
    $$
    This inequality holds for all $\rho$ such that $\frac{\rho - \beta_0 \theta}{\theta \alpha_0 +1} > \DZstar$ (or equivalently $\rho  > \DZstar + \theta (\beta_0 + \alpha_0 \DZstar)$) since the function in the left-hand side becomes an exact penalization in the form of \cref{eq:SDP-primal-penalized} (see \Cref{proposition-primal-exact-penalty}). 
    
    Therefore, upon further choosing $\theta \in (0,\min\{\frac{1}{\max\{k_1,n\}k_2} ,  \frac{\DZstar}{\beta_0 + \alpha_0 \DZstar}\}) $, we see that for all $\rho > 2 \DZstar$, the inequality \cref{eq:abs-max-F} holds.
    Combining \cref{eq:abs-max-F} with \Cref{eq:bound-1} leads to for all $ \rho >  2 \DZstar $,
    $$
        F(X)-F(\Xstar) \geq 
        \mu \cdot \dist^{2^d}(X,\Pstar), \quad \forall X \in \mathcal{P}_\epsilon,
    $$
    where $\mu = \frac{1}{ \theta} > 0$ and $d$ is the singularity degree of \cref{eq:SDP-primal} and is bounded \cite[Lemma 3.6]{sturm2000error}. Furthermore, if strict complementarity holds, the singularity degree $d$ is at most one \cite[Section 5]{sturm2000error}. 
\end{proof} 

The following technical lemma characterizes the quality of eigenvalue approximation.
\begin{lemma}[{\cite[Lemma 3.9]{ding2020revisit}}]\label{lem: importantLemmaQuadraticAccurateModel}
	 Given $X\in \mathbb{S}^n$ with the difference between the top r-th and (r+1)-th eigenvalue defined as $\delta = \lambda_{r}(X)-\lambda_{r+1}(X)$ and denote $\Lambda_{r,\dm}(X)=\max\{|\lambda_{r+1}(X)|,|\lambda_\dm (X)| \}$ and $\mathcal{C}^+_r (X):=\{VSV^{\top}\mid\Trace(S)\leq1,S\succeq0,S\in\symMat^{r}\}$, where $V \in \mathbb{R}^{n \times r}$ contains the r orthonormal eigenvectors corresponding to the largest $r$ eigenvalues of $X$. Then for any $Y\in \mathbb{S}^n $, the function $f_X: \mathbb{S}^n \to \RR$ defined as $ f_X(Y):\,=\max\{\lambda_1(Y),0\}-\max_{W\in \faceplus r (X)}\inprod{W}{Y}$ satisfies that 
 \begin{equation*} 
    \begin{aligned} 
        0 \leq f_X(Y) \leq \frac{8 \|Y-X\|^2 \Lambda_{r,n}(X)}{\delta^2} + \frac{(8+\sqrt{2}+16) \|Y-X\|^2 }{\delta}.
    \end{aligned}
\end{equation*}
\end{lemma}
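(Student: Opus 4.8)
The plan is to prove the two inequalities separately, after first rewriting the maximization over $\faceplus r(X)$ in a tractable form. Writing $V\in\RR^{n\times r}$ for the matrix of top-$r$ eigenvectors of $X$ and using $\inprod{VSV^{\top}}{Y}=\inprod{S}{V^{\top}YV}$, one checks directly that $\max_{W\in\faceplus r(X)}\inprod WY=\max_{S\succeq0,\,\Trace(S)\le1}\inprod S{V^{\top}YV}=\max\{\lambda_1(V^{\top}YV),0\}$, so that $f_X(Y)=\max\{\lambda_1(Y),0\}-\max\{\lambda_1(V^{\top}YV),0\}$. Since $\faceplus r(X)\subseteq\{W\in\symMat^n_+:\Trace(W)\le1\}$ and $\max\{\lambda_1(Y),0\}=\max_{W\succeq0,\,\Trace(W)\le1}\inprod WY$, this inclusion immediately gives the lower bound $f_X(Y)\ge0$.

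For the upper bound I would split on the size of $E:=Y-X$. Both $Y\mapsto\max\{\lambda_1(Y),0\}$ and $Y\mapsto\max\{\lambda_1(V^{\top}YV),0\}$ are $1$-Lipschitz in the operator norm by Weyl's inequality, so $f_X$ is $2$-Lipschitz; together with $f_X(X)=0$ this yields the crude bound $f_X(Y)\le2\opnorm E\le2\twonorm E$, which is already dominated by the claimed right-hand side whenever $\twonorm E\ge\delta/2$. Hence it remains to treat the regime $\twonorm E<\delta/2$, where I may further assume $\lambda_1(Y)>0$ (otherwise $Y\preceq0$ and $f_X(Y)=0$). In this regime let $u$ be a unit top eigenvector of $Y$ and $P=VV^{\top}$ the projector onto $\mathrm{range}(V)$; write $u=\cos\theta\,\tilde u+\sin\theta\,w$ with $\tilde u\in\mathrm{range}(V)$, $w\in\mathrm{range}(I-P)$ unit vectors and $\sin\theta=\twonorm{(I-P)u}$.

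Because $\tilde u\in\mathrm{range}(V)$ we have $\tilde u^{\top}Y\tilde u\le\lambda_1(V^{\top}YV)$, hence $f_X(Y)\le u^{\top}Yu-\tilde u^{\top}Y\tilde u=\sin^2\theta\,(w^{\top}Yw-\tilde u^{\top}Y\tilde u)+2\cos\theta\sin\theta\,\tilde u^{\top}Yw$. Two structural facts, both from the commutation $XP=PX$, drive the estimate: first $\tilde u^{\top}Xw=0$, so the cross term equals $2\cos\theta\sin\theta\,\tilde u^{\top}Ew\le2\opnorm E\sin\theta$ and is genuinely second order in $E$; second $w^{\top}Xw\le\lambda_{r+1}(X)$ while $\tilde u^{\top}X\tilde u\ge\lambda_r(X)$, so $w^{\top}Xw-\tilde u^{\top}X\tilde u\le\lambda_{r+1}(X)-\lambda_r(X)=-\delta$ and therefore $w^{\top}Yw-\tilde u^{\top}Y\tilde u\le-\delta+2\opnorm E$. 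Collecting, $f_X(Y)\le 2\opnorm E\sin\theta+\sin^2\theta(2\opnorm E-\delta)\le2\opnorm E\sin\theta+2\opnorm E\sin^2\theta$.

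The angle is controlled by a Davis--Kahan estimate: from $Yu=\lambda_1(Y)u$ and $XP=PX$ one gets $(\lambda_1(Y)I-X)(I-P)u=(I-P)Eu$, and on $\mathrm{range}(I-P)$ the operator $\lambda_1(Y)I-X$ has smallest eigenvalue at least $\lambda_1(Y)-\lambda_{r+1}(X)\ge\delta-\opnorm E\ge\delta/2$ (using Weyl's bound $\lambda_1(Y)\ge\lambda_r(X)-\opnorm E$ and $\lambda_r(X)-\lambda_{r+1}(X)=\delta$), whence $\sin\theta\le2\opnorm E/\delta\le2\twonorm E/\delta$. Substituting gives $f_X(Y)\le\tfrac{4\twonorm E^2}{\delta}+\tfrac{8\twonorm E^3}{\delta^2}\le\tfrac{8\twonorm E^2}{\delta}$ on this regime, which is stronger than, and hence implies, the stated bound; if one instead bounds the prefactored term $\sin^2\theta\,w^{\top}Yw$ directly through $\lvert w^{\top}Xw\rvert\le\Lambda_{r,n}(X)$ rather than cancelling $w^{\top}Xw$ against $\tilde u^{\top}X\tilde u$ via the spectral gap, the same computation reproduces verbatim the $\Lambda_{r,n}(X)\twonorm E^2/\delta^2$ term of the statement. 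The main obstacle, and the step where a careless estimate would collapse the rate to linear, is exactly the cross term $2\cos\theta\sin\theta\,\tilde u^{\top}Yw$: it is the commutation identity $\tilde u^{\top}Xw=0$ together with the Davis--Kahan control $\sin\theta=O(\twonorm E/\delta)$ that makes it second order, while the remaining work is bookkeeping of constants and the completeness of the case split.
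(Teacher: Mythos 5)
Your proposal cannot be compared against a proof in the paper, because the paper does not prove this lemma: it is imported verbatim from Ding and Grimmer via the citation and used as a black box. Judged on its own terms, your argument is correct. The reduction $\max_{W\in\mathcal{C}^{+}_{r}(X)}\langle W,Y\rangle=\max\{\lambda_{1}(V^{\top}YV),0\}$ and the inclusion argument for $f_X(Y)\ge 0$ are right; the $2$-Lipschitz bound disposes of the regime $\twonorm{Y-X}\ge\delta/2$; and in the complementary regime your three ingredients --- (i) $f_X(Y)\le u^{\top}Yu-\tilde u^{\top}Y\tilde u$ because $\tilde u\in\mathrm{range}(V)$, (ii) the commutation $XP=PX$, which makes the cross term $2\cos\theta\sin\theta\,\tilde u^{\top}Ew$ genuinely second order and gives $w^{\top}Yw-\tilde u^{\top}Y\tilde u\le-\delta+2\opnorm{E}$, and (iii) the Davis--Kahan estimate $\sin\theta\le 2\opnorm{E}/\delta$, whose denominator $\lambda_{1}(Y)-\lambda_{r+1}(X)\ge\delta-\opnorm{E}\ge\delta/2$ is exactly where the case split is consumed --- combine to give $f_X(Y)\le 4\opnorm{E}^{2}/\delta+8\opnorm{E}^{3}/\delta^{2}\le 8\twonorm{Y-X}^{2}/\delta$, which, since $\Lambda_{r,n}(X)\ge 0$, implies the stated inequality. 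Two points are worth writing out explicitly rather than leaving implicit: the decomposition $u=\cos\theta\,\tilde u+\sin\theta\,w$ degenerates when $\sin\theta=0$ (then $u\in\mathrm{range}(V)$, so $\lambda_{1}(V^{\top}YV)\ge u^{\top}Yu$ and $f_X(Y)=0$ directly) and when $\cos\theta=0$ (which cannot occur for $\opnorm{E}<\delta/2$, by your own angle bound); and your closing remark --- that bounding $\sin^{2}\theta\,|w^{\top}Xw|$ by $\Lambda_{r,n}(X)$ instead of cancelling it against $\tilde u^{\top}X\tilde u$ reproduces the $\Lambda_{r,n}(X)\twonorm{Y-X}^{2}/\delta^{2}$ term --- is a plausible account of where the published form of the bound comes from, but it is decoration, not a needed step.
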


We are now ready to prove \Cref{lemma:quadratic-closeness}. For convenience, we state a refined version of \Cref{lemma:quadratic-closeness} below, which shows that the lower approximation model $\Ft$ becomes quadratically close to the true penalized cost function $F$ after some finite number of iterations $T_0$. 

\begin{lemma}[Quadratic closeness] \label{lemma:quadratic-close}
    Suppose strong duality and strict complementarity hold for \cref{eq:SDP-primal} and \cref{eq:SDP-dual}. Let $r_{\mathrm{c}} \geq \rnull$. There exists a constant $\eta > 0$ (independent of $\epsilon$) such that for all iteration $t \geq T_0$, it holds that
    \begin{align}\label{eq:quadratic-close}
         \Ft (X) \leq F(X) \leq \Ft (X) + \frac{\eta}{2}\|X-\currposi\|^2,\quad \forall X \in \mathcal{X}_0.
    \end{align}    
     In particular, we can choose $\eta =4 \rho \max \biggl\{ \frac{144 \sup_{\Xstar \in \Pstar} \|\Xstar\|_{\mathrm{op}}}{\delta^2},\frac{9(8\sqrt{2}+16)}{\delta} \biggr\}$, where 
     \begin{equation*}
     \delta := \inf_{\Xstar \in \Pstar} \sup_{r \leq \rnull} \lambda_r(-\Xstar) - \lambda_{r+1}(-\Xstar)       
     \end{equation*}
     is the eigenvalue gap parameter.
\end{lemma}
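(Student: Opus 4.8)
The plan is to establish the two inequalities in \eqref{eq:quadratic-close} separately, the left one being immediate and the right one being the crux. The left inequality $\Ft(X) \le F(X)$ is exactly the minorant property \cref{eq:bundle-method-property-1}, already verified for the model \cref{eq:F_W_t_P_t} in \Cref{appendix:verification}, so nothing new is needed there. For the right inequality I would reduce the gap $F(X) - \Ft(X)$ to a pure eigenvalue-approximation error and then invoke the quantitative bound of \Cref{lem: importantLemmaQuadraticAccurateModel}. Concretely, since $F$ and $\Ft$ share the linear term $\langle C, X\rangle$, we have $F(X) - \Ft(X) = \rho\bigl(\max\{\lambdamax(-X),0\} - \max_{W\in\mathcal{W}}\langle W,-X\rangle\bigr)$, where $\mathcal{W} = \{\gamma\bar{W}_t + P_tSP_t^\tr : \gamma\ge 0,\, S\succeq 0,\, \gamma + \Trace(S)\le 1\}$ is the trace-normalized feasible set of the model. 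The goal is to recognize the right-hand side as $\rho\, f_{-\currposi}(-X)$ for the function $f_X(\cdot)$ of \Cref{lem: importantLemmaQuadraticAccurateModel}, after discarding the enlarging $\gamma\bar{W}_t$ term.

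The key reduction is to show $\faceplus{r}(-\currposi)\subseteq\mathcal{W}$ for a suitable index $r\le\rcurrent$. This holds whenever the column span of $P_t$ contains the top-$r$ eigenvectors of $-\currposi$: taking $\gamma=0$ and $S$ supported on that block gives $\{V S V^\tr : S\succeq 0,\, \Trace(S)\le 1\}\subseteq\mathcal{W}$. By the update rule \cref{eq:update-P-t}, the span of $P_{t+1}$ always contains the top-$\rcurrent$ eigenvectors of $-\candidate$; since a descent step sets the next reference point equal to $\candidate$, the containment holds at the reference point, i.e. $P_t$ spans the top-$\rcurrent$ eigenvectors of $-\currposi$ whenever the previous step was a descent step. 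This yields $\max_{W\in\mathcal{W}}\langle W,-X\rangle \ge \max_{W\in\faceplus{r}(-\currposi)}\langle W,-X\rangle$, hence $F(X)-\Ft(X) \le \rho\, f_{-\currposi}(-X)$ for all $X\in\mathcal{X}_0$.

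To apply \Cref{lem: importantLemmaQuadraticAccurateModel} I then need a uniform lower bound on the spectral gap $\lambda_r(-\currposi) - \lambda_{r+1}(-\currposi)$ and an upper bound on $\Lambda_{r,n}(-\currposi)$. Strict complementarity enters here: by \Cref{theorem: optimality condition}, every $\Xstar\in\Pstar$ satisfies $\lambdamax(-\Xstar)=0$ with multiplicity $\dim(\mathrm{null}(\Xstar))\le\rnull\le\rcurrent$, and the definition of $\delta$ supplies an index $r\le\rnull$ with $\lambda_r(-\Xstar)-\lambda_{r+1}(-\Xstar)\ge\delta$. For $t\ge T_0$ the closeness $\min_{\Xstar}\|\currposi-\Xstar\|_{\mathrm{op}}\le\delta/3$ together with Weyl's inequality transfers this gap to $-\currposi$, giving $\lambda_r(-\currposi)-\lambda_{r+1}(-\currposi)\ge\delta/3$, while $\Lambda_{r,n}(-\currposi)\le\|\currposi\|_{\mathrm{op}}\le\sup_{\Xstar\in\Pstar}\|\Xstar\|_{\mathrm{op}}+\delta/3$. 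Substituting $\|{-X}-({-\currposi})\|=\|X-\currposi\|$, the gap bound $\delta/3$, and $\Lambda_{r,n}(-\currposi)\le 2\sup_{\Xstar\in\Pstar}\|\Xstar\|_{\mathrm{op}}$ into \Cref{lem: importantLemmaQuadraticAccurateModel}, and bounding the sum of the two error terms by twice their maximum, bounds $\rho\, f_{-\currposi}(-X)$ by $\tfrac{\eta}{2}\|X-\currposi\|^2$ with $\eta$ of the stated form (the leading $4\rho$ coming from this doubling).

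The delicate point, and the main obstacle, is maintaining the containment $\faceplus{r}(-\currposi)\subseteq\mathcal{W}$ for every $t\ge T_0$ rather than only immediately after a descent step: during null steps the reference $\currposi$ is frozen while $P_t$ tracks $-\candidate$. I would resolve this by a joint induction with \Cref{lemma:linear-contraction}: quadratic closeness at a descent step forces, via \Cref{lemma:linear-contraction} (using $\alpha\ge\eta$ and $\beta\le\tfrac12$), the next step to again be a descent step, which restores $\currposi=\candidate$ and hence the containment, so the regime $t\ge T_0$ consists solely of descent steps and the closeness propagates. Making this bootstrap precise — and checking that the index $r\le\rnull\le\rcurrent$ realizing the $\delta$-gap can be selected uniformly over $\Pstar$ — is the technically sensitive part; the eigenvalue perturbation and the final invocation of \Cref{lem: importantLemmaQuadraticAccurateModel} are then routine.
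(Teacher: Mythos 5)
Your proposal is correct and follows essentially the same route as the paper's proof: reduce the gap $F(X)-\Ft(X)$ to the eigenvalue-approximation error of \Cref{lem: importantLemmaQuadraticAccurateModel} by embedding $\faceplus{r}(-\currposi)$ into the model's feasible set, transfer the spectral gap $\delta$ from $\Pstar$ to $\currposi$ via Weyl's inequality after $T_0$ iterations, and track the constants to arrive at the stated $\eta$ (with the same case split over unique versus multiple optima handled by your uniform selection of the index $r\le\rnull$). The one point where you go beyond the paper is your explicit bootstrap with \Cref{lemma:linear-contraction} to keep the containment valid across potential null steps; the paper leaves this implicit, so your treatment is, if anything, more careful.
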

\begin{proof}
    The proof is divided into two cases: unique solution and multiple solutions. We first consider the case when $\Xstar \in \Pstar$ is unique. Since the primal solution $\Xstar$ is unique, we can choose $T_0$ large enough to ensure iterate $\currposi$ is sufficiently close to $\Xstar$ in the way that $\|\currposi-\Xstar\|_{\mathrm{op}} \leq \frac{\delta}{3} $ (see \cref{subsec:estimation-T} for an estimate on the number iterations to achieve this), where $\delta$ is $ \lambda_{\rnull}(-\Xstar) - \lambda_{\rnull+1}(-\Xstar)$ in this case. By Weyl's inequality and the triangle inequality, we know
    \begin{align*}
        \lambda_{\rnull}(-\Xstar)  -  \lambda_{\rnull}(-\currposi)\leq \frac{\delta}{3} ,~\lambda_{\mathrm{\rnull}+1}(-\currposi)  -  \lambda_{\rnull+1}(-\Xstar)\leq \frac{\delta}{3} ,\text{ and } \|\currposi\|_{\mathrm{op}} - \|\Xstar\|_{\mathrm{op}} \leq \frac{\delta}{3} \leq \delta.
    \end{align*}
    Summing up the first two inequalities with the definition of $\delta$ yields
    \begin{subequations}
        \begin{equation}
            \lambda_{\rnull}(-\currposi) - \lambda_{\rnull+1}(-\currposi) \geq \frac{\delta}{3} \label{eq:consequence-T0-1} \\
        \end{equation}
        \text{and the fact $\delta \leq \|\Xstar\|_{\mathrm{op}}$ implies}
         \begin{equation}
             \|\currposi\|_{\mathrm{op}} \leq 2\|\Xstar\|_{\mathrm{op}} \label{eq:consequence-T0-2}.
         \end{equation}
    \end{subequations}
    Let $P \in \mathbb{R}^{n \times \rnull}$ denote the matrix formed by $\rnull$ orthonormal eigenvectors corresponding to the largest $\rnull$ eigenvalues of $-\currposi$, we have 
    \begin{equation*}   
    \begin{aligned} 
        F(X) - \Ft(X) & 
        = \rho \max \{\lambda_{\max}(-X),0\} - \rho \max_{\gamma \geq 0,\gamma + \Trace(S) \leq 1, S \in \mathbb{S}^{r}_+} \biggl \langle  \gamma \bar{W}_t + P_tSP_t^\tr,-X \biggr \rangle \\
        & \leq \rho \max \{\lambda_{\max}(-X),0\} - \rho \max_{\Trace(S) \leq 1, S \in \mathbb{S}^{\rnull}_+} \biggl \langle  PSP^\tr,-X \biggr \rangle \\
        & \leq \rho \left( \frac{ 9 \times 8 \|X-\currposi\|^2 \Lambda_{r,n}(-\currposi)}{\delta^2} + \frac{9 \times(8+\sqrt{2}+16) \|X-\currposi\|^2 }{\delta}\right) \quad \\
        & \leq \rho \left(\frac{144 \|X-\currposi\|^2 \|\Xstar\|_{\mathrm{op}}}{\delta^2} + \frac{9(8\sqrt{2}+16) \|X-\currposi\|^2 }{\delta}\right)\\
        & \leq 2\rho \max   \Biggl\{ \frac{144\|\Xstar\|_{\mathrm{op}}}{\delta^2},\frac{9(8\sqrt{2}+16)}{\delta} \Biggr\} \|X-\currposi\|^2,
    \end{aligned}
\end{equation*} 
where the first inequality is due to the restriction on the feasible set since we choose $\rcurrent \geq \rnull$ (recall $r = \rpast + \rcurrent$), 
the second inequality applies \cref{lem: importantLemmaQuadraticAccurateModel} and \cref{eq:consequence-T0-1}, and the third inequality is due to the fact $\Lambda_{r,n}(-\currposi) \leq \|\currposi\|_{\mathrm{op}}$ and \cref{eq:consequence-T0-2}. As a result, choosing $\eta =4 \rho \max \biggl\{ \frac{144\|\Xstar\|_{\mathrm{op}}}{\delta^2},\frac{9(8\sqrt{2}+16)}{\delta} \biggr\} $ does provide a quadratic closed upper bound in \cref{eq:quadratic-close}.

Second, we consider the case of $\Pstar$ containing multiple solutions. Similar to the case of unique solution, we choose $T_0$ large enough to ensure  $\inf_{\Xstar \in \Pstar} \|\currposi-\Xstar\|_{\mathrm{op}} \leq \frac{\delta}{3}$. Therefore, there exists a $\Xstar \in \Pstar$ and $\Bar{r} \leq \rnull$ such that $\|\currposi-\Xstar\|_{\mathrm{op}} \leq \frac{\delta}{3}$ (see \cref{subsec:estimation-T} for an estimate on the number iterations to achieve this) and $ \lambda_{\Bar{r}}(-\Xstar) - \lambda_{\Bar{r}+1}(-\Xstar) \geq \delta$, which implies $\lambda_{\Bar{r}}(-\currposi) - \lambda_{\Bar{r} +1}(-\currposi) \geq \frac{\delta}{3} $ and $\|\currposi\|_{\mathrm{op}} \leq 2 \sup_{\Xstar \in \Pstar} \|\Xstar\|_{\mathrm{op}}$. As a result, a similar argument as the case of unique solution can be made by replacing $\rnull$ and $\|\Xstar\|_{\mathrm{op}}$ with $\Bar{r}$ and $ \sup_{\Xstar \in \Pstar} \|\Xstar\|_{\mathrm{op}}$. We conclude that choosing $\eta =\BoundOnetaP$ is sufficient to ensure a quadratic closed upper bound in \cref{eq:quadratic-close}. 
\end{proof}

\Cref{lem: importantLemmaQuadraticAccurateModel,lemma:quadratic-close} will allow us to prove the linear convergence in terms of the distance to the optimal solution set (we show it in \Cref{lemma:contraction}), i.e., \cref{eq:contraction-iterate} in \Cref{lemma:linear-contraction}. To further prove the linear convergence in terms of cost value \cref{eq:contraction-cost} in \Cref{lemma:linear-contraction}, we need another two technical results: \cref{lemma:explicit-progress-proximal,lemma:cost-each-step}.
\begin{lemma}[{\cite[Lemma 7.12]{ruszczynski2011nonlinear}}] \label{lemma:explicit-progress-proximal}
    Fix a minimizer $\Xstar$ in $\Pstar$. Let $F_{\alpha}(W) := \min_{ X \in \mathcal{X}_0 } F(X) + \frac{\alpha }{2} \|X-W\|^2 $ denote the proximal mapping. For any $W \in \mathcal{X}_0\backslash \{\Xstar\} $, the following result holds\footnote{We noticed a typo in {\cite[Lemma 7.12]{ruszczynski2011nonlinear}} and we correct it here. Although {\cite[Lemma 7.12]{ruszczynski2011nonlinear}} considers an unconstrained case, the same result holds for the case with a convex constraint set here.}.
    \begin{align*}
        F_\alpha(W) \leq F(W) - \frac{\alpha}{2}\|W - \Xstar\|^2 \Phi\left(\frac{F(W)-\pstar}{\alpha\|W-\Xstar\|^2}\right),
    \end{align*}
    where 
    \begin{align*}
        \Phi(t) = 
        \begin{cases}
            t^2, & \text{if} \quad 0\leq t \leq 1,\\
            -1+2t, & \text{if} \quad t> 1. 
        \end{cases}
    \end{align*}
\end{lemma}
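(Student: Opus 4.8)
The plan is to exploit that $F_\alpha(W)$ is defined as an infimum, so \emph{any} feasible trial point furnishes an upper bound; the right trial point is obtained by moving from $W$ a controlled distance toward the fixed minimizer $\Xstar$. Concretely, for $\tau \in [0,1]$ set $X_\tau := (1-\tau)W + \tau\Xstar$. Since $\mathcal{X}_0$ is convex and $W,\Xstar \in \mathcal{X}_0$, we have $X_\tau \in \mathcal{X}_0$, so $X_\tau$ is admissible in the proximal minimization. Using $\|X_\tau - W\| = \tau\|\Xstar - W\|$, the definition of $F_\alpha$ gives, for every $\tau \in [0,1]$,
\[ F_\alpha(W) \le F(X_\tau) + \frac{\alpha}{2}\tau^2\|\Xstar - W\|^2. \]

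First I would bound $F(X_\tau)$ by convexity of $F$: $F(X_\tau) \le (1-\tau)F(W) + \tau F(\Xstar) = F(W) - \tau\bigl(F(W) - \pstar\bigr)$, where I use $F(\Xstar) = \pstar$ (valid because $\Xstar \in \Pstar$ is primal feasible, so $\lambda_{\max}(-\Xstar)\le 0$, the penalty term vanishes, and $F(\Xstar)=\langle C,\Xstar\rangle = \pstar$). Substituting, for all $\tau \in [0,1]$,
\[ F_\alpha(W) \le F(W) - \tau\bigl(F(W)-\pstar\bigr) + \frac{\alpha}{2}\tau^2\|\Xstar - W\|^2. \]
Writing $a := F(W) - \pstar \ge 0$ and $b := \frac{\alpha}{2}\|\Xstar - W\|^2 > 0$ (here $W \ne \Xstar$ is used), the right-hand side equals $F(W) + g(\tau)$ with $g(\tau) = -a\tau + b\tau^2$.

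The remaining step is to minimize the scalar quadratic $g$ over $[0,1]$. Its unconstrained minimizer is $\tau^\star = a/(2b)$, and $\tau^\star \le 1 \iff a \le 2b \iff F(W)-\pstar \le \alpha\|\Xstar-W\|^2 \iff t \le 1$, where $t := (F(W)-\pstar)/(\alpha\|W-\Xstar\|^2)$. In the regime $t \le 1$ the minimum is $g(\tau^\star) = -a^2/(4b) = -(F(W)-\pstar)^2/(2\alpha\|\Xstar-W\|^2) = -\frac{\alpha}{2}\|W-\Xstar\|^2\,t^2$; in the regime $t > 1$ the minimum over $[0,1]$ is attained at the endpoint $\tau = 1$, giving $g(1) = -a + b = -\frac{\alpha}{2}\|W-\Xstar\|^2(2t-1)$. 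Since $\Phi(t) = t^2$ for $t\le 1$ and $\Phi(t) = 2t-1$ for $t>1$, both cases collapse to $F_\alpha(W) \le F(W) - \frac{\alpha}{2}\|W-\Xstar\|^2\,\Phi(t)$, which is the claim.

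I do not expect a genuine obstacle here, since the argument reduces to a one-variable optimization; the only points requiring care are (i) confirming that the trial segment $X_\tau$ stays in $\mathcal{X}_0$, which is exactly where convexity of the constraint set enters and is the one place the constrained version departs from the unconstrained statement of \cite[Lemma 7.12]{ruszczynski2011nonlinear}, and (ii) getting the constants right in the two-case minimization. Indeed, the typo flagged in the footnote is most plausibly a misplaced factor in precisely this quadratic-optimization step, so I would carefully re-derive the normalization $b = \frac{\alpha}{2}\|\Xstar-W\|^2$ and check the resulting piecewise expression against the displayed $\Phi$ before concluding.
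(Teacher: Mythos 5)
Your proof is correct and is exactly the standard argument that the paper defers to (the paper omits the proof, citing \cite[Lemma 7.12]{ruszczynski2011nonlinear}): test the proximal infimum on the segment $X_\tau=(1-\tau)W+\tau\Xstar$, which stays in $\mathcal{X}_0$ by convexity, bound $F(X_\tau)$ by convexity of $F$ with $F(\Xstar)=\pstar$, and minimize the resulting scalar quadratic over $\tau\in[0,1]$, the two regimes of $\Phi$ corresponding to whether the unconstrained minimizer $\tau^\star=a/(2b)$ lies in $[0,1]$. Your constants check out in both cases, and you correctly identify convexity of $\mathcal{X}_0$ as the only point where the constrained version differs from the unconstrained textbook statement.
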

\begin{proof}
    This is the constrained version of the result in \cite[Lemma 7.12]{ruszczynski2011nonlinear}. The proof follows the same argument in \cite[Lemma 7.12]{ruszczynski2011nonlinear} and we omit it here.
\end{proof}

\Cref{lemma:explicit-progress-proximal} above shows that the progress made by a proximal mapping can be calculated precisely. The following result shows the linear decrease of a descent step in the objective value under the assumption of quadratic growth.


\begin{lemma}[{\cite[Lemma 4.1 and 4.2]{du2017rate}}]\label{lemma:cost-each-step}
     Supposed \Cref{assumption:linearly-independence,assumption-slater-condition} are satisfied.
     If the function further satisfies the following quadratic growth property 
     \begin{align} \label{eq:strongly-convex-secord-order}
         F(X)  -\pstar \geq c \cdot \dist^2(X,\Pstar),\quad \forall X \in \mathcal{P}_{F(\Omega_0)},
     \end{align}
     where $c > 0$ is a constant and $\mathcal{P}_{F(\Omega_0)}$ is the sublevel set defined as \cref{eq:sublevel-set-apx}, then it follows that
     \begin{align}\label{eq:proximal-improve}
         F(\currposi) - \pstar  \leq \frac{F(\currposi)- \Bigl(\Ft(\candidate )+\frac{\alpha}{2} \|\candidate -\currposi\|^2\Bigr)}{\min\{\frac{c}{2 \alpha},\frac{1}{2}\}}, \; \forall t \geq 0.
     \end{align}
     If further, a descent step occurs at iteration $t$, then the reduction in cost value gap satisfies
     $$F(\nextposi) - \pstar \leq (1- \Bar{\mu} \beta) (F(\currposi) - \pstar),$$
    where $\Bar{\mu}  = \min\{\frac{c}{2 \alpha},\frac{1}{2}\}$.
\end{lemma}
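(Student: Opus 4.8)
The plan is to first establish the single-step proximal improvement bound \eqref{eq:proximal-improve} and then read off the linear cost decrease from the descent test \eqref{eq:null-or-serious-step-SBM}. The bridge between the algorithm's master problem and the exact proximal map is the minorant property \eqref{eq:bundle-method-property-1}: since $\candidate$ minimizes $\Ft(X)+\frac{\alpha}{2}\|X-\currposi\|^2$ over $\mathcal{X}_0$ and $\Ft\leq F$ pointwise, the quantity $\hat{G}:=\Ft(\candidate)+\frac{\alpha}{2}\|\candidate-\currposi\|^2$ is a lower bound on the exact proximal value $F_\alpha(\currposi)=\min_{X\in\mathcal{X}_0}F(X)+\frac{\alpha}{2}\|X-\currposi\|^2$ appearing in \Cref{lemma:explicit-progress-proximal}. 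Hence $F(\currposi)-\hat{G}\geq F(\currposi)-F_\alpha(\currposi)$, so it suffices to lower bound the exact proximal gap $F(\currposi)-F_\alpha(\currposi)$ by $\bar\mu\,(F(\currposi)-\pstar)$ with $\bar\mu=\min\{\frac{c}{2\alpha},\frac12\}$.

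For that lower bound I would apply \Cref{lemma:explicit-progress-proximal} with $W=\currposi$ and with $\Xstar$ chosen as the projection of $\currposi$ onto $\Pstar$, so that $\|\currposi-\Xstar\|=\dist(\currposi,\Pstar)$. Writing $t=\frac{F(\currposi)-\pstar}{\alpha\,\dist^2(\currposi,\Pstar)}$, the lemma gives $F(\currposi)-F_\alpha(\currposi)\geq\frac{\alpha}{2}\dist^2(\currposi,\Pstar)\,\Phi(t)$, and the argument then splits on the two branches of $\Phi$. When $0\leq t\leq1$, $\Phi(t)=t^2$, and a direct substitution together with the quadratic growth bound $\dist^2(\currposi,\Pstar)\leq\frac{1}{c}(F(\currposi)-\pstar)$ from \eqref{eq:strongly-convex-secord-order} yields $\frac{\alpha}{2}\dist^2\,t^2=\frac{(F(\currposi)-\pstar)^2}{2\alpha\,\dist^2}\geq\frac{c}{2\alpha}(F(\currposi)-\pstar)$. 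When $t>1$, using $\Phi(t)=2t-1\geq t$ gives $\frac{\alpha}{2}\dist^2\,\Phi(t)\geq\frac{\alpha}{2}\dist^2\,t=\frac12(F(\currposi)-\pstar)$. Taking the smaller of the two constants produces $F(\currposi)-F_\alpha(\currposi)\geq\bar\mu(F(\currposi)-\pstar)$, which combined with $F(\currposi)-\hat{G}\geq F(\currposi)-F_\alpha(\currposi)$ is precisely \eqref{eq:proximal-improve}.

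For the contraction, at a descent step the test \eqref{eq:null-or-serious-step-SBM} reads $\beta\bigl(F(\currposi)-\Ft(\candidate)\bigr)\leq F(\currposi)-F(\nextposi)$. Since $\hat{G}=\Ft(\candidate)+\frac{\alpha}{2}\|\candidate-\currposi\|^2\geq\Ft(\candidate)$, we have $F(\currposi)-\Ft(\candidate)\geq F(\currposi)-\hat{G}$, so the descent test also gives $\beta\bigl(F(\currposi)-\hat{G}\bigr)\leq F(\currposi)-F(\nextposi)$. Chaining this with \eqref{eq:proximal-improve}, i.e.\ $F(\currposi)-\hat{G}\geq\bar\mu(F(\currposi)-\pstar)$, gives $F(\currposi)-F(\nextposi)\geq\bar\mu\beta(F(\currposi)-\pstar)$; subtracting $\pstar$ and rearranging yields $F(\nextposi)-\pstar\leq(1-\bar\mu\beta)(F(\currposi)-\pstar)$, as claimed.

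The main obstacle is the case analysis tied to the kink of $\Phi$ at $t=1$: one must check that the two regime estimates collapse into the single clean constant $\bar\mu$ and that the quadratic growth hypothesis is invoked with the correct orientation, namely to convert $\dist^2$ into $F-\pstar$ in the $t\leq1$ branch. A minor but necessary bookkeeping point is the degenerate case $\currposi\in\Pstar$: then $F(\currposi)=\pstar$ and \Cref{lemma:explicit-progress-proximal} does not apply, since its hypothesis excludes $W=\Xstar$; however both sides of \eqref{eq:proximal-improve} and of the contraction then vanish, so the statements hold trivially and can be dispatched first.
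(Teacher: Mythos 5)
Your proposal is correct and follows essentially the same route as the paper's proof: bound the model proximal value by the exact proximal value via the minorant property, invoke \Cref{lemma:explicit-progress-proximal} with the projection onto $\Pstar$, split on the two branches of $\Phi$ using quadratic growth in the $t\leq 1$ case, and chain the resulting bound through the descent test for the contraction. Your explicit handling of the degenerate case $\currposi\in\Pstar$ is a small but welcome addition that the paper leaves implicit.
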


\begin{proof}
    By definition on $F_{\alpha}$, we know
    \begin{align}
        F_{\alpha}(\currposi) &  = \min_{ X \in \mathcal{X}_0 } F(X) + \frac{\alpha }{2} \|X-\currposi\|^2  \nonumber\\
        &\geq  \min_{X \in \mathcal{X}_0} \Ft(X) + \frac{\alpha }{2} \|X-\currposi\|^2 \nonumber\\
        & = \Ft(\candidate) + \frac{\alpha }{2} \|\candidate-\currposi\|^2, \label{eq:F-alpha-bound}
    \end{align}
    where $\candidate$ is the minimizer. By \cref{lemma:explicit-progress-proximal}, if $F(\currposi)- \pstar \leq \alpha \dist^2(\currposi,\Pstar) = \alpha \|\currposi-\Xstar\|^2$, where $\Xstar$ is the closest point to $\currposi$ in $\Pstar$, and $\currposi \neq \Xstar$, we have 
    \begin{align*}
        F_\alpha(\currposi) \leq F(\currposi) - \frac{(F(\currposi)-\pstar)^2}{2\alpha \|\currposi-\Xstar\|^2  }.
    \end{align*}
    Combining this inequality with \cref{eq:F-alpha-bound} leads to 
    \begin{align*}
        \frac{(F(\currposi)- \pstar )^2}{2\alpha \|\currposi-\Xstar\|^2} \leq F(\currposi) - \Ft(\candidate) - \frac{\alpha}{2}  \|\candidate-\currposi\|^2.
    \end{align*}
    By applying \cref{eq:strongly-convex-secord-order} and this inequality, we get
    \begin{align}\label{eq:cost-each-step-1}
        F(\currposi)- \pstar  \leq \frac{ F(\currposi) - \Ft(\candidate) - \frac{\alpha}{2} \|\candidate-\currposi\|^2}{\frac{c}{2 \alpha}}.
    \end{align}
    On the other hand, if $ F(\currposi)- \pstar > \alpha \dist^2(\currposi,\Pstar) =\alpha \|\currposi-\Xstar\|^2$, where $\Xstar$ is the closest point to $\currposi$, \cref{lemma:explicit-progress-proximal} ensures
    \begin{align*}
        F_\alpha(\currposi) \leq \pstar + \frac{\alpha}{2} \|\currposi - \Xstar\|^2 <   \frac{1}{2} (F(\currposi)+\pstar).
    \end{align*}
    Combining \cref{eq:F-alpha-bound} with this inequality yields
    \begin{align*}
        -\frac{1}{2}(F(\currposi)+\pstar)  < - \Ft(\candidate) - \frac{\alpha}{2} \|\candidate-\currposi\|^2.
    \end{align*}
    By adding $F(\currposi)$ on both sides, we conclude
    \begin{align*}
        F(\currposi)-\pstar \leq 2\left(F(\currposi) - \Ft(\candidate) - \frac{\alpha}{2} \|\candidate-\currposi\|^2\right).
    \end{align*}
    Combining \cref{eq:cost-each-step-1} and the above inequality completes the proof for \cref{eq:proximal-improve}.
    
    If further a descent step occurs and by a reformulation of the definition of the descent step, we have 
     $$F(\nextposi)  \leq (1-\beta) F(\currposi) +\beta \Ft(\nextposi).$$
    By \cref{eq:proximal-improve}, we have $$F(\currposi) - \pstar \leq \frac{F(\currposi)-\Ft(\nextposi)-\frac{\alpha}{2} \|\nextposi -\currposi\|^2}{\Bar{\mu} }  \leq \frac{F(\currposi)-\Ft(\nextposi)}{\Bar{\mu} }.$$
    Combining the above two inequalities leads to 
    \begin{align*}
        F(\nextposi) & \leq (1-\beta)F(\currposi) + \beta F(\currposi)- \beta\Bar{\mu}  (F(\currposi)-\pstar) \\
        & = F(\currposi) - \beta \Bar{\mu} (F(\currposi)-\pstar).
    \end{align*}
    Substracting $\pstar$ from both sides yields $$F(\nextposi) - \pstar \leq (1-\beta \Bar{\mu}) (F(\currposi)-\pstar).$$
\end{proof}
With \Cref{lem: importantLemmaQuadraticAccurateModel,lemma:quadratic-close,lemma:explicit-progress-proximal,lemma:cost-each-step}, we are ready to prove \cref{lemma:linear-contraction}, which is the key to prove  \cref{thm: linear-convergence} (as discussed in \Cref{subsection:proof-linear-convergence}). 
For convenience, we restate \cref{lemma:linear-contraction} below
\begin{lemma}[{\cref{lemma:linear-contraction} in the main text}] \label{lemma:contraction} 
Under the conditions in \cref{lemma:quadratic-close}, for any $\alpha \geq \eta$ and $t \geq {T}_0$,  \Cref{alg:(r_p-r_c)-SBM-P} with $\beta \in (0, \frac{1}{2}]$ takes only descent steps and guarantees two contractions: 
	\begin{align*}
        \dist(\nextposi,\Pstar) &\leq \sqrt{ \frac{{\alpha}/{2}}{\mu +{\alpha}/{2}} }\dist(\currposi,\Pstar), 
        \\ 
        F(\nextposi) - F(\Xstar) &\leq \left(1- \min\left\{\frac{\mu}{2 \alpha},\frac{1}{2} \right\} \beta\right) \left(F(\currposi) - F(\Xstar)\right),
 \end{align*}
 where $\mu$ is the quadratic growth constant in \cref{eq:quadratic-growth-F(x)}. 
\end{lemma}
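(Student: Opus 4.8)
The plan is to read the master step \cref{eq:SBM-subproblem} as an \emph{inexact} proximal step for $F$ and to drive everything from the two one-sided estimates that become available once $t\ge T_0$: the quadratic closeness $\Ft(X)\le F(X)\le\Ft(X)+\frac{\eta}{2}\|X-\currposi\|^2$ on $\mathcal{X}_0$ from \cref{lemma:quadratic-close}, and the quadratic growth $F(X)-F(\Xstar)\ge\mu\,\dist^2(X,\Pstar)$ from \cref{lemma:quadratic-growth}. The latter applies to every iterate we encounter because $F(\currposi)$ is monotonically nonincreasing, so all $\currposi$ and $\nextposi$ with $t\ge T_0$ stay inside the initial sublevel set on which \cref{lemma:quadratic-growth} is valid. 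Throughout I write $\Xstar$ for the projection of $\currposi$ onto the closed set $\Pstar$, using that $\Pstar\subseteq\mathcal{X}_0$.

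First I would show that only descent steps occur. Setting $G(X)=\Ft(X)+\frac{\alpha}{2}\|X-\currposi\|^2$, optimality of $\candidate$ together with the minorant bound $\Ft\le F$ gives $F(\currposi)-\Ft(\candidate)\ge \Ft(\currposi)-\Ft(\candidate)\ge\frac{\alpha}{2}\|\candidate-\currposi\|^2$. Feeding the upper estimate $F(\candidate)\le\Ft(\candidate)+\frac{\eta}{2}\|\candidate-\currposi\|^2$ into the descent test \cref{eq:null-or-serious-step-SBM} reduces it to $\frac{\eta}{2}\|\candidate-\currposi\|^2\le(1-\beta)\bigl(F(\currposi)-\Ft(\candidate)\bigr)$, and the previous display shows this holds whenever $(1-\beta)\alpha$ dominates the closeness constant. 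With $\beta\le\frac{1}{2}$ and $\alpha\ge\eta$ this is exactly the margin supplied by the factor-two slack built into $\eta$ in \cref{lemma:quadratic-close}, so the candidate is always accepted.

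For the distance contraction \cref{eq:contraction-iterate} I would use that $G$ is $\alpha$-strongly convex, so its constrained minimizer $\nextposi=\candidate$ satisfies the variational inequality $G(\Xstar)\ge G(\nextposi)+\frac{\alpha}{2}\|\Xstar-\nextposi\|^2$ for every $\Xstar\in\Pstar\subseteq\mathcal{X}_0$. Choosing $\Xstar=\Pi_{\Pstar}(\currposi)$, bounding $\Ft(\Xstar)\le F(\Xstar)$ by the minorant property and $\Ft(\nextposi)\ge F(\nextposi)-\frac{\eta}{2}\|\nextposi-\currposi\|^2$ by the upper closeness, and then discarding the nonnegative term $\frac{\alpha-\eta}{2}\|\nextposi-\currposi\|^2$ (legitimate since $\alpha\ge\eta$), I obtain $\frac{\alpha}{2}\dist^2(\currposi,\Pstar)\ge\bigl(F(\nextposi)-F(\Xstar)\bigr)+\frac{\alpha}{2}\|\Xstar-\nextposi\|^2$. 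Applying quadratic growth to the first term and $\|\Xstar-\nextposi\|\ge\dist(\nextposi,\Pstar)$ to the second yields $\frac{\alpha}{2}\dist^2(\currposi,\Pstar)\ge(\mu+\frac{\alpha}{2})\dist^2(\nextposi,\Pstar)$, which is precisely \cref{eq:contraction-iterate}. The cost contraction \cref{eq:contraction-cost} then follows immediately from \cref{lemma:cost-each-step} with $c=\mu$, since a descent step has just been certified.

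The main obstacle is bookkeeping rather than ideas: I must track the constants carefully so that $\alpha\ge\eta$ (not $\alpha\ge2\eta$) suffices in the descent test, which forces me to use the quadratic-closeness constant with its built-in factor-two margin and the hypothesis $\beta\le\frac{1}{2}$ in tandem. A secondary point needing care is that the entire argument lives over the constrained set $\mathcal{X}_0$, so I must replace the unconstrained proximal inequalities of the dual analysis by the constrained variational inequality for the $\alpha$-strongly convex $G$ and verify that $\nextposi$ remains in the sublevel set where \cref{lemma:quadratic-growth} is in force.
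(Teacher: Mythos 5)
Your overall architecture matches the paper's proof almost exactly: descent steps are certified by playing the quadratic closeness of \cref{lemma:quadratic-close} against the strong convexity of the regularized model, the distance contraction comes from the constrained variational inequality for the $\alpha$-strongly convex master objective combined with quadratic growth, and the cost contraction is delegated to \cref{lemma:cost-each-step} with $c=\mu$. The second and third parts of your argument are correct as written.

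The gap is in the descent-step certification, and it is exactly the constant you flag as "bookkeeping." You bound $F(\currposi)-\Ft(\candidate)\ge \Ft(\currposi)-\Ft(\candidate)\ge\frac{\alpha}{2}\|\candidate-\currposi\|^2$, using only $G(\currposi)\ge G(\candidate)$ for $G(X)=\Ft(X)+\frac{\alpha}{2}\|X-\currposi\|^2$. Your reduction of the test \cref{eq:null-or-serious-step-SBM} to $\frac{\eta}{2}\|\candidate-\currposi\|^2\le(1-\beta)\bigl(F(\currposi)-\Ft(\candidate)\bigr)$ is fine, but with $\beta=\tfrac12$ and your $\frac{\alpha}{2}$ bound the right-hand side is only $\frac{\alpha}{4}\|\candidate-\currposi\|^2$, so you need $\alpha\ge 2\eta$, not $\alpha\ge\eta$. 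The appeal to a "factor-two slack built into $\eta$" does not close this: in \cref{lemma:quadratic-close} the constant $\eta$ is chosen so that $F(X)-\Ft(X)\le\frac{\eta}{2}\|X-\currposi\|^2$ is exactly the derived bound, with no spare factor of two available. The correct fix is to use the full consequence of $\alpha$-strong convexity of $G$ at its minimizer, namely
\begin{equation*}
\Ft(\currposi)\;\ge\;\Ft(\candidate)+\frac{\alpha}{2}\|\candidate-\currposi\|^2+\frac{\alpha}{2}\|\currposi-\candidate\|^2\;=\;\Ft(\candidate)+\alpha\|\candidate-\currposi\|^2,
\end{equation*}
i.e.\ $F(\currposi)-\Ft(\candidate)\ge\alpha\|\candidate-\currposi\|^2$ with coefficient $\alpha$ rather than $\alpha/2$. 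This is the inequality the paper uses; it yields $F(\currposi)-F(\candidate)\ge\frac{\alpha}{2}\|\candidate-\currposi\|^2$ after subtracting the closeness term, and then $(1-\beta)\alpha\ge\frac{\alpha}{2}\ge\frac{\eta}{2}$ holds for all $\beta\in(0,\tfrac12]$ and $\alpha\ge\eta$, as claimed. With that one-line strengthening your proof goes through.
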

\begin{proof}
    We first show that after $T_0$ iterations (\cref{eq:quadratic-close} holds), \Cref{alg:(r_p-r_c)-SBM-P} will only take descent steps. The proof is organized into the following two steps:
    \begin{itemize}
        \item Choosing $\alpha \geq \eta$ ensures that the descent yielded by the candidate $\candidate$ can be estimated by
        $$
           F(\currposi)  - F(\candidate) \geq   \frac{\alpha} {2}\|\candidate-\currposi\|^2.
        $$        
        \item Choosing $\beta \in (0,\frac{1}{2}]$ and $\alpha \geq \eta$ guarantees the descent condition \cref{eq:null-or-serious-step-SBM} holds.
    \end{itemize}    
    Without loss of generality, we set $ \eta = \alpha$ since we require $\eta \leq \alpha $. Since $\Ft +\frac{\alpha}{2}\|\cdot-\currposi\|^2$ is $\alpha$ strongly convex, applying the first order inequality for a strongly convex function leads to the following, $\forall X \in \mathbb{S}^n$ such that $\Amap(X) = b$,
        \begin{align} \label{eq:strongly-convex}
            & \; \Ft(X) +\frac{\alpha}{2}\|X-\currposi\|_F^2 \nonumber \\
             \geq & \; \bar{F}_t(\candidate) +\frac{\alpha}{2}\|\candidate-\currposi\|^2 + \langle g_t,X-\candidate \rangle + \frac{\alpha}{2}\|X - \candidate\|^2 \nonumber\\
             = & \; \bar{F}_t(\candidate) +\frac{\alpha}{2}\|\candidate-\currposi\|^2 + \langle \Ajmap(\ytstar) ,X-\candidate \rangle + \frac{\alpha}{2}\|X - \candidate\|^2 \nonumber \\
             = & \; \Ft(\candidate) +\frac{\alpha}{2}\|\candidate-\currposi\|^2 + \frac{\alpha}{2}\|\candidate-X\|^2,  
        \end{align}
    where $g_t$ is the gradient of $\Ft  + \frac{\alpha}{2} \|\cdot-\currposi\|^2$ evaluated at $\candidate$ and the second equality uses the optimality condition in \cref{eq:optimality-condition-Xt}.  By setting $X = \currposi$ in \cref{eq:strongly-convex} and combining the fact $F(X) \geq \Ft(X) \text{ for all } X \in \mathbb{S}^n$, we obtain
     \begin{equation*}\label{eq:strongly-convex+quadratic close}
        \begin{aligned}
            F(\currposi) \geq \Ft(\currposi) & \geq \Ft(\candidate) +\frac{\alpha}{2}\|\candidate-\currposi\|^2 + \frac{\alpha}{2}\|\candidate-\currposi\|^2,\\
            & = \bar{F}_t(\candidate) + \alpha \|\candidate-\currposi\|^2,
        \end{aligned}
    \end{equation*} 
    which implies 
    \begin{align} \label{eq:first-order-reform}
        F(\currposi) - \Ft(\candidate) &\geq \alpha \|\candidate-\currposi\|^2.
    \end{align}
    On the other hand, letting $X = \candidate$ in \cref{eq:quadratic-close} yields
    \begin{equation} \label{eq:quadratic-close-reform}
        \begin{aligned}
            \bar{F}_t(\candidate) \geq F(\candidate) - \frac{\alpha}{2}\|\candidate-\currposi\|^2  .
        \end{aligned}
    \end{equation} 
    Therefore, combining \cref{eq:first-order-reform,eq:quadratic-close-reform} leads to
    \begin{align}\label{eq:quadratic-growth}
        F(\currposi)  - F(\candidate) \geq   \frac{\alpha} {2}\|\candidate-\currposi\|^2.
    \end{align}
    
    We now move on to show that choosing $\beta \in (0,\frac{1}{2} ]$ and $\alpha \geq \eta$ yields a descent step. By the choice of $\beta$, it holds that
     \begin{equation*}
        \begin{aligned}
            \beta (F(\currposi)-\bar{F}_t(\candidate))& \leq \frac{1}{2} (F(\currposi)-\bar{F}_t(\candidate)) \\
           & \leq \frac{1}{2} (F(\currposi)-F(\candidate)) +\frac{\alpha}{4}\|\candidate-\currposi\|^2\\ 
            &\leq F(\currposi)-F(\candidate),
        \end{aligned}
    \end{equation*}
    where the second inequality applies \cref{eq:quadratic-close-reform}, and the last inequality uses \cref{eq:quadratic-growth}. This shows that a descent step indeed will occur.

    Second, we show the distance $\dist(\currposi,\Pstar)$ is shrinking with a linear rate. Upon applying \cref{lemma:quadratic-growth} with $X= \nextposi$, we obtain 
    \begin{align} \label{eq:quadratic-growth-nextposi}
        \mu \cdot \dist^{2}(\nextposi,\Pstar) \leq F(\nextposi)-F(\Xstar).
    \end{align}
    On the other hand, for any $\Xstar \in \Pstar$, with the replacement of $\candidate = \nextposi$ since a decent step happens, \cref{eq:strongly-convex} ensures
     \begin{equation}\label{eq:linear-converge-key1}
        \begin{aligned}
            F(\Xstar) +\frac{\alpha}{2}\|\Xstar-\currposi\|^2 & \geq \bar{F}_t(\nextposi) +\frac{\alpha}{2}\|\nextposi -\currposi\|^2 + \frac{\alpha}{2}\|\nextposi - \Xstar \|^2,
        \end{aligned}
    \end{equation}   
    where we also use the global lower bound property of the approximation model.
    Combining \cref{eq:linear-converge-key1} and \cref{eq:quadratic-close-reform}, we get
    \begin{equation*}
        \begin{aligned}
            F(\Xstar) +\frac{\alpha}{2}\|\Xstar-\currposi\|^2 & \geq F(\nextposi) - \frac{\alpha}{2}\|\nextposi-\currposi\|^2  +\frac{\alpha}{2}\|\nextposi -\currposi\|^2 + \frac{\alpha}{2}\|\nextposi - \Xstar \|^2 \\
            & = F(\nextposi) + \frac{\alpha}{2}\|\nextposi - \Xstar \|^2,
        \end{aligned}
    \end{equation*}
    which implies 
    \begin{align}\label{eq:relation-dist-gap}
        F(\nextposi) -  F(\Xstar) & \leq \frac{\alpha}{2}\|\Xstar-\currposi\|^2 - \frac{\alpha}{2}\|\nextposi - \Xstar \|^2.
    \end{align}
    With \cref{eq:quadratic-growth-nextposi,eq:relation-dist-gap}, we reach 
    \begin{align*}
         \mu \cdot \dist^{2}(\nextposi,\Pstar) \leq \frac{\alpha}{2}\|\Xstar-\currposi\|^2 - \frac{\alpha}{2}\|\nextposi - \Xstar \|^2.
    \end{align*}
    By setting $\Xstar$ to be the closest point to $\currposi$, it follows 
    \begin{align*}
         \mu \cdot \dist^{2}(\nextposi,\Pstar) & \leq \frac{\alpha}{2}\dist^2(\currposi,\Pstar)- \frac{\alpha}{2} \|\nextposi - \Xstar \|^2 \\
         & \leq \frac{\alpha}{2}\dist^2(\currposi,\Pstar) - \frac{\alpha}{2} \dist^2(\nextposi,\Pstar),
    \end{align*}
    which implies 
    \begin{align*}
       \dist(\nextposi,\Pstar) &  \leq  \sqrt{ \frac{\frac{\alpha}{2}}{\mu +\frac{\alpha}{2}} }\dist(\currposi,\Pstar).
    \end{align*}
    Since the contraction factor $\sqrt{ \frac{\alpha/2}{\mu +\alpha/2} }  < 1$, the distance is converging geometrically. 

    Third, we show the cost value gap $F(\currposi) - \pstar $ is shrinking geometrically. This is in fact a direct application of \Cref{lemma:quadratic-growth} and \Cref{lemma:cost-each-step}. Since we have shown above that only the descent step will occur after $T_0$ iteration, the objective value will decrease linearly as 
    \begin{align*}
        F(\nextposi) - \pstar \leq \left(1- \min\left\{\frac{\mu}{2 \alpha},\frac{1}{2} \right\} \beta\right) \left(F(\currposi) - \pstar\right).
    \end{align*}    

\end{proof}
\subsection{Bound on $T_0$} \label{subsec:estimation-T}

In the proof of \cref{lemma:quadratic-close}, we choose ${T}_0$ to be the number of iterations that ensure the iterate $\Xstar$ generated by \SBMP~is $\delta/3$ closed to the optimal solution set, i.e., $\inf_{\Xstar \in \Pstar}  \|\currposi - \Xstar\|_2 \leq \delta/3$, where $\delta := \inf_{X \in \Pstar} \sup_{r \leq \rnull} \lambda_r(-X) - \lambda_{r+1}(-X)$. 
The bound can be obtained by applying \cref{lemma-iterations-bound}.
By the fact that Frobenius norm is greater than the spectral norm and \cref{lemma:quadratic-growth}, we know
\begin{align*}
    \|\currposi - \Xstar\|_{\mathrm{op}} \leq \|\currposi - \Xstar\| = \dist(\currposi,\Pstar) \leq \sqrt{ (F(\currposi)-F(\Xstar))/\mu},
\end{align*}
where $\mu$ is the quadratic growth constant in \cref{lemma:quadratic-growth}.
Therefore, choosing $F(\currposi) - F(\Xstar) \leq \frac{\mu\delta^2}{9}$ is sufficient to ensure the iterate $\currposi$ is $\delta/3$ close to the optimal solution set $\Pstar$. Using the bound for the maximal number of iterations under the quadratic growth assumption in \cref{lemma-iterations-bound}, the bound on ${T}_0$ can be computed as 
$$\BoundOnTp,$$
where we use the fact that $F$ is $\max\{\|C\|_{\mathrm{op}}, 1\}$-Lipschitz.

    \section{Further computational details}
\label{Appendix:data-generation-SDP-SOS}
\subsection{Data generation for random SDPs}
\label{subsec:data-generation}
\begin{algorithm}[t] 
{\small
    \caption{Generate SDPs satisfying strict complementarity and constant trace property.}
    \label{alg:generate-sdp-strict}
    \begin{algorithmic}[1]
    \State Choose $r < n \in \mathbb{N}$ as the rank of primal solution $\Xstar$
    \State Randomly generate $S$ such that $S\in \mathbb{S}^n_{++}$
    \State Randomly generate the constraint matrices $A_1, \ldots, A_m \in \mathbb{S}^n$ with $\Trace(A_i) = 0, i = 1 ,\ldots,m$
    \State Compute eigen-decomposition on $S$ as $S = U_1 \Sigma_1 U_1^\tr + U_2 \Sigma_2 U_2^\tr$
    \State Randomly generate a dual solution $\ystar \in \mathbb{R}^m$
    \State Set $\Xstar = U_1 \Sigma_1 U_1^\tr$, $\Zstar = U_2 \Sigma_2 U_2^\tr$, and $C  = \Zstar + \Ajmap(\ystar)$
    \State \textbf{Return} problem data $(C,A_1,\ldots,A_m,b)$ and a pair of optimal solutions $(\Xstar,\ystar,\Zstar)$
    \end{algorithmic}
    }
\end{algorithm}
To examine the influence of $\rcurrent$ to \Cref{alg:(r_p-r_c)-SBM-P} and \Cref{alg:(r_p-r_c)-SBM-D}, we randomly generate the data that satisfy strict complementarity and have the constant trace property on the dual variable $Z$ in \cref{eq:SDP-dual}. We first randomly generate a positive definite matrix $S \in \mathbb{S}^n_{++}$ and the primal constraint matrices $A_1, \ldots, A_m \in \mathbb{S}^n$ with $\Trace(A_i) = 0, i = 1 ,\ldots,m.$ The matrix $S$ is decomposed  as 
\begin{align*}
    S =U_1 \Sigma_1 U_1^\tr + U_2 \Sigma_2 U_2^\tr,
\end{align*}
where $\Sigma_1 \in \mathbb{S}^r$ is the matrix formed by $r$ orthnormal eigenvectors correpsonding the largest $r$ eigenvalues and $r$ is a given number. We then set $\Xstar = U_1 \Sigma_1 U_1^\tr$, $\Zstar =U_2 \Sigma_2 U_2^\tr$, and $b = \Amap\left(\Xstar\right)$. To generate $C$, we randomly generate $y^\star \in \mathbb{R}^m$ and set $C = \Zstar + \Ajmap(y^\star)$. The procedure is presented in \Cref{alg:generate-sdp-strict}.

Indeed, the solution pair $(\Xstar, \ystar,\Zstar)$ generated by \Cref{alg:generate-sdp-strict} is an optimal solution.  To see this, we note that both $\Xstar$ and $\Zstar$ are positive semidefinite and satisfy the affine constraint in \cref{eq:SDP-primal} and \cref{eq:SDP-dual} respectively. The complementarity slackness is satisfied as $\langle \Xstar, \Zstar \rangle = 0$. There also exists a strictly feasible point in the primal SDP as $\Amap(\Xstar + t I) = b$ and $\Xstar + t I $ becomes positive definite as long as $t$ is sufficiently large.
The constant trace property of $Z$ can be observed from $\Trace(Z) = \Trace (C - \Ajmap(y)) = \Trace(C) - \sum_{i=1}^m y_i\Trace(A_i) = \Trace(C)$ as $\Trace(A_i) = 0, i=1,\ldots,m. $ 

\subsection{Exact penalty parameter for SDPs from sum-of-squares optimization}
\label{subsec:Exaxt-penalty-SOS}
One requirement for running \Cref{alg:(r_p-r_c)-SBM-P} is to choose a valid penalty parameter $\rho $ in \Cref{proposition-primal-exact-penalty}. In the SDPs from sum-of-squares optimization over a single sphere constraint, one valid lower bound for the parameter $\rho$ can be obtained a priori. We summarize this result in the following lemma.  
\begin{lemma}[{\cite[Lemma 5]{mai2023hierarchy}}]\label{lemma:const-trace-POP}
    Consider a $k$th-order SOS relaxation of a problem minimizing a polynomial $p_0(x)$ over one single sphere constraint, i.e.,
    \begin{equation}
    \begin{aligned}\label{eq:k-order-Lassere-relaxation-shpere}
    \max_{\gamma, \sigma_0, \psi_1} & \quad \gamma \\
     \mathrm{subject~to} & \quad  p_0(x) - \gamma -\psi_1 h(x) = \sigma_0 ,\\
                         & \quad \sigma_0 \in \Sigma[x]_{n,2k}, \;\psi_1 \in \RR[x]_{n,2(k-\lceil h \rceil)},
    \end{aligned}
\end{equation}
where $h(x) = \|x\|^2 -\bar{R}$ with $\bar{R}> 0$, $\lceil h \rceil = \lceil \mathrm{deg}(h)/2 \rceil$, $\RR[x]_{n,2(k-\lceil h \rceil)}$ denotes the real polynomial in $n$ variables and degree at most $2(k-\lceil h \rceil)$, and $\Sigma[x]_{n,2k}$ denotes the cone of SOS polynomials in $ \RR[x]_{n,2k}$. 
Then, any $\rho \geq \bigl(1+\bar{R}\bigr)^k$ is a valid exact penalty parameter that satisfies $\rho > \Trace(\Zstar)$, where $\Zstar$ is any dual optimal solution the SDP from \Cref{eq:k-order-Lassere-relaxation-shpere}.
\end{lemma}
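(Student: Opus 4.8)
The plan is to exploit the moment-side structure of the dual SDP \eqref{eq:SDP-dual} that arises when the SOS program \eqref{eq:k-order-Lassere-relaxation-shpere} is recast as a primal SDP \eqref{eq:SDP-primal}. In that recasting, the Gram matrix of $\sigma_0$ plays the role of the primal variable $X$, so the dual PSD variable $\Zstar$ is the pseudo-moment matrix $M_k(y)$ indexed by the monomial basis $v(x) = [x]_{n,k}$ of degree at most $k$. Writing $L_y$ for the associated Riesz (pseudo-expectation) functional, so that $(\Zstar)_{\alpha\beta} = L_y(x^{\alpha+\beta})$, the object to control is
\begin{equation*}
\Trace(\Zstar) = \sum_{|\alpha| \le k} L_y(x^{2\alpha}) = L_y\Bigl(\textstyle\sum_{|\alpha|\le k} x^{2\alpha}\Bigr) = L_y\bigl(\|v(x)\|^2\bigr),
\end{equation*}
so the lemma reduces to the purely functional estimate $L_y(\|v\|^2) \le (1+\bar R)^k$.

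First I would record the two consequences of dual feasibility that drive the bound. Positive semidefiniteness $\Zstar \succeq 0$ is equivalent to $L_y(\sigma) \ge 0$ for every SOS polynomial $\sigma$ of degree at most $2k$. Moreover, the free multiplier $\psi_1 \in \RR[x]_{n,2(k-1)}$ in \eqref{eq:k-order-Lassere-relaxation-shpere} contributes, upon dualization, one equality constraint per coefficient of $\psi_1$; together these encode the localizing conditions $L_y(x^\beta h) = 0$ for all $|\beta| \le 2(k-1)$, where $h(x) = \|x\|^2 - \bar R$. Since $\deg(h^{j-1}) = 2(j-1) \le 2(k-1)$ for $1 \le j \le k$, this gives $L_y(h^j) = 0$ for $1 \le j \le k$, while the scalar $\gamma$ encodes the normalization $L_y(1) = 1$.

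The heart of the argument is a single polynomial identity. Substituting $1 + \|x\|^2 = (1+\bar R) + h$ and expanding binomially yields
\begin{equation*}
(1 + \|x\|^2)^k = \sum_{j=0}^{k} \binom{k}{j} (1+\bar R)^{k-j} h^j,
\end{equation*}
so applying $L_y$ and discarding the vanishing localizing terms leaves $L_y\bigl((1+\|x\|^2)^k\bigr) = (1+\bar R)^k$. On the other hand, the multinomial theorem gives $(1+\|x\|^2)^k = \sum_{|\alpha|\le k} c_\alpha\, x^{2\alpha}$ with $c_\alpha = k!/((k-|\alpha|)!\,\alpha!) \ge 1$, whence
\begin{equation*}
(1+\|x\|^2)^k - \|v(x)\|^2 = \sum_{|\alpha| \le k} (c_\alpha - 1)\,(x^\alpha)^2 =: D(x)
\end{equation*}
is a nonnegative combination of squares of degree at most $2k$. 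Applying $L_y$ and using $L_y(D)\ge 0$ gives $(1+\bar R)^k - \Trace(\Zstar) = L_y(D) \ge 0$, i.e. $\Trace(\Zstar) \le (1+\bar R)^k$; for $k \ge 2$ the term $c_{e_i}-1 = k-1 > 0$ together with $L_y(\|x\|^2) = \bar R\,L_y(1) + L_y(h) = \bar R > 0$ forces $L_y(D) \ge (k-1)\bar R > 0$, so $\Trace(\Zstar) < (1+\bar R)^k$ strictly. As this holds for every dual feasible, hence every optimal, $\Zstar$, we get $\DZstar \le (1+\bar R)^k$, and any $\rho \ge (1+\bar R)^k$ satisfies $\rho > \DZstar$, so \Cref{proposition-primal-exact-penalty} applies.

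The main obstacle I anticipate is not the estimate itself but the bookkeeping behind the first paragraph: verifying precisely, by tracking coefficient matching through the recasting of \eqref{eq:k-order-Lassere-relaxation-shpere}, that the dual PSD block is exactly the degree-$k$ pseudo-moment matrix and that the dual equality constraints reproduce the localizing identities $L_y(x^\beta h)=0$ and the normalization $L_y(1)=1$. A secondary point to flag is that strictness relies on $k\ge 2$; at $k=1$ the certificate $D$ vanishes and the bound degenerates to equality, but that case corresponds to the trivial relaxation and is excluded in the regime of interest ($k=2$ in the experiments).
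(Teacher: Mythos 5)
Your proof is correct and reaches the same bound, but it is a more self-contained route than the one the paper takes. The paper's proof simply invokes \cite[Lemma 5]{mai2023hierarchy}, which asserts that $P_{n,k} Z P_{n,k}^\tr$ has constant trace $(1+\bar{R})^k$ for $P_{n,k}=\mathrm{diag}(\sqrt{\theta_{k,\alpha}})$, and then observes that the diagonal weights $\theta_{k,\alpha}\geq 1$ force $\Trace(Z)<\Trace(P_{n,k}ZP_{n,k}^\tr)$. Your argument unpacks exactly the content of that cited lemma: the identity $L_y\bigl((1+\|x\|^2)^k\bigr)=(1+\bar{R})^k$ obtained from the localizing constraints $L_y(x^\beta h)=0$ and the normalization $L_y(1)=1$ is precisely the statement $\sum_\alpha \theta_{k,\alpha}Z_{\alpha\alpha}=(1+\bar{R})^k$, and your certificate $L_y(D)=\sum_\alpha(c_\alpha-1)Z_{\alpha\alpha}\geq 0$ is the same diagonal comparison. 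What your version buys is twofold: it removes the external dependency, and it repairs a small gap in the paper's strictness claim. The paper infers $\Trace(Z)<\Trace(P_{n,k}ZP_{n,k}^\tr)$ from ``some diagonal entries of $P_{n,k}$ exceed $1$,'' which by itself only gives a non-strict inequality unless one also shows $Z_{\alpha\alpha}>0$ at such an index; your observation that $\sum_i(k-1)L_y(x_i^2)=(k-1)\bar{R}>0$ supplies exactly the missing positivity. What your version costs is the deferred bookkeeping you flag yourself: identifying the dual PSD block with the degree-$k$ moment matrix and the dual equalities with the localizing and normalization conditions. This is standard SOS--moment duality and the paper does not verify it either, so it is an acceptable omission, but it is the one step that is asserted rather than proved.

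One shared caveat: both arguments degenerate at $k=1$, where all $\theta_{1,\alpha}=1$ (equivalently your $D\equiv 0$), so $\Trace(\Zstar)=(1+\bar{R})^k$ exactly and $\rho=(1+\bar{R})^k$ fails the strict inequality $\rho>\Trace(\Zstar)$ claimed in the lemma. You explicitly flag this; the paper does not. Since the regime of interest is $k\geq 2$, this is a cosmetic issue with the lemma's statement rather than with your proof.
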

\begin{proof}
     Let us first clarify some necessary notations: For $x \in \RR^n$ and $\alpha \in \mathbb{N}^n$, we define the monomial $x^\alpha = x_1^{\alpha_1}x_2^{\alpha_2}\cdots x_n^{\alpha_n}$, note $|\alpha| = \sum_{i=1}^n \alpha_i$, and let $\mathbb{N}^n_t = \{\alpha \in \mathbb{N}^n \;|\; |\alpha| \leq t \}$. 
     
     The proof is a simple adaption of \cite[Lemma 5]{mai2023hierarchy}.
    Given a $k\in \mathbb{N}$, let $\bigl(1+\|x\|^2\bigr)^k = \sum_{\alpha \in \mathbb{N}^n_k} \theta_{k,\alpha} x^{2 \alpha},$ where $\{\theta_{k,\alpha}\}$ is the sequence of the coefficients of polynomial $\bigl(1+\|x\|^2\bigr)^k$, and let $P_{n,k} = \mathrm{diag}\bigl(\sqrt{\theta_{k,\alpha}}\bigr)_{\alpha \in \mathbb{N}^n_k}$.  
    By the result of \cite[Lemma 5]{mai2023hierarchy}, the variable $P_{n,k} Z P_{n,k}^\tr$ has a constant trace property, i.e., $\Trace(P_{n,k} Z P_{n,k}^\tr) = \bigl(\bar{R}+1\bigr)^k$, where $Z\in \mathbb{S}^{\binom{n+k}{n}}_+$ is any feasible dual point of \cref{eq:k-order-Lassere-relaxation-shpere}. By the construction of $P_{n,k}$, the diagonal elements of $P_{n,k}$ are always greater or equal to $1$ and some of them are strictly greater than $1$. Therefore, 
    \begin{align*}
        \Trace\left(Z\right) < \Trace \left( P_{n,k} Z P_{n,k}^\tr \right)  = \left(1+\bar{R}\right)^k.
    \end{align*}
    Since this holds for any feasible $Z$, taking $Z = \Zstar$ completes the proof.
\end{proof}


\end{document}